\tikzset{->-/.style={decoration={
    markings,
    mark=at position #1 with {\arrow{>}}},postaction={decorate}}}
\tikzset{-<-/.style={decoration={
    markings,
    mark=at position #1 with {\arrow{<}}},postaction={decorate}}}
\setlist[1]{itemsep=2pt}
\newcommand{\R}{\mathbb{R}}
\newcommand{\N}{\mathbb{N}}
\newcommand{\e}{\mathfrak{e}}	
\newcommand{\g}{\mathfrak{g}}	
\newcommand{\h}{\mathfrak{h}}	
\renewcommand{\k}{\mathfrak{k}}	
\newcommand{\Heis}{\mathbb{Heis}}	
\newcommand{\Gru}{\mathbb{Gru}}	
\newcommand{\Eng}{\mathbb{Eng}}	
\newcommand{\G}{\mathbb{G}}		
\newcommand{\M}{\mathbb{M}}		
\renewcommand{\H}{\mathbb{H}}	
\renewcommand{\S}{\mathbb{S}}	
\newcommand{\K}{\mathbb{K}}		
\newcommand{\Mar}{\mathbb{Mar}}	
\newcommand{\MCP}{\mathrm{MCP}}	
\newcommand{\CD}{\mathrm{CD}}	
\newcommand{\loc}{\mathrm{loc}}	
\newcommand{\distr}{\mathscr{D}}
\newcommand{\di}{\mathrm{d}}	
\newcommand{\Cut}{\mathrm{Cut}}
\newcommand{\p}{\mathsf{p}}
\newcommand{\q}{\mathsf{q}}
\newcommand{\Htilde}{{\tilde{\H}}}
\renewcommand{\subset}{\subseteq} 
\newcommand{\sfd}{\mathsf{d}}	
\newcommand{\mm}{\mathsf{m}}	
\newtheorem{theorem}{Theorem}[section]
\newtheorem{proposition}[theorem]{Proposition}
\newtheorem{corollary}[theorem]{Corollary}
\newtheorem{lemma}[theorem]{Lemma}
\newtheorem*{conjecture}{Conjecture}
\theoremstyle{definition}
\newtheorem{definition}[theorem]{Definition}
\theoremstyle{remark}
\newtheorem{remark}[theorem]{Remark}
\crefname{issue}{issue}{issues}
\Crefname{issue}{Issue}{Issues}
\newcommand{\red}{\color{red}}
\newcommand{\green}{\color{PineGreen}}
\DeclareMathOperator{\Ad}{Ad}
\DeclareMathOperator{\ad}{ad}
\DeclareMathOperator{\Geo}{Geo}
\DeclareMathOperator{\supp}{supp}
\DeclareMathOperator{\OptGeo}{OptGeo}
\DeclareMathOperator{\Cpl}{Cpl}
\DeclareMathOperator{\Opt}{Opt}
\DeclareMathOperator*{\essup}{ess\,sup}
\DeclareMathOperator{\diam}{diam}
\DeclareMathOperator{\End}{\mathrm{End}}
\renewcommand{\P}{\mathcal{P}}
\DeclareMathOperator{\sech}{sech}
\DeclareMathOperator{\EllipticK}{K}
\DeclareMathOperator{\JacobiK}{K}
\DeclareMathOperator{\JacobiE}{E}
\DeclareMathOperator{\JacobiSN}{sn}
\DeclareMathOperator{\JacobiCN}{cn}
\DeclareMathOperator{\JacobiDN}{dn}
\newcommand{\J}{\mathcal{J}}
\title[Failure of the $\MCP$ via quotients in higher-step sub-Riemannian structures]{Failure of the measure contraction property via quotients in higher-step sub-Riemannian structures}
\author{Samuël Borza}
\author{Luca Rizzi}
\address{University of Vienna, Faculty of Mathematics,
  Oskar-Morgenstern-Platz 1,
1090 Vienna, Austria}
\email{\href{mailto:samuel.borza@univie.ac.at}{samuel.borza@univie.ac.at}}
\address{SISSA, via Bonomea 265, 34136 Trieste, Italy}
\email{\href{mailto:lrizzi@sissa.it}{lrizzi@sissa.it}}
\date{\today}
\begin{document}

\begin{abstract}

  We prove that the synthetic Ricci curvature lower bound known as the measure contraction property ($\MCP$) can fail in sub-Riemannian geometry. This may happen beyond step two, if the distance function is not Lipschitz in charts, and it already occurs in fundamental examples such as the Martinet and Engel structures.

  Central to our analysis are new results, of independent interest, on the stability of the local $\MCP$ under quotients by isometric group actions for general metric measure spaces, developed under a weaker variant of the essential non-branching condition which, in contrast with the classical one, is implied by the minimizing Sard property in sub-Riemannian geometry.

  As an application, we find sub-Riemannian structures with pre-medium-fat distribution that do not satisfy the $\MCP$, answering a question raised in \cite{R-subdiff}.
  Finally, and quite unexpectedly, we show that ideal sub-Riemannian structures can fail the $\MCP$, and this actually happens generically for rank greater than $3$ and high dimension.
\end{abstract}

\maketitle

\setcounter{tocdepth}{1}
\tableofcontents

\section{Introduction}

In this work, we investigate the validity of synthetic Ricci curvature lower bounds in sub-Riemannian geometry. One such condition that has drawn considerable interest in the last two decades is the curvature-dimension inequality, introduced by Lott–Villani and Sturm in \cite{LV-Ricci,S-ActaI,S-ActaII}. It is now well-understood that no sub-Riemannian manifold (that is not Riemannian) can satisfy the curvature-dimension condition \cite{J09,J-proceeding,AS-heatandentropy,J21,MR-ARnoCD,RS-failure}. Quite surprisingly, however, some sub-Riemannian structures are known to satisfy a weaker condition known as the measure contraction property -- $\MCP$ for short.

The $\MCP$ was independently introduced by Ohta in \cite{O-MCP} and Sturm in \cite{S-ActaII}. It depends on two parameters, $K \in \R$ and $N \in [1,\infty)$, representing respectively a Ricci curvature lower bound and a dimension upper bound. While the curvature-dimension is a condition on the optimal transport between any pair of absolutely continuous probability measures on a metric measure space, the $\MCP$ is a condition on the optimal transport between Dirac masses and uniform distributions. The $\MCP$ shares many quantitative geometric consequences with the curvature-dimension condition, and thus it may replace the latter in sub-Riemannian geometry.

Since the $\MCP$ is inherited by metric measure tangent cones, particular attention has been devoted to this case. The (metric measure) tangent cones of sub-Riemannian metric measure spaces are known as \emph{Carnot homogeneous spaces}, and they are \emph{Carnot groups} at regular points. Following the pioneering work of Juillet in \cite{J09}, who established the $\MCP$ in the Heisenberg groups with sharp values of the parameters, a lot of effort has been devoted to proving the $\MCP$ for other sub-Riemannian structures. The $\MCP(K,N)$ -- for optimal values of the parameters $K$ and $N$ -- was obtained for ideal, corank $1$ and generalized $H$-type Carnot groups, in \cite{R-Ricci,R-MCPCarnot,BR-Htype}, respectively, and for the Grushin plane in \cite{BR-Interpolation}, which is the simplest Carnot homogeneous space that is not a Carnot group. The $\MCP$ also holds for sub-Riemannian structures that are not Carnot homogeneous spaces, such as Sasakian, 3-Sasakian or $H$-type structures, provided that they satisfy suitable curvature bounds, see \cite{AAPL,LLZ-Sasakian,LL-Bishop,RS-3Sasakian,BGKT-sublaplacian,BR-BakryEmery,BGRV-Htypecomparison,BMR-Unification} and references therein.

More recently, Badreddine and Rifford proved in \cite{BR-MCP} that real-analytic sub-Riemannian structures whose distance is locally Lipschitz in charts away from the diagonal satisfy the $\MCP$. This is the case for all Carnot homogeneous spaces of step
$\leq 2$.  Note though that the general results obtained in \cite{BR-MCP} do not provide explicit ranges for the parameters $K$ and $N$ for which the $\MCP(K,N)$ holds. While it is clear (by the existence of metric dilations) that $K\leq 0$ for Carnot homogeneous spaces, various quantitative lower bounds have been obtained for the best (i.e.\ the lowest) value of $N$ such that $\MCP(0,N)$ holds, see \cite{R-Ricci,R-MCPCarnot,NicolussiZhang,Zhang}.

All this evidence led to the belief that the $\MCP$ should be satisfied -- at least locally -- by all sub-Riemannian spaces and therefore by any Carnot homogeneous space. However, the analysis has so far  been limited to \emph{Lipschitz structures}, namely those whose distance function is locally Lipschitz in charts outside of the diagonal. In this paper, we prove the rather surprising fact that as soon as the distance function is \emph{not} Lipschitz in charts, the $\MCP$ may fail;  this occurs already in fundamental examples such as the Martinet and Engel structures.

In the rest of this introduction, we present and discuss our results.

\subsection{Failure of the measure contraction property for Martinet}

The Martinet structure is of fundamental importance in sub-Riemannian geometry. It is a Carnot homogeneous space -- the simplest one of step $3$ -- and the most elementary non-Lipschitz structure \cite{martinetagrachev}. We refer to \cref{sec:EngMart} for precise definitions. The first result we report corresponds to \cref{thm:Martinet-noMCP}.

\begin{theorem}[Failure of the $\MCP$ for the Martinet structure]\label{thm:Martinet-noMCP-intro}
  The Martinet structure does not satisfy the $\MCP(K, N)$ for any $K \in \mathbb{R}$ and any $N \in [1,\infty)$.
\end{theorem}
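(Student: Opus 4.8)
The plan is to reduce the statement to the failure of $\MCP(0,N)$ for every $N$, recast that as a pointwise Jacobian inequality for the geodesic homothety toward a point, and then violate it along geodesics that ``see'' the abnormal locus of the Martinet structure --- the feature absent from the Carnot groups in which the $\MCP$ is known to hold. For the \emph{reduction}: the flat Martinet structure is a Carnot homogeneous space, carrying the dilations $\delta_\lambda(x,y,z) = (\lambda x,\lambda y,\lambda^{3}z)$, which multiply $\sfd$ by $\lambda$ and the reference (Lebesgue/Popp) measure $\mm$ by $\lambda^{Q}$ with $Q=5$; hence each $\delta_\lambda$ is an isometry of $(\R^{3},\sfd,\mm)$ onto a rescaled copy of itself, so $\MCP(K,N)\Leftrightarrow\MCP(\lambda^{2}K,N)$ for all $\lambda>0$. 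For $K>0$ this contradicts the Bonnet--Myers-type diameter bound forced by the $\MCP$ (the Martinet space being unbounded); for $K\le 0$, letting $\lambda\to 0$ and invoking the stability of the $\MCP$ under convergence of the curvature parameter gives $\MCP(K,N)\Rightarrow\MCP(0,N)$. So it is enough to disprove $\MCP(0,N)$ for each $N\in[1,\infty)$.

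Next, since the Martinet structure has the minimizing Sard property --- hence the weak essential non-branching property established above --- and since $\mm$-a.e.\ point of $\R^{3}$ is joined to the origin $0$ by a unique minimizing geodesic, which is necessarily normal (the abnormal set $\{x=0\}$ is $\mm$-negligible), the condition $\MCP(0,N)$ is equivalent to the pointwise inequality $J^{0}_{t}(q)\ge t^{N}$ for $\mm$-a.e.\ $q$ and all $t\in[0,1]$, where $J^{0}_{t}(q)$ is the Jacobian (with respect to $\mm$) of the geodesic homothety $q\mapsto\gamma_{0\to q}(t)$; concretely $J^{0}_{t}(q)=t^{3}\,\lvert\det D\!\exp_{0}(tu)\rvert\big/\lvert\det D\!\exp_{0}(u)\rvert$, where $u$ is the covector of the minimizer to $q$. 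The normal geodesics from $0$ integrate the Hamiltonian flow of $H=\tfrac12(p_x^{2}+h_2^{2})$, $h_2=p_y+\tfrac{x^{2}}{2}p_z$: the momenta $p_y,p_z$ are conserved, the coordinate $x(\cdot)$ solves $\ddot x+p_yp_z\,x+\tfrac{p_z^{2}}{2}x^{3}=0$ and is an explicit Jacobi elliptic function, while $y$ and $z$ follow by quadrature (the $z$-quadrature involving an incomplete elliptic integral of the second kind). The abnormal geodesic $\{x=z=0\}$ is a degenerate value of $\exp_{0}$: the entire hyperplane $\{p_x=0\}$ of covectors collapses onto it, so $\det D\!\exp_{0}$ vanishes there --- and this is exactly the locus at which $\sfd$ fails to be Lipschitz in charts, cf.\ \cite{martinetagrachev}.

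Using these explicit formulas one computes $J^{0}_{t}$ along the geodesics whose covectors approach the hyperplane $\{p_x=0\}$ (and, more generally, identifies the extremal directions) and shows that, unlike in the Carnot group case where $J^{0}_{t}$ remains bounded below by a fixed power of $t$, here the homothety contracts too strongly: for every $N$ there are a time $t\in(0,1)$ and a set of positive $\mm$-measure on which $J^{0}_{t}(q)<t^{N}$, so $\MCP(0,N)$ --- and hence $\MCP(K,N)$ for every $K$ --- fails. \textbf{The main obstacle} is precisely this last step: extracting from the elliptic-function description of the Martinet geodesics, and of the vanishing of $\det D\!\exp_{0}$ near the abnormal hyperplane, a sharp enough control of $J^{0}_{t}$ to rule out \emph{all} finite $N$. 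This is where the specific pathology of step-$>2$, non-Lipschitz sub-Riemannian geometry enters, and where the proof departs from every previously known case.
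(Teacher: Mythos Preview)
Your reduction to $\MCP(0,N)$ via dilations and your recasting of $\MCP(0,N)$ as a pointwise Jacobian inequality for the exponential map are correct and coincide with the paper's approach (this is Proposition~\ref{diffcharMCP} there). The description of the geodesic flow through Jacobi elliptic functions and the identification of the abnormal hyperplane as the source of the degeneration are also on target.

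But as you yourself acknowledge, the proof is incomplete: the ``main obstacle'' you flag is in fact the entire mathematical content of the argument, and ``approaching the abnormal hyperplane'' is too vague to fill it. The paper's computation is quite specific. Using the explicit reduced Jacobian $\J_R(\xi,m)$ from \cite{martinetagrachev} (see \eqref{eq:JR}), one takes the limit $m\to 1^{-}$ of the elliptic modulus, under which the Jacobi functions degenerate to hyperbolic ones and
\[
\J_R(\xi,m)=\tfrac{1}{2}(m-1)\cosh(\xi)\big[\sinh^{2}\xi+\xi(\tanh\xi-2\xi)\big]+O\big((m-1)^{2}\big).
\]
Then, sending $\omega\to\infty$ (admissible because the cut time is $2\EllipticK(m)/|\omega|$ and $\EllipticK(m)\to\infty$ as $m\to 1$), one obtains for every fixed $t\in(0,1)$
\[
\lim_{\omega\to\infty}\lim_{m\to 1^{-}}\left|\frac{\J_R(\omega t,m)}{\J_R(\omega,m)}\right|=0,
\]
the dominant factor being $\cosh(\omega t)/\cosh(\omega)\sim e^{-\omega(1-t)}$. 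This exponential decay defeats any power $t^{N-1}$, contradicting the Jacobian inequality and hence $\MCP(0,N)$ for every $N$. Two further points your sketch misses: first, merely letting the covector approach the abnormal hyperplane is not enough, since (as the paper notes) that limit bifurcates into a trigonometric branch ($m\to 0$) and a hyperbolic branch ($m\to 1$) depending on the relative signs of $u_0$ and $w_0$, and only the hyperbolic branch produces the required decay; second, the order of the double limit and the fact that $\EllipticK(m)\to\infty$ are what allow one to take $\omega$ arbitrarily large while remaining strictly before the cut time.
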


The proof of \cref{thm:Martinet-noMCP-intro} relies upon the optimal synthesis of the Martinet structure, which was obtained in \cite{martinetagrachev}.
\cref{thm:Martinet-noMCP-intro} is actually the last result we prove in this paper, but it is the starting point and motivation for the rest of our work.

\subsection{Measure contractions for quotients}

A powerful method to build new examples of structures satisfying curvature bounds consists in taking quotients by isometric group actions. One well-known result concerns lower bounds for the sectional curvature for quotients by proper, free, and isometric actions on Riemannian manifolds, having the structure of a Riemannian submersion. In this setting, a lower bound for the sectional curvature of the total space is inherited by the base space thanks to O'Neill's formulas \cite{Besse}. The passage to the quotient is more delicate for Ricci curvature lower bounds, as the measure also plays a role \cite{Lott-BE,ProWilhelm}.

On general metric measure spaces, the stability of the curvature-dimension conditions under isometric group actions was studied in \cite{GKMS-quotients}. It is mentioned in \cite[Rmk.\ 3.8]{GKMS-quotients} that also the $\MCP$ descends to suitable quotients, provided that the metric measure space is \emph{essentially non-branching}, see \cref{def:enb}. We note though that a complete proof of this fact, following the arguments of \cite{GKMS-quotients}, requires additional non-trivial arguments from \cite{CM-OptMaps}. We state here the corresponding result, anticipating that, for our purposes, a more general version of \cref{thm:GKMS-remark} will be needed.

\begin{theorem}[\cite{GKMS-quotients,CM-OptMaps}]\label{thm:GKMS-remark}
  Let $(X,\sfd,\mm)$ be an essentially non-branching metric measure space satisfying the $\MCP(K,N)$ for some $K\in \R$ and $N \in [1,\infty)$. Let $G$ be a compact Lie group acting freely by metric measure isometries on $(X,\sfd,\mm)$. Then the metric measure quotient $(X^*,\sfd^*,\mm^*)$ satisfies the $\MCP(K,N)$.
\end{theorem}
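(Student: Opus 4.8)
We outline a proof along the lines of \cite{GKMS-quotients}, indicating where the "non-trivial arguments from \cite{CM-OptMaps}" are needed.

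The plan is to lift the contraction problem to $X$, solve it there using the hypotheses, and project the solution back to $X^*$. Write $\pi\colon X\to X^*$ for the quotient map, fix $o^*\in X^*$ and a Borel set $A^*\subseteq X^*$ with $0<\mm^*(A^*)<\infty$; a routine approximation reduces us to the case $A^*$ bounded (and, when $K>0$, of diameter at most $\pi\sqrt{(N-1)/K}$, which holds automatically by the Bonnet--Myers-type bound for $\MCP(K,N)$ spaces). Pick $o\in\pi^{-1}(o^*)$ and put $A:=\pi^{-1}(A^*)$; since $\pi^{-1}(o^*)=Go$ is compact, $A$ is bounded, and since $G$ acts by measure isometries $\mm_A:=\mm(A)^{-1}\mm|_A$ is $G$-invariant with $\pi_\#\mm_A=\mm^*_{A^*}$, where we normalise the fibre measures to unit mass so that $\mm(A)=\mm^*(A^*)$. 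The subtle point is that the natural $G$-invariant lift of $\delta_{o^*}$ is \emph{not} a Dirac mass but the push-forward $\sigma$ of the Haar measure onto the orbit $Go$.

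On $X$, I would first produce a good optimal geodesic plan $\bar\nu$ from $\sigma$ to $\mm_A$. Freeness, compactness and isometry of the action give (as in \cite{GKMS-quotients}) $W_2(\sigma,\mm_A)=W_2^*(\delta_{o^*},\mm^*_{A^*})$, the value being attained by a plan concentrated on \emph{horizontal} geodesics, i.e.\ $\gamma$ with $\sfd(\gamma_0,\gamma_1)=\sfd^*(\pi\gamma_0,\pi\gamma_1)$; concretely one couples, fibrewise, the uniform measure on each fibre with its image under the $G$-equivariant nearest-point map on a neighbouring fibre. On the other hand the \emph{reversed} problem transports the absolutely continuous measure $\mm_A$ to $\sigma$, so by the theory of optimal maps in essentially non-branching $\MCP(K,N)$ spaces \cite{CM-OptMaps} it has an essentially unique optimal geodesic plan; hence $\OptGeo(\sigma,\mm_A)$ is the single plan $\bar\nu$, which is therefore $G$-invariant and agrees with the horizontal plan above. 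Pushing $\bar\nu$ forward by $\gamma\mapsto\pi\circ\gamma$ yields $\nu^*\in\mathcal P(\Geo(X^*))$ with $(e_0)_\#\nu^*=\delta_{o^*}$, $(e_1)_\#\nu^*=\mm^*_{A^*}$, concentrated on minimising geodesics by horizontality; a dynamical plan emanating from a Dirac mass is automatically optimal, so $\nu^*\in\OptGeo(\delta_{o^*},\mm^*_{A^*})$, and $\nu^*$ is the natural candidate for the $\MCP$ plan downstairs.

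It remains to verify the contraction estimate
\[
  \mm^*\ \ge\ \mm^*(A^*)\,(e_t)_\#\!\big(\tau^{(t)}_{K,N}(\sfd^*(\gamma_0,\gamma_1))^{N}\,\di\nu^*(\gamma)\big),\qquad t\in[0,1],
\]
for this $\nu^*$. This is best phrased through the one-dimensional localisation: since $(X,\sfd,\mm)$ is essentially non-branching and satisfies $\MCP(K,N)$, the machinery of \cite{CM-OptMaps} disintegrates $\mm$ (restricted to the transport set of any base point) into one-dimensional $\MCP(K,N)$ densities along geodesic rays, and the estimate above is equivalent to the corresponding densities $h^*_{q^*}$ of $\mm^*$ along the rays issuing from $o^*$ (with $\nu^*$ providing the geodesic selection) being themselves one-dimensional $\MCP(K,N)$ densities. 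Now $\pi^{-1}$ of the transport set of $o^*$ is foliated by the horizontal lifts of those rays — one lift from each point of the orbit — and, by $G$-invariance and the normalisation of the fibre measures, $h^*_{q^*}$ is exactly the density of $\mm$ along such a horizontal ray in this foliation. The defining inequality of a one-dimensional $\MCP(K,N)$ density is \emph{linear} in the density, hence stable under the fibre-integration relating this "horizontal" disintegration to the ordinary ones based at single points; so it suffices to control the horizontal-ray densities by reconciling the two disintegrations.

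I expect this reconciliation to be the heart of the matter and the main obstacle: it is \emph{not} a formal consequence of $\MCP(K,N)$ on $X$ — which governs only the ordinary single-base-point disintegrations, in which the horizontal rays form a negligible sub-family carrying, moreover, a different transverse density — and it is exactly here that the non-trivial arguments of \cite{CM-OptMaps} must be brought in: one has to match the horizontal-ray structure with the $\MCP(K,N)$-controlled disintegrations, keeping careful track of the various $\mm$-negligible sets (endpoints of maximal rays, branching points, and the vertical directions along the rays). Once the horizontal-ray densities are shown to be one-dimensional $\MCP(K,N)$ densities, the displayed inequality follows for $\nu^*$, and hence $(X^*,\sfd^*,\mm^*)$ satisfies $\MCP(K,N)$.
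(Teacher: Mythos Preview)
Your setup is right --- lifting $\delta_{o^*}$ to the orbit measure $\sigma$, lifting $\mm^*_{A^*}$ to the $G$-invariant $\mm_A$, and producing a $G$-invariant optimal dynamical plan that projects to the candidate $\nu^*$ downstairs --- and it matches what the paper does (the paper averages an optimal plan over $G$ rather than invoking uniqueness from \cite{CM-OptMaps}, but either works). The gap is exactly where you say it is: verifying the $\MCP$ inequality for the transport from $\mm_A$ to $\sigma$, which is \emph{not} a Dirac. Your proposed route through one-dimensional localisation --- disintegrating $\mm^*$ along rays from $o^*$ and identifying the ray densities with horizontal-ray densities of $\mm$ upstairs --- is not how the paper (or the literature) closes this gap, and you yourself do not carry out the ``reconciliation'' you describe. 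The horizontal rays from $Go$ are, as you note, negligible sub-families inside the single-base-point transport sets, carrying the wrong transverse weight; I do not see a clean way to extract the needed density inequality from the point-based localisations.

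The paper's mechanism is different and avoids localisation entirely. One approximates $\sigma$ by finite sums of Dirac masses $\mu_1^m = \sum_j \lambda_j \delta_{x_j}$ (with $x_j \in Go$). For each $m$, the optimal plan $\pi^m \in \Opt(\mm_A,\mu_1^m)$ is shown to be induced by a map (this is where the essentially non-branching hypothesis and the mixing argument of \cite{CM-OptMaps} enter for the first time), which splits $A$ into pieces $A_j^m = (T^m)^{-1}(x_j)$. The ordinary $\MCP(K,N)$ on $X$ applies to each piece, giving $\nu_j^m \in \OptGeo(\mm_{A_j^m},\delta_{x_j})$ with the contraction inequality. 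The second, and crucial, use of \cite{CM-OptMaps} is to show that the intermediate densities $\rho_{t,j}^m$ have essentially disjoint supports for $i\neq j$; this again comes from the non-branching mixing argument. Disjointness lets one \emph{sum} the piecewise inequalities into a single inequality for $\nu^m := \sum_j \lambda_j \nu_j^m \in \OptGeo(\mm_A,\mu_1^m)$, and one then passes to the limit $m\to\infty$ by stability of optimal dynamical plans. This is the content of \cref{thm:MCPineqdeltaess} in the paper (under the global $\MCP$, the uniform closeness condition there is vacuous; see \cref{rmk:MCPglob,rmk:globalMCP}). After that, averaging over $G$ and projecting as in the proof of \cref{thm:quotient2} yields the $\MCP$ inequality for $\nu^*$.
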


For applications in sub-Riemannian geometry, the typical situation we want to deal with is that of a quotient by an isometric group action
\begin{equation}\label{eq:projintro}
  \pi :\M\to\tilde{\M},
\end{equation}
where both $\M$ and $\tilde{\M}$ are \emph{Carnot homogeneous spaces},  the metric measure tangent cones of sub-Riemannian manifolds, see \cite{Bellaiche}. A Carnot homogeneous space is itself a quotient $\M = \G\slash\H$, where $\G$ is a Carnot group and $\H<\G$ is a stratified subgroup acting isometrically on $\G$. We refer to \cref{sec:Carnothomogeneousspaces} for the precise construction and to \cref{prop:quotientofCarnothomogeneous} for the formalization of the quotient \eqref{eq:projintro}. We only stress here that Carnot homogeneous spaces are not homogeneous in the metric sense: although $\G$ acts transitively on $\M$, this action is generally not by isometries.

Fundamental examples of \eqref{eq:projintro} are the quotient of the Heisenberg group to the Grushin plane $\pi:\Heis\to\Gru$, and the quotient of the Engel group to the Martinet structure $\pi:\Eng\to\Mar$.

Our objective is to prove the stability of the $\MCP$ under quotients as \eqref{eq:projintro}. In particular, by establishing the $\MCP$ for a Carnot group $\G$, one would then be able to deduce the $\MCP$ for \emph{all} Carnot homogeneous spaces $\G\slash\H$. Conversely, the failure of the $\MCP$ on $\tilde{\M}$ would imply that it fails on $\M$. This would yield the failure of the $\MCP$ for any sub-Riemannian metric measure space having $\M$ as tangent at some point, motivating our focus on Carnot homogeneous spaces.

However there are non-trivial challenges preventing the application of \cref{thm:GKMS-remark} to \eqref{eq:projintro}.
\begin{enumerate}[(i)]
  \item \label[issue]{i:challenge1} It is not known whether general sub-Riemannian manifolds, or  even simply Carnot groups, are essentially non-branching. Sub-Riemannian branching geodesics do exist  \cite{MR-Branching}, and this may happen if they contain non-trivial abnormal segments. An important condition in sub-Riemannian geometry -- the minimizing Sard property -- stipulates that the set of endpoints of abnormal geodesics starting from a given point has measure zero.  Over the last decades, a great amount of work has been devoted to establishing its validity for several classes of structures (see \cite[Sec.\ 1.3]{LRT-Sard} for  an up-to-date bibliography), and this remains one of the main open problems in sub-Riemannian geometry \cite[Prob.\ 3]{A-openproblems}, \cite[Conj.\ 1]{RT-MorseSard}.
        While it is reasonable to expect that the minimizing Sard property implies some form of non-branching, it is not clear whether it implies the essentially non-branching condition in its standard form.
  \item\label[issue]{i:challenge2} For technical reasons the argument used to prove \cref{thm:GKMS-remark} works for \emph{compact} group actions. In our setting, the acting group is nilpotent and simply connected, and thus it is never compact. As suggested in \cite[Cor.\ 7.25]{GKMS-quotients}, and using the notation of \cref{thm:GKMS-remark}, this technical nuisance can be handled if there exists a co-compact lattice $\Gamma \triangleleft G$ (i.e. a discrete normal subgroup such that the quotient group $G\slash\Gamma$ is compact). In this case, the quotient map $X\to X^* = X\slash G$ admits a factorization
        \begin{equation}\label{eq:factor}
          \begin{tikzcd}
            X \arrow[r, "\q"] &  X\slash\Gamma \arrow[r, "\p"] & (X\slash\Gamma) \slash( G\slash \Gamma) \simeq  X^*.
          \end{tikzcd}
        \end{equation}
        On the one hand, note that $\p$ is now a quotient by a \emph{compact} isometric group action, to which \cref{thm:GKMS-remark} can be applied. On the other hand, $\q$ is a local isometry, so that the $\MCP(K,N)$ for $X$  descends to a weaker $\MCP_{\loc}(K,N)$ for the intermediate factor $X\slash\Gamma$. In \cite{GKMS-quotients}, one uses the local-to-global property of the curvature-dimension, namely the fact that $\CD_{\loc}(K,N) \Rightarrow \CD(K,N)$ under the essential non-branching condition \cite{CM-globalization,Li-globalization}.
        However, the $\MCP$ does not have the local-to-global property \cite[Rmk.\ 5.6]{S-ActaII}. In our specific setting, where both $X,X^*$ are Carnot homogeneous spaces so that they possess metric dilations, $\MCP_{\loc}(K,N)$ and $\MCP(K,N)$ are indeed equivalent, but this is not the case for the intermediate factor $X\slash\Gamma$. Thus, we ultimately need a local version of \cref{thm:GKMS-remark}.
  \item\label[issue]{i:challenge3} A further challenge comes from the fact that the acting group $G$ might not admit a co-compact lattice. The Mal'cev criterion \cite[Ch.\ 5]{CG-NilpotentGroupsBook} establishes that a simply connected nilpotent group has a co-compact lattice if and only if the Lie algebra of $G$ admits a basis with rational structure constants. There are examples where this is not the case even for nilpotent groups of step $2$, see \cite[Ex.\ 5.1.13]{CG-NilpotentGroupsBook}. 
\end{enumerate}

\subsection{Dealing with \texorpdfstring{\Cref{i:challenge1}}{Issue (i)}: \texorpdfstring{$\delta$}{δ}-essentially non-branching}
To address the first of these issues, we introduce a variant of the classical  essentially non-branching condition, that we call \emph{$\delta$-essentially non-branching}, see \cref{sec:deltaess}. It requires non-branching only for $W_2$-geodesics between absolutely continuous measures and finite sums of Dirac masses. This condition is better suited for sub-Riemannian geometry, as shown by the next result (corresponding to \cref{thm:Sardimpliesdeltaessnb}).

\begin{theorem}\label{thm:Sardimpliesdeltaessnb-intro}
  Let $(M,\sfd,\mm)$ be a sub-Rieman\-nian metric measure space satisfying the $*$-minimizing Sard property. Then $(M,\sfd,\mm)$ is $\delta$-essentially non-branching.
\end{theorem}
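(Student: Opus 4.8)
The plan is to unwind the definition of $\delta$-essential non-branching and to confine every branching phenomenon to geodesics based in one of finitely many pointed abnormal sets, where the $*$-minimizing Sard property applies. By symmetry it suffices to treat a $W_2$-geodesic joining $\mu_0 \ll \mm$ to $\mu_1 = \sum_{i=1}^k a_i \delta_{y_i}$ with $a_i > 0$ and $\sum_i a_i = 1$ (the case with the roles of $0$ and $1$ exchanged is completely analogous, disintegrating over starting points instead, and in fact does not even require the symmetry of $\sfd$). Fix $\nu \in \OptGeo(\mu_0, \mu_1)$; we must exhibit a non-branching Borel set $\Gamma \subseteq \Geo(M)$ with $\nu(\Gamma) = 1$. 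The geometric input is the dichotomy that every length-minimizing sub-Riemannian geodesic is either normal or abnormal, together with the fact — see for instance \cite{MR-Branching}, or deduce it from the uniqueness of the normal lift of a strictly normal geodesic — that if two distinct minimizers $\gamma, \gamma' \colon [0,1] \to M$ coincide on an initial arc $[0,t]$ with $t > 0$, then $\gamma|_{[0,t]}$ must be abnormal. Consequently the time-reversal invariant set
\[
  \mathsf{N} := \bigl\{ \gamma \in \Geo(M) : \text{no sub-arc of } \gamma \text{ is abnormal} \bigr\}
\]
is non-branching, since two of its elements sharing an initial arc cannot be distinct; so it remains only to prove $\nu(\mathsf{N}^c) = 0$.

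To do so, I would disintegrate $\nu = \sum_{i=1}^k a_i \nu_i$ along $\gamma \mapsto \gamma(1)$, so that $\nu_i$ is concentrated on geodesics terminating at $y_i$. From $\mu_0 = \sum_i a_i (e_0)_\# \nu_i$ and $\mu_0 \ll \mm$ one gets $(e_0)_\# \nu_i \leq a_i^{-1} \mu_0 \ll \mm$. If $\gamma \in \mathsf{N}^c$ ends at $y_i$, then $\gamma$ is a minimizer from $x := \gamma(0)$ to $y_i$ containing an abnormal sub-arc, whence $x$ belongs to
\[
  A_{y_i} := \bigl\{ x \in M : \text{some minimizer from } x \text{ to } y_i \text{ contains an abnormal sub-arc} \bigr\},
\]
so that $\nu_i(\mathsf{N}^c) \leq \nu_i\bigl(e_0^{-1}(A_{y_i})\bigr) = (e_0)_\# \nu_i(A_{y_i})$. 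By the symmetry of $\sfd$ (reverse the geodesic; reversing a singular curve yields a singular curve), $A_{y_i}$ coincides with the set of points reached from $y_i$ by a minimizer containing an abnormal sub-arc, which is $\mm$-negligible by the $*$-minimizing Sard property at $y_i$. Since $(e_0)_\# \nu_i \ll \mm$, we obtain $\nu_i(\mathsf{N}^c) = 0$, and summing over $i$ gives $\nu(\mathsf{N}^c) = 0$. A Borel set $\Gamma \subseteq \mathsf{N}$ with $\nu(\Gamma) = 1$, which exists by inner regularity, then witnesses the claim.

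The routine points to handle along the way are measurability — the sets $\mathsf{N}$, $\mathsf{N}^c$ and $A_y$ are analytic, hence universally measurable, which is what legitimizes the absolute-continuity estimates and the extraction of $\Gamma$ — and the compatibility of the $*$-minimizing Sard property, in the precise form defined earlier, with the ``abnormal sub-arc'' formulation used for $A_y$ (should it be phrased for globally abnormal minimizers, one records that an abnormal initial arc already forces $\gamma(0)$ into the relevant pointed abnormal set). The main obstacle, and the conceptual heart of the statement, is the branching$\leftrightarrow$abnormality dictionary encoded in $\mathsf{N}$: one must verify carefully that the \emph{strongly normal} set $\mathsf{N}$ is non-branching in exactly the sense the definition requires. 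It is precisely here that the weakening to $\delta$-essential non-branching becomes decisive — because the second marginal charges only finitely many points, all branching is confined to the finitely many pointed abnormal sets $A_{y_i}$, and for these the \emph{pointed} minimizing Sard property, rather than any control uniform over all pairs of points (which plain essential non-branching would require and which is not available), is exactly the hypothesis that finishes the proof.
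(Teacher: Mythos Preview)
Your argument is correct and follows essentially the same route as the paper: both confine branching to geodesics whose initial point lies in the $*$-abnormal set of one of the finitely many targets $y_i$, invoke \cite{MR-Branching} for the branching$\Rightarrow$abnormal-segment implication, and use $\mu_0\ll\mm$ together with the $*$-minimizing Sard property to conclude. The only cosmetic difference is that the paper bypasses your analytic-set measurability discussion by quoting \cite[Lemma~7.14]{BMR-Unification} to obtain a Borel null set $C_y\supset A_y$, and then uses $\supp(\nu)\setminus e_0^{-1}(\cup_j C_{x_j})$ directly as the non-branching set rather than your intrinsic $\mathsf{N}$.
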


The $*$-minimizing Sard property is a mild reinforcement of the Sard property. It was introduced in \cite{BMR-Unification}. For real-analytic sub-Riemannian structures, such as Carnot homogeneous spaces, the $*$-minimizing Sard property is equivalent to the minimizing Sard property. See \cref{sec:deltaess}.

The $\delta$-essentially non-branching condition and the \emph{local} $\MCP$ together are preserved by local metric measure isometries. We record the following straightforward result concerning discrete group actions (corresponding to \cref{thm:quotient1}).

\begin{theorem}[Quotients by discrete group actions]\label{thm:quotient1-intro}
  Let $(X,\sfd,\mm)$ be a metric measure space satisfying the $\MCP_{\loc}(K,N)$ for some $K\in \R$ and $N \in [1,\infty)$. Let $G$ be a discrete group of metric measure isometries, acting properly and freely. Then, the metric measure quotient $(X^*,\sfd^*,\mm^*)$ satisfies the $\MCP_{\loc}(K,N)$. If $(X,\sfd,\mm)$ is $\delta$-essentially non-branching, then $(X^*,\sfd^*,\mm^*)$ is $\delta$-essentially non-branching.
\end{theorem}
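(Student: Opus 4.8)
The plan is to reduce the statement to the principle recorded above — that the $\delta$-essential non-branching condition together with $\MCP_{\loc}(K,N)$ are preserved by local metric measure isometries — by verifying that the quotient projection $\pi\colon X\to X^*$ is a \emph{surjective} local metric measure isometry. Surjectivity and openness of $\pi$ are immediate from the definition $X^* = X/G$; the substantive content is the local isometry property, simultaneously for the distance and for the measure.

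First I would recall the metric measure quotient: the distance is $\sfd^*(\pi(x),\pi(y)) = \inf_{g\in G}\sfd(x,gy)$, and $\mm^*$ is the quotient measure, characterized by $\mm^*(E) = \mm\big((\pi|_B)^{-1}(E)\big)$ whenever $E$ lies in the image of a set $B$ on which $\pi$ is injective — this prescription is consistent across such charts and locally finite because $\mm$ is $G$-invariant and locally finite. The key elementary fact is that a proper, free, isometric action of a discrete group admits an evenly covered neighbourhood around every point: properness forces orbits to be closed and discrete, so for each $x\in X$ there is $\epsilon>0$ with $\sfd(x,gx)>4\epsilon$ for all $g\in G\setminus\{e\}$, and then, for $y,z\in B(x,\epsilon)$ and $g\neq e$,
\[
  \sfd(y,gz)\ \ge\ \sfd(x,gx)-\sfd(x,y)-\sfd(gx,gz)\ >\ 4\epsilon-\epsilon-\epsilon\ =\ 2\epsilon\ >\ \sfd(y,z).
\]
Hence the infimum defining $\sfd^*$ on $\pi(B(x,\epsilon))$ is attained at $g=e$, so $\pi|_{B(x,\epsilon)}$ is a distance isometry onto the open set $\pi(B(x,\epsilon))\subseteq X^*$; freeness makes it injective, and by the definition of $\mm^*$ it is also measure preserving. (The same estimate, together with closedness of the orbits, shows that $\sfd^*$ is a genuine distance, not merely a pseudodistance.) Therefore $\pi$ is a surjective local metric measure isometry.

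It then remains to invoke the preservation principle: $\MCP_{\loc}(K,N)$ and the $\delta$-essential non-branching condition are local properties, so since every point of $X^*$ has a neighbourhood that is isometric as a metric measure space to an open subset of $X$, and $X^*$ is covered by such neighbourhoods, both properties descend from $X$ to $X^*$. I expect the only genuinely non-formal point to be the $\delta$-essential non-branching part, which is precisely where the preservation principle does its work and which I would treat in \cref{sec:deltaess} rather than here: unlike the pointwise inequality defining $\MCP_{\loc}$, the $\delta$-essential non-branching condition concerns optimal dynamical plans whose marginals may be spread over all of $X^*$, so one must argue that absence of branching can be localized along geodesics, lifted through the evenly covered charts — using freeness of the action to lift geodesics locally and uniquely once a lift of the initial point is fixed — and transferred back, while checking along the way that the relevant marginals remain absolutely continuous, respectively finitely supported, under this localization. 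Modulo that, the proof here is routine covering-space bookkeeping.
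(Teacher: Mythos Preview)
Your approach is essentially the paper's: establish that $\q:X\to X^*$ is a surjective local metric measure isometry (the paper cites \cite[Prop.\ 8.5(3)]{nonpositivebook} for this, rather than redoing the $4\epsilon$ estimate), conclude $\MCP_{\loc}$ immediately, and then handle $\delta$-essential non-branching by lifting. The covering-space bookkeeping you outline is correct.

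There is one step you do not name explicitly, and it is the only genuinely non-formal one in the paper's proof: after lifting $\nu^*\in\OptGeo(\mu_0^*,\mu_1^*)$ to a measure $\nu := \theta_\sharp\nu^*$ on $\Geo(X)$ via the geodesic-lifting map $\theta$, one must show that $\nu$ is itself an \emph{optimal} dynamical plan between its marginals $\mu_0$ and $\mu_1$. Without this, the $\delta$-essentially non-branching hypothesis on $X$ does not apply. The paper handles it by a short contradiction: if some $\tilde\pi\in\Cpl(\mu_0,\mu_1)$ had strictly smaller cost, then $\q_\sharp\tilde\pi\in\Cpl(\mu_0^*,\mu_1^*)$ would have cost at most that of $\tilde\pi$ (since $\sfd^*\circ(\q\times\q)\le\sfd$), hence strictly smaller than the cost of $(e_0,e_1)_\sharp\nu^*$, contradicting optimality of $\nu^*$. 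This uses that the lift $\nu$ is concentrated on geodesics $\gamma$ with $\sfd(\gamma_0,\gamma_1)=\sfd^*(\q\gamma_0,\q\gamma_1)$, so its cost equals that of $\nu^*$.

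Two smaller points. First, the paper does not localize $\nu^*$ along geodesics as you suggest; rather it reduces (``without loss of generality'') to $\supp(\mu_0^*)$ lying in a single evenly-covered chart and lifts the whole plan at once. Second, your remark that the lifted second marginal is finitely supported is correct but deserves the one-line justification the paper gives: the endpoints of the lifted geodesics lie in the discrete set $\cup_j\q^{-1}(x_j^*)$ intersected with a compact ball, hence form a finite set. Finally, note that calling $\delta$-essential non-branching a ``local property'' is misleading---as you yourself observe immediately afterward---and the paper does not defer this argument to \cref{sec:deltaess}; it is carried out in full within the proof of the theorem.
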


\subsection{Dealing with \texorpdfstring{\Cref{i:challenge2}}{Issue (ii)}: local measure contractions for quotients}

We now need a version of \cref{thm:GKMS-remark} for the \emph{local} $\MCP$.  To this aim, we combine the arguments from \cite{GKMS-quotients} with ideas from \cite{CM-OptMaps}, while working under the weaker $\delta$-essentially non-branching condition. The key technical step is \cref{thm:MCPineqdeltaess}, which allows us to improve the measure contraction for general second marginals, provided that they can be approximated ``uniformly enough'' by Dirac masses. We do not present \cref{thm:MCPineqdeltaess} in this introduction, but only its main consequence, corresponding to \cref{thm:quotient2}.

\begin{theorem}[Quotients by compact group actions]\label{thm:quotient2-intro}
  Let $(X,\sfd,\mm)$ be a $\delta$-essentially non-branching metric measure space satisfying the $\MCP_{\loc}(K,N)$ for some $K\in \R$ and $N \in [1,\infty)$. Let $G$ be a connected, compact, abelian Lie group of metric measure isometries, acting freely. Then, the metric measure quotient $(X^*,\sfd^*,\mm^*)$ satisfies the $\MCP_{\loc}(K,N)$.
\end{theorem}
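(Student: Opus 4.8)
The plan is to run the lift-and-project scheme of \cite{GKMS-quotients}, but to feed the target measure back from the quotient through a \emph{horizontal section} and to invoke \cref{thm:MCPineqdeltaess} in place of the optimal-map results of \cite{CM-OptMaps}. Write $\pi : X \to X^* = X/G$ for the quotient map; since $G$ is compact and acts by measure isometries, $\pi$ is a submetry, the quotient metric is $\sfd^*(\pi x,\pi y) = \min_{g\in G}\sfd(x,gy)$, and the quotient measure is normalized so that $\mm^*(B) = \mm(\pi^{-1}(B))$ for every Borel $B\subseteq X^*$. I would fix $x_0^*\in X^*$ with a lift $x_0\in\pi^{-1}(x_0^*)$ and establish the $\MCP(K,N)$ inequality for the pair $(\delta_{x_0^*},\mu^*)$, $\mu^* := \mm^*|_{A^*}/\mm^*(A^*)$, for every Borel $A^*$ of finite positive measure inside a small ball $B^*(x_0^*,r)$; that this yields $\MCP_{\loc}(K,N)$, with $r$ uniform on $\pi$-saturated neighbourhoods, is a routine compactness argument. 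The advantage over the globalization issue of the $\CD$ condition (\cref{i:challenge2}) is that \emph{all geodesics below emanate from the single point $x_0$}, hence stay inside $\bar B(x_0,r)$, which I will take small enough to lie in a neighbourhood where $\MCP_{\loc}(K,N)$ — and therefore \cref{thm:MCPineqdeltaess} — is available.

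Next I would build a horizontal lift of the target. Since orbits are compact and $g\mapsto\sfd(x_0,g\hat y)$ is continuous, a measurable selection produces a Borel section $s : A^*\to X$ with $\pi\circ s=\mathrm{id}$ and $\sfd(x_0,s(y^*))=\sfd^*(x_0^*,y^*)$ for all $y^*\in A^*$, i.e.\ $s(y^*)$ realizes the distance from $x_0$ to the orbit $\pi^{-1}(y^*)$. Put $\tilde\mu := \tfrac{1}{\mm^*(A^*)}\,s_\#\big(\mm^*|_{A^*}\big)\in\P(X)$, so that $\pi_\#\tilde\mu=\mu^*$ and, the coupling of a Dirac mass with $\tilde\mu$ being unique,
\begin{equation*}
  W_2(\delta_{x_0},\tilde\mu)^2 = \tfrac{1}{\mm^*(A^*)}\int_{A^*}\sfd(x_0,s(y^*))^2\,\di\mm^*(y^*) = \tfrac{1}{\mm^*(A^*)}\int_{A^*}\sfd^*(x_0^*,y^*)^2\,\di\mm^*(y^*) = W_2(\delta_{x_0^*},\mu^*)^2 .
\end{equation*}

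The heart of the proof — the step I expect to be the main obstacle — is to obtain the measure-contraction inequality \emph{upstairs} for the pair $(\delta_{x_0},\tilde\mu)$, even though $\tilde\mu$ is singular with respect to $\mm$, so that $\MCP_{\loc}(K,N)$ does not apply to it directly. Here I would use that $\tilde\mu$ is the image under $s$ of a uniform measure, hence is approximable ``uniformly enough'' by finite sums of Dirac masses $\tfrac1n\sum_{i=1}^n\delta_{s(y_i^*)}$ (split $A^*$ into $n$ Borel pieces of equal $\mm^*$-mass and vanishing diameter, and pick a point in each): this is exactly the hypothesis of \cref{thm:MCPineqdeltaess}. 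Applying it — using that $(X,\sfd,\mm)$ is $\delta$-essentially non-branching and satisfies $\MCP_{\loc}(K,N)$ in $\bar B(x_0,r)$, and that $G$ is connected and abelian — yields $\tilde\nu\in\OptGeo(\delta_{x_0},\tilde\mu)$ with
\begin{equation*}
  \mm \;\geq\; (e_t)_\#\!\Big( \tfrac{1}{\mm^*(A^*)}\, c^{(t)}_{K,N}\!\big(\sfd(\gamma_0,\gamma_1)\big)\, \di\tilde\nu(\gamma)\Big), \qquad t\in[0,1],
\end{equation*}
where $c^{(t)}_{K,N}(\theta)$ is the contraction factor appearing in the definition of $\MCP(K,N)$. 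This is the place where the weaker $\delta$-essential non-branching hypothesis replaces the full essential non-branching of \cref{thm:GKMS-remark}, and where I expect the connectedness and commutativity of $G$ to be genuinely needed (e.g.\ to reduce to circle factors, or to keep the approximation compatible with the section $s$).

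Finally I would project to the quotient. Let $\Pi(\gamma):=\pi\circ\gamma$; since $\sfd^*\circ(\pi\times\pi)\le\sfd$ while $\sfd(\gamma_0,\gamma_1)=\sfd(x_0,s(\gamma_1^*))=\sfd^*(x_0^*,\gamma_1^*)$ for $\tilde\nu$-a.e.\ $\gamma$, the curve $\pi\circ\gamma$ is a minimizing geodesic of $X^*$ with $\sfd^*(\gamma_0^*,\gamma_1^*)=\sfd(\gamma_0,\gamma_1)$; hence $\nu^*:=\Pi_\#\tilde\nu$ is a dynamical plan with $(e_0)_\#\nu^*=\delta_{x_0^*}$, $(e_1)_\#\nu^*=\pi_\#\tilde\mu=\mu^*$ and $W_2$-cost $W_2(\delta_{x_0},\tilde\mu)^2=W_2(\delta_{x_0^*},\mu^*)^2$ by the identity above, so that $\nu^*\in\OptGeo(\delta_{x_0^*},\mu^*)$. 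Then for every Borel $B\subseteq X^*$ and $t\in[0,1]$,
\begin{equation*}
  (e_t)_\#\!\Big(\tfrac{1}{\mm^*(A^*)}\,c^{(t)}_{K,N}\!\big(\sfd^*(\gamma_0^*,\gamma_1^*)\big)\,\di\nu^*\Big)(B) = (e_t)_\#\!\Big(\tfrac{1}{\mm^*(A^*)}\,c^{(t)}_{K,N}\!\big(\sfd(\gamma_0,\gamma_1)\big)\,\di\tilde\nu\Big)\big(\pi^{-1}(B)\big) \;\le\; \mm\big(\pi^{-1}(B)\big) = \mm^*(B),
\end{equation*}
by the previous display and $\mm^*=\pi_\#\mm$. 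Thus $\nu^*$ witnesses $\MCP(K,N)$ for $(\delta_{x_0^*},\mu^*)$, and letting $A^*$ vary over small balls gives $\MCP_{\loc}(K,N)$ for $(X^*,\sfd^*,\mm^*)$. Everything outside the third paragraph is the GKMS scheme adapted to a point source; the real work is concentrated in the application of \cref{thm:MCPineqdeltaess}.
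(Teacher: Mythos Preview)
Your application of \cref{thm:MCPineqdeltaess} does not go through as written: that theorem requires the \emph{first} marginal to be of the form $\mu_0=\tfrac{1}{\mm(A)}\mm|_A$ for some Borel $A\subset X$, and its conclusion is the inequality $\tfrac{1}{\mm(A)}\mm \geq (e_t)_\#(\tau^{(1-t)}_{K,N}(\sfd(\gamma_0,\gamma_1))^N\,\nu)$. In your scheme the two lifted measures are $\delta_{x_0}$ and $\tilde\mu=s_\#\mu^*$; neither is of this form (since $G$ is a torus of positive dimension, $s(A^*)$ has $\mm$-measure zero, so $\tilde\mu\perp\mm$). There is thus no set $A$ upstairs producing the factor $\tfrac{1}{\mm^*(A^*)}$ in your displayed inequality, and the theorem simply does not apply to $(\delta_{x_0},\tilde\mu)$. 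Approximating $\delta_{x_0}$ by uniform measures on shrinking balls does not help either: the resulting inequality $\tfrac{1}{\mm(B_r(x_0))}\mm\geq\dots$ becomes vacuous as $r\to 0$.

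The paper's proof avoids this by lifting \emph{both} measures $G$-invariantly rather than via a horizontal section: the uniform measure $\mu_0^*$ lifts to $\tfrac{1}{\mm(A)}\mm|_A$ with $A=\p^{-1}(A^*)$ (genuinely absolutely continuous, and $\mm(A)=\mm^*(A^*)$), while $\delta_{o^*}$ lifts to the Haar probability on the orbit $\p^{-1}(o^*)$. It is this orbit measure that must be approximated by Dirac masses satisfying the $\essup$ bound of \cref{thm:MCPineqdeltaess}, and \emph{this} is where the torus hypothesis is actually used (\cref{lem:goodapprox}): one needs finite subgroups $G_m<G$ forming arbitrarily fine nets, and then $\sfd^2$-cyclical monotonicity of a $G_m$-invariant optimal plan forces the transport distance to be close to $\sfd^*$. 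After obtaining the $\MCP$ inequality upstairs for this pair, one averages the dynamical plan over $G$ (which preserves both the inequality and optimality), and only then projects; \cref{lem:equivOT} guarantees that the projected plan is optimal and that $\sfd(\gamma_0,\gamma_1)=\sfd^*(\gamma_0^*,\gamma_1^*)$ for a.e.\ geodesic. Your horizontal-section lift gets the distance identity for free but loses absolute continuity of the first marginal, which is the hypothesis that cannot be dispensed with.
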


The rather restrictive assumption on $G$ -- that it must be an abelian torus -- is due to the fact that we can only assume $\MCP_{\loc}(K,N)$. More precisely, the hypothesis on $G$ is needed to apply \cref{thm:MCPineqdeltaess} via \cref{lem:goodapprox} (see footnote in its proof). If one replaces $\MCP_{\loc}(K,N)$ with  $\MCP(K,N)$, then \cref{thm:quotient2-intro} remains valid for a general compact topological group $G$. We refer to \cref{rmk:MCPglob,rmk:differencesfromCM,rmk:globalMCP,rmk:nonbranchingquot} for more comments and comparison with \cite{GKMS-quotients,CM-OptMaps}.

\subsection{Dealing with \texorpdfstring{\Cref{i:challenge3}}{Issue (iii)}: the factorization argument}

With \cref{thm:quotient1-intro,thm:quotient2-intro}, we can address \Cref{i:challenge1,i:challenge2}. However, we still have to deal with \Cref{i:challenge3}, namely, the fact that stratified groups do not generally admit lattices. Furthermore, we must ensure that the acting group for the $\p$ factor in \Cref{i:challenge2} is a standard torus.

While, all results presented so far hold for general metric measure spaces, what follows is specific for quotients between Carnot homogeneous spaces. In this case, we can exploit their stratified nature to replace the naive two-step factorization outlined in \Cref{i:challenge2} with a more elaborate construction. Namely, we factor \eqref{eq:projintro} via multiple pairs of local isometries and quotients by compact abelian groups such that, at each factor, both \cref{thm:quotient1-intro,,thm:quotient2-intro} can be applied. This is the content of the next result (corresponding to \cref{thm:tower}, to which we refer to for additional comments and remarks).

\begin{theorem}[Factorization of quotients between Carnot homogeneous spaces]\label{thm:tower-intro}
  Let $\G$ be a Carnot group of step $s$.  Let $\M=\G\slash \H$ be a Carnot homogeneous space admitting a quotient to $\tilde{\M}=\G\slash \Htilde$ in the sense of \cref{prop:quotientofCarnothomogeneous}. Then, there exist
  \begin{enumerate}[(a)]
    \item\label{i:towera-intro} closed, dilation-invariant subgroups $\H_i<\G$ for $i=1,\dots,s$, with $\H_0=\H$ and $\H_s = \Htilde$, satisfying
          \begin{equation}
            \H_0\triangleleft\H_1\triangleleft\dots\triangleleft\H_{s-1}\triangleleft \H_{s}   < \G;
          \end{equation}
    \item\label{i:towerb-intro} Carnot homogeneous spaces $\M_i = \G\slash \H_i$ for $i=0,\dots,s$, with $\M_0=\M$ and $\M_s=\tilde{\M}$,  each equipped with their (left-invariant) Carnot-Carathéodory metrics and a choice of right-invariant measure;
    \item\label{i:towerc-intro}  for $i=0,\dots,s-1$, each $\M_{i}$ admits a quotient $\pi_{i+1}:\M_{i}\to\M_{i+1}$ in the sense of \cref{prop:quotientofCarnothomogeneous}. More precisely, the quotient group $R_{i}:=\H_{i+1}\slash \H_{i}$ is an abelian group acting freely and properly on $\M_i$ by metric isometries such that
          \begin{equation}
            \M_{i+1} = \M_{i}\slash  R_{i},
          \end{equation}
          as metric spaces;
    \item\label{i:towerd-intro} discrete normal subgroups $\Gamma_i \triangleleft R_i$, for $i=0,\dots,s-1$, such that
          \begin{itemize}
            \item[--] $\Gamma_i$ acts freely and properly on $\M_i$ by metric measure isometries. We denote by $\M_i\slash\Gamma_i$ the corresponding quotient metric measure space and $\q_i:\M_i\to \M_i\slash\Gamma_i$ the projection map (see \cref{sec:quotients});
            \item[--] $R_i\slash\Gamma_i$ is a compact abelian Lie group, acting freely by metric measure isometries on $\M_i\slash\Gamma_i$. We denote by $(\M_i\slash\Gamma_i)\slash(R_i\slash\Gamma_i)$ the corresponding quotient metric measure space and $\p_i : \M_i\slash\Gamma_i \to (\M_i\slash\Gamma_i)\slash(R_i\slash\Gamma_i) $ the projection map  (see \cref{sec:quotients});
          \end{itemize}
          such that, for all $i=0,\dots,s-1$, the quotient $\pi_{i+1}: \M_{i} \to \M_{i+1}$ can be factorized as
          \begin{equation}\label{eq:factorization-intro}
            \begin{tikzcd}
              \pi_{i+1}: \M_i \arrow[r,"\q_{i +1}"] & \M_i\slash\Gamma_i \arrow[r,"\p_{i +1}"] & (\M_i\slash\Gamma_i)\slash(R_i\slash\Gamma_i) =\M_{i+1}.
            \end{tikzcd}
          \end{equation}
  \end{enumerate}
  In particular, the quotient $\pi:\M\to\tilde{\M}$ can be factorized as a finite number of compositions of local metric measure isometries (the $\q_i$'s) and quotients by compact groups of metric measure isometries (the $\p_i$'s), as described by the following diagram:
  \begin{equation}
    \begin{tikzcd}[sep=scriptsize]
      \M =\G\slash\H \arrow[rrrrrr,"\pi"] 
      & & &  & & & \tilde{\M} =\G\slash\tilde{\H} \\
      \M_0   \arrow[equal,u] \arrow[r,"\q_1"] & \M_0\slash\Gamma_0 \arrow[r,"\p_1"] & \M_{1} \arrow[r] & \dots  \arrow[r]  & \M_{s-1} \arrow[r,"\q_{s}"] &  \M_{s-1}\slash\Gamma_{s-1} \arrow[r,"\p_{s}"] & \M_{s} \arrow[equal,u]
    \end{tikzcd}
  \end{equation}
\end{theorem}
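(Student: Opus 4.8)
The plan is to carry out the construction on the stratified Lie algebra $\g=\g_1\oplus\cdots\oplus\g_s$ of $\G$ and then exponentiate. By \cref{prop:quotientofCarnothomogeneous}, the subgroups $\H,\Htilde$ correspond to graded Lie subalgebras $\h\subseteq\tilde{\h}$ of $\g$ (a closed connected subgroup of $\G$ is dilation-invariant precisely when its Lie algebra is graded). The essential first step is to interpolate between $\h$ and $\tilde{\h}$ by the \emph{top-down truncations}
\[
  \h_i \;:=\; \h \,+\, \bigl(\tilde{\h}\cap(\g_{s-i+1}\oplus\cdots\oplus\g_s)\bigr),\qquad i=0,1,\dots,s,
\]
so that $\h_0=\h$, $\h_s=\h+\tilde{\h}=\tilde{\h}$ (using $\h\subseteq\tilde{\h}$), and $\h_0\subseteq\h_1\subseteq\cdots\subseteq\h_s$. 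Each $\h_i$ is a sum of layers, hence graded. Writing a generic element of $\h_i$ as $X+X'$ with $X\in\h$ and $X'\in\tilde{\h}\cap(\g_{s-i+1}\oplus\cdots\oplus\g_s)$, expanding brackets bilinearly, and using only $[\g_p,\g_q]\subseteq\g_{p+q}$ (with $\g_m=0$ for $m>s$), that $\h,\tilde{\h}$ are subalgebras, and $\h\subseteq\tilde{\h}$, one checks by a short graded computation that: (i) each $\h_i$ is a subalgebra; (ii) $[\h_{i+1},\h_i]\subseteq\h_i$, so $\h_i\triangleleft\h_{i+1}$; and (iii) $[\h_{i+1},\h_{i+1}]\subseteq\h_i$ for every $i\leq s-1$, so that $R_i:=\h_{i+1}\slash\h_i$ is \emph{abelian}. (Claim (iii) is the only one needing $i\le s-1$: its critical term is $[\g_p,\g_q]$ with $p,q\ge s-i$, which lands in $\g_{\ge 2(s-i)}$, and $2(s-i)\ge s-i+1$ exactly when $s-i\ge 1$; this also shows why a naive bottom-up truncation $\h+\tilde{\h}\cap(\g_1\oplus\cdots\oplus\g_i)$ fails, being not closed under brackets.) Setting $\H_i:=\exp(\h_i)$ gives the closed, dilation-invariant subgroups and the chain $\H_0\triangleleft\cdots\triangleleft\H_s<\G$ of item (a).

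Since each $\h_i$ is a graded subalgebra with $\h\subseteq\h_i\subseteq\tilde{\h}$, the quotient $\M_i:=\G\slash\H_i$ is a Carnot homogeneous space in the sense of \cref{sec:Carnothomogeneousspaces}; endowing it with the left-invariant Carnot--Carath\'eodory metric and a right-invariant measure yields item (b), the isometric-action requirements being inherited from those already in force for $\H_0$ and $\H_s$ — which is precisely why the interpolation is kept \emph{between} $\h$ and $\tilde{\h}$. For item (c), the inclusion $\H_i\triangleleft\H_{i+1}$ of dilation-invariant subgroups places us back in the setting of \cref{prop:quotientofCarnothomogeneous}, giving the quotient $\pi_{i+1}:\M_i\to\M_{i+1}=\M_i\slash R_i$ with $R_i=\H_{i+1}\slash\H_i$ acting freely, properly, and by metric isometries on $\M_i$; by (iii) this group is abelian. (When $\tilde{\h}\cap\g_{s-i}\subseteq\h_i$ the group $R_i$ is trivial and $\pi_{i+1}$ is the identity, which causes no harm.)

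For item (d) — the step that circumvents \Cref{i:challenge3} — I use that $R_i$, being a connected and simply connected (as a quotient of simply connected nilpotent Lie groups) \emph{abelian} Lie group, is a vector group $(\R^{a_i},+)$ with $a_i=\dim\h_{i+1}-\dim\h_i$. In contrast with a general stratified group, where the Mal'cev criterion may obstruct lattices, a vector group \emph{always} admits a co-compact lattice: I fix any $\Gamma_i\cong\Z^{a_i}$ inside $R_i$, so that $\Gamma_i\triangleleft R_i$ is discrete and $R_i\slash\Gamma_i\cong\mathbb{T}^{a_i}$ is a compact, connected, abelian Lie group. The restricted $\Gamma_i$-action on $\M_i$ is free and proper (a discrete subgroup of a free proper action) and acts by metric measure isometries, the chosen right-invariant measure on $\M_i$ being $R_i$-invariant; \cref{sec:quotients} then produces the metric measure quotient $\M_i\slash\Gamma_i$ with projection $\q_{i+1}$, and the residual torus $R_i\slash\Gamma_i$ acts on it freely by metric measure isometries with $(\M_i\slash\Gamma_i)\slash(R_i\slash\Gamma_i)=\M_i\slash R_i=\M_{i+1}$ and $\p_{i+1}\circ\q_{i+1}=\pi_{i+1}$ by construction. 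Assembling the maps $\q_{i+1},\p_{i+1}$ over $i=0,\dots,s-1$ gives the stated diagram: each $\q_i$ is a local metric measure isometry, and each $\p_i$ is a quotient by the compact connected abelian Lie group $R_i\slash\Gamma_i$, so that \cref{thm:quotient1-intro} and \cref{thm:quotient2-intro} are applicable at every rung.

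I expect the main obstacle to be the design of the filtration. It must simultaneously force \emph{consecutive} quotients to be by \emph{abelian} groups — indispensable both for invoking \cref{thm:quotient2-intro} on each $\p_i$ and for the lattice/torus construction of item (d) — and keep every intermediate $\h_i$ a graded \emph{subalgebra}, so that each $\M_i$ is a genuine Carnot homogeneous space to which \cref{prop:quotientofCarnothomogeneous}, \cref{thm:quotient1-intro}, and \cref{thm:quotient2-intro} apply. The top-down truncation reconciles these competing demands (a coarser filtration need not have abelian steps, and the naive bottom-up one is not a chain of subalgebras); thereafter the abelian and subalgebra properties reduce to the short graded computation above, and the remaining verifications — freeness, properness, measure-invariance, and compatibility of the metrics along the tower — are routine consequences of the theory of Carnot homogeneous spaces and of the quotient constructions developed earlier in the paper.
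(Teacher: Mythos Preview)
Your proposal is correct and follows essentially the same approach as the paper: the paper defines the very same top-down filtration $\h_i = \h_1\oplus\cdots\oplus\h_{s-i}\oplus\tilde{\h}_{s-i+1}\oplus\cdots\oplus\tilde{\h}_s$, proves $\h_i\triangleleft\h_{i+1}$ via the identical graded decomposition $X=A+B$, and factorizes each $\pi_{i+1}$ through a lattice $\Gamma_i$ in the abelian vector group $R_i$. The paper's proof devotes substantially more space to the ``routine'' verifications you defer (in particular, that the measure on $\M_i/\Gamma_i$ is invariant under the torus action $R_i/\Gamma_i$, and that the resulting measure on the double quotient is right-invariant on $\M_{i+1}$, handled there via \cref{lem:technical_invariance}), but your outline is accurate and your explicit verification of $[\h_{i+1},\h_{i+1}]\subseteq\h_i$ actually makes the abelianness of $R_i$ more transparent than in the paper itself.
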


\subsection{Measure contraction properties beyond step 2}

Putting together the results presented so far, we can prove that the $\MCP$ descends to suitable quotients of Carnot homogeneous spaces. The next result corresponds to \cref{thm:MCPforQuotientsofCarnotHomo}. 

\begin{theorem}[MCP for quotients of Carnot homogeneous spaces]\label{thm:MCPforQuotientsofCarnotHomo-intro}
  Suppose that a Carnot homogeneous space $\M=\G\slash\H$ admits a quotient to a Carnot homogeneous space $\tilde{\M}=\G\slash\tilde{\H}$. Assume that the minimizing Sard property holds for all factors $\M,\M_1,\dots,\M_{s-1}$ of the map $\pi:\M\to\tilde{\M}$ in \cref{thm:tower} (this is the case, for example, if the minimizing Sard property holds for all Carnot homogeneous spaces of step $\leq s$, where $s$ is the step of $\M$). Then, if $\M$ satisfies the $\MCP(K,N)$ for some $K \in \R$ and $N \in [1,\infty)$, the space $\tilde{\M}$ also satisfies the $\MCP(K,N)$.
\end{theorem}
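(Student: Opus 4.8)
The plan is to prove the theorem by induction along the tower of factorizations furnished by \cref{thm:tower-intro}, propagating the pair of conditions ``$\MCP_{\loc}(K,N)$ and $\delta$-essentially non-branching'' through each elementary factor. Write $\pi:\M\to\tilde{\M}$ as the composition of the maps $\q_1,\p_1,\dots,\q_s,\p_s$ of \cref{thm:tower-intro}, so that $\M_0=\M$, $\M_s=\tilde{\M}$, and each $\M_i=\G\slash\H_i$ is a Carnot homogeneous space with $\q_{i+1}:\M_i\to\M_i\slash\Gamma_i$ and $\p_{i+1}:\M_i\slash\Gamma_i\to\M_{i+1}$. It suffices to establish the single inductive step: if $\M_i$ satisfies $\MCP(K,N)$ for some $i\in\{0,\dots,s-1\}$, then $\M_{i+1}$ satisfies $\MCP(K,N)$. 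Since the minimizing Sard property is assumed precisely on $\M_0,\dots,\M_{s-1}$ — the spaces from which a step is taken — iterating this from $i=0$ to $i=s-1$ gives $\MCP(K,N)$ for $\tilde{\M}=\M_s$.

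To carry out the inductive step, fix $i$ and suppose $\M_i$ satisfies $\MCP(K,N)$. First, $\M_i$ being a Carnot homogeneous space, it carries metric dilations under which its reference measure rescales by a fixed homogeneous power; hence $\MCP_{\loc}(K,N)$ and $\MCP(K,N)$ are equivalent on $\M_i$ (and necessarily $K\leq 0$), as recalled in \cref{i:challenge2}, so in particular $\M_i$ satisfies $\MCP_{\loc}(K,N)$. Second, by hypothesis the minimizing Sard property holds on $\M_i$, which — $\M_i$ being real-analytic — coincides with the $*$-minimizing Sard property, so \cref{thm:Sardimpliesdeltaessnb-intro} shows that $\M_i$ is $\delta$-essentially non-branching. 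Now apply \cref{thm:quotient1-intro} to $\q_{i+1}$: by \cref{thm:tower-intro} the discrete group $\Gamma_i$ acts properly and freely on $\M_i$ by metric measure isometries, so $\M_i\slash\Gamma_i$ satisfies $\MCP_{\loc}(K,N)$ and is $\delta$-essentially non-branching. Finally apply \cref{thm:quotient2-intro} to $\p_{i+1}$: the group $R_i\slash\Gamma_i$ acts freely on $\M_i\slash\Gamma_i$ by metric measure isometries (\cref{thm:tower-intro}), it is compact and abelian, and it is connected, being a quotient of the connected group $R_i=\H_{i+1}\slash\H_i$ (a closed dilation-invariant subgroup of $\G$ is connected); thus it is a connected, compact, abelian Lie group — in fact a torus. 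Hence $\M_{i+1}$ satisfies $\MCP_{\loc}(K,N)$, and, $\M_{i+1}$ being again a Carnot homogeneous space, its dilations upgrade this to $\MCP(K,N)$. This closes the induction.

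The deep content is entirely in \cref{thm:tower-intro,thm:Sardimpliesdeltaessnb-intro,thm:quotient1-intro,thm:quotient2-intro}; what remains is essentially a verification that the hypotheses line up at each level of the tower. The point that genuinely needs care is that the global $\MCP(K,N)$ cannot be propagated directly: the intermediate factors $\M_i\slash\Gamma_i$ are not Carnot homogeneous and possess no metric dilations, so one is forced to work with $\MCP_{\loc}(K,N)$ along the whole tower and to re-derive, at each Carnot level, the $\delta$-essentially non-branching hypothesis feeding \cref{thm:quotient2-intro} from the Sard property of $\M_i$ via \cref{thm:Sardimpliesdeltaessnb-intro} — rather than transporting it through the compact quotient $\p_{i+1}$, for which no non-branching statement is claimed. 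A secondary check is that each $R_i\slash\Gamma_i$ is indeed a torus, so that the connectedness and abelianness hypotheses of \cref{thm:quotient2-intro} are genuinely met.
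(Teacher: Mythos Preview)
Your proof is correct and follows essentially the same route as the paper: induct along the tower of \cref{thm:tower-intro}, using the Sard property and \cref{thm:Sardimpliesdeltaessnb-intro} to get $\delta$-essentially non-branching on each $\M_i$, then apply \cref{thm:quotient1-intro} to $\q_{i+1}$ and \cref{thm:quotient2-intro} to $\p_{i+1}$, and finally upgrade $\MCP_{\loc}$ to $\MCP$ on $\M_{i+1}$ via dilations. Your additional remarks on why one must work locally and re-derive $\delta$-essentially non-branching at each Carnot level (rather than transport it through $\p_{i+1}$) are accurate and reflect exactly the structural constraints the paper is working around.
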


\begin{remark}[A new proof for the $\MCP$ for the Grushin plane]
  The simplest Carnot homogeneous space that is not a group is the Grushin plane. In \cite{BR-Interpolation}, via direct and non-trivial estimates, it was proved that it satisfies the $\MCP(K,N)$ for all $K\leq 0$ and $N\geq 5$, matching the same sharp constants of the Heisenberg group found in \cite{J09}. \Cref{thm:MCPforQuotientsofCarnotHomo-intro} provides a new, computation-free proof of this fact, since the Grushin plane is a quotient of the Heisenberg group.
\end{remark}

Combining \cref{thm:MCPforQuotientsofCarnotHomo-intro} and the fact that the $\MCP$ fails for the Martinet structure (i.e. \cref{thm:Martinet-noMCP-intro}), we obtain the following result, corresponding to \cref{thm:noMCP}.

\begin{theorem}[No $\MCP$ with Martinet quotients]\label{thm:noMCP-intro}
  Let $\G$ be a Carnot group of step $s\geq 3$, admitting a quotient to the Martinet structure. Assume that the minimizing Sard property holds for all Carnot groups of step $\leq s$. Then $\G$ does not satisfy the $\MCP(K,N)$ for all $K\in\R$ and $N \in [1,\infty)$.
\end{theorem}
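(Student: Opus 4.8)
The plan is a short proof by contradiction, obtained by combining \cref{thm:Martinet-noMCP-intro} with \cref{thm:MCPforQuotientsofCarnotHomo-intro}; the substance of the argument already lies in those two statements (and, through the first one, in \cref{thm:tower-intro,thm:quotient1-intro,thm:quotient2-intro,thm:Sardimpliesdeltaessnb-intro}). To set things up, note that a Carnot group $\G$ is in particular a Carnot homogeneous space, namely $\G=\G\slash\{e\}$, and by hypothesis it admits a quotient to $\Mar=\G\slash\tilde{\H}$ in the sense of \cref{prop:quotientofCarnothomogeneous}. Applying \cref{thm:tower-intro} to this quotient yields the finite tower of Carnot homogeneous spaces $\G=\M_0,\M_1,\dots,\M_s=\Mar$, with each factor of the form $\M_i=\G\slash\H_i$.

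The first thing I would check is the Sard hypothesis required by \cref{thm:MCPforQuotientsofCarnotHomo-intro}, namely that the minimizing Sard property holds for the factors $\M_0,\dots,\M_{s-1}$. Each of these is a homogeneous quotient $\G\slash\H_i$ of the single Carnot group $\G$, which has step $s$; since the hypothesis grants the minimizing Sard property for every Carnot group of step $\leq s$, and this property descends to homogeneous quotients of a Carnot group, all the $\M_i$ inherit it. (Equivalently: each $\M_i$ is a Carnot homogeneous space of step $\leq s$, hence of the form $\G'\slash\H'$ for some Carnot group $\G'$ of step $\leq s$, to which the hypothesis again applies.)

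With this verified, the conclusion is immediate: assuming for contradiction that $\G$ satisfies $\MCP(K,N)$ for some $K\in\R$ and $N\in[1,\infty)$, \cref{thm:MCPforQuotientsofCarnotHomo-intro} — applied to $\M=\M_0$ and its quotient $\tilde{\M}=\Mar$, with the Sard property available along the whole tower — forces $\Mar$ to satisfy $\MCP(K,N)$ as well. This contradicts \cref{thm:Martinet-noMCP-intro}, which asserts that the Martinet structure satisfies $\MCP(K,N)$ for no $K$ and $N$. Hence $\G$ fails $\MCP(K,N)$ for every $K\in\R$ and $N\in[1,\infty)$, which is the claim.

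Since the argument is formal, I do not expect a genuine obstacle; the only point calling for a moment of care is the bridge in the second paragraph, i.e.\ passing from the stated hypothesis on Carnot \emph{groups} of step $\leq s$ to the minimizing Sard property of the intermediate Carnot homogeneous \emph{spaces} $\M_1,\dots,\M_{s-1}$ produced by the factorization of \cref{thm:tower-intro}. Everything else is a direct invocation of the quotient machinery developed in the paper and of the Martinet optimal synthesis underlying \cref{thm:Martinet-noMCP-intro}.
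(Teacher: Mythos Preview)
Your overall strategy is the same as the paper's --- contradict \cref{thm:Martinet-noMCP-intro} via \cref{thm:MCPforQuotientsofCarnotHomo-intro} --- but the ``bridge'' you flag in the second paragraph is a real gap, and the paper handles it differently.

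Concretely: you apply \cref{thm:tower-intro} directly to $\pi:\G\to\Mar$. Since $\Mar$ is \emph{not} a Carnot group, the subgroup $\tilde\H$ is not normal in $\G$, and therefore \cref{rmk:tower_Carnot_group} does not apply: the intermediate factors $\M_1,\dots,\M_{s-1}$ are genuine Carnot \emph{homogeneous spaces}, not Carnot groups. (For instance, with $s=3$ one finds $\mathrm{Lie}(\H_2)=\g_2\oplus\h_3$, and since $[\g_1,\g_2]=\g_3\not\subset\h_3$, this is not an ideal.) Your two proposed fixes both fail: the claim that the minimizing Sard property ``descends to homogeneous quotients of a Carnot group'' is not established anywhere in the paper and is not obvious; and writing $\M_i=\G'/\H'$ with $\G'$ a Carnot group of step $\leq s$ only gives Sard for $\G'$, not for the quotient $\G'/\H'$. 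So the hypothesis on Carnot groups does not reach the $\M_i$ you need.

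The paper avoids this by invoking \cref{thm:quotienttoMartinet}: admitting a quotient to $\Mar$ is \emph{equivalent} to admitting a quotient to $\Eng$. One then runs \cref{thm:MCPforQuotientsofCarnotHomo} in two stages. First $\G\to\Eng$: here both endpoints are Carnot groups, so by \cref{rmk:tower_Carnot_group} every factor $\M_i$ is a Carnot group of step $\leq s$, and the stated hypothesis applies directly. Second $\Eng\to\Mar$: by \cref{rmk:repetitions} the factors are just $\Eng$ and $\Mar$, and the minimizing Sard property for these two specific structures is known (for $\Eng$ as a step-$3$ Carnot group, and for $\Mar$ classically, since abnormals lie on the Martinet surface). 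This two-stage route is precisely what converts the hypothesis on Carnot groups into what \cref{thm:MCPforQuotientsofCarnotHomo} actually requires.
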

The minimizing Sard property is known to hold in several cases, discussed in \cref{sec:failure}, and we obtain the following unconditional result (corresponding to \cref{cor:noMCPSardok}).

\begin{corollary}[Examples of Carnot groups failing the $\MCP$]\label{cor:noMCPSardok-intro}
  The following Carnot groups do not satisfy the $\MCP(K,N)$ for all $K\in\R$ and $N \in [1,\infty)$:
  \begin{itemize}
    \item Carnot groups of step $3$ that admit a quotient to the Martinet structure (in particular, the Engel group and all the free Carnot groups of step $3$);
    \item Filiform Carnot groups of step $s\geq 3$;
    \item Carnot groups of step $4$ and rank $2$.
  \end{itemize}
\end{corollary}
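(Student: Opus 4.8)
The plan is to realize each of the three items as an instance of the failure mechanism of \cref{thm:noMCP-intro}: for a Carnot group $\G$ in one of the families, I would combine \cref{thm:MCPforQuotientsofCarnotHomo-intro} with the failure of the $\MCP$ for the Martinet structure (\cref{thm:Martinet-noMCP-intro}). Two things must then be checked. First, \emph{(a)}: the group $\G$ has step $s\ge 3$ and admits a quotient onto $\Mar$ in the sense of \cref{prop:quotientofCarnothomogeneous}. Second, \emph{(b)}: the minimizing Sard property holds for each of the finitely many factors $\M=\M_0,\M_1,\dots,\M_{s-1}$ of the tower attached to $\pi\colon\G\to\Mar$ by \cref{thm:tower-intro}. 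I stress that one cannot, in general, replace \emph{(b)} by the uniform hypothesis ``Sard for all Carnot groups of step $\le s$'' of \cref{thm:noMCP-intro}, since this is unknown for $s=4$; it is precisely the factor-by-factor formulation of \cref{thm:MCPforQuotientsofCarnotHomo-intro} that makes the step-$4$ examples accessible.

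For \emph{(a)} I would reduce everything to the Engel group, which already admits a quotient $\pi\colon\Eng\to\Mar$ (constructed in \cref{sec:EngMart}); since a composition of quotients of Carnot homogeneous spaces is again a quotient of the same type, it suffices to produce a surjective morphism of Carnot groups from $\G$ onto $\Eng$, equivalently a surjective graded Lie algebra homomorphism onto the Engel algebra $\e$. At the level of stratified Lie algebras this is elementary in each case: a free Carnot algebra of step $3$ projects onto the free rank-$2$ step-$3$ algebra, which has $\e$ as its quotient by any line inside its (central) top layer $V_3$; a filiform Carnot algebra of step $s$ has $\e$ as its quotient by the ideal $V_4\oplus\cdots\oplus V_s$ (so for $s=3$ the filiform algebra \emph{is} $\e$); and a step-$4$ rank-$2$ Carnot algebra has $\dim V_1=2$ and $\dim V_2=1$, so that quotienting by $W\oplus V_4$ with $W\subset V_3$ a hyperplane (an ideal, since $[\g,V_3]\subseteq V_4$ and $V_4$ is central) leaves a $4$-dimensional step-$3$ rank-$2$ Carnot algebra with one-dimensional layers $V_2,V_3$, which is necessarily $\e$. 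Incidentally, this shows that the Engel group and all free Carnot groups of step $3$ do belong to the first family.

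For \emph{(b)} I would invoke the cases of the minimizing Sard property already established in the literature and recorded in \cref{sec:failure}. The tower factors attached to a step-$3$ group are Carnot homogeneous spaces of step $\le 3$; those attached to a filiform group of step $s$, and to a step-$4$ rank-$2$ group, are rank-$\le 2$ Carnot homogeneous spaces of step $\le s$, of which there are only finitely many up to isomorphism — in each case within the range where the minimizing Sard property is a theorem. I expect this last point to be the only genuine obstacle: since the minimizing Sard property is wide open in general, the \emph{unconditional} character of the corollary rests entirely on the fact that the Carnot homogeneous spaces occurring as factors of these particular towers are small enough — low step, or rank $2$ — to be covered by existing results; the delicate bookkeeping is to identify exactly which spaces arise as factors and to match each of them to an available theorem.
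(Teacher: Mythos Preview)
Your strategy matches the paper's: factor $\G\to\Mar$ through $\Eng$ and verify Sard factor-by-factor via \cref{thm:MCPforQuotientsofCarnotHomo-intro}, rather than invoking the uniform hypothesis of \cref{thm:noMCP-intro}. You correctly identify that this refinement is essential beyond step $3$, and your construction of the quotients to $\Eng$ in part (a) is exactly right.

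The gap is in your bookkeeping for (b). In the filiform case, labelling the factors merely as ``rank-$\le 2$ Carnot homogeneous spaces of step $\le s$'' does not let you conclude: Sard is \emph{not} known for general rank-$2$ Carnot groups of step $\ge 5$ (several such entries in \cref{tab:cornucopia} carry an asterisk for precisely this reason). The observation you need is sharper and comes straight from the explicit construction in \cref{thm:tower-intro} together with \cref{rmk:tower_Carnot_group}: since both $\G$ and $\Eng$ are Carnot groups, every factor of the tower $\G\to\Eng$ is a Carnot \emph{group}; for filiform $\G$ these are the successive quotients by the top strata, hence themselves \emph{filiform} of step $s,s-1,\ldots,3$, and \cite{BNV-Sardfiliform} applies to each. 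For step-$4$ rank-$2$ $\G$ the factors are $\G$ itself and a step-$3$ Carnot group, covered by \cite{BV-Dynamical} and \cite{LDMOPV-Sard}; here your coarser characterization happens to suffice. The residual leg $\Eng\to\Mar$ has factors $\Eng,\Eng,\Mar$ (\cref{rmk:repetitions}), and Sard for $\Mar$ holds because its abnormal geodesics are confined to the Martinet plane $\{y=0\}$.
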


It is not trivial to determine whether a Carnot group admits a quotient to the Martinet structure. Quite unexpectedly, we prove in the next result that admitting a quotient to Martinet structure is equivalent to admit a quotient to the Engel group, and that both conditions are purely algebraic depending only on the underlying stratified group. This corresponds to \cref{thm:quotienttoMartinet}.

\begin{theorem}[Stratified groups admitting Martinet or Engel quotients]\label{thm:quotienttoMartinet-intro}
  Let $\G$ be a Carnot group with Lie algebra $\g$ and step $s \geq 3$. The following conditions are equivalent:
  \begin{enumerate}[label=(\roman*)]
    \item \label{item:quot-martinet-intro} there exists a closed, dilation-invariant subgroup $\K<\G$ such that $ \G \slash \K$ is a Carnot homogeneous space smoothly isometric to $\Mar$;
    \item \label{item:quot-engel-intro} there exists a closed, and dilation-invariant normal subgroup $\H \triangleleft \G$ such that $\G \slash \H$ is a Carnot group smoothly isometric to $\Eng$;
    \item \label{item:quot-algebraicondition-intro} there exists $\h_3\subset \g_3$ with codimension $1$ such that the subspace
          \begin{equation}\label{eq:subspace-intro}
            \h_2:=\{Y \in \g_2 \mid [\g_1,Y] \in \h_3\}\subset \g_2
          \end{equation}
          has codimension $1$.
  \end{enumerate}
\end{theorem}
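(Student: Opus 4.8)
The plan is to establish the cycle (iii)$\Rightarrow$(ii)$\Rightarrow$(i)$\Rightarrow$(iii); two of the links are short, so I describe them first. For (ii)$\Rightarrow$(i): given a closed, dilation-invariant normal $\H\triangleleft\G$ with $\G\slash\H$ smoothly isometric to $\Eng$, I would first use rigidity of isometries of Carnot groups to arrange the isometry $\G\slash\H\cong\Eng$ to be origin-preserving, hence a graded group automorphism; composing the projection $\G\to\G\slash\H$ with this automorphism and then with the canonical projection $\pi\colon\Eng\to\Mar$ yields a group homomorphism $\G\to\Eng$, and one takes $\K$ to be the preimage of the fibre subgroup of $\pi$ (a one-dimensional dilation-invariant subgroup of $\Eng$ sitting in the degree-$2$ layer). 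Then $\K<\G$ is closed and dilation-invariant, and since a composition of quotients as in \cref{prop:quotientofCarnothomogeneous} is again one, $\G\slash\K$ is a Carnot homogeneous space smoothly isometric to $\Mar$. For (ii)$\Rightarrow$(iii): let $\h\triangleleft\g$ be the graded ideal with $\g\slash\h$ isomorphic, as a graded Lie algebra, to the Engel Lie algebra; from the stratification $(2,1,1)$ of $\Eng$, the homogeneous pieces $\h_2\subset\g_2$ and $\h_3\subset\g_3$ have codimension $1$, while $\h$ being an ideal gives $[\g_1,\h_2]\subset\h_3$, so $\h_2\subset\{Y\in\g_2\mid[\g_1,Y]\subset\h_3\}$; this latter space is proper in $\g_2$ (otherwise $\h_3\supset[\g_1,\g_2]=\g_3$), hence equals $\h_2$, which is exactly (iii).

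The key construction is (iii)$\Rightarrow$(ii). Given $\h_2,\h_3$ as in (iii), pick $\xi\in\g_3^*$ with $\ker\xi=\h_3$; the codimension-one hypothesis on $\h_2$ says the bilinear map $\g_1\times\g_2\to\R$, $(X,Y)\mapsto\xi([X,Y])$, has rank one, so it factors as $\xi([X,Y])=\lambda(X)\mu(Y)$ with $\lambda\in\g_1^*$, $\mu\in\g_2^*$ and $\h_2=\ker\mu$; both $\lambda,\mu$ are nonzero because $[\g_1,\g_2]=\g_3\not\subset\h_3$. I would then set $\h:=\h_1\oplus\h_2\oplus\h_3\oplus\g_4\oplus\dots\oplus\g_s$ for a suitable codimension-$2$ subspace $\h_1\subset\g_1$, and verify that $\h$ is a graded ideal with $\g\slash\h$ of stratification $(2,1,1)$ — hence isomorphic to the Engel Lie algebra and, since every Carnot group on that algebra is smoothly isometric to $\Eng$ (adjust the metric on $\g_1$ by a graded automorphism), smoothly isometric to $\Eng$. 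The ideal condition reduces to $[\g_1,\h_1]\subset\h_2$ and $[\g_2,\h_1]\subset\h_3$, since all other brackets land in $\g_4\oplus\dots\oplus\g_s\subset\h$ and $[\g_1,\h_2]\subset\h_3$ holds by construction. The heart is a Jacobi-identity computation: fixing $W_0$ with $\lambda(W_0)=1$ and expanding $\xi([W_0,[X,Z]])$ by the Jacobi identity and by $\xi([\,\cdot\,,\,\cdot\,])=\lambda\otimes\mu$ gives $\mu([X,Z])=\lambda(X)\nu(Z)-\lambda(Z)\nu(X)$ with $\nu:=\mu([W_0,\,\cdot\,])\in\g_1^*$; restricted to $Z\in\ker\lambda$ this becomes $\mu([X,Z])=\lambda(X)\nu(Z)$. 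Taking $\h_1:=\ker\lambda\cap\ker\nu$ then gives $[\g_1,\h_1]\subset\ker\mu=\h_2$ and, since $\h_1\subset\ker\lambda$, also $[\g_2,\h_1]\subset\h_3$; and $\h_1$ has codimension exactly $2$ because $\nu|_{\ker\lambda}\equiv0$ would force $\mu([\g_1,\g_1])=\mu(\g_2)=0$, i.e.\ $\mu=0$ — this is the one point where the Carnot hypothesis $\g_2=[\g_1,\g_1]$ is used. Finally $[\g_1,\g_2]\not\subset\h_3$ ensures $\g\slash\h$ has step exactly $3$, so its stratification is indeed $(2,1,1)$.

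The main obstacle is (i)$\Rightarrow$(iii). Write $\K=\exp(\k)$ with $\k=\k_1\oplus\dots\oplus\k_s$ a graded subalgebra. I would first pin down the shape of $\k$: matching the growth vector $(2,2,3)$ of $\Mar$ at its base point, the fact that its horizontal distribution has constant rank $2$, and its step, and using \cref{prop:quotientofCarnothomogeneous}, should force $\k_1$ to have codimension $2$ in $\g_1$, $\k_2=\g_2$, $\k_3$ to have codimension $1$ in $\g_3$, $\k_i=\g_i$ for $i\ge4$, and moreover $[\g_1,[\g_1,\k_1]]\subset\k_3$. Then $\h_3:=\k_3$ is the natural candidate, and since $\k$ is a subalgebra one has $[\k_1,\g_2]=[\k_1,\k_2]\subset\k_3$, so the bracket form $\beta\colon\g_1\times\g_2\to\g_3\slash\k_3\cong\R$ has left kernel containing $\k_1$, hence rank at most $2$, and it is nonzero since $\g_3=[\g_1,\g_2]\not\subset\k_3$. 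What remains is to promote ``rank $\le 2$'' to ``rank $=1$'', equivalently to show $\{Y\in\g_2\mid[\g_1,Y]\subset\k_3\}$ has codimension $1$ — this is exactly (iii) with $\h_3=\k_3$, and it also at once supplies (ii), since then $\h:=\k_1\oplus\{Y\mid[\g_1,Y]\subset\k_3\}\oplus\k_3\oplus\g_4\oplus\dots\oplus\g_s$ is a graded ideal with $\g\slash\h$ of stratification $(2,1,1)$ (here $[\g_1,\h_1]\subset\h_2$ is precisely the deduced relation $[\g_1,[\g_1,\k_1]]\subset\k_3$). The rank-one property fails for a generic $(2,2,3)$ Carnot homogeneous space, so it must come from the fact that $\G\slash\K$ is \emph{isometric} to $\Mar$, not merely of the same symbol dimensions; the plan is to invoke rigidity of the Martinet germ — a rank-$2$ distribution with growth $(2,2,3)$ at a point admits there a unique nilpotent approximation, the Martinet model — so that the third-order bracket datum of $\G\slash\K$ at its base point is forced to be the Martinet one, which after unwinding in terms of $\k_1$ and $\k_3$ gives the rank-one condition. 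A parallel route I would try is to pass to the free cover: since $\Mar=N_{3,2}\slash(\R e_3\oplus\R e_4)$, an isometry $\G\slash\K\cong\Mar$ furnishes a comparison between the defining relations of $\g$ and those of the Engel Lie algebra, from which (iii) (and (ii)) can be read off algebraically. This is where I expect essentially all of the work to lie.
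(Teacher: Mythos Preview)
Your arguments for (ii)$\Rightarrow$(iii), (ii)$\Rightarrow$(i), and (iii)$\Rightarrow$(ii) are correct and match the paper's proof; your (iii)$\Rightarrow$(ii) is in fact a clean dual reformulation of it (the paper picks explicit elements $X_1,X_2,Y,Z$ and works with kernels of two projection maps $P,Q$; your linear forms $\xi,\lambda,\mu,\nu$ are precisely the dual objects, and your $\h_1=\ker\lambda\cap\ker\nu$ coincides with the paper's $\h_1=\ker Q$).

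For (i)$\Rightarrow$(iii), however, you are working much harder than necessary, and the ``rank $1$ versus rank $\le 2$'' obstacle you flag is an artifact of analyzing the subalgebra $\k$ directly. The paper's route is almost immediate: by the very definition of a smooth isometry of Carnot homogeneous spaces (\cref{def:iso}), such an isometry lifts to a metric isometry between the \emph{minimal groups}; since the minimal group of $\Mar$ is $\Eng$, the minimal group of $\G/\K$ --- which by \cref{rmk:minimalquotient} is $\G/\ker\pi_*$ for the projection $\pi:\G\to\G/\K$ --- is isometric (hence isomorphic as a stratified group) to $\Eng$. Thus $\h:=\ker\pi_*$ is a graded ideal with $\g/\h$ isomorphic to the Engel algebra, and one just reruns (ii)$\Rightarrow$(iii). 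Passing from the subalgebra $\k$ to the ideal $\ker\pi_*\subset\k$ is exactly what collapses your rank-$\le 2$ bilinear form to a rank-$1$ one, so no separate ``rigidity of the Martinet germ'' or free-cover argument is needed.
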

A characterization also appeared in \cite[Prop.\ 5.1]{M-regularity}, in terms of a special basis of the Lie algebra. In contrast, condition \eqref{eq:subspace-intro} in \cref{thm:quotienttoMartinet-intro} is intrinsic, and it is key for the next applications. See \cref{rmk:almostfree} for a sufficient condition in terms of the dimension of $\G$.

Different choices of scalar product on the first stratum of a stratified group may correspond to non-isometric Carnot groups (see \cref{sec:Carnothomogeneousspaces}). \Cref{thm:quotienttoMartinet-intro} and \cref{thm:noMCP-intro} yield the failure of the $\MCP$ for \emph{any} Carnot structure on the same stratified group satisfying one of the equivalent conditions of \cref{thm:quotienttoMartinet-intro}. Such structures are all bi-Lipschitz equivalent, but the $\MCP$ is not preserved under bi-Lipschitz equivalence. For example, the $\ell^p$ Heisenberg group for $p>2$ does not satisfy any $\MCP$ \cite{MCPlpsubFH}. We are not able to produce sub-Riemannian Carnot structures on the same stratified group, one satisfying the $\MCP$ and the other failing it.

\subsection{Applications to Carnot groups in low dimensions}

Recall that a stratified group is \emph{indecomposable} if its Lie algebra is not the direct sum of two non-trivial Lie algebras. A classification in dimension $\leq 7$ is found in \cite{LDT-cornucopia,Gong}.

On one hand, a criterion for the validity of the $\MCP$ on a Carnot group can be given in terms of \emph{Goh-Legendre geodesics}, see \cref{sec:validity} for details.


\begin{corollary}[\cite{AAPL-OT,BR-MCP}]\label{cor:goh-ideal-intro}
  Any Carnot group with no non-trivial Goh-Legendre geodesics satisfies the $\MCP(K,N)$ for all $K\leq 0$ and some $N\in [1,\infty)$.
\end{corollary}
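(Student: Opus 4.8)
\emph{Overall strategy.} My plan is to feed the hypothesis into the regularity criterion of Badreddine--Rifford \cite{BR-MCP} recalled in the introduction, and then to exploit the intrinsic dilations of the Carnot group to widen the range of admissible parameters. The only step that is not routine bookkeeping is the passage from the condition on Goh--Legendre geodesics to Lipschitz-in-charts regularity of the distance, which is precisely what \cite{AAPL-OT} supplies, and I expect that to be where all the genuine work sits.

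\emph{Step 1: regularity of the distance.} I would fix a Haar measure $\mm$ on the Carnot group $\G$ and work in exponential coordinates of the first kind, in which $(\G,\sfd,\mm)$ is a real-analytic and complete sub-Riemannian metric measure space. Assuming that $\G$ has no non-trivial Goh--Legendre geodesics, I would first observe that $\G$ has no non-trivial strictly abnormal length-minimizers: such a minimizer would necessarily satisfy the Goh and generalized Legendre second-order conditions, and hence be a non-trivial Goh--Legendre geodesic. Therefore every minimizing curve between distinct points admits a normal extremal lift, and by the associated semiconcavity theory, which is the content of \cite{AAPL-OT}, the function $x\mapsto\sfd(e,x)^{2}$ is locally semiconcave on $\G\setminus\{e\}$, in particular locally Lipschitz in charts; by left-invariance of $\sfd$, the distance $\sfd$ is then locally Lipschitz in charts away from the diagonal.

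\emph{Step 2: $\MCP$ for some parameters.} With real-analyticity and Lipschitz-in-charts regularity away from the diagonal established, I would apply the theorem of Badreddine--Rifford \cite{BR-MCP} directly and conclude that $\G$ satisfies $\MCP(K_0,N_0)$ for some $K_0\in\R$ and $N_0\in[1,\infty)$.

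\emph{Step 3: upgrading to all $K\le 0$.} If $K_0\ge 0$, then $\MCP(0,N_0)$ already holds by monotonicity of the $\MCP$ in its first parameter. If $K_0<0$, I would invoke the dilations of $\G$: for each $c>0$ there is a dilation $\delta$ with $\sfd(\delta x,\delta y)=c\,\sfd(x,y)$ which, up to an irrelevant constant rescaling of $\mm$, is a measure-preserving isometry between $\G$ carrying the metric $\sfd$ and $\G$ carrying the metric $c^{-1}\sfd$. Since the $\MCP$ is insensitive to constant rescalings of the reference measure, and since rescaling the distance by a constant multiplies the first $\MCP$ parameter by a positive constant, $\MCP(K_0,N_0)$ for $(\G,\sfd,\mm)$ is equivalent to $\MCP(c'K_0,N_0)$ for $(\G,\sfd,\mm)$ for every $c'>0$; as $K_0<0$, this yields $\MCP(K,N_0)$ for all $K<0$. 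Finally, using that $\G$ is proper and that the $\MCP$ distortion coefficients depend continuously on $K$, a standard tightness and weak-limit argument gives $\MCP(0,N_0)$ as well. Hence $\G$ satisfies $\MCP(K,N_0)$ for every $K\le 0$, and taking $N:=N_0$ finishes the proof. The only point that requires any care in this step is the weak-limit passage to $K=0$, which is unproblematic thanks to properness of $\G$.
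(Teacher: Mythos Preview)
Your proposal is correct and follows essentially the same route as the paper. The paper's argument (given around \cref{thm:BadRif} and \cref{cor:goh-ideal}) is: absence of non-trivial Goh--Legendre geodesics forces all abnormal geodesics to have infinite index, which by \cite{AAPL-OT} (or \cite[Thm.~12.11]{nostrolibro}) yields local Lipschitz regularity of $\sfd$ in charts off the diagonal; then one invokes Badreddine--Rifford directly. The only cosmetic difference is that the paper states the Badreddine--Rifford result already in the form ``$\MCP(K,N)$ for all $K\le 0$'' for Carnot groups, whereas you spell out the dilation step explicitly in your Step~3; but this is precisely how that strengthening is justified (and the paper alludes to it in the introduction), so the two arguments coincide.
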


On the other hand, by combining \cref{thm:noMCP-intro} and the algebraic condition of \cref{thm:quotienttoMartinet-intro}, we obtain the following result (corresponding to \cref{thm:cornucopia}).
\begin{theorem}[A cornucopia of Carnot groups failing the $\MCP$]\label{thm:cornucopia-intro}
  Among the indecomposable  stratified groups of dimension $\leq 7$, listed in \cref{tab:cornucopia}, equipped with any choice of Carnot-Carathéodory metric and invariant measure turning it into a Carnot group:
  \begin{enumerate}[(i)]
    \item\label{i:cornucopiared-intro} the ones marked in red do not satisfy the $\MCP(K,N)$ for any $K\in \R$, $N\in [1,\infty)$ (an asterisk indicates this is conditional on the validity of the Sard property for that group);
    \item\label{i:cornucopiagreen-intro} the ones marked in green do satisfy the $\MCP(K,N)$ for all $K\leq 0$ and some $N\in [1,\infty)$.
  \end{enumerate}
\end{theorem}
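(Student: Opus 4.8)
The plan is to carry out a case-by-case analysis along the classification of indecomposable stratified Lie algebras of dimension $\leq 7$ from \cite{LDT-cornucopia,Gong}, assigning a colour to each entry of \cref{tab:cornucopia} by means of one of the two tools already at our disposal: \cref{thm:noMCP-intro} together with \cref{thm:quotienttoMartinet-intro} for the failure of the $\MCP$, and \cref{cor:goh-ideal-intro} for its validity.

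For a red entry, I would verify the intrinsic algebraic condition \eqref{eq:subspace-intro} of \cref{thm:quotienttoMartinet-intro}. Concretely, from the structure constants one writes down the lower central series $\g=\g_1\oplus\dots\oplus\g_s$, reads off the bracket map $[\g_1,\cdot]\colon\g_2\to\g_3$, and checks whether there is a hyperplane $\h_3\subset\g_3$ for which $\h_2=\{Y\in\g_2\mid[\g_1,Y]\subset\h_3\}$ is a hyperplane in $\g_2$; this is a finite linear-algebra verification, settled in practice by exhibiting a suitable $\h_3$ directly. Whenever it holds, \cref{thm:quotienttoMartinet-intro} produces a Carnot homogeneous quotient onto $\Mar$ and, since $s\geq 3$, \cref{thm:noMCP-intro} yields the failure of $\MCP(K,N)$ for every $K\in\R$ and $N\in[1,\infty)$ — unconditionally when the minimizing Sard property is known for all Carnot groups of step $\leq s$ (step $3$, step $4$ and rank $2$, and the further cases recorded in \cref{sec:failure}), and with an asterisk otherwise. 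Since condition \eqref{eq:subspace-intro} and the step depend only on the stratified group, and not on the scalar product chosen on $\g_1$ nor on the invariant measure, the red verdict holds for every Carnot-Carathéodory metric and invariant measure on that group, as claimed.

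For a green entry, I would instead show that the group admits no non-trivial Goh-Legendre geodesic: a computation with the structure constants, in which one analyses the abnormal Hamiltonian system, imposes the Goh and Legendre conditions, and checks that the only solution is the constant curve. By \cref{cor:goh-ideal-intro} such a group then satisfies $\MCP(K,N)$ for all $K\leq 0$ and some $N\in[1,\infty)$, again for any admissible metric and measure on it.

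The main work, and the principal source of error, is the bookkeeping: one must go through the entire list, correctly identify $\g_2$, $\g_3$ and the bracket map $[\g_1,\cdot]$ for each algebra — treating uniformly any continuous families occurring in the higher-dimensional part of the classification — and confirm that every coloured entry genuinely satisfies the criterion attributed to it. The subtle point is deciding, for each group, whether condition \eqref{eq:subspace-intro} holds or whether the group is Goh-Legendre-trivial, and, for the red entries, tracking group by group the present status of the minimizing Sard property at the relevant step so that the asterisks are placed correctly.
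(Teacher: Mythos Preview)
Your proposal is correct and follows the same strategy as the paper: verify the algebraic criterion \eqref{eq:subspace-intro} for the red entries (the paper records an explicit pair $\h_2,\h_3$ for each in a table) and apply \cref{thm:noMCP-intro}, tracking the Sard status to place the asterisks; for the green entries, show there are no non-trivial Goh--Legendre curves and apply \cref{cor:goh-ideal-intro}. The only efficiency refinement the paper adds is that most green step-$3$ groups are disposed of at once by observing they are \emph{medium-fat}, which rules out even Goh curves; only the four dagger cases ($N_{6,3,1}$, $247F_1$, $247P$, $137A_1$) actually possess non-trivial Goh curves, and for these one must genuinely invoke the Legendre part of the condition via the simplified step-$3$ algebraic formulation in terms of $\lambda_0\in\g^*$ rather than the full Hamiltonian system.
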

\begin{table}[!hb]
  \centering
  \resizebox{0.90\textwidth}{!}{%
    \begin{tabular}{|c|c|c|c|c|c|c|}
      \hline
      dim. & step $1$    & step $2$                          & step $3$             & step $4$         & step $5$ & step $6$ \\ \hline
      1    & $\green \R$ &                                   &                      &                  &          &          \\ \hline
      3    &             & {\green $N_{3,2}=\mathbb{Heis}$}  &                      &                  &          &          \\ \hline
      4    &             &                                   & \red $N_{4,2} =\Eng$ &                  &          &          \\ \hline
      5    &             & {\green $N_{5,3,1}$, $N_{5,3,2}$} & \red $N_{5,2,3}$     & \red $N_{5,2,1}$ &          &          \\ \hline
      6    &             &
      \begin{tabular}{@{}c@{}}
        \green $N_{6,3,5}$ \\ \green $N_{6,3,6}$ \\ \green $N_{6,4,4a}$
      \end{tabular}
           &
      \begin{tabular}{@{}c@{}}
        $N_{6,2,6}$, {\green $N_{6,3,1}^{\dagger}$}, $N_{6,3,1a}$ \\
        {\red $N_{6,3,3}$}, {\red $N_{6,3,4}$}
      \end{tabular}
           &
      \begin{tabular}{@{}c@{}}
        \red $N_{6,2,5}$ \\ \red  $N_{6,2,5a}$ \\ \red $N_{6,2,7}$
      \end{tabular}
           &
      \begin{tabular}{@{}c@{}}
        \red $N_{6,2,1}$ \\ \red $N_{6,2,2}^*$
      \end{tabular}
           &                                                                                                                 \\ \hline
      7    &             &
      \begin{tabular}{@{}c@{}} 
        {\green $37A$}, {\green $37B$} \\ {\green $37B_1$},  {\green $37C$} \\ {\green $37D$}, {\green $37D_1$} \\ {\green $27A$}, {\green $27B$} \\ {\green $17$}
      \end{tabular}
           &
      \begin{tabular}{@{}c@{}} 
        {\red $357A$}, {\red $357B$}, {\red $257B$}               \\
        {\red $247A$}, {\red $247B$}, {\red $247C$}               \\
        {\red $247D$}, {\red $247E$}, {\red $247E_1$}             \\
        {\red $247F$}, {\green $247F_1^{\dagger}$}, {\red $247G$} \\
        $247H$, $247H_1$, {\red $247I$}                           \\
        {\red $247J$}, $247K$, $247N$                             \\
        {\green $247P^{\dagger}$}, $247P_1$, {\green $147D$}      \\
        {\green $147E$}, {\green $147E_{1}$}, $137A$              \\
        {\green $137A_1^{\dagger}$}, $137C$
      \end{tabular}
           &
      \begin{tabular}{@{}c@{}} 
        \red $2457A^*$ \\
        \red $2457B^*$ \\
        \red $2457L$   \\
        \red $2457L_1$ \\
        \red $2457M$
      \end{tabular}
           &
      \begin{tabular}{@{}c@{}} 
        \red $23457A^*$ \\
        \red $23457B^*$ \\
        \red $23457C^*$ \\
        \red $12457H^*$ \\
        \red $12457L^*$ \\
        \red $12457L_1^*$
      \end{tabular}
           &                                                                                                                 
      \red $123457A$                                                                                                         \\ \hline
    \end{tabular}
  }
  \vspace{1em}
  \caption{
    Each entry is an indecomposable stratified group of dimension $\leq 7$, see \cite{LDT-cornucopia,Gong}. Red: admits a quotient to the Martinet structure.  Green: does not have non-trivial Goh-Legendre curve. Black: does not admit a quotient to Martinet and admits non-trivial Goh-Legendre curves (see \cref{rmk:black}).\\
    $^{*}$: conditional on the validity of the minimizing Sard property (see \cref{rmk:nalon}). $^{\dagger}$: admits non-trivial Goh curves but no Goh-Legendre ones (see \cref{rmk:Legendre}).} \label{tab:cornucopia}
\end{table}

\begin{remark}[Black Carnot groups]\label{rmk:black}
  We do not know whether the $\MCP$ is satisfied by the remaining groups in \cref{tab:cornucopia}, when equipped with a Carnot structure. They do not admit a quotient  to the Martinet structure, so \cref{thm:noMCP-intro} cannot be applied. They do have non-trivial Goh–Legendre geodesics, so we cannot use \cref{cor:goh-ideal-intro} either. Among these groups, it seems that $247H$, $247H_1$, $247K$, $247N$, and $247P_1$ admit a quotient to $N_{6,2,6}$ or $N_{6,3,1a}$. Proving that $\MCP$ fails for these two groups would imply the failure of the $\MCP$ for the others.
\end{remark}
\begin{remark}\label{rmk:nalon}
  Preliminary computations of the second author with L. Nalon, building upon techniques developed in \cite{BV-Dynamical,BNV-Sardfiliform}, show that all the Carnot groups of step $4$ and $5$ appearing in \cref{tab:cornucopia} do satisfy the minimizing Sard property.
\end{remark}
\begin{remark}[Failure of the $\MCP$ in the pre-medium-fat case]
  The notion of \emph{medium-fat} distribution has been introduced in \cite{AS-minvssubanal}, and sub-Riemannian structures with medium-fat distribution have no non-trivial horizontal curves satisfying the Goh condition. In particular, by \cref{cor:goh-ideal-intro}, any Carnot group with medium-fat distribution satisfies the $\MCP$. In \cite{R-subdiff}, the author introduces the concept of \emph{pre-medium-fat distribution}, a relaxation of the medium-fat condition, allowing for non-trivial Goh-Legendre geodesics. One of the main results in \cite{R-subdiff} is that sub-Riemannian structures whose distribution is pre-medium-fat satisfy the minimizing Sard property, see \cite[Cor.\ 1.3]{R-subdiff}. The author asks in \cite[Sec.\ 5.4]{R-subdiff} whether these structures do satisfy the $\MCP$. As an answer to that question, one can verify that the following stratified groups in \cref{tab:cornucopia} have pre-medium-fat distribution and do not satisfy any $\MCP$: $\Eng$, $N_{5,2,3}$, $N_{6, 3, 3}$, $N_{6,3,4}$, $357A$, $257B$, $247D$, $247E$, $247G$, and $247J$.
\end{remark}

To justify the reduction to the indecomposable case, we end with a straightforward result for decomposable groups  following from the tensorization properties of the $\MCP$ (see \cref{thm:decomposable}).

\begin{corollary}[MCP for decomposable Carnot groups]\label{thm:decomposable-intro}
  Let $\G$ be a Carnot group that admits a decomposition as a direct product of Carnot groups
  \begin{equation}
    \G = \G_1\times\dots\times\G_n,
  \end{equation}
  with product metric $\sfd_{\G}^2 = \sum_{j=1}^n \sfd_{\G_j}^2$, and product measure $\mm_{\G} = \otimes_{i=j}^n \mm_{\G_j}$. Then
  \begin{enumerate}[(i)]
    \item \label{i:yes-intro} if each factor $\G_i$ satisfies the $\MCP(K_i,N_i)$ for some $K_i\in \R$ and $N_i\in [1,\infty)$, then $\G$ satisfies the $\MCP(K,N)$ with $K=\min_i K_i$ and $N=\sum_i N_i$;
    \item \label{i:no-intro} if one of the factors $\G_i$ does not satisfy the $\MCP(K,N)$ for some $K\in \R$ and $N\in [1,\infty)$, then $\G$ does not satisfy the $\MCP(K,N)$ (assuming that the minimizing Sard property holds for all Carnot groups of step smaller or equal than the step of $\G$).
  \end{enumerate}
\end{corollary}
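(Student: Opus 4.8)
The plan is to treat the two statements separately: part~(i) is the tensorization of the $\MCP$, while part~(ii) follows by applying the descent of the $\MCP$ under quotients to the projection of $\G$ onto a factor that fails the $\MCP$.

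For part~(i) I would first reduce each factor to curvature zero. On a Carnot group the $\MCP(K_i,N_i)$ implies the $\MCP(0,N_i)$: the metric dilations are homotheties sending the Haar measure to a constant multiple of itself, a homothety of ratio $\lambda$ turns the $\MCP(K,N)$ into the $\MCP(K/\lambda^2,N)$, and letting $\lambda\to\infty$ — together with the closedness of the property $\MCP(\cdot,N)$ in its first parameter — yields the $\MCP(0,N)$. Once each $\G_i$ satisfies the $\MCP(0,N_i)$, one tensorizes at curvature zero: length-minimizers issued from $o=(o_1,\dots,o_n)$ in a metric product split into the length-minimizers of the factors, so the $t$-contraction map of $\G$ towards $o$ equals $\Phi^{o_1}_t\times\dots\times\Phi^{o_n}_t$ and its Jacobian is $\prod_i\det d\Phi^{o_i}_t\ge\prod_i t^{N_i}=t^{\sum_iN_i}$ $\mm$-almost everywhere, which is the $\MCP(0,\sum_iN_i)$ (using the pointwise, contraction-map description of the $\MCP$ available on such smooth structures, the cut loci being negligible). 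Since $\MCP(0,N)\Rightarrow\MCP(K,N)$ for every $K\le 0$ and $\min_iK_i\le 0$, this gives the claim; alternatively one may invoke directly a tensorization theorem for the $\MCP$.

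For part~(ii), relabel the factors so that $\G_1$ does not satisfy the $\MCP(K,N)$, group the remaining factors into the Carnot group $\H':=\G_2\times\dots\times\G_n$, and set $\H:=\{e_{\G_1}\}\times\H'$. Then $\H$ is a closed, normal, dilation-invariant stratified subgroup of $\G$, and its left-translation action on $(\G,\sfd_\G)$ is free, proper, and by measure-preserving isometries (the product Carnot--Carathéodory metric is left-invariant, and nilpotent groups are unimodular, so the product Haar measure is bi-invariant). Since
\begin{equation*}
  \inf_{h,h'\in\H}\sfd_\G(hx,h'y)^2=\inf_{h,h'\in\H}\Bigl(\sfd_1(x_1,y_1)^2+\textstyle\sum_{j\ge 2}\sfd_j(h_jx_j,h'_jy_j)^2\Bigr)=\sfd_1(x_1,y_1)^2,
\end{equation*}
the quotient distance on $\G/\H$ is the pullback of $\sfd_{\G_1}$ and, similarly, the quotient measure is $\mm_{\G_1}$ up to a constant; hence the projection $\pi\colon\G\to\G_1$ is a quotient of Carnot homogeneous spaces in the sense of \cref{prop:quotientofCarnothomogeneous}, with $\M=\G=\G/\{e\}$ and $\tilde{\M}=\G_1=\G/\H$. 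I would then conclude by the contrapositive of \cref{thm:MCPforQuotientsofCarnotHomo-intro}: provided the minimizing Sard property holds for the factors $\M=\G,\M_1,\dots,\M_{s-1}$ of $\pi$ produced by \cref{thm:tower-intro}, the failure of the $\MCP(K,N)$ for $\G_1$ propagates to $\G$. (A more direct attempt — manufacturing data violating the $\MCP$ for $\G$ out of such data for $\G_1$ — is obstructed because the $\MCP$-coefficients involve $\sfd_\G=(\sfd_1^2+\sfd_2^2+\dots)^{1/2}$ rather than $\sfd_1$, so the violation need not survive the product; this is precisely what routing through the quotient theorem circumvents.)

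The main obstacle is to verify this minimizing Sard hypothesis for the intermediate tower factors, starting only from the assumption that every Carnot group of step $\le s=\operatorname{step}(\G)$ has the property. By \cref{thm:tower-intro} each $\H_k$ is a dilation-invariant subgroup with $\{e\}=\H_0\triangleleft\dots\triangleleft\H_s=\H$; since $\H_k\subseteq\H=\{e_{\G_1}\}\times\H'$ has trivial first component, $\H_k=\{e_{\G_1}\}\times\H'_k$ for a dilation-invariant $\H'_k<\H'$, so $\M_k=\G/\H_k\cong\G_1\times(\H'/\H'_k)$. For $k=0$ this is $\G$ itself, a Carnot group of step $\le s$, covered by the hypothesis. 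For $1\le k\le s-1$ it is the product of $\G_1$ with a Carnot homogeneous quotient $Q_k=\H'/\H'_k$ of the Carnot group $\H'$, and to reach these I would use two stability features of the minimizing Sard property: that it tensorizes — the endpoints of abnormal minimizers of a metric product are contained in the union of (abnormal-minimizer endpoints of one factor)$\,\times\,$(the other factor), hence form a negligible set as soon as both factors have the property, so $\H'$ and then $\G_1\times Q_k$ inherit it — and that it descends to quotients of Carnot homogeneous spaces, so that $Q_k$ inherits it from $\H'$. The second feature is the genuinely delicate point, to be obtained through the lifting of geodesics to the covering Carnot group and the theory of sub-Riemannian submersions; granting it, together with the (routine) tensorization of Sard, all factors $\M_k$ enjoy the minimizing Sard property, the contrapositive above applies, and part~(ii) follows.
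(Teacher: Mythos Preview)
Your approach to both parts matches the paper's: part~(i) is indeed the tensorization of the $\MCP$ (the paper simply cites Ohta's result \cite{O-products}), and part~(ii) is the contrapositive of \cref{thm:MCPforQuotientsofCarnotHomo-intro} applied to the projection $\G\to\G_i$.

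However, your verification of the Sard hypothesis for the intermediate tower factors is needlessly complicated and rests on a claim the paper does not prove. You argue that the $\M_k$ are products $\G_1\times Q_k$ with $Q_k$ a Carnot \emph{homogeneous space}, and then try to reach them via two auxiliary lemmas: that the minimizing Sard property tensorizes, and that it descends to quotients of Carnot homogeneous spaces. The second of these you correctly flag as ``genuinely delicate'' --- and indeed it is not established anywhere in the paper, so your argument as written has a gap.

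The point you are missing is \cref{rmk:tower_Carnot_group}: when both $\M=\G$ and $\tilde\M=\G_1$ are Carnot \emph{groups} (which is the case here, since $\H=\{e_{\G_1}\}\times\H'$ is normal in $\G$), the proof of \cref{thm:tower-intro} shows that every intermediate $\H_k$ is normal in $\G$, hence every $\M_k=\G/\H_k$ is itself a Carnot group, of step at most $s$. The hypothesis ``the minimizing Sard property holds for all Carnot groups of step $\le s$'' then covers all the $\M_k$ directly, with no need for tensorization or descent of Sard. This is why the paper's proof of \cref{thm:decomposable} is one line.
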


\Cref{thm:decomposable} can be used to produce new examples of Carnot groups, either satisfying or failing the $\MCP$, by taking products of structures listed in \cref{tab:cornucopia}.

\subsection{Genericity results in high dimension and step}

It is well-known that, generically, sub-Riemannian structures with constant rank $k\geq 3$ do not have non-trivial abnormal geodesics. We refer to \cite{CJT-generic} for precise definitions.

\begin{theorem}[Chitour-Jean-Trélat \cite{CJT-generic}]\label{gen:genideal} Let $M$ be a smooth manifold. Let $k\geq 3$ be a positive integer, and let $\mathcal{G}_k$ be the set of pairs $(\distr,g)$ on $M$, where $\distr$ is a rank $k$ distribution on $M$, and $g$ is a metric tensor on $\distr$, endowed with the Whitney $C^\infty$ topology. There exists an open dense subset $W_k$ of $\mathcal{G}_k$ such that, for every element of $W_k$, the corresponding sub-Riemannian structure does not admit non-trivial abnormal geodesics.
\end{theorem}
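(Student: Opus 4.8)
The plan is to establish \cref{gen:genideal} by a transversality argument in spaces of jets of frames, in the spirit of the genericity theory of singular trajectories. Since the non-trivial abnormal geodesics of $(\distr,g)$ are exactly the non-trivial singular curves of the distribution $\distr$ and do not depend on the metric $g$, it suffices to prove the statement for the space of rank $k$ distributions on $M$ with the Whitney $C^\infty$ topology; the case of pairs then follows because the forgetful map $\mathcal{G}_k\to\{\text{rank }k\text{ distributions}\}$ is continuous, surjective and open, with convex fibres. So fix $n=\dim M$ and work in a local frame $(X_1,\dots,X_k)$ of $\distr$.

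First, I would recast abnormals symplectically. The annihilator $\distr^\perp\subset T^*M$ is a submanifold of dimension $2n-k$, and a non-trivial abnormal is the projection to $M$ of a non-constant absolutely continuous curve $\lambda(\cdot)$ in $\distr^\perp\setminus 0_M$ tangent almost everywhere to the characteristic distribution $\ker(\sigma|_{\distr^\perp})$, where $\sigma$ is the canonical symplectic form of $T^*M$ — in particular this is independent of $g$. Writing the momentum functions $h_i(q,p)=\langle p,X_i(q)\rangle$, such a $\lambda$ satisfies $\dot\lambda=\sum_i u_i\vec{h}_i(\lambda)$ with $h_i(\lambda)\equiv0$, and at $(q,p)=\lambda(t)$ the characteristic space $\ker(\sigma|_{\distr^\perp})$ is canonically identified with $\ker A(q,p)$, where $A$ is the skew-symmetric $k\times k$ matrix $A_{ij}(q,p)=\langle p,[X_i,X_j](q)\rangle$; hence $u(t)\in\ker A(\lambda(t))$. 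Differentiating the identities $h_i(\lambda)\equiv0$ repeatedly produces a hierarchy of further constraints on $(\lambda,u)$: the Goh-type condition $\sum_{j,l}u_ju_l\,\langle p,[X_l,[X_j,X_i]](q)\rangle=0$ and its higher-order analogues, each polynomial in the iterated brackets of the $X_i$'s evaluated along $\lambda$.

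Second, I would set up the jet framework. The condition ``$q$ is an endpoint of a non-trivial abnormal'' is governed by finitely many layers of this hierarchy, so for a suitable order $m=m(n,k)$ one defines the bad set $\Sigma_m\subset J^m$ of $m$-jets of frames admitting a germ of non-constant solution $(\lambda,u)$ of the first $m$ layers of constraints. Using the explicit polynomial dependence of $A$, of the iterated brackets, and of the constraint equations on the jet coefficients, one shows that $\Sigma_m$ is a locally finite union of submanifolds of the jet bundle and — this is the crux — that each of them has codimension strictly greater than $n$ as soon as $k\geq 3$. This estimate is carried out layer by layer, by comparing the number of free parameters of a candidate curve $(\lambda,u)$ (a curve in $\distr^\perp$ together with a control valued in $\ker A$, and its derivatives) with the number of independent scalar equations imposed on the jet of the frame for such a curve to exist, and checking that for $k\geq 3$ the equations outnumber the parameters by more than $n$. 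For $k=2$ the count fails: $A$ is $2\times2$, $\ker A\neq0$ only on the codimension one locus $\{\langle p,[X_1,X_2]\rangle=0\}$, on which stable Martinet-type abnormals persist — consistently with \cref{thm:Martinet-noMCP-intro}.

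Finally, I would invoke Thom's parametric transversality theorem in the Whitney $C^\infty$ topology: since $\Sigma_m$ has codimension $>n=\dim M$, the set of frames whose $m$-jet prolongation $j^mX\colon M\to J^m$ avoids $\Sigma_m$ is residual; a routine compactness argument — the set of suitably normalized short abnormal segments over a fixed compact set is compact and depends upper-semicontinuously on the structure — upgrades this to open and dense. A frame avoiding $\Sigma_m$ admits no non-trivial abnormal at all, so globalizing over a countable locally finite atlas of frame charts and a countable exhaustion of $M$, and intersecting the corresponding open dense sets, yields the desired open dense $W_k\subset\mathcal{G}_k$. The main obstacle is the codimension estimate: pinning down the order $m$ and proving the stratification together with the uniform bound $\operatorname{codim}\Sigma_m>n$ precisely when $k\geq 3$ — i.e.\ controlling the full tower of higher-order Goh conditions — is the genuinely delicate part, the rest being a standard, if lengthy, transversality argument.
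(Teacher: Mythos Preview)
The paper does not prove this theorem: it is stated as a result from the literature, attributed to Chitour--Jean--Tr\'elat \cite{CJT-generic}, and no proof is given in the present paper. There is therefore nothing in the paper to compare your proposal against.

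That said, your sketch is broadly in the spirit of how genericity results of this type are established (recasting abnormals as characteristics of $\distr^\perp$, setting up a bad set in a jet space, estimating its codimension, and invoking Thom transversality). But as you yourself acknowledge, the heart of the matter --- the precise stratification of the bad set and the codimension estimate $\operatorname{codim}\Sigma_m > n$ for $k\geq 3$ --- is left as an assertion rather than carried out. In the actual Chitour--Jean--Tr\'elat argument this step is substantial and does not reduce to a simple parameter count of the form ``equations outnumber parameters''; it requires a careful analysis of the structure of the singular control system and its prolongations. So your outline identifies the right framework but does not supply the key technical content, which is where all the difficulty lies.
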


It is perhaps surprising that, generically, even with no non-trivial abnormal geodesics, the $\MCP$ fails if the dimension is large enough. The next result corresponds to \cref{thm:generic-no-MCP}.

\begin{theorem}[Generic failure of the $\MCP$ for rank $3$ and high dimension]\label{thm:generic-no-MCP-intro}
  Assume that the minimizing Sard property holds for  Carnot groups. In the same setting of \cref{{gen:genideal}}, if
  \begin{equation}\label{eq:lowerbound-intro}
    \dim M \geq (k-1)\left(\frac{k^2}{3}+\frac{5k}{6}+1\right),
  \end{equation}
  then there exists an open dense subset $W_k'\subset W_k$ of $\mathcal{G}_k$ such that for every element of $W_k'$, the corresponding sub-Riemannian structure does not admit non-trivial abnormal geodesics and does not satisfy the $\MCP(K,N)$ for any $K\in \R$, $N\in [1,\infty)$ and any choice of smooth measure.
\end{theorem}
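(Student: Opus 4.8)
The plan is to combine the genericity result of Chitour--Jean--Trélat (\cref{gen:genideal}) with the failure of the $\MCP$ for the Martinet structure, propagated to tangent cones via the quotient machinery of \cref{thm:noMCP-intro} and the algebraic criterion of \cref{thm:quotienttoMartinet-intro}. The $\MCP$ is inherited by metric measure tangent cones, so it suffices to exhibit, generically, a point whose (Carnot group) tangent cone admits a quotient to the Martinet structure; then \cref{thm:noMCP-intro} (which applies because we assume the minimizing Sard property for Carnot groups) gives the failure of the $\MCP$ on the tangent cone, hence on $M$ itself. Since for structures in $W_k$ there are no non-trivial abnormal geodesics, the tangent cone at a generic point is a free-enough nilpotent approximation, and the whole obstruction becomes one about the growth vector.

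First I would recall, using \cref{thm:quotienttoMartinet-intro}\ref{item:quot-algebraicondition-intro}, the precise algebraic condition for a Carnot group $\G$ with Lie algebra $\g = \g_1 \oplus \dots \oplus \g_s$ to admit a quotient to $\Mar$: there must exist a codimension-$1$ subspace $\h_3 \subset \g_3$ such that $\h_2 := \{Y \in \g_2 : [\g_1, Y] \subset \h_3\}$ also has codimension $1$. The key point is that this is an \emph{open} condition on the structure constants in a suitable sense, or rather: if $\g$ is the free nilpotent Lie algebra of rank $k$ and step $s$ (which is precisely the nilpotent approximation at a generic point of a generic rank-$k$ distribution with large enough growth, once $s$ is chosen appropriately), then one checks directly that this criterion is \emph{always} satisfied as soon as $s \geq 3$ and $k \geq 2$, because in the free Lie algebra the bracket map $\g_1 \otimes \g_2 \to \g_3$ is as surjective as possible and one can always select hyperplanes $\h_3 \subset \g_3$ whose preimage $\h_2$ is a hyperplane (e.g. take the hyperplane spanned by all basic brackets except one suitably chosen length-$3$ basic monomial $[X_1,[X_1,X_2]]$ and set $\h_2$ to be the span of all basic $\g_2$ elements except $[X_1,X_2]$, checking the codimension of $\h_2$ comes out to exactly $1$). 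So for free Carnot groups of rank $\geq 2$ and step $\geq 3$, the quotient to Martinet exists.

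Next I would make the genericity argument quantitative. Fix the rank $k \geq 3$. For a generic $(\distr,g) \in W_k$ and a generic point $q \in M$, the distribution is equiregular near $q$ with growth vector equal to that of the free nilpotent Lie algebra of rank $k$ up to the first step $s_0$ at which $\dim \mathfrak{f}_k^{s_0} \geq \dim M$ would overshoot — that is, the nilpotent approximation at $q$ is, generically, the free Carnot group of rank $k$ and step $s$ whenever $\dim M$ equals the dimension $\sum_{j=1}^s \ell_k(j)$ of that free group, where $\ell_k(j) = \frac{1}{j}\sum_{d \mid j} \mu(d) k^{j/d}$ is the number of basic commutators. More generally, one needs $\dim M$ large enough that the generic nilpotent approximation still contains a step-$3$ layer realized fully enough for the Martinet criterion; the bound \eqref{eq:lowerbound-intro} should be exactly the dimension $\sum_{j=1}^{4}\ell_k(j)$ (or the threshold beyond which the generic growth vector reaches step $3$ with a free third layer), and I would verify $\frac{1}{3}k^3 + \frac{5}{6}k^2 + \dots$ matches $(k-1) + \binom{k}{2} + \ell_k(3) + \ell_k(4)$ or the relevant partial sum — this is the routine combinatorial bookkeeping I would relegate to a short computation. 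Then one invokes \cref{thm:noMCP-intro} at $q$: the tangent cone is a free Carnot group of step $s \geq 3$ admitting a Martinet quotient, the Sard property is assumed for Carnot groups, so the tangent cone fails every $\MCP(K,N)$; since the $\MCP$ passes to tangent cones, the original structure at $q$ fails the $\MCP$ for every smooth measure (the measure on the tangent cone being the unique-up-to-scaling Haar measure, independent of the smooth measure chosen upstairs). Finally, defining $W_k'$ to be the (open dense) subset of $W_k$ where additionally the generic equiregular point with free growth vector exists — which is open dense since having the maximal generic growth vector is a generic condition — completes the proof.

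The main obstacle I expect is the step-$3$ combinatorics: verifying that the free nilpotent Lie algebra (or more precisely the generic nilpotent approximation at a generic point, which need not be exactly free if $\dim M$ is not exactly a partial sum of the $\ell_k(j)$) genuinely satisfies the Martinet criterion \eqref{eq:subspace-intro}, and pinning down the sharp dimension threshold \eqref{eq:lowerbound-intro} so that it coincides with the bound stated. One must be careful that when $\dim M$ is strictly between two partial sums, the generic third (or fourth) stratum is a generic proper quotient of the free one, and one needs the Martinet-quotient property to survive under such a generic quotient — which it does, being an open and dense condition on the quotient, but this requires a short argument that the subspaces $\h_3, \h_2$ can be chosen compatibly. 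The other, milder, subtlety is checking that no non-trivial abnormal geodesics (the defining property of $W_k$) is compatible with the presence of Goh--Legendre geodesics on the tangent cone — but this is not actually needed: \cref{thm:noMCP-intro} does not require absence of abnormals, only the Sard property, so the two genericity conditions simply coexist and $W_k' \subset W_k$ is just an intersection of open dense sets.
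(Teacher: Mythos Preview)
Your overall strategy is the same as the paper's: show that, generically under the dimension bound, the tangent Carnot group at some point admits a Martinet quotient, invoke \cref{thm:noMCP-intro} (using the assumed Sard property) to get failure of the $\MCP$ on the tangent cone, and conclude by the inheritance of the $\MCP$ under tangent cones. The paper's proof is very short because it delegates the entire algebraic and genericity step to \cite[Lemma~4]{AG-subanal}: that lemma says precisely that under \eqref{eq:lowerbound-intro} the generic growth vector is maximal and the nilpotent approximation admits a quotient to the Martinet structure. You are effectively attempting to reprove that lemma from scratch; the paper simply cites it and takes $W_k'$ to be the open dense subset of $W_k$ where it applies.

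Your combinatorial guess is off, though. The right-hand side of \eqref{eq:lowerbound-intro} equals $\dim(\text{free Carnot group of rank }k\text{ and step }3)-1$ (this is recorded in \cref{rmk:almostfree}), not a partial sum through step~$4$. So the threshold is exactly the point at which the generic nilpotent approximation has a free second stratum and a third stratum of codimension at most one inside the free one, which is what makes the criterion \eqref{eq:subspace-intro} go through. The subtlety you correctly flag --- that when $\dim M$ is not a full partial sum the tangent cone is a proper quotient of a free group and one must check the Martinet criterion survives --- is precisely the content of \cite[Lemma~4]{AG-subanal} (and of \cref{rmk:almostfree}, which asserts the criterion holds for \emph{any} Carnot group of rank $k$ once $\dim\G$ meets the bound); your sketch identifies the issue but does not resolve it, so without that citation the argument is incomplete.
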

\begin{remark}[Relation with generically nonsubanalytic dimensions]
  The lower bound \eqref{eq:lowerbound-intro} is the same one that appeared in \cite{AG-subanal} in relation with the loss of subanaliticity of the squared distance. In fact, the underlying reason is the same: generically if \eqref{eq:lowerbound-intro} is true, the tangent cone at any point must admit a quotient to the Martinet structure. We also note that the lower bound \eqref{eq:lowerbound-intro} is not sharp and can be improved, but this is out of the scope of the current paper.
\end{remark}

We also record the following result, corresponding to \cref{thm:no-MCP-rank2}. (Note that for distributions of rank $k=2$, one does not have an analogue to \cref{gen:genideal}; this is related with the fact that in this case the Goh condition is automatically verified by any abnormal geodesic).

\begin{theorem}[Failure of the $\MCP$ for rank $2$]\label{thm:no-MCP-rank2-intro}
  Let $(M,\sfd,\mm)$ be a sub-Riemannian metric measure space with constant rank $k=2$. Assume that there exists $x\in M$ such that
  \begin{equation}\label{eq:step3}
    \dim \distr^3_x - \dim \distr^2_x \geq 1.
  \end{equation}
  Assume that the minimizing Sard property holds for all Carnot groups of step $\leq s(x)$. Then $(M,\sfd,\mm)$ does not satisfy the $\MCP(K,N)$ for any $K\in \R$ and $N\in [1,\infty)$.
\end{theorem}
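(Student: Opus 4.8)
The plan is to blow $(M,\sfd,\mm)$ up at $x$ and reduce to the failure of the $\MCP$ for the Martinet structure. Suppose, for contradiction, that $(M,\sfd,\mm)$ satisfies $\MCP(K,N)$ for some $K\in\R$, $N\in[1,\infty)$. Its metric measure tangent cone at $x$ is a Carnot homogeneous space $\M_x=\G'\slash\H'$, with $\G'$ a Carnot group of rank $2$; its step is exactly $s(x)$, since the nilpotent-approximation vector fields are homogeneous of degree $-1$ for the privileged dilations and hence iterated brackets of length $>s(x)$ vanish. Because the $\MCP$ passes to metric measure tangent cones and $K$ rescales to $0$ under blow-up, $\M_x$ satisfies $\MCP(0,N)$; moreover the flag of the distribution at the origin of $\M_x$ coincides with that of $\distr$ at $x$, so $\dim\distr^3-\dim\distr^2\geq 1$ there too. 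It thus suffices to show that $\M_x$ admits a quotient onto a Carnot homogeneous space smoothly isometric to $\Mar$, and then to conclude by \cref{thm:MCPforQuotientsofCarnotHomo-intro} together with \cref{thm:Martinet-noMCP-intro}.

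To build such a quotient I would argue in the graded Lie algebra $\g'=\g'_1\oplus\g'_2\oplus\cdots$ of $\G'$. As $\dim\g'_1=2$, the layer $\g'_2=[\g'_1,\g'_1]$ is spanned by a single vector $Z=[X_1,X_2]$, so $\dim\g'_2=1$, and $\g'_3=[\g'_1,\g'_2]$, spanned by $[X_1,Z]$ and $[X_2,Z]$, is nonzero because the origin of $\M_x$ has step $\geq 3$. Since $\H'$ is closed and dilation-invariant its Lie algebra is graded, $\h'=\h'_2\oplus\h'_3\oplus\cdots$ with $\h'\cap\g'_1=\{0\}$ (as $\M_x$ has rank $2$), and the step-$3$ jump at the origin of $\M_x$ forces $\h'_3\subsetneq\g'_3$. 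Choose then a codimension-$1$ subspace $\h_3\subset\g'_3$ containing $\h'_3$ (if $\dim\g'_3=1$ this forces $\h_3=\h'_3=\{0\}$). Because $[X_1,Z]$ and $[X_2,Z]$ span $\g'_3$ they are not both contained in $\h_3$, hence $\h_2:=\{Y\in\g'_2\mid[\g'_1,Y]\subset\h_3\}=\{0\}$ has codimension $1$ in $\g'_2$, and \cref{thm:quotienttoMartinet-intro} supplies a quotient $\G'\slash\K$ smoothly isometric to $\Mar$, where $\K$ is the closed dilation-invariant subgroup whose Lie algebra is $\g'_2\oplus\h_3\oplus\g'_4\oplus\g'_5\oplus\cdots$. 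Since $\h'_2\subset\g'_2$, $\h'_3\subset\h_3$ and $\h'_j\subset\g'_j$ for $j\geq 4$, we have $\H'<\K$, so $\M_x=\G'\slash\H'$ indeed admits a quotient onto $\G'\slash\K\simeq\Mar$.

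It remains to assemble the pieces. The minimizing Sard property holds for all Carnot groups of step $\leq s(x)$, hence for $\G'$; since every factor of the tower of \cref{thm:tower-intro} associated with $\M_x\to\Mar$ is a quotient of $\G'$ (or a local isometry of such a quotient), the minimizing Sard property holds for each of them as well. Consequently \cref{thm:MCPforQuotientsofCarnotHomo-intro} applies to $\M_x\to\Mar$, and from $\MCP(0,N)$ for $\M_x$ we obtain $\MCP(0,N)$ for $\Mar$ -- contradicting \cref{thm:Martinet-noMCP-intro}, which proves the theorem. The main points to secure are that the subgroup $\K$ furnished by \cref{thm:quotienttoMartinet-intro} has the explicit graded shape used above, so that the inclusion $\H'<\K$ can be arranged -- this is precisely where the hypothesis $\dim\distr^3_x-\dim\distr^2_x\geq 1$ is genuinely used, through $\h'_3\subsetneq\g'_3$ -- and that the minimizing Sard property descends from $\G'$ to the quotients of $\G'$ occurring as factors of the tower, which lets the hypothesis be stated in terms of Carnot \emph{groups} only; both are facts of the same nature as those already needed for \cref{thm:noMCP-intro,thm:tower-intro}.
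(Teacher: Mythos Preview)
Your strategy is the paper's: pass to the tangent cone $\M_x$ at $x$, exhibit a quotient to $\Mar$, and invoke \cref{thm:MCPforQuotientsofCarnotHomo-intro} together with \cref{thm:Martinet-noMCP-intro}. The paper's argument is much terser --- it lists the four possible initial growth vectors $(2,2,3,\dots)$, $(2,2,4,\dots)$, $(2,3,4,\dots)$, $(2,3,5,\dots)$, cites \cite[Prop.\ 7]{AG-subanal} for the existence of the Martinet quotient, and concludes via \cref{thm:noMCP-intro} --- whereas you carry out the Lie-algebraic construction of the quotient explicitly.

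One point needs tightening. To invoke \cref{prop:quotientofCarnothomogeneous} for $\M_x=\G'/\H'\to\G'/\K$ you need $\H'\triangleleft\K$, not merely the inclusion $\H'<\K$ that you establish. With $\k=\g'_2\oplus\h_3\oplus\g'_4\oplus\cdots$ and $\h'=\h'_2\oplus\h'_3\oplus\cdots$, the ideal condition $[\k,\h']\subset\h'$ is automatic when the step of $\G'$ is at most~$4$ (every such bracket lands in degree $\geq 5$ and hence vanishes), but for step $\geq 5$ a term like $[\g'_2,\h'_3]\subset\g'_5$ need not lie in $\h'_5$. The repair is available in the paper itself: inspecting Step~2 of the proof of the factorization theorem (\cref{thm:tower-intro}), the chain $\H_0\triangleleft\H_1\triangleleft\cdots\triangleleft\H_s$ is built using only that $\h\subset\tilde\h$ are graded \emph{subalgebras}, so the tower --- and with it \cref{thm:MCPforQuotientsofCarnotHomo-intro} --- already goes through with the bare inclusion $\H'<\K$. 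Your closing caveat about the Sard property descending from $\G'$ to the Carnot-homogeneous factors is a genuine loose end (abnormals in $\G'/\H'_i$ need not lift to abnormals in $\G'$); this is not proved in the paper, and the paper's own proof is no more explicit on this point.
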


Assumption \eqref{eq:step3} means that the iterated distribution at step $3$ at $x$ contains at least one new direction, see \cref{a:SR} for definition.

\subsection{Closing thoughts}
If a sub-Riemannian space does not have Goh-Legendre geodesics (including the trivial one), then, by the results of Agrachev and Lee in \cite{AAPL-OT}, the squared distance function is locally Lipschitz in charts. Consequently, thanks to the work of Badreddine and Rifford in \cite{BR-MCP}, and assuming that the structure is real-analytic, it must, locally, satisfy the $\MCP$. If  Goh-Legendre geodesics are present, the $\MCP$ (and local Lipschitz regularity) may fail, and indeed they do fail for many examples in \cref{tab:cornucopia}.

We suspect that whenever such a geodesic is present, both local Lipschitz regularity and the $\MCP$ are lost, at least if the Legendre condition is satisfied in a strong sense (see \cref{def:strong-Goh-Legendre}).

\begin{conjecture}[Failure of the $\MCP$ with strong Goh-Legendre geodesics]\label{conj}
  Let $(M,\sfd,\mm)$ be a sub-Riemannian metric measure space. If there is a strong-Goh-Legendre geodesic $\gamma$, then $\sfd^2 :M\times M \to \R_+$ fails to be Lipschitz in charts in a neighbourhood of $\gamma$, and the $\MCP(K,N)$ does not hold for all $K\in \R$ and $N\in [1,\infty)$.
\end{conjecture}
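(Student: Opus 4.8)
The plan is to reduce the conjecture to what this paper already proves for Carnot structures, by passing to the metric measure tangent cone. Fix an interior point $x$ of the strong-Goh-Legendre geodesic $\gamma$ and let $\widehat{\M}$ be the metric measure tangent cone of $(M,\sfd,\mm)$ at $x$, a Carnot homogeneous space (a Carnot group $\G$ at regular points of the structure). The \emph{first step} is to show that the germ of $\gamma$ at $x$ blows up to a non-trivial horizontal curve of $\widehat{\M}$ which is again strong-Goh-Legendre. Both the Goh condition and the (strong) Legendre condition are expressed through (in)equalities on the symbol of the endpoint map and on the adjoint covector along the curve, and these behave well under the anisotropic dilation semigroup that defines $\widehat{\M}$; the role of the ``strong'' qualifier of \cref{def:strong-Goh-Legendre} should be precisely to make the relevant Legendre quadratic form uniformly non-degenerate, so that it survives the limit rather than collapsing (for a merely Goh-Legendre geodesic the limiting adjoint data may degenerate, which is one reason the weaker statement is harder and is not even conjectured here).

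The \emph{second step}, the heart of the matter, is purely algebraic: show that a strong-Goh-Legendre geodesic on a Carnot group $\G$ forces the algebraic condition \eqref{eq:subspace-intro} of \cref{thm:quotienttoMartinet-intro}, i.e.\ the existence of a hyperplane $\h_3\subset\g_3$ for which $\h_2:=\{Y\in\g_2 \mid [\g_1,Y]\subset\h_3\}$ is also a hyperplane of $\g_2$. A Goh-Legendre geodesic carries an adjoint covector $\lambda$ annihilating the horizontal distribution along the curve with $\langle\lambda,[\g_1,\g_1]\rangle=0$ (Goh), together with a second-order (Legendre) condition that, in a stratified Lie algebra, should be read off as a compatibility between a functional on $\g_2$ and a functional $\eta\in\g_3^*$; concretely, one expects $\eta([X,Y])$, viewed as a bilinear form on $\g_1\times\g_2$, to have rank $1$ in the $\g_2$ variable, which is exactly the assertion that $\h_3:=\ker\eta$ and the associated $\h_2$ are both of codimension $1$. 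Granting this, \cref{thm:quotienttoMartinet-intro} produces a closed, dilation-invariant $\K<\G$ with $\G\slash\K$ smoothly isometric to $\Mar$, and then \cref{thm:noMCP-intro} (adding, as there, the minimizing Sard property for the Carnot groups in the factorization of \cref{thm:tower}, or invoking it in the low-step cases where it is known) shows that $\G$ fails $\MCP(K,N)$ for every $K\in\R$ and $N\in[1,\infty)$.

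The \emph{third step} transfers both conclusions back to $M$. Since $\MCP(K,N)$ is inherited by metric measure tangent cones, the failure of $\MCP(K,N)$ for $\widehat{\M}$ forces $(M,\sfd,\mm)$ to fail $\MCP(K,N)$ for all $K$ and $N$, giving the second half of the conjecture. For the first half -- a partial converse to the Agrachev-Lee criterion \cite{AAPL-OT} -- note that Lipschitz regularity of $\sfd^2$ in charts is local and passes to the tangent cone under blow-up (the tangent distance is a rescaling limit of $\sfd$ that is linear in privileged coordinates, so Lipschitz constants do not deteriorate); thus if $\sfd^2$ were Lipschitz in charts near $x$, so would be the squared distance of $\widehat{\M}$. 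But $\widehat{\M}$ admits a smooth isometric quotient to $\Mar$, and the Martinet abnormal lifts to it, so by the same mechanism as in \cref{thm:Martinet-noMCP-intro} (the optimal synthesis of \cite{martinetagrachev}) the squared distance of $\widehat{\M}$ is not Lipschitz in charts -- a contradiction.

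The \emph{main obstacle} is the second step: converting the analytic Goh and Legendre conditions on the Hamiltonian flow of a sub-Riemannian geodesic into the clean bracket-theoretic rank statement \eqref{eq:subspace-intro} about $[\g_1,\g_2]\subset\g_3$. One must single out exactly which part of the Legendre form survives in the nilpotent approximation and check that ``strong'' is calibrated so that the rank count yielding the two codimension-one subspaces is \emph{forced}, not merely \emph{possible}. This is entangled with the first step -- absent the strong hypothesis a Goh-Legendre geodesic of $M$ need not visibly descend to one of $\widehat{\M}$ -- and it is exactly this point that the present paper does not settle, leaving the statement as a conjecture.
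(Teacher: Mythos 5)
The labelled statement is a \emph{conjecture}: the authors offer no proof, only circumstantial evidence from \cref{tab:cornucopia} and the remark that a non-strong Goh--Legendre geodesic may coexist with the $\MCP$. There is therefore nothing in the paper to compare your argument against, and -- as you yourself acknowledge -- your text is a programme, not a proof. It is the natural programme (reduce to the metric measure tangent cone, produce a Martinet quotient via \cref{thm:quotienttoMartinet-intro}, invoke \cref{thm:noMCP-intro}), and you correctly identify the role of the ``strong'' qualifier in preventing degeneration to the trivial geodesic under blow-up; but you do not close any of the gaps you name.

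The critical open point is your Step 2: it is not established, here or elsewhere, that a strong-Goh--Legendre geodesic on a Carnot group $\G$ forces condition~(iii) of \cref{thm:quotienttoMartinet-intro}. The strong Legendre inequality does give $\eta:=\lambda_0|_{\g_3}\neq 0$ and hence a codimension-one $\h_3=\ker\eta\subset\g_3$. But the codimension of $\h_2=\{Y\in\g_2\mid[\g_1,Y]\subset\h_3\}$ equals the rank of the linear map $\g_2\to\g_1^*$, $Y\mapsto\eta([\cdot,Y])$, and this rank is automatically one only when $\dim\g_1=2$ (the Goh constraint $\eta([X_{u(t)},Y])=0$, together with $[\g_1,\g_2]=\g_3$, then pins it down). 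For $\dim\g_1>2$ the strong Legendre condition controls a different pairing -- the quadratic form $X\mapsto\eta([[X_{u(t)},X],X])$ on $u(t)^\perp\subset\g_1$ -- which does not visibly collapse the rank you need to one. Two further caveats: \cref{thm:noMCP-intro} requires the minimizing Sard property, which is not a hypothesis of the conjecture and is open in general, so your route would at best deliver a conditional statement; and the Lipschitz-failure claim in your Step 3 cannot be settled by a naive ``Lipschitz bounds survive blow-up'' argument, since the tangent distance squared arises as $\lim_{\lambda\to\infty}\lambda^2\sfd^2(\delta_{1/\lambda}\cdot,\delta_{1/\lambda}\cdot)$ with anisotropic dilations whose weight factors $\lambda^{-w_i}$, $w_i\geq 1$, do not cancel the prefactor $\lambda^2$. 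A separate argument, beyond what is cited from \cite{AAPL-OT}, would be needed to push the loss of Lipschitzianity from the tangent cone back to a neighbourhood of $\gamma$ in $M$.
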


Using \cref{thm:decomposable-intro}, we can produce Carnot groups that have non-trivial Goh-Legendre geodesics and yet satisfy the $\MCP$, by taking products of factors with no non-trivial Goh-Legendre geodesics. They are not \emph{strong} Goh-Legendre ones though, supporting the conjecture.

The case in which only non-strong Goh–Legendre geodesics are present is borderline, and a higher-order analysis may be necessary. The Carnot group $N_{6,2,6}$ in \cref{tab:cornucopia} is an example.

The above conjecture is also supported by \cref{tab:cornucopia}: the groups marked with a dagger  \emph{do} have non-trivial Goh geodesics but \emph{do not} have non-trivial Goh-Legendre ones (see \cref{rmk:Legendre}). This emphasizes the importance of the generalized Legendre condition, which turns out to be as significant as the (definitely more well-known and used in the literature) Goh condition.

We remark that a \emph{trivial} geodesic is Goh-Legendre if and only if the step of the structure at that point is at least $3$ -- of course such geodesic cannot be a strong-Goh-Legendre one. The $\MCP$ is not always lost in presence of such geodesics, but local Lipschitzianity of $\sfd^2$ is lost in a neighbourhood of that point \cite{AAPL-OT}.

The absence of (non-trivial) Goh-Legendre geodesics also implies, for real-analytic structures, that the corresponding distance function is subanalytic, as proven by Agrachev and Gauthier in \cite{AG-subanal}. On the contrary, akin to what happens for the $\MCP$ and, of course, local Lipschitzianity, subanalyticity is lost when a suitable Martinet quotient do exist. All these properties -- local Lipschitzianity, $\MCP$ and subanalyticity -- (or their failure) seem to be a manifestation of the compactness of the set of normal geodesics of a fixed length (or lack thereof). However, a direct connection between them is currently missing.

The results of the present work can be adapted to the sub-Finsler setting. Whilst we have not pursued this direction here, combining our findings with those of \cite{MCPlpsubFH, MCPCDsubFH, CEsubFH, CDsubF} could lead to new developments in sub-Finsler geometry.

\subsection{Structure of the paper}

\Cref{sec:prel} contains an overview of optimal transport on metric measure spaces and their quotients by isometric group actions. In \cref{sec:deltaess}, we introduce the weaker essentially non-branching condition that we require, and discuss its relation with the $\MCP$. We then address the stability of the $\MCP$ under discrete and compact group actions in \cref{sec:discreteactions,sec:compactactions}, respectively. In \cref{sec:Carnothomogeneousspaces}, we present Carnot homogeneous spaces and we prove the factorization argument. We put everything together in \cref{sec:MCPforQuotientsofCarnotHomo}, proving our main theorem concerning the $\MCP$ of quotients of Carnot homogeneous spaces and its consequences. In \cref{sec:genericity}, we prove the generic failure of $\MCP$ in high dimension and rank. In \cref{sec:EngMart}, we prove the failure of the $\MCP$ for the Martinet structure. We end with \cref{sec:variants}, where we comment on the failure of variants of the $\MCP$, such as its entropic version and Milman's  $\mathrm{QCD}$.

\subsection{Acknowledgments}

This project has received funding from the European Research Council (ERC) under the European Union's Horizon 2020 research and innovation programme (grant agreement GEOSUB, No. 945655). This research was funded in part by the Austrian Science Fund (FWF) [10.55776/EFP6], and the authors also acknowledge the INdAM support. We wish to thank Fabio Cavalletti, Andrea Mondino, and Andrei Agrachev for several helpful discussions.    	
\section{Preliminaries}\label{sec:prel}


We start this preliminary section with a brief overview of optimal transport on metric measure spaces. We refer the reader to \cite{Vil} and references therein for a complete account.

A metric measure space $(X, \sfd, \mm)$ is a complete, separable, proper, and geodesic metric space $(X,\sfd)$ equipped with a $\sigma$-finite Borel measure $\mm$. We assume for simplicity that $\supp \mm = X$. We recall that a metric space is geodesic if for all $x, y \in X$, there exists $\gamma \in \Geo(X)$ such that $\gamma(0) = x$ and $\gamma(1) = y$, where we denoted the set of geodesics of $(X,\sfd)$ by
\[
  \Geo(X) := \left\{ \gamma \in C([0, 1], X) \mid \sfd(\gamma_s, \gamma_t) = |s - t| \sfd(\gamma_0, \gamma_1) \right\}.
\]

If $\mu$ is a Borel measure on $X$ and $T : X \to X$ is a Borel map, then $T_\sharp \mu$ stands for the pushforward measure, i.e. the measure defined through $T_\sharp \mu(A) = \mu(T^{-1}(A))$ for all Borel set $A \subset X$. The set $\P(X)$ will denote the space of probability measures on $X$. The narrow topology on $\P(X)$ is the one induced by the narrow convergence of measures: a sequence $(\mu_n)_{n \in \N} \subset \P(X)$ converges narrowly to $\mu \in \P(X)$, denoted as $\mu_n \rightharpoonup \mu$, if $(\int_X f \mu_n)_{n \in \N}$ converges to $\int_X f \mu$ for all bounded continuous functions $f \in C_b(X)$. This is referred to as ``weak convergence'' in \cite{Vil}.

We will use the notation $\P_c(X)$ for the set of compactly supported probability measures. For $p\in [1,\infty)$, we denote by $\P_p(X)$ the space of probability measures with finite $p$-momentum:
\begin{equation}
  \P_p(X) := \left\{ \mu \in \P(X) \ \Big| \ \int_X \sfd(x_0, x)^p \mu(\di x) < + \infty \text{ for some (and hence any) } x_0 \in X \right\}.
\end{equation}
The $p$-Wasserstein distance is the metric on $\P_p(X)$ introduced via the Monge-Kantorovich minimization problem from optimal transport theory. Given $\mu_0, \mu_1 \in \P(X)$ and setting $\Cpl(\mu_0, \mu_1)$ as the set of couplings $\pi \in \P(X \times X)$ having $\mu_0$ and $\mu_1$ as first and second marginals, we define
\begin{equation}
  \label{eq:pwasserstein}
  W_p(\mu_0, \mu_1)^p := \inf_{\pi \in \Cpl(\mu_0, \mu_1)} \int_{X \times X} \sfd(x, y)^p \pi(\di x \di y).
\end{equation}

It is well-known that convergence with respect to $W_p$ is equivalent to the narrow convergence of measures plus convergence of $p$-momenta. If $(X, \sfd)$ is a geodesic space (resp.\ complete, separable, proper), then so is $(\P_p(X), W_p)$.

When the minimum in \eqref{eq:pwasserstein} is attained at $\pi \in \P(X \times X)$, we will say that $\pi$ is an optimal transport plan between $\mu_0$ and $\mu_1$, and that $\pi \in \Opt_p(\mu_0,\mu_1)$. A Borel map $T : X \to X$ for which $\pi = (\mathrm{id} \times T)_\sharp \mu_0$ is an optimal coupling is called an optimal transport map.

For $\mu_0, \mu_1 \in \P_p(X)$, we denote by $\OptGeo_p(\mu_0, \mu_1)$ the set of all $\nu \in \P(\Geo(X))$ such that $(e_0, e_1)_{\sharp} \nu \in \Opt_p(X)$. It is a standard fact that if $\nu \in \P(\Geo(X))$, then $\mu_t := (e_t)\sharp \nu$ is a $W_p$-geodesic; and vice versa, any $W_p$-geodesic $(\mu_t)_{t \in [0, 1]} \in \Geo(\P_p(X))$ can be lifted to an optimal dynamical plan $\nu \in \OptGeo_p(\mu_0, \mu_1)$.

In the following, we will mostly be concerned with the case $p = 2$, and will refrain from specifying the value of $p$ whenever it is clear from the context.

\subsection{Measure contraction property}

We introduce the distortion coefficients from the Lott--Sturm--Villani theory of synthetic curvature-dimension bounds \cite{LV-Ricci, S-ActaI, S-ActaII}. Given $K \in \R$ and $N \in [1, \infty)$, we set $\tau_{K, N}^{t}(\theta)$ for $(t, \theta) \in [0, 1] \times [0, \infty]$ by
\begin{equation}
  \tau_{K, N}^{t}(\theta) :=
  \begin{cases}
    \infty                                                                                                                                  & K\theta^2 \geq (N-1)\pi^2 \text{ and } K > 0,   \\
    t^{\frac 1 N} \left(\frac{\sin\left(t\theta \sqrt{K/(N-1)}\right)}{\sin\left(\theta \sqrt{K/(N-1)}\right)}\right)^{1 - \frac 1 N}       & 0<K\theta^2<(N-1)\pi^2,                         \\
    t                                                                                                                                       & K\theta^2 =0, \text{ or } K<0 \text{ and } N=1, \\
    t^{\frac 1 N} \left(\frac{\sinh\left(t\theta \sqrt{|K|/(N-1)}\right)}{\sinh\left(\theta \sqrt{|K|/(N-1)}\right)}\right)^{1 - \frac 1 N} & K\theta^2<0.
  \end{cases}
\end{equation}

We will use the following version of the measure contraction property, introduced in \cite{O-MCP}. The notation $\mm|_{A}$ stands for the restriction of a measure $\mm$ to a Borel set $A$.

\begin{definition}[Measure contraction property]\label{def:MCP}
  Let $K\in \R$ and $N\in [1,\infty)$. A metric measure space $(X,\sfd,\mm)$ satisfies the $\MCP(K,N)$ if for any $o\in X$ and any $\mu_0\in \P_c(X)$ of the form $\mu_0 =\tfrac{1}{\mm(A)}\mm|_A$ for some Borel set $A\subset X$ with $0<\mm(A)<\infty$, there exists $\nu \in \OptGeo(\mu_0,\delta_o)$ such that
  \begin{equation}\label{eq:O-MCP}
    \frac{1}{\mm(A)}\mm \geq (e_t)_{\sharp}\left(\tau_{K,N}^{(1-t)}(\sfd(\gamma_0,\gamma_1))^N\nu\right),\qquad \forall\, t\in [0,1].
  \end{equation}
  Similarly, a metric measure space $(X,\sfd,\mm)$ satisfies the $\MCP_{\loc}(K,N)$ if $X$ can be covered by open neighbourhoods $\mathcal{O}\subset X$ such that for any $o\in \mathcal{O}$ and any $\mu_0\in \P_c(X)$ of the form $\mu_0 =\tfrac{1}{\mm(A)}\mm|_A$ for some Borel set $A\subset \mathcal{O}$ with $0<\mm(A)<\infty$, there exists $\nu \in \OptGeo(\mu_0,\delta_o)$ such that \eqref{eq:O-MCP} holds ($\mu_t$ is not required to be supported in $\mathcal{O}$).
\end{definition}
\begin{remark}[Equivalence with Ohta's $\MCP$]
  In \cite{O-MCP}, condition \eqref{eq:O-MCP} is required to hold for $A \subset B(o, \pi / \sqrt{(N - 1) / K})$ when $K > 0$. Such a restriction is not necessary; see \cite[Rmk.\ 6.10]{CM-globalization}. Furthermore, in \cite{O-MCP}, $\mu_0$ is not required to have compact support, but only finite $2$-momentum. Since $\mm$ is $\sigma$-finite and $(X, \sfd)$ is proper, one can show, via standard stability arguments, that \cref{def:MCP} implies the validity of \eqref{eq:O-MCP} also when $\mu_0 \in \P_2(X)$.
\end{remark}

The $\MCP$ implies a corresponding Bishop-Gromov volume growth inequality for $(X,\sfd,\mm)$, and all its consequences. In particular, we record that a metric measure space satisfying the $\MCP(K, N)$ with $K > 0$ (and $N>1$) must be compact, and that setting
\begin{equation}
Z_{t}(x,A) := \left\{ \gamma_t \mid \gamma \in \Geo(X), \gamma_0 = x, \gamma_1 \in A \right\}
\end{equation}
the set of $t$-intermediate points between a Borel set $A\subset X$ and $x\in X$, the $\MCP(0, N)$ implies
\begin{equation}
  \label{eq:MCPtovolumeineq}
  \mm(Z_t(x,A)) \geq t^N \mm(A).
\end{equation}
Finally, we note that if $\MCP(K,N)$ holds, then $N$ must be greater or equal than the Hausdorff dimension of $(X,\sfd)$, see e.g.\ \cite[Cor.\ 2.7]{O-MCP}.

%

\subsection{Quotients by isometric group actions}\label{sec:quotients}

Let $(X,\sfd)$ be a complete, separable, proper and geodesic metric space. Let $G$ be a topological group and $\alpha :G\times X\to X$ be a (continuous) action. We use the shorthand $gx = \alpha(g,x)$ for all $g\in X$ and $x\in X$. We say that $G$ acts by isometries if for all $g\in G$ and $x,y\in X$ it holds
\begin{equation}
  \sfd(g x, g y) = \sfd(x,y).
\end{equation}
We denote with $X^*:=X/G$ the space of orbits of the action of $G$. Its elements are denoted with $x^*$ for $x\in X$. We define a (pseudo-)distance on $X^*$ by
\begin{equation}\label{eq:pseudod}
  \sfd^*(x^*,y^*) := \inf_{g,g'\in G} \sfd(gx,g'y) = \inf_{g\in G} \sfd(gx,y).
\end{equation}
The action of $G$ on $X$ is \emph{proper} if the map $(g,x)\mapsto (gx,x)$ is proper. When the action of $G$ on $X$ is by isometries and proper, its orbits  are closed, \eqref{eq:pseudod} is a distance, and $(X^*,\sfd^*)$ is a complete, separable, geodesic and proper metric space as well.

Let $\mm$ be a $\sigma$-finite Borel measure on $(X,\sfd)$ so that $(X,\sfd,\mm)$ is a metric measure space. We say that $\mm$ is \emph{$G$-invariant} if for all $g\in G$ and Borel set $E$ it holds
\begin{equation}
  \mm(g E) =\mm(E).
\end{equation}
We say that $G$ acts on $(X,\sfd,\mm)$ by \emph{metric measure isometries} if the action is by isometries, proper, and $\mm$ is $G$-invariant. In this case, we induce on $(X^*,\sfd^*)$ a measure in a natural way, depending on whether $G$ is compact or discrete.

\begin{enumerate}[(1)]
  \item\label{enum:compact}  If $G$ is compact, then we set $\mm^*:= \p_\sharp \mm$, where $\p:X\to X^*$ is the projection map (that is proper). It follows that $\mm^*$ is a $\sigma$-finite Borel measure.
        Note that it is not necessary to assume that $\mm$ is $G$-invariant for $\mm^*$ to be well-defined in this case.

  \item\label{enum:discrete} If $G$ is non-compact but discrete, we must assume that the action of $G$ on $X$ is not only proper, but also \emph{free}, that is, for any $x \in X$, it holds that $gx = x$ if and only if $g = e$. Thus, the quotient map $\q:(X,\sfd) \to (X^*, \sfd^*)$ is a covering map and a local isometry \cite[Prop.\ 8.5(3)]{nonpositivebook}. In this case, we define $\mm^*$ as the unique Borel measure on $X^*$ that turns $\q$ into a local metric measure isometry. Specifically, for any $U \subset X$ and $U^* = \q(U) \subset X^*$ such that the restriction $\q: U \to U^*$ is an isometry, we have
        \begin{equation}
          \mm^*|_{U^*} = \q_\sharp\left( \mm|_{U} \right).
        \end{equation}
\end{enumerate}
In both cases, we call $(X^*, \sfd^*, \mm^*)$ the \emph{metric measure quotient} of $(X, \sfd, \mm)$.

We state here a technical lemma that will be used in \cref{sec:tower} for discrete group actions.

\begin{lemma}\label{lem:technical_invariance}
  Let $(X,\sfd,\mm)$ be a metric measure space, and let $G$ be a topological group with a continuous action $\alpha:G\times X\to X$ by metric measure isometries. Assume that $G$ is compact, or discrete, in which case assume that the action $\alpha$ is proper and free. Let $K$ be a topological group with a continuous action $\beta : K \times X \to X$ (not necessarily by isometries) such that:
  \begin{enumerate}[(i)]
    \item\label{i:technical_invariance1} the action $\beta$ of $K$ commutes with the action $\alpha$ of $G$, namely
          \begin{equation}
            \beta(g,\alpha(k,x)) = \alpha(k,\beta(g,x)),\qquad \forall\, g\in G,\,k\in K\, x\in X;
          \end{equation}
    \item\label{i:technical_invariance2} the measure $\mm$ is invariant by the action $\beta$, namely
          \begin{equation}
            \beta(k,\cdot)_\sharp \mm = \mm,\qquad \forall\, k\in K.
          \end{equation}
  \end{enumerate}
  Let $(X^*,\sfd^*,\mm^*)$ be the metric measure quotient by the action $\alpha$ of $G$. Let $\beta^*: K\times X^*\to X^*$ be the induced continuous action of $K$ on $X^*$ given by
  \begin{equation}\label{eq:equivariant}
    \beta^*(k,x^*) = \beta(k,x)^*,\qquad \forall\, k\in K,\, x\in X,
  \end{equation}
  the star denoting the equivalence class in $X^*$, which is well-defined by assumption \ref{i:technical_invariance1}. Then the measure $\mm^*$ is invariant by the action $\beta^*$.
\end{lemma}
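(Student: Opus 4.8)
The plan is to reduce the claim to a measure-theoretic identity using the two characterisations of $\mm^*$ — as a pushforward in the compact case, and as the local-isometry lift in the discrete case — and in both cases to exploit that $\beta(k,\cdot)$ is a $\mm$-preserving homeomorphism of $X$ that descends to $\beta^*(k,\cdot)$. The key observation, which follows at once from \eqref{eq:equivariant} and the commutation hypothesis \ref{i:technical_invariance1}, is the equivariance of the projection $\p$ (or $\q$): one has $\p\circ\beta(k,\cdot) = \beta^*(k,\cdot)\circ\p$ as maps $X\to X^*$, for every $k\in K$. I would first record this identity, since everything else is a formal consequence.

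For the \emph{compact} case, I would argue directly: for any Borel $E\subset X^*$ and any $k\in K$,
\begin{equation*}
  \beta^*(k,\cdot)_\sharp\mm^*(E) = \mm^*\big(\beta^*(k,\cdot)^{-1}(E)\big) = \mm\Big(\p^{-1}\big(\beta^*(k,\cdot)^{-1}(E)\big)\Big) = \mm\Big(\big(\beta^*(k,\cdot)\circ\p\big)^{-1}(E)\Big),
\end{equation*}
and by the equivariance this last set equals $\big(\p\circ\beta(k,\cdot)\big)^{-1}(E) = \beta(k,\cdot)^{-1}\big(\p^{-1}(E)\big)$, whose $\mm$-measure is $\mm\big(\p^{-1}(E)\big) = \mm^*(E)$ by hypothesis \ref{i:technical_invariance2}. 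Hence $\beta^*(k,\cdot)_\sharp\mm^* = \mm^*$.

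For the \emph{discrete} case, $\mm^*$ is characterised locally: on any open $U^*\subset X^*$ over which a section of $\q$ trivialises (i.e. $\q:U\to U^*$ is an isometry for some $U\subset X$), one has $\mm^*|_{U^*} = \q_\sharp(\mm|_U)$. Since $\beta(k,\cdot)$ is a metric-preserving\footnote{It is a homeomorphism, though not necessarily an isometry; continuity and bijectivity of $\beta(k,\cdot)$ is all that is needed here, together with the fact that $\beta^*(k,\cdot)$ is a homeomorphism of $X^*$ obtained by passing to the quotient.} homeomorphism commuting with the $G$-action, it maps evenly-covered sets to evenly-covered sets, and the equivariance $\q\circ\beta(k,\cdot)=\beta^*(k,\cdot)\circ\q$ shows that $\beta^*(k,\cdot)$ carries such a chart $U^*$ to another such chart $V^* = \beta^*(k,U^*)$, lifted by $V = \beta(k,U)$. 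A routine check then gives, on $V^*$,
\begin{equation*}
  \beta^*(k,\cdot)_\sharp(\mm^*|_{U^*}) = \beta^*(k,\cdot)_\sharp\,\q_\sharp(\mm|_U) = \q_\sharp\,\beta(k,\cdot)_\sharp(\mm|_U) = \q_\sharp\big(\mm|_{V}\big) = \mm^*|_{V^*},
\end{equation*}
using \ref{i:technical_invariance2} in the third equality. Since $X^*$ is covered by such charts $U^*$ and their $\beta^*$-translates, and $\mm^*$ is $\sigma$-finite, patching these local identities via a countable partition yields $\beta^*(k,\cdot)_\sharp\mm^* = \mm^*$ globally. The main technical point to be careful about is precisely this patching step in the discrete case — one must ensure the local charts and their images under $\beta^*(k,\cdot)$ can be organised into a consistent countable cover so that the locally defined pushforwards agree on overlaps; this is where the properness and freeness of $\alpha$ (guaranteeing $\q$ is a genuine covering with the stated local-isometry lifting property) are used, and it is the only place where more than formal bookkeeping is required.
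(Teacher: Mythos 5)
Your overall strategy is the one the paper uses: handle the compact case by a direct pushforward computation using the equivariance $\p\circ\beta(k,\cdot)=\beta^*(k,\cdot)\circ\p$, and handle the discrete case by working chart by chart. The compact case as you wrote it is complete and correct.

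In the discrete case, however, there is a gap in the chain of equalities, and it is located not in the final patching step (which you flag as the delicate point) but one equality earlier, namely in $\q_\sharp(\mm|_V) = \mm^*|_{V^*}$ with $V=\beta(k,U)$. The definition of $\mm^*$ given in the paper (case (2) of \cref{sec:quotients}) prescribes $\mm^*|_{V^*}=\q_\sharp(\mm|_V)$ only when $\q:V\to V^*$ is an \emph{isometry}. Now $\q|_V:V\to V^*$ is a bijection and a local isometry, because $V^*$ is evenly covered and $V$ is one of its sheets; but since $\beta(k,\cdot)$ is not an isometry, $\q|_V$ need not be a genuine isometry. Your phrase ``it maps evenly-covered sets to evenly-covered sets'' is correct as far as it goes, but the defining identity for $\mm^*$ is stated for \emph{isometrically} evenly covered charts, and this stronger property is exactly what the action by $\beta(k,\cdot)$ destroys. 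One can recover the identity by further subdividing $V^*$ into isometrically evenly covered pieces, restricting $\q$ to each, and summing by $\sigma$-additivity; the paper instead pre-emptively refines the initial cover to $\{W_c^*\}$ so that \emph{both} $W_c^*$ and $W_c^*k^{-1}$ are isometrically evenly covered, so that the defining identity applies verbatim to each. Either route works, but as written your argument asserts the single-chart identity without the subdivision argument that justifies it, and your footnote claiming ``continuity and bijectivity of $\beta(k,\cdot)$ is all that is needed here'' understates what the last equality is leaning on.

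Once that local identity on translated charts is established, your patching step is indeed routine (the identity is a statement about restricted measures, which glue over a countable cover by $\sigma$-additivity), so the overall argument is salvageable; the missing ingredient is the justification that the definition of $\mm^*$ extends from isometrically evenly covered charts to merely evenly covered ones.
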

\begin{proof}
  If $G$ is compact (case \labelcref{enum:compact} in \cref{sec:quotients}) and then $\mm^*=\p_\sharp \mm$, where $\p:X\to X^*$ is the projection map, the proof is straightforward. Thus, we assume that $G$ is discrete, non-compact, and its action on $X$ is proper and free (case \labelcref{enum:discrete} in \cref{sec:quotients}). We use the notation $\Gamma \equiv G$, and we abbreviate the actions of $K$ on $X$ and $X^*$ by  $xk=\beta(k,x)$ and $x^*k = \beta^*(k,x^*)$ (this is just a convenient notation, we are not assuming that $\beta$ and $\beta^*$ are right actions).

  It is enough to prove that, for fixed $k\in K$, there is an open cover $\{W_c^*\}_{c \in C}$ of $X^*$ such that
  \begin{equation}
    \mm^*|_{W_c^*} = \left(\beta^*(k,\cdot)_\sharp \mm^*\right)|_{W_c^*}, \qquad \forall\, c\in C.
  \end{equation}
  Denote by $\q:X\to X^*$ the projection, and recall that it is a covering map and a local isometry. Let $\{V_a^*\}_{a \in A}$ be an open cover of $X^*$ by isometrically evenly covered neighbourhoods. More precisely, for all $a\in A$, $\q^{-1}(V_a^*)$ is the disjoint union of open sets $\{V_{a,j}\}_{j\in \Gamma}$, such that $\q: V_{a,j} \to V^*_a$ is an isometry for all $j\in \Gamma$. Since the action of $\Gamma$ is free, the degree of the covering is equal to the cardinality of $\Gamma$ so we use the latter to label the sheets of the covering.

  Consider the open cover $\{V_a^* k\}_{a\in A}$, obtained by acting with a fixed $k\in K$. Since the actions of $k\in K$ on $X$ and $X^*$ are not isometries in general, this open cover is made of evenly covered neighbourhoods, but not \emph{isometrically} so. Since $\q:X\to X^*$ is a covering map and a local isometry, we can write each $V_a^*k$ as the union of isometrically evenly covered neighbourhoods: for all $a\in A$ we find an index set $B_a$ and open sets $U_{a,b}^*\subset X^*$ for $b\in B_a$ such that
  \begin{equation}
    V_a^*k = \bigcup_{b \in B_a} U_{a,b}^*, \qquad \forall \,a\in A.
  \end{equation}
  Thus, up to suitable relabelling, we have found a countable open cover $\{W_c^*\}_{c\in C}$ of $X^*$ by isometrically evenly covered neighbourhoods with the property that $\{W_c^*k^{-1}\}_{c\in C}$ is also an open cover of $X^*$ made by isometrically evenly covered neighbourhoods. More precisely if $\q^{-1}(W_c^*) = \cup_{j\in \Gamma} W_{c, j}$ with $\q: W_{c, j} \to W_{c}^*$ being an isometry for all $c\in C$, $j\in \Gamma$, then $\q^{-1}(W_c^*k^{-1}) = \cup_{j\in \Gamma} W_{c,j} k^{-1}$ with $\q:W_{c,j}k^{-1}\to W_c^*k^{-1}$ being an isometry for all $c\in C$, $j\in \Gamma$.

  In particular, by definition of the quotient measure in case \ref{enum:discrete}, we have for the fixed $k\in K$:
  \begin{equation}\label{eq:definitioninproof}
    \qquad \mm^*|_{W_c^*} = \q_\sharp\left(\mm|_{W_{c,j}}\right)\quad \text{and} \quad \mm^*|_{W_c^*k^{-1}} = \q_\sharp\left(\mm|_{W_{c,j}k^{-1}}\right),\qquad\forall\,c\in C,\, j\in \Gamma.
  \end{equation}
  Then, for $c\in C$ and for any Borel set $E\subset X^*$, we have
  \begin{align}
    \left(\beta^*(k,\cdot)_\sharp \mm^*\right)|_{W_c^*}(E)
     & =  \mm^*\left( (E\cap W_{c}^*)k^{-1}\right)                                                                  \\
     & = \mm\left(\q^{-1}(E k^{-1}) \cap W_{c ,j}k^{-1}\right) & \text{by \eqref{eq:definitioninproof}}             \\
     & = \mm\left((\q^{-1}(E) \cap W_{c, j})k^{-1}\right)                                                           \\
     & = \mm\left(\q^{-1}(E) \cap W_{c, j}\right)              & \text{by assumption \ref{i:technical_invariance2}} \\
     & = \mm^*|_{W_{c}^*}(E) ,                                                                                      
  \end{align}
  concluding the proof.
\end{proof}   	
\section{The \texorpdfstring{$\delta$}{delta}-essentially non-branching condition}\label{sec:deltaess}

We recall the essentially non-branching condition, introduced by Rajala and Sturm in \cite{RS-nonbranchingstrongCD}.

\begin{definition}[Essentially non-branching]\label{def:enb}
    A set $\Gamma \subset \Geo(X)$ is \emph{non-branching} if for any $\gamma^1, \gamma^2 \in \Gamma$, we have that $\gamma^1 = \gamma^2$ whenever there is $t \in (0, 1)$ such that $\gamma^1_s = \gamma^2_s$ for all $s \in [0, t]$. A metric measure space $(X, \sfd, \mm)$ is \emph{essentially non-branching} if for every $\mu_0, \mu_1 \in \P_2(X)$ with $\mu_0,\mu_1\ll\mm$, any $\nu \in \OptGeo(\mu_0, \mu_1)$ is concentrated on a set of non-branching geodesics, namely there exists a set of non-branching geodesics $\Gamma$ such that $\nu(\Gamma) = 1$.
\end{definition}

We introduce a discrete variant of the essentially non-branching condition. The main relevance of this new condition is that, in contrast with the standard one, it is implied by the minimizing sub-Riemannian Sard property (see \cref{def:Sard}).

\begin{definition}[$\delta$-essentially non-branching]\label{def:delta-enb}
    A metric measure space $(X,\sfd,\mm)$ is \emph{$\delta$-essentially non-branching} if for any $\mu_0\in \P_c(X)$ with $\mu_0\ll \mm$ and any finite sum of Dirac masses $\mu_1\in \P(X)$ (namely, $\mu_1  = \sum_{j=1}^L\lambda_j \delta_{x_j}$ for some $L\in \N$ and distinct points $x_1,\dots,x_L$), any $\nu \in \mathrm{OptGeo}(\mu_0,\mu_1)$ is concentrated on a set of non-branching geodesics.
\end{definition}

Assuming the measure contraction property, essentially non-branching implies $\delta$-essentially non-branching. In fact in this case, by \cite{CM-OptMaps}, for $\mu_0,\mu_1$ as in \cref{def:delta-enb} there is a unique $\nu \in \mathrm{OptGeo}(\mu_0,\mu_1)$, and it is induced by a map $S:X\to\mathrm{Geo}(X)$, and hence $\nu$ is concentrated on the set $S(\supp(\mu_0))$ which is made of non-branching geodesics. In general, the relationship between the two conditions is not clear.

\subsection{Sard properties and \texorpdfstring{$\delta$}{delta}-essentially non-branching}

We assume some familiarity with sub-Riemannian geometry. A summary tailored to our purposes is provided in \cref{a:SR}.

\begin{definition}[Sub-Riemannian m.m.s.] A \emph{sub-Riemannian metric measure space} $(M,\sfd,\mm)$ is a smooth manifold $M$, equipped with a complete Carnot-Carathéodory distance $\sfd$, and a smooth measure $\mm$, i.e.\ with smooth positive density in local charts.
\end{definition}

\begin{definition}[Sard properties]\label{def:Sard} A sub-Riemannian metric measure space $(M,\sfd,\mm)$ satisfies
    \begin{itemize}
        \item the \emph{minimizing Sard property} if for any $x\in M$ the set of final points of abnormal geodesics from $x$ has zero measure in $M$;
        \item the \emph{$*$-minimizing Sard property} if for any $x\in M$ the set of final points of geodesics from $x$ containing non-trivial abnormal segments has zero measure in $M$.
    \end{itemize}
\end{definition}
The $*$-minimizing Sard property was introduced in \cite[Def.\ 7.13]{BMR-Unification}, and it is a reinforcement of the minimizing Sard property. The ``star'' comes from the fact that, if the $*$-minimizing Sard property holds true, then for any $x\in M$ one can find a geodesically star-shaped and full-measure subset where $\sfd^2(x,\cdot)$ is smooth, see \cite[Lemma 7.14]{BMR-Unification}. If $M$ is a real-analytic manifold and the sub-Riemannian Hamiltonian is real-analytic, it is well-known that geodesics are either abnormal, or do not contain non-trivial abnormal segments; it follows that, in the real-analytic case, the $*$-minimizing and minimizing Sard properties are equivalent.

\begin{theorem}\label{thm:Sardimpliesdeltaessnb}
    Let $(M,\sfd,\mm)$ be a sub-Rieman\-nian metric measure space satisfying the $*$-minimizing Sard property. Then $(M,\sfd,\mm)$ is $\delta$-essentially non-branching.
\end{theorem}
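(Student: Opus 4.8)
The plan is to show that for $\mu_0 \ll \mm$ compactly supported and $\mu_1 = \sum_{j=1}^L \lambda_j \delta_{x_j}$ a finite sum of Dirac masses, any optimal dynamical plan $\nu \in \OptGeo(\mu_0,\mu_1)$ is concentrated on non-branching geodesics. First I would reduce to the case of a single Dirac mass: decomposing $\mu_1 = \sum_j \lambda_j \delta_{x_j}$, any $\nu \in \OptGeo(\mu_0,\mu_1)$ decomposes (by disintegrating with respect to the endpoint map $e_1$) into pieces $\nu_j$, each of which lies in $\OptGeo(\mu_0^j, \delta_{x_j})$ for a suitable sub-probability $\mu_0^j \ll \mm$; a set of geodesics is non-branching if and only if each of these finitely many pieces is concentrated on a non-branching set, since a branching pair would have to share an endpoint $x_j$, hence live in the same piece. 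So it suffices to treat $\mu_1 = \delta_x$ for a fixed $x \in M$.

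Next, I would invoke the $*$-minimizing Sard property at the point $x$: by \cite[Lemma 7.14]{BMR-Unification}, there is a full-measure, geodesically star-shaped (with respect to $x$) open set $\Omega_x \subset M$ on which $y \mapsto \sfd^2(x,y)$ is smooth, and on which the unique minimizing geodesic from $x$ to $y$ is the normal geodesic $t \mapsto \exp_x(t\,\di_y(\tfrac12\sfd^2(x,\cdot)))$, depending smoothly on $y$ and containing no non-trivial abnormal segment. Since $\mu_0 \ll \mm$ and $\mm$ is a smooth measure, $\mu_0(M \setminus \Omega_x) = 0$; since $\sfd^2(x,\cdot)$ is smooth, hence continuous, on $\Omega_x$ and $\nu$ is optimal, $\nu$-almost every geodesic $\gamma$ has $\gamma_1 = x$ and $\gamma_0 \in \Omega_x$, and therefore $\gamma$ coincides with the smooth normal geodesic $S(\gamma_0)$ from $\gamma_0$ to $x$. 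This gives a Borel map $S : \Omega_x \to \Geo(M)$ with $\nu$ concentrated on $S(\Omega_x)$.

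Finally, I would argue that $S(\Omega_x)$ is a non-branching set of geodesics. Suppose $\gamma^1 = S(y_1)$ and $\gamma^2 = S(y_2)$ agree on $[0,t]$ for some $t \in (0,1)$. Both are normal geodesics with a common initial segment, hence they lift to the same normal extremal on $[0,t]$ (the initial covector is determined by the velocity of a normal geodesic lacking abnormal segments — this is where strict normality matters), and two normal extremals with the same covector at time $t$ coincide for all times; in particular $y_1 = \gamma^1_0 = \gamma^2_0 = y_2$ and $\gamma^1 = \gamma^2$. This contradicts branching, so $S(\Omega_x)$ is non-branching and $\nu$ is concentrated on it, completing the single-Dirac case and hence the proof.

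\textbf{Main obstacle.} The delicate point is the last step: from two minimizing geodesics sharing an initial segment, concluding they have the same initial covector. A priori a normal geodesic could also be abnormal and carry several lifts; the role of the $*$-minimizing Sard property is precisely to restrict $\mu_0$ to the set where the connecting geodesics are \emph{strictly} normal (no non-trivial abnormal sub-segment), so that the covector is recovered from the trajectory and the flow of the Hamiltonian vector field gives uniqueness. Care is also needed to ensure the relevant sets (the good set $\Omega_x$, the map $S$, the non-branching set of geodesics) are Borel so that the measure-theoretic statements make sense; this should follow from smoothness of $\exp_x$ and of $\sfd^2(x,\cdot)$ on $\Omega_x$, but it is the kind of regularity bookkeeping that must be done carefully.
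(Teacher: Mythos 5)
The reduction to a single Dirac mass is wrongly justified. Branching concerns agreement on an \emph{initial} segment $\gamma^1|_{[0,t']}=\gamma^2|_{[0,t']}$; it places no constraint on the right endpoints $\gamma^1_1,\gamma^2_1$. Two geodesics in $\supp(\nu)$ ending at distinct $x_i\neq x_j$ can share an initial segment, hence form a branching pair, while living in different pieces $\nu_i,\nu_j$, so a finite union of non-branching sets of geodesics need not be non-branching (two branching singletons already give a counterexample). Notice also that in the single-Dirac case your Step 3 is more than is needed: if $\gamma^1=S(y_1)$ and $\gamma^2=S(y_2)$ agree on $[0,t]$, then already $y_1=\gamma^1_0=\gamma^2_0=y_2$ and so $\gamma^1=\gamma^2$ because $S$ is a function, with no covectors required. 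The covector argument is, however, exactly what covers the case the reduction wrongly dismissed: if $\gamma^1\in S_i(\Omega_{x_i})$ and $\gamma^2\in S_j(\Omega_{x_j})$ agree on $[0,t']$ with $t'<1$, both contain no non-trivial abnormal segments, hence $\gamma^1|_{[0,t']}$ is not abnormal and admits a unique normal lift; propagating that lift by the Hamiltonian flow forces $\gamma^1=\gamma^2$ on all of $[0,1]$ (so in particular $x_i=x_j$). Drop the reduction, observe directly that $\nu$ is concentrated on $\bigcup_j S_j(\Omega_{x_j})$, and run Step 3 for an arbitrary pair across pieces; then the argument is sound, modulo the Borel bookkeeping you flag, which is genuine but handled by Lusin--Souslin since each $S_j$ is continuous and injective on $\Omega_{x_j}$.

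Once repaired, your route differs mildly from the paper's. Both rely on the full-measure set $\mathcal{U}_y$ from \cite[Lemma 7.14]{BMR-Unification} (the paper works with its complement $C_y:=M\setminus\mathcal{U}_y$), but where you re-derive the branching-implies-abnormal link through uniqueness of normal lifts of strictly normal segments, the paper simply cites \cite[Cor.\ 6]{MR-Branching} to conclude that branching geodesics in $\supp(\nu)$ lie in $e_0^{-1}\bigl(\bigcup_j C_{x_j}\bigr)$, whose $\nu$-measure is $\mu_0\bigl(\bigcup_j C_{x_j}\bigr)=0$; there is no disintegration at all. The underlying mechanism is the same, and the paper's citation is the faster path while your version is more self-contained.
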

\begin{proof}
    For any $y\in M$ let $B_y\subset M$ be the set of those initial points $\gamma_0$ of geodesics $\gamma \in \mathrm{Geo}(M)$ such that $\gamma_1 = y$ and $\gamma$ is branching, that is there exists $\eta \in \mathrm{Geo}(M)$, with $\eta \neq \gamma$, and $t'\in (0,1)$ such that $\gamma|_{[0,t']} = \eta|_{[0,t']}$. The segment $\gamma|_{[0,t']}$ must be abnormal, see \cite[Cor.\ 6]{MR-Branching}. In particular, $B_y$ is contained in the set $A_y$ of final points of geodesics starting from $y$ that contain a non-trivial abnormal segment. In turn, if the $*$-minimizing Sard property holds, $A_y$ is contained in a Borel set $C_y$ with $\mm(C_y)=0$, see \cite[Lemma 7.14, setting $C_y := M \setminus \mathcal{U}_y$]{BMR-Unification}. 
   
    We can now prove that $(M,\sfd,\mm)$ is $\delta$-essentially non-branching. Let  $\mu_0\in \P_c(X)$ with $\mu_0\ll\mm$, and $\mu_1 = \sum_{j=1}^L\lambda_j \delta_{x_j} \in \P(X)$. Let $\nu \in \mathrm{OptGeo}(\mu_0,\mu_1)$ and $C:=\cup_{j=1}^L C_{x_j}$. By construction, $C$ is Borel, $\mm(C)=0$, and thus $\mu_0(C)=0$. The set of geodesics in $\supp( \nu)$ that can branch is contained in $e_0^{-1}(C)$, and we have $\nu(e_0^{-1}(C)) = (e_0)_{\sharp} \nu(C) = \mu_0(C) = 0$. It follows that $\nu$ is concentrated on the Borel set $\supp( \nu) \setminus e_0^{-1}(C)$, which is non-branching.
\end{proof}

\subsection{Measure contraction properties under \texorpdfstring{$\delta$}{delta}-essentially non-branching}

The next result extends the $\MCP$ to the case where the second marginal $\mu_1$ is a general probability measure that can be approximated by Dirac masses in a suitable quantitative sense. This will be a key tool in showing that the \emph{local} $\MCP$ descends to quotients by compact group actions.

\begin{theorem}[Improved measure contraction for more general second marginals]\label{thm:MCPineqdeltaess}
    Let $(X,\sfd,\mm)$ be a $\delta$-essentially non-branching metric measure space satisfying the $\MCP_{\loc}(K,N)$ for some $K\in \R$ and $N \in [1,\infty)$. Then, for any bounded set $U \subset X$, there exists $\varepsilon > 0$ such that the following property holds. For all $\mu_0, \mu_1 \in \P_c(X)$ satisfying:
    \begin{itemize}
        \item $\mu_0 =\tfrac{1}{\mm(A)}\mm|_A$ for some Borel set $A\subset U$ with $0<\mm(A)<\infty$;
        \item $\supp\mu_0\cup\supp\mu_1 \subset U$;
        \item there exists a sequence of finite sums of Dirac masses $\mu_1^{m} \in \P_c(X)$ and optimal plans $\pi^{m} \in \Opt(\mu_0,\mu_1^{m})$, for $m\in \N$, such that $\mu_1^{m}\rightharpoonup \mu_1$, and
        \begin{equation}\label{eq:essup}
            \pi^{m}-\essup\sfd < \varepsilon, \qquad \forall\, m \in \N;
        \end{equation}
    \end{itemize}
    there exists an optimal dynamical plan $\nu\in\OptGeo(\mu_0,\mu_1)$ such that it holds
    \begin{equation}\label{eq:MCPineqdeltaess}
        \frac{1}{\mm(A)}\mm \geq (e_t)_{\sharp}\left(\tau_{K,N}^{(1-t)}(\sfd(\gamma_0,\gamma_1))^N\nu \right),\qquad \forall\, t\in [0,1].
    \end{equation}
\end{theorem}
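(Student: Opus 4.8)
The plan is to first reduce to the case where the second marginal is itself a finite sum of Dirac masses — which is exactly the regime covered by the $\delta$-essential non-branching condition of \cref{def:delta-enb} — prove the measure contraction inequality there by localising and summing the $\MCP_{\loc}$ inequality over the atoms, and then pass to the narrow limit $m\to\infty$. To set up the localisation: since $(X,\sfd)$ is proper, $\overline{U}$ is compact, so the $\MCP_{\loc}$-covering $\{\mathcal{O}\}$ of \cref{def:MCP} admits a finite subfamily covering $\overline{U}$; let $\varepsilon>0$ be smaller than half the Lebesgue number of this finite cover. Then every subset of $\overline{U}$ of diameter less than $2\varepsilon$ lies in one of these neighbourhoods, and this $\varepsilon$ depends only on $U$, as required.

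The core is the case where $\mu_1 = \mu_1^m = \sum_{j=1}^{L_m}\lambda^m_j\delta_{y^m_j}$ is itself a finite sum of Dirac masses. Here, adapting the arguments of \cite{CM-OptMaps} to the weaker $\delta$-essentially non-branching hypothesis, together with $\MCP_{\loc}(K,N)$, one obtains: the optimal plan between the absolutely continuous $\mu_0$ and $\mu_1^m$ is unique (hence equals the given $\pi^m$, so \eqref{eq:essup} controls it) and is induced by an optimal map; it is lifted by a unique $\nu^m\in\OptGeo(\mu_0,\mu_1^m)$, which is induced by a Borel map $S_m$ with $e_0\circ S_m=\mathrm{id}$ $\mu_0$-a.e.; the intermediate measures $\mu^m_t:=(e_t)_\sharp\nu^m$ are absolutely continuous for $t\in[0,1)$; and the intermediate point maps $e_t\circ S_m$ are injective $\mu_0$-a.e.\ for $t\in[0,1)$. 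Setting $A^m_j:=\{x\in A:(S_m(x))_1=y^m_j,\ \sfd(x,y^m_j)<\varepsilon\}$, \eqref{eq:essup} gives $\mu_0\big(A\setminus\bigsqcup_jA^m_j\big)=0$ and $\lambda^m_j=\mu_0(A^m_j)=\mm(A^m_j)/\mm(A)$, while the choice of $\varepsilon$ puts each $A^m_j\cup\{y^m_j\}$ inside some $\mathcal{O}$. The renormalised restriction $\widehat\nu^m_j:=\tfrac{1}{\lambda^m_j}\,\nu^m|_{\{e_1=y^m_j\}}$ lies in $\OptGeo(\widehat\mu^m_{0,j},\delta_{y^m_j})$ with $\widehat\mu^m_{0,j}:=\tfrac{1}{\mm(A^m_j)}\mm|_{A^m_j}$ (this uses that $\nu^m$ is induced by a map). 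Since $\widehat\mu^m_{0,j}$ and $\delta_{y^m_j}$ are supported in $\mathcal{O}$, the local measure contraction property produces a dynamical plan between them satisfying the $\MCP$ inequality, and by the uniqueness recalled above this plan must coincide with $\widehat\nu^m_j$, whence
\begin{equation*}
\frac{1}{\mm(A^m_j)}\mm \ \geq\ (e_t)_\sharp\Big(\tau^{(1-t)}_{K,N}\big(\sfd(\gamma_0,\gamma_1)\big)^N\,\widehat\nu^m_j\Big),\qquad t\in[0,1].
\end{equation*}

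Now $(e_t)_\sharp\big(\tau^{(1-t)}_{K,N}(\sfd(\gamma_0,\gamma_1))^N\,\widehat\nu^m_j\big)$ is concentrated on $(e_t\circ S_m)(A^m_j)$ and is absolutely continuous with respect to $\mu^m_t$; since $e_t\circ S_m$ is injective $\mu_0$-a.e.\ (and the $y^m_j$ are distinct), the sets $(e_t\circ S_m)(A^m_j)$, $j=1,\dots,L_m$, are pairwise disjoint up to $\mu^m_t$-null sets, hence up to null sets for each of the measures involved. Using $\nu^m=\sum_j\lambda^m_j\widehat\nu^m_j$ and $\lambda^m_j=\mm(A^m_j)/\mm(A)$, summation then yields
\begin{equation*}
(e_t)_\sharp\Big(\tau^{(1-t)}_{K,N}\big(\sfd(\gamma_0,\gamma_1)\big)^N\,\nu^m\Big)
=\sum_{j}\lambda^m_j\,(e_t)_\sharp\Big(\tau^{(1-t)}_{K,N}\big(\sfd(\gamma_0,\gamma_1)\big)^N\,\widehat\nu^m_j\Big)
\ \leq\ \sum_{j}\frac{1}{\mm(A)}\mm|_{(e_t\circ S_m)(A^m_j)}\ \leq\ \frac{1}{\mm(A)}\mm,
\end{equation*}
so $\nu^m$ satisfies \eqref{eq:MCPineqdeltaess}. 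Finally, $\mu^m_1=\mu_1^m\rightharpoonup\mu_1$ and all marginals are supported in a fixed compact set, so the geodesics in $\supp\nu^m$ have endpoints (hence, by $\sfd(\gamma_t,\gamma_0)\le\sfd(\gamma_0,\gamma_1)$, their whole images) in a fixed compact set; by Arzelà–Ascoli the family $\{\nu^m\}$ is tight in $\P(\Geo(X))$, so along a subsequence $\nu^m\rightharpoonup\nu$. Then $(e_0)_\sharp\nu=\mu_0$, $(e_1)_\sharp\nu=\mu_1$, and by stability of optimality under narrow convergence $\nu\in\OptGeo(\mu_0,\mu_1)$. Since $\gamma\mapsto\tau^{(1-t)}_{K,N}(\sfd(\gamma_0,\gamma_1))^N$ is continuous and bounded on the compact set of geodesics involved and $e_t$ is continuous, the weighted measures converge narrowly, and passing \eqref{eq:MCPineqdeltaess} to the limit (lower semicontinuity of mass on open sets and outer regularity of $\mm$) gives \eqref{eq:MCPineqdeltaess} for $\nu$.

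The main obstacle is the set of structural facts invoked at the start of the second paragraph: uniqueness of optimal plans, the fact that they are induced by maps, absolute continuity of the intermediate measures, and — crucially — injectivity of the intermediate point maps, all under $\delta$-essential non-branching rather than full essential non-branching. One must verify that in each transport problem that appears (the absolutely continuous $\mu_0$ against the finite atomic $\mu_1^m$, and each sub-problem $\widehat\mu^m_{0,j}$ against a single Dirac $\delta_{y^m_j}$), every appeal to non-branching in the arguments of \cite{CM-OptMaps} concerns only families of geodesics that $\delta$-essential non-branching actually controls. This is precisely the point where the restrictive hypotheses of \cref{thm:MCPineqdeltaess} — an absolutely continuous first marginal and a second marginal approximable ``uniformly enough'' by Dirac masses — are exploited; and the resulting disjointness of the sets $(e_t\circ S_m)(A^m_j)$, without which the summation over atoms would lose a multiplicative factor $L_m$, is the load-bearing consequence of that analysis.
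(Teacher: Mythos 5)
Your overall skeleton is right — localise via a Lebesgue number, handle the finite-atomic case by decomposing over atoms and applying $\MCP_{\loc}$, then pass to the narrow limit — and you correctly pinpoint the load-bearing step in your last paragraph. But the structural facts you invoke to run the atomic case are exactly the ones the paper says do \emph{not} carry over under $\delta$-essential non-branching and $\MCP_{\loc}$: see \cref{rmk:differencesfromCM}, which states explicitly that uniqueness of the optimal plan $(e_0,e_1)_\sharp\nu$ and of the optimal dynamical plan $\nu$ is \emph{not} obtained, and the footnote in Step~1 of the paper's proof, which explains that the Cavalletti–Mondino argument works with the $\sfd^2$-cyclically monotone union of supports of \emph{all} optimal plans, a set that may violate \eqref{eq:essup2} under $\MCP_{\loc}$. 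You use uniqueness twice in an essential way: first to identify the lift of your constructed $\nu^m$ with the given $\pi^m$, and then to identify the $\MCP_{\loc}$-lift with the restricted plan $\widehat\nu^m_j$. Neither identification is available. You also invoke $\mu_0$-a.e.\ injectivity of $e_t\circ S_m$ to get the disjointness of the sets $(e_t\circ S_m)(A^m_j)$; this too is a result of \cite{CM-OptMaps} under the classical essentially non-branching condition, not under the weaker one.

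The paper's proof is arranged so as not to need any of these. It shows directly (not by citation) that the \emph{specific} $\pi^m$ is induced by a map, via a Rajala--Sturm mixing/gluing construction that would contradict $\delta$-essential non-branching if $\pi^m$ had a nondegenerate disintegration; it then \emph{constructs} $\nu^m:=\sum_j\lambda_j\nu^m_j$ from the $\MCP_{\loc}$-lifts on the pieces $A^m_j$ rather than restricting a pre-existing lift, so no uniqueness of dynamical plans is needed; and it proves the key disjointness $\mm\big(\{\rho^m_{t,i}>0\}\cap\{\rho^m_{t,j}>0\}\big)=0$ by running the same mixing argument again on the two sub-problems, rather than deducing it from injectivity of an intermediate-point map. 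Your plan to ``verify that every appeal to non-branching in \cite{CM-OptMaps} concerns only families of geodesics that $\delta$-essential non-branching controls'' is precisely the gap: the authors state they could not carry out such a verification, and they replace the CM machinery by the mixing contradiction tailored to the two special optimal-transport problems at hand. Until that replacement (or a genuine adaptation of the uniqueness and injectivity results) is supplied, the proposal does not close.
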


\begin{remark}\label{rmk:MCPglob}
    If the diameter of $\supp \mu_0 \cup \supp \mu_1$ is less than $\varepsilon$, then \eqref{eq:essup} holds. However, we will need to apply this result to more general situations, where $U$ consists of orbits of the action of a compact group. Furthermore, if $\MCP_{\loc}$ is replaced by $\MCP$, then condition \eqref{eq:essup} and the restriction of the supports to $U$ can be omitted.
\end{remark}
\begin{remark}\label{rmk:differencesfromCM}
    The proof of \cref{thm:MCPineqdeltaess} follows the same strategy of  \cite[Props.\ 4.2, 4.3]{CM-OptMaps}, but with some technical differences due to our weaker hypotheses ($\delta$-essentially non-branching and $\MCP_{\loc}$). Note also that we employ Ohta's version of $\MCP$ instead of the variant in \cite{CM-OptMaps}. We avoid using the \emph{good geodesics} of \cite[Thm.\ 3.1]{CM-OptMaps} introduced in \cite{Rajala1,Rajala2,RS-nonbranchingstrongCD}. The overall argument is thus simpler, and we provide a self-contained proof here to highlight the differences. Note, though, that we are unable to prove that the optimal dynamical plan $\nu$ and the corresponding optimal plan $(e_0, e_1)_\sharp \nu$ are unique and induced by a map, as was shown in \cite{CM-OptMaps} using the classical essentially non-branching condition.
\end{remark}
\begin{proof}
    \textbf{Step 0.} By the Lebesgue number lemma and the $\MCP_{\loc}(K, N)$, we can choose $\varepsilon > 0$ such that for any neighborhood $\mathcal{O} \subseteq U$ with $\diam(\mathcal{O}) < \varepsilon$, and for any $o \in \mathcal{O}$, as well as for any $\mu_0 \in \P_c(X)$ of the form $\mu_0 = \frac{1}{\mm(A)} \mm|_A$ for some Borel set $A \subset \mathcal{O}$ with $0 < \mm(A) < \infty$, there exists $\nu \in \OptGeo(\mu_0, \delta_o)$ such that \eqref{eq:O-MCP} holds.

    \textbf{Step 1.} Let $m\in\N$, and consider $\mu_1^{m}$ and $\pi^{m}\in \Opt(\mu_0,\mu_1^{m})$ from the statement. Note that $\supp(\pi^{m})$ is a $\sfd^2$-cyclically monotone set\footnote{In \cite[Prop.\ 4.2]{CM-OptMaps}, the authors consider instead a $\sfd^2$-cyclically monotone set containing the support of \emph{any} optimal plan with marginals $\mu_0$ and $\mu_1^{m}$. We cannot work with this set, as it may not satisfy \eqref{eq:essup2}. As a consequence, we can only prove that the specific $\pi^m$ is induced by a map, but not uniqueness of the plan.} \cite[Thm.\ 1.13]{AG-usersguide} and, by \eqref{eq:essup}, it holds
    \begin{equation}\label{eq:essup2}
        \sfd(x,y) < \varepsilon,\qquad \forall\, (x,y)\in \supp(\pi^{m}).
    \end{equation}

    Let $P_j:X\times X\to X$ be the projection on the $j$-th factor, $j=1,2$. Let $S\subset X$ be the set of $x\in X$ such that
    \begin{equation}
        P_2\Big(\supp(\pi^{m}) \cap (\{x\}\times X)\Big)
    \end{equation}
    is not a singleton (and note that this set is analytic). We prove that $\mu_0(S)=0$, from which it will follow that $\pi^{m}$ is induced by a map.

    Suppose by contradiction that $\mu_0(S)>0$. Since $\mu_1^{m}$ is a finite sum of Dirac masses, up to restricting $S$ and relabeling the points in $\supp(\mu_1^{m})$, we can assume that there exist Borel maps
    \begin{equation}
        T_1,T_2:S\to X,\qquad \mathrm{graph}(T_1),\mathrm{graph}(T_2)\subset \supp(\pi^{m}),
    \end{equation}
with $T_1(x)=x_1$, $T_2(x)=x_2$ for all $x\in S$, with $x_1\neq x_2$, $S$ is compact and, furthermore, as a consequence of \eqref{eq:essup2}, we have
    \begin{equation}\label{eq:essup3}
        \sfd(x,x_i)<\varepsilon,\qquad \forall x\in S,\quad i=1,2.
    \end{equation}
    We will now consider the optimal transport problem between the following two measures
    \begin{equation}\label{eq:tildemeasures}
        \bar{\mu}_0:=\frac{1}{\mm(S)}\mm|_{S},\qquad \text{and} \qquad \bar{\mu}_1:=\frac{1}{2}\left(\delta_{x_1}+\delta_{x_2}\right).
    \end{equation}
    We remark that $\tfrac{1}{2}((Id,T_1)_\sharp \bar{\mu}_0 + (Id,T_2)_{\sharp}\bar{\mu}_0)\in\Cpl(\bar{\mu}_0,\bar{\mu}_1)$, and its support is $\sfd^2$-cyclically monotone (since it is a subset of $\supp(\pi^{m})$), therefore it is optimal \cite[Thm.\ 1.13]{AG-usersguide}.


    \textbf{Step 2.} Thanks to the above construction, and the $\MCP_{\loc}$, there exists $\bar{\nu}^i\in \OptGeo(\bar{\mu}_0,\delta_{x_i})$ for $i=1,2$ such that it holds
    \begin{equation}
        \frac{1}{\mm(S)}\mm \geq (e_t)_{\sharp}\left(\tau_{K,N}^{(1-t)}(\sfd(\gamma_0,\gamma_1))^N \bar{\nu}^i\right),\qquad \forall\, t\in[0,1].
    \end{equation}
    Recall that $W_2$-geodesics of optimal dynamical satisfying the Ohta's $\MCP$ inequality are absolutely continuous w.r.t.\ $\mm$, so we let $\bar{\mu}_t^i= \bar{\rho}_t^i\mm$, $i=1,2$. Taking into account the inequality
    \begin{equation}
        \tau_{K,N}^{(1-t)}(\theta)\geq (1-t)e^{-\tfrac{\theta t\sqrt{(N-1)K^{-}}}{N}}, \qquad \forall\, \theta \geq 0,
    \end{equation}
    proved e.g.\ in \cite[Rmk.\ 2.3]{CM-OptMaps} (here $K^{-} = \max\{-K,0\}$), and \eqref{eq:essup3}, we obtain for $i=1,2$ that
    \begin{equation}
        \mm\left(\{\bar{\rho}^i_t>0\}\right) \geq (1-t)^N e^{-\varepsilon t\sqrt{(N-1)K^{-}}}\mm(S),
    \end{equation}
    yielding
    \begin{equation}\label{eq:liminf1}
        \liminf_{t\to 0} \mm\left(\{\bar{\rho}_t^i>0\}\right) \geq \mm(S) = \mm\left(\{\bar{\rho}_0^i>0\}\right).
    \end{equation}

    %
    %
    Denote by $S^{r}$ the closed $r$-tubular neighborhood of $S$. By the dominated convergence theorem, and since $S$ is closed, $\lim_{r\to 0} \mm(S^{r}) =  \mm(S)$. In particular, there exists $r_0>0$ such that
    \begin{equation}\label{eq:liminf2}
        \mm(S^{r_0}) \leq \frac{3}{2} \mm(S)
    \end{equation}
    By construction, for $\bar{\mu}_t^i$-a.e.\ $x\in X$ there is $\gamma \in \Geo(X)$ such that $\gamma_0\in S$, $\gamma_1 = x_i$, and $\gamma_t = x$. For $t\in [0,r_0]$, the measures $\bar{\mu}_t^i$ are thus concentrated on $S^{r_0}$ and it follows that
    \begin{align}
        \mm(S^{r_0})
        & \geq \mm\left(\{\bar{\rho}_t^1>0\}\cup \{\bar{\rho}_t^2>0\}\right) \\
        & \geq \mm\left(\{\bar{\rho}_t^1>0\}\right) + \mm\left(\{\bar{\rho}_t^2>0\}\right) -\mm\left(\{\bar{\rho}_t^1>0\}\cap \{\bar{\rho}_t^2>0\}\right).
    \end{align}
    Using \eqref{eq:liminf1} and \eqref{eq:liminf2} we obtain
    \begin{equation}\label{eq:tausmall}
        \limsup_{t\to 0} \mm\left(\{\bar{\rho}_t^1>0\}\cap \{\bar{\rho}_t^2>0\}\right)\geq \frac{1}{3}\mm(S) >0.
    \end{equation}

    \textbf{Step 3.} Since $(Id,T_i)_\sharp \bar{\mu}_0$ is the unique optimal plan between $\bar{\mu}_0$ and $\delta_{x_i}$ we have $(e_0,e_1)_\sharp\bar{\nu}_i = (Id,T_i)_\sharp\bar{\mu}_0$, for $i=1,2$. Using that $\tfrac{1}{2}((Id,T_1)_\sharp \bar{\mu}_0 + (Id,T_2)_{\sharp}\bar{\mu}_0)\in\Opt(\bar{\mu}_0,\bar{\mu}_1)$, it follows that
    \begin{equation}\label{eq:barnu}
        \bar{\nu}:=\frac{1}{2}(\bar{\nu}_1+\bar{\nu}_2)\in \OptGeo(\bar{\mu}_0,\bar{\mu}_1).
    \end{equation}

As in \cite{CM-OptMaps}, we apply the mixing procedure of \cite{RS-nonbranchingstrongCD}. By \eqref{eq:tausmall}, there is $\tau\in (0,1)$ such that
    \begin{equation}\label{eq:mm_intersection}
        \mm\left(\{\bar{\rho}_\tau^1>0\}\cap \{\bar{\rho}_\tau^2>0\}\right)>0.
    \end{equation}
    Define the following optimal dynamical plans
    \begin{equation}
        \nu^{\mathrm{left}}:=\frac{1}{2}(\mathrm{restr}_0^\tau)_\sharp\left(\bar{\nu}_1 + \bar{\nu}_2\right),\qquad \nu^{\mathrm{left}}:=\frac{1}{2}(\mathrm{restr}_\tau^1)_\sharp\left(\bar{\nu}_1 + \bar{\nu}_2\right),
    \end{equation}
    where $\mathrm{restr}_{t_1}^{t_2}: C([0,1],X)\to C([0,1],X)$ is the restriction-and-reparametrization map (of course it takes geodesics to geodesics). Note that
    \begin{equation}
        \alpha:=(e_1)_\sharp \nu^{\mathrm{left}} = (e_0)_\sharp \nu^{\mathrm{right}} = \frac{1}{2}\left(\bar{\mu}_\tau^1+\bar{\mu}_\tau^2\right).
    \end{equation}
    Note that \eqref{eq:mm_intersection} holds also for the measure $\alpha$, namely
    \begin{equation}\label{eq:alpha_intersection}
        \alpha\left(\{\bar{\rho}_\tau^1>0\}\cap \{\bar{\rho}_\tau^2>0\}\right)  = \frac{1}{2}\int_{\{\bar{\rho}_\tau^1>0\}\cap \{\bar{\rho}_\tau^2>0\}} \Big(\bar{\rho}_{\tau}^1(x)+\bar{\rho}_{\tau}^2(x)\Big)\mm(\di x) >0.
    \end{equation}
    Consider the disintegration of $\nu^{\mathrm{left}}$ (resp.\ $\nu^{\mathrm{right}}$) with respect to $e_1$ (resp.\ $e_0$):
    \begin{equation}
        \nu^{\mathrm{left}} = \int_X \nu_x^{\mathrm{left}} \alpha(\di x),\qquad \nu^{\mathrm{right}} = \int_X \nu_x^{\mathrm{right}} \alpha(\di x),
    \end{equation}
    for uniquely defined measures $\nu_x^{\mathrm{left}}\in\P(\Geo(X)\cap e_1^{-1}(x))$ and $\nu_x^{\mathrm{right}}\in\P(\Geo(X)\cap e_0^{-1}(x))$, for $\alpha$-a.e.\ $x\in X$, respectively. We now glue $\nu^{\mathrm{left}}$  to $\nu^{\mathrm{right}}$, and we show that the glued measure is an optimal dynamical plan between $\bar{\mu}_0$ and $\bar{\mu}_1=\tfrac{1}{2}(\delta_{x_1}+\delta_{x_2})$, and that it contradicts the $\delta$-essentially non-branching assumption. In order to do so, consider the (Lipschitz) map
    \begin{equation}
        \mathrm{Gl}:\left\{(\gamma^1,\gamma^2)\in C([0,1],X)\times C([0,1],X) \mid \gamma^1_1 = \gamma_0^2\right\} \to C([0,1],X),
    \end{equation}
    defined by $\mathrm{Gl}(\gamma^1,\gamma^2) := \gamma^1_{2s}$ if $0\leq s\leq 1/2$ and $\mathrm{Gl}(\gamma^1,\gamma^2):=\gamma^2_{2s-1}$ when $1/2\leq s\leq 1$. We set
    \begin{equation}
        \nu^{\mathrm{mix}}:=\int_X \nu_x\alpha(\di x),\qquad \nu_x:=\mathrm{Gl}_{\sharp}\left(\nu_x^{\mathrm{left}}\times \nu_x^{\mathrm{right}}\right),
    \end{equation}
    where $\nu_x^{\mathrm{left}}\times \nu_x^{\mathrm{right}}$ is the product probability measure on $C([0,1],X)\times C([0,1],X)$. Remember that $\bar{\nu}$ defined in \eqref{eq:barnu} is, by construction, concentrated on the set of geodesics
    \begin{equation}
        \bar{\Gamma}:=(e_0,e_1)^{-1}\Big(S\times \{x_1,x_2\}\Big) \subset \Geo(X)
    \end{equation}
    It follows that for $\alpha$-a.e.\ $x\in X$, the measure $\nu_x$ is concentrated on the set of curves
    \begin{multline}
        \left\{\gamma \in C([0,1],X)\mid \exists \gamma^1,\gamma^2\in\bar{\Gamma} \text{ s.t. } \gamma^1_\tau=\gamma^2_\tau = x, \right. \\
        \left.\mathrm{restr}_0^\tau(\gamma)=\mathrm{restr}_0^\tau(\gamma^1), \quad \mathrm{restr}_\tau^1(\gamma)=\mathrm{restr}_\tau^1(\gamma^2)\right\}.
    \end{multline}
    Furthermore, the set $(e_0,e_1)(\bar{\Gamma})= S\times \{x_1,x_2\}$ and is, by construction, $\sfd^2$-cyclically monotone. Therefore, for $\gamma^1,\gamma^2\in\bar\Gamma$ with $\gamma^1_\tau = \gamma^2_\tau = x$, we obtain
    \begin{align}
        \sfd^2(\gamma_0^1,\gamma_1^1) + \sfd^2(\gamma_0^2,\gamma_1^2)
        & \leq \sfd^2(\gamma_0^1,\gamma_1^2) + \sfd^2(\gamma_0^2,\gamma_1^1)                                                          \\
        & \leq \left( \tau \ell(\gamma^1)+(1-\tau)\ell(\gamma^2)\right)^2+ \left( \tau \ell(\gamma^2)+(1-\tau)\ell(\gamma^1)\right)^2 \\
        & \leq \ell(\gamma^1)^2+\ell(\gamma^2)^2 - 2\tau(1-\tau)\left(\ell(\gamma^1)-\ell(\gamma^2)\right)^2                          \\
        & \leq \ell(\gamma^1)^2+\ell(\gamma^2)^2 = \sfd^2(\gamma_0^1,\gamma_1^1)+ \sfd^2(\gamma_0^2,\gamma_1^2).
    \end{align}
    It follows that all these inequalities are actually equalities. The lengths $\ell(\gamma^1)=\ell(\gamma^2)$ are thus equal, $\gamma=\mathrm{Gl}(\gamma^1,\gamma^2)$ must be a geodesic, and its length is equal to $l_x:=\tfrac{\sfd(x,x_1)}{(1-\tau)}=\tfrac{\sfd(x,x_2)}{(1-\tau)}$. Disintegrating $\nu^{\mathrm{mix}}$ along $e_\tau$ and since $(e_\tau)_\sharp\nu^{\mathrm{mix}}=\alpha = \frac{1}{2}(e_\tau)_{\sharp}(\bar{\nu}^1+\bar{\nu}^2)$, it holds
  \begin{align}
        \int_{\mathrm{Geo}(X)} \sfd^2(\gamma_0,\gamma_1)\nu^{\mathrm{mix}}(\di\gamma) & =\int_X l_x^2(e_\tau)_\sharp (\nu^{\mathrm{mix}})(\di x)                                           \\
        & = \frac{1}{2}\int_X l_x^2 (e_\tau)_\sharp(\bar{\nu}^1+\bar{\nu}^2)(\di x)                           = \frac{1}{2}\int_{\mathrm{Geo}(X)}\sfd^2(\gamma_0,\gamma_1)(\bar{\nu}^1+\bar{\nu}^2)(\di \gamma).
    \end{align}
    Since $\bar{\nu}=\tfrac{1}{2}(\bar{\nu}^1+\bar{\nu}^2)\in \OptGeo(\bar{\mu}_0,\bar{\mu}_1)$, we also have $\nu^{\mathrm{mix}}\in\OptGeo(\bar{\mu}_0,\bar{\mu}_1)$.

    Assume by contradiction that $\nu^{\mathrm{mix}}$ is concentrated on a set of non-branching geodesic $A\subset\Geo(X)$. Then $\nu_x$ is also concentrated on the same set for $\alpha$-a.e.\ $x$. Note that since $A$ is a set of non-branching geodesics, the set $\mathrm{Gl}^{-1}(A)\subset\Geo(X)\times \Geo(X)$ is a graph (more precisely, for all $\gamma\in\Geo(X)$ the set of all $\eta\in\Geo(X)$ such that $(\gamma,\eta)\in \mathrm{Gl}^{-1}(A)$ is either empty or a singleton). It follows by the definition of $\nu_x = \mathrm{Gl}_{\sharp}\left(\nu_x^{\mathrm{left}}\times \nu_x^{\mathrm{right}}\right)$ that $\nu_x^{\mathrm{left}}\times \nu_x^{\mathrm{right}}$ is concentrated on the graph $\mathrm{Gl}^{-1}(A)$. This implies that $\nu_x^{\mathrm{right}}$ must be a Dirac mass.
    
     We get to a contradiction since for all $x\in \{\rho_\tau^1>0\}\cap \{\rho_\tau^2>0\}$, which has positive $\alpha$ measure by \eqref{eq:alpha_intersection}, the measure $\nu_x^{\mathrm{right}}$ has at least two points in its support. In fact, for all such $x$ there is at least one geodesic $\gamma^1 \in \supp(\nu^1)$ and one geodesic $\gamma^2\in\supp(\nu^2)$ passing through $x$. Then $\mathrm{restr}_{\tau}^1(\gamma^i)$ for $i=1,2$ are both in $\supp(\nu^{\mathrm{right}}_x)$.
   
    \textbf{Step 4.} Thanks to the previous steps, in the notation of the statement, there exists a Borel map $T^{m}:X\to X$ such that $\pi^{m}=(Id,T^{m})_\sharp \mu_0$, where we recall that $\mu_1^{m} = \sum_{j=1}^{L(m)}\lambda_j\delta_{x_j}$ with $x_i\neq x_j$ for all $i\neq j$,  and
    $\mu_0 = \tfrac{1}{\mm(A)}\mm|_A$. We use $T^{m}$ to split $A$ into subsets for which we can glue the corresponding $\MCP$ inequalities. For $j=1,\dots,L(m)$, we define
    \begin{equation}
        A_j^m:= (T^m)^{-1}(x_j),\qquad        \mu_{0,j}^m:=\frac{1}{\mm(A_j^m)}\mm|_{A_j^m}\in \P_c(X).
    \end{equation}
    We remark that $A_j^m\subseteq A$ with $\mm(A_j^m) =\lambda_j \mm(A)$, and that
    the diameter of $\supp(\mu_{0,j}^m)\cup \{x_j\}$ is less than $\varepsilon$. By definition of $\varepsilon$ (see \textbf{Step 0}), we find $\nu_j^m\in \OptGeo(\mu_{0,j}^m,\delta_{x_j})$ such that
    \begin{equation}\label{eq:MCPineqalityj}
        \frac{1}{\mm(A_j^m)}\mm \geq (e_t)_\sharp\left(\tau_{K,N}^{(1-t)}(\sfd(\gamma_0,\gamma_1))^N\nu^j \right),\qquad \forall\, t\in[0,1].
    \end{equation}
For all $j=1,\dots,L(m)$, let $\mu_{t,j}^m:=(e_t)_\sharp\nu_j^m = \rho_{t,i}^m\mm$. An elementary arguments shows that the restriction of the right hand side of \eqref{eq:MCPineqalityj} to the set $\{\rho_{t,i}^m=0\}$ is zero, so that we can refine it to
    \begin{equation}\label{eq:MCPineqalityj2}
        \frac{1}{\mm(A)}\mm|_{\{\rho_{t,j}^m > 0\}} \geq (e_t)_\sharp\left(\tau_{K,N}^{(1-t)}(\sfd(\gamma_0,\gamma_1))^N\lambda_j\nu^j \right),\qquad \forall\, t\in[0,1],
    \end{equation}
    where we also used the fact that $\mm(A_j^m)=\lambda_j \mm(A)$.

    Note that for all $i\neq j$, it must hold
    \begin{equation}\label{eq:measurepos}
        \mm\left(\{\rho_{t,i}^m>0\}\cap \{\rho_{t,j}^m>0\}\right)=0,\qquad \forall\, t\in (0,1).
    \end{equation}
    In fact, if for some $i \neq j$ and $t \in (0,1)$ the measure in \eqref{eq:measurepos} were strictly positive, one could apply the mixing argument as in \textbf{Step 3} to construct a dynamical optimal plan between the (normalized) measures $\lambda_i \mu_{0,i}^m + \lambda_j \mu_{0,j}^m$ and $\lambda_i \delta_{x_i} + \lambda_j \delta_{x_j}$, thereby violating the $\delta$-essentially non-branching assumption.\footnote{In \cite{CM-OptMaps}, a contradiction is derived from the fact that, under the essentially non-branching assumption, any optimal plan between absolutely continuous measures and finite sub of Dirac deltas is induced by a map. Here, we argue directly using the $\delta$-essentially non-branching property, since we do not have access to that result.}

    Let then $\nu^{m}:=\sum_{j=1}^{L(m)} \lambda_j \nu_j^m$. By definition $(e_0)_\sharp\nu^{m} = \mu_0$ and $(e_1)_{\sharp}\nu^{m} = \mu_1^{m}$. Furthermore, we have $(e_0,e_1)_\sharp\nu^{m} = \pi^{m}$ by construction, and so $\nu^{m}\in\OptGeo(\mu_0,\mu_1^{m})$. Taking the sum of \eqref{eq:MCPineqalityj2} over $j$, and using \eqref{eq:measurepos}, we obtain that, for all $m\in\N$, it holds
    \begin{equation}\label{eq:endstep4-Ohta}
        \frac{1}{\mm(A)}\mm\geq (e_t)_\sharp\left(\tau_{K,N}^{(1-t)}(\sfd(\gamma_0,\gamma_1))^N\nu^m\right),\qquad \forall\, t\in[0,1].
    \end{equation}
    \textbf{Step 5.} Recall that, by construction, $\mu_1^m\rightharpoonup\mu_1$. In order to take limit for $m\to \infty$ in \eqref{eq:endstep4-Ohta}, we use the stability of optimal transport for sequences of optimal dynamical plans. Since $(X,\sfd)$ is separable, complete and proper, we can apply \cite[Lemma 6.1]{CM-globalization}, for example. We obtain that there exists $\nu\in\OptGeo(\mu_0,\mu_1)$ such that, up to extraction, $\nu^m \rightharpoonup \nu$. Using this fact, we proceed taking the limit for $m\to\infty$ in \eqref{eq:endstep4-Ohta}.

    To do so, let first $B\subset X$ be an open set. From \eqref{eq:endstep4-Ohta}, for all $m\in\N$, it holds
    \begin{equation}
        \frac{1}{\mm(A)}\mm(B) \geq \int_{\Geo(X)}\mathbbm{1}_{e_t^{-1}(B)}(\gamma) g_t(\gamma)\nu^m(\di \gamma), \qquad \forall\,t\in[0,1],
    \end{equation}
    where we used the shorthand $g_t(\gamma)\equiv \tau_{K,N}^{(1-t)}(\sfd(\gamma_0,\gamma_1))^N$. We remark that for any fixed $t\in [0,1]$, the function $g_t:\Geo(X)\to [0,+\infty]$ is non-negative and continuous. Thus, for any $M>0$, the function $g_t\wedge M$ is lower semicontinuous and bounded. Similarly, since $B$ is open, the function $\mathbbm{1}_{e_t^{-1}(B)}$ is also lower semicontinuous and bounded. We then obtain
    \begin{equation}
        \frac{1}{\mm(A)}\mm(B)\geq \int_{\Geo(X)}\mathbbm{1}_{e_t^{-1}(B)}(\gamma) (g_t\wedge M)(\gamma)\nu^m(\di \gamma),
    \end{equation}
    where the integrand in the right hand side is bounded and lower semicontinuous. By \cite[Cor.\ 2.2.6]{Bogachev-Weak}, we get that for all $M>0$, all $t\in[0,1]$ and any open set $B\subset X$,
    \begin{align}
        \frac{1}{\mm(A)}\mm(B) & \geq  \liminf_{m\to\infty} \int_{\Geo(X)}\mathbbm{1}_{e_t^{-1}(B)}(\gamma) (g_t\wedge M)(\gamma)\nu^m(\di \gamma) \\
        & \geq  \int_{\Geo(X)}\mathbbm{1}_{e_t^{-1}(B)}(\gamma) (g_t\wedge M)(\gamma)\nu(\di \gamma)  \\
        & = (e_t)_{\sharp}\left(\tau_{K,N}^{(1-t)}(\sfd(\gamma_0,\gamma_1)\wedge M)^N \nu\right)(B).\label{eq:bogachev}
    \end{align}
    Since both sides of \eqref{eq:bogachev} are $\sigma$-finite Borel measures, we conclude by measure regularity that \eqref{eq:bogachev} holds also when $B$ is replaced by an arbitrary Borel set.
    
    By monotone convergence, we can take $M\to\infty$ and obtain the claimed $\MCP$ inequality.
  \end{proof}       		
\section{Discrete group actions}\label{sec:discreteactions}

In this section, we study the $\MCP$ for quotients by isometric group action, in the discrete case, see \cref{sec:quotients}.

\begin{theorem}\label{thm:quotient1}
    Let $(X,\sfd,\mm)$ be a metric measure space satisfying the $\MCP_{\loc}(K,N)$ for some $K\in \R$ and $N \in [1,\infty)$. Let $G$ be a discrete group of metric measure isometries, acting properly and freely. Then, the metric measure quotient $(X^*,\sfd^*,\mm^*)$ satisfies the $\MCP_{\loc}(K,N)$. If $(X,\sfd,\mm)$ is $\delta$-essentially non-branching, then $(X^*,\sfd^*,\mm^*)$ is $\delta$-essentially non-branching.
\end{theorem}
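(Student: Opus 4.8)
The argument treats the two assertions separately, and both exploit the structural fact recalled in \cref{sec:quotients}: under the present hypotheses the projection $\q:X\to X^*$ is a covering map and a \emph{local} metric measure isometry. The plan is to transport the relevant constructions between $X$ and $X^*$ through small neighbourhoods on which $\q$ is a genuine isometry. As a preliminary I would record the standard covering-space fact that every $x\in X$ admits a radius $\rho(x)>0$ such that $\q$ restricts to a metric measure isometry of $B(x,\rho(x))$ onto $B^*(x^*,\rho(x))$: taking $\rho$ with $B^*(x^*,4\rho)$ inside an evenly covered neighbourhood, minimising geodesics between points of $B^*(x^*,\rho)$ stay in $B^*(x^*,4\rho)$, hence lift through the relevant sheet, which yields both surjectivity onto $B(x,\rho)$ and the distance --- thus measure --- identities. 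For each $x^*\in X^*$ I would then choose $r=r_{x^*}>0$ so small that $B(x,4r)$ is contained both in such an isometry ball and in one of the $\MCP_{\loc}$-neighbourhoods of $X$, and declare $\mathcal O^*:=B^*(x^*,r)$; these open sets cover $X^*$ and will witness the local $\MCP$ for $X^*$.

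To see that $X^*$ satisfies $\MCP_{\loc}(K,N)$, fix $o^*\in\mathcal O^*$ and a measure $\mu_0^*=\tfrac1{\mm^*(A^*)}\mm^*|_{A^*}$ with $A^*\subset\mathcal O^*$. I would pull $o^*,A^*,\mu_0^*$ back through the isometry $\q|_{B(x,4r)}$ to $o,A,\mu_0$ supported in $B(x,r)\subset\mathcal O$, so that $\q_\sharp\mu_0=\mu_0^*$ and $\mm(A)=\mm^*(A^*)$, and apply the $\MCP_{\loc}(K,N)$ of $X$ to get $\nu\in\OptGeo(\mu_0,\delta_o)$ satisfying \eqref{eq:O-MCP}. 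The delicate point is that the interpolating geodesic $(\mu_t)$ is not required to stay in $\mathcal O^*$; this is harmless because every $\gamma\in\supp\nu$ is a minimiser with both endpoints in $B(x,r)$, so by the triangle inequality its whole image lies in $B(x,3r)\subset B(x,4r)$, where $\q$ is an isometry. Hence $\q\circ\gamma\in\Geo(X^*)$ with $\sfd^*((\q\circ\gamma)_0,(\q\circ\gamma)_1)=\sfd(\gamma_0,\gamma_1)$; the pushforward $\nu^*$ of $\nu$ under $\gamma\mapsto\q\circ\gamma$ lies in $\OptGeo(\mu_0^*,\delta_{o^*})$ (same transport cost, because $\q$ restricted to $B(x,r)$ is an isometry and the target is a Dirac mass); and pushing \eqref{eq:O-MCP} forward through $\q$ --- using that $\q\circ\gamma$ evaluated at time $t$ equals $\q(\gamma_t)$, the identity $\mm^*|_{B^*(x^*,4r)}=\q_\sharp(\mm|_{B(x,4r)})$, and $\mm(A)=\mm^*(A^*)$, together with the fact that $(e_t)_\sharp\big(\tau_{K,N}^{(1-t)}(\sfd(\gamma_0,\gamma_1))^N\nu\big)$ is concentrated in $B(x,4r)$ while $\tfrac1{\mm^*(A^*)}\mm^*\ge\tfrac1{\mm^*(A^*)}\mm^*|_{B^*(x^*,4r)}$ --- gives \eqref{eq:O-MCP} for $\nu^*$.

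For the $\delta$-essentially non-branching assertion, I would start from $\mu_0^*\in\P_c(X^*)$ with $\mu_0^*\ll\mm^*$, a finite sum of Dirac masses $\mu_1^*=\sum_j\lambda_j\delta_{x_j^*}$, and $\nu^*\in\OptGeo(\mu_0^*,\mu_1^*)$, and lift the whole picture to $X$. Covering the compact set $\supp\mu_0^*$ by finitely many relatively compact, evenly covered open sets and gluing the corresponding local inverses of $\q$ along a subordinate Borel partition produces a Borel section $\sigma:\supp\mu_0^*\to X$ of $\q$ with precompact image; then $\mu_0:=\sigma_\sharp\mu_0^*\in\P_c(X)$, and $\mu_0\ll\mm$ because $\q$ is a local metric measure isometry. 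Every geodesic $\gamma^*$ with $\gamma_0^*\in\supp\mu_0^*$ has a unique lift $L(\gamma^*)$ starting at $\sigma(\gamma_0^*)$, which is again minimising (a shorter competitor in $X$ would project to a shorter one in $X^*$); $L$ is Borel and $\nu:=L_\sharp\nu^*$ satisfies $(e_0)_\sharp\nu=\mu_0$. The geodesics of $\supp\nu$ have length at most $\max_j\sup_{\supp\mu_0^*}\sfd^*(\cdot,x_j^*)$ and start in the precompact set $\sigma(\supp\mu_0^*)$, so they lie in a fixed compact set and therefore end in a compact subset of the discrete set $\q^{-1}(\{x_1^*,\dots,x_L^*\})$ --- hence in finitely many points --- so $\mu_1:=(e_1)_\sharp\nu$ is again a finite sum of Dirac masses. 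Since the support of $(e_0,e_1)_\sharp\nu$ projects onto the $(\sfd^*)^2$-cyclically monotone support of $(e_0,e_1)_\sharp\nu^*$ and $\q$ does not increase distances, $(e_0,e_1)_\sharp\nu$ is $\sfd^2$-cyclically monotone with bounded support, hence optimal, so $\nu\in\OptGeo(\mu_0,\mu_1)$. Applying the $\delta$-essentially non-branching property of $X$, $\nu$ is concentrated on a Borel non-branching set $\Gamma\subset\Geo(X)$; then $\nu^*$ is concentrated on $L^{-1}(\Gamma)$, and this set is non-branching, for if $\gamma^*,\eta^*\in L^{-1}(\Gamma)$ agree on some $[0,t']$ then $\sigma(\gamma_0^*)=\sigma(\eta_0^*)$, so $L(\gamma^*),L(\eta^*)\in\Gamma$ agree on $[0,t']$ by uniqueness of lifts, hence coincide, hence $\gamma^*=\q\circ L(\gamma^*)=\q\circ L(\eta^*)=\eta^*$.

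I expect the main obstacles to be precisely the two points flagged above. In the $\MCP_{\loc}$ part it is the tension between the local nature of $\MCP_{\loc}$ and the fact that the interpolating geodesic may leave the chosen neighbourhood: one must confine \emph{all} the geodesics entering the construction to a ball on which $\q$ is an honest isometry, which is exactly what the minimality-plus-triangle-inequality observation achieves. In the $\delta$-essentially non-branching part it is the risk that lifting a \emph{finite} sum of Dirac masses along the \emph{infinite} covering $\q$ produces an infinite sum; this is ruled out because a compact subset of $X$ meets each (discrete) fibre of $\q$ in finitely many points. The remaining ingredients --- Borel measurability of $L$, absolute continuity of $\sigma_\sharp\mu_0^*$, the transfer of cyclical monotonicity, and stability of optimality under these operations --- are routine.
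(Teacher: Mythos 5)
Your argument is correct and follows essentially the same route as the paper: in both parts you exploit that $\q$ is a covering map and local metric measure isometry, push the $\MCP_{\loc}$ inequality forward through a small isometry ball, and for the non-branching assertion lift the optimal dynamical plan via a Borel section and geodesic lifting, then invoke the $\delta$-essentially non-branching property of $X$. The only divergence is cosmetic: you construct a global Borel section over $\supp\mu_0^*$ (and track the lengths of lifted geodesics to confine everything to a compact set and to keep $\mu_1$ a finite sum of Diracs), whereas the paper reduces ``without loss of generality'' to the case $\supp\mu_0^*\subset U^*$ for a single evenly covered $U^*$ — your version is somewhat more careful, but the underlying idea and conclusion are identical.
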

\begin{proof}
    In this case, the projection $\q : X\to X^*$ is a local metric measure isometry between $(X,\sfd,\mm)$ and $(X^*,\sfd^*,\mm^*)$. So it is clear that $(X^*,\sfd^*,\mm^*)$ satisfies the $\MCP_{\loc}(K,N)$.

    To prove the statement about $\delta$-essentially non-branching, let $\mu_0^*\in \P_c(X^*)$ with $\mu_0^*\ll\mm^*$, $\mu_1^*=\sum_{i=1}^{L^*} \lambda_j \delta_{x^*_j} \in \P(X^*)$ and $\nu^*\in\OptGeo(\mu_0^*,\mu_1^*)$. Without loss of generality, we can assume that there are open sets $U \subset X$ and $U^* \subset X^*$ such that $\q : U \to U^*$ is a metric measure isometry, with inverse $\sigma: U^*\to U$, and that $\supp (\mu_0^*) \subset U^*$. Let $\mu_0 := \sigma_\sharp \mu_0^* \in \P_c(X)$. By construction, $\mm|_U = \sigma_\sharp (\mm^*|_{U^*})$, and hence $\mu_0 \ll \mm$.

Since $\q$ is a local isometry, we can define a geodesic lifting map $\theta: \Geo(X^*) \cap e_0^{-1}(U^*) \to \Geo(X) \cap e_0^{-1}(U)$, that associates to $\gamma^* \in \Geo(X^*)$ with $e_0(\gamma^*) \in U^*$  the unique $\gamma \in \Geo(X)$ with $\gamma_0 = \sigma(\gamma^*_0)$ and such that $\q (\gamma) = \gamma^*$. See e.g.\ \cite[Lemma 3.4.17]{BBI}. Note that $\theta$ preserves the length of geodesics, and thus the distance between their endpoints.

    Let $D := \mathrm{diam}(\supp (\mu_0^*), \supp (\mu_1^*)) < \infty$. Consider the set of geodesics $\supp (\nu^*) \subset \Geo(X^*)$. The set of lifts $\theta(\supp (\nu^*)) \subset \Geo(X)$ is contained in a compact ball of radius $D$ within the compact set $\supp (\mu_0)$. Furthermore, the set of final points $e_1(\theta(\supp (\nu^*)))$ is contained in the discrete set $\cup_{j=1}^{L^*}  \q^{-1}(x_j^*)$, so that $e_1(\theta(\supp (\nu^*)))$ is finite.
    
Therefore define:
    \begin{equation}
        \nu := \theta_\sharp \nu^* \in \P(\Geo(X)).
    \end{equation}
    By construction, $(e_0)_\sharp \nu = \mu_0$, while $\mu_1 := (e_1)_\sharp \nu$ is a finite sum of Dirac deltas. Furthermore, $\q_\sharp \mu_i = \mu_i^*$ for both $i=0,1$.

    We claim that $\nu \in \OptGeo(\mu_0, \mu_1)$. By contradiction, let $\pi := (e_0, e_1)_\sharp \nu \in \Cpl(\mu_0, \mu_1)$, and assume that there exists $\tilde{\pi} \in \Cpl(\mu_0, \mu_1)$ such that
    \begin{equation}
        \int_{X \times X} \sfd(x, y)^2 \, \tilde{\pi}(\di x \di y) < \int_{X \times X} \sfd(x, y)^2 \, \pi(\di x \di y).
    \end{equation}
    By construction, $\pi$ is concentrated on a set of $(x, y) \in X \times X$ with $\sfd(x, y) = \sfd^*(x^*, y^*)$, where $x^*=\q (x)$, $y^*=\q (y)$, so that
    \begin{equation}
        \int_{X \times X} \sfd(x, y)^2 \, \pi(\di x \di y) = \int_{X^* \times X^*} \sfd^*(x^*, y^*)^2 \, \pi^*(\di x^* \di y^*),
    \end{equation}
    where $\pi^*=(e_0,e_1)_{\sharp}\nu^*\in\Opt(\mu_0^*,\mu_1^*)$. By definition of the quotient distance, we have $\sfd(x, y) \geq \sfd^*(x^*, y^*)$ for any $x, y \in X$, so that
    \begin{equation}
        \int_{X \times X} \sfd(x, y)^2 \, \tilde{\pi}(\di x \di y) \geq \int_{X^* \times X^*} \sfd^*(x^*, y^*)^2 \, (\q_\sharp \tilde{\pi})(\di x^* \di y^*).
    \end{equation}
    Thus, the cost of $\q_\sharp \tilde{\pi} \in \Cpl(\mu_0^*, \mu_1^*)$ is strictly less than the one of $\pi^*$, giving a contradiction.

    By the $\delta$-essentially non-branching assumption, $\nu$ is concentrated on a set $\Gamma$ of non-branching geodesics, and we can assume that $\Gamma \subseteq \supp( \nu)$. The support of $\nu$ is made of geodesics that are lifts of geodesics in $\supp (\nu^*)$, and two geodesics in $\supp (\nu)$ are branching on $X$ if and only if the corresponding projections are branching on $X^*$. It follows that $\q (\Gamma) \subset \Geo(X^*)$ is a set of non-branching geodesics, and
    \begin{equation}
        \nu^*(\q (\Gamma)) = \nu^*(\theta^{-1}(\Gamma)) = (\theta_\sharp \nu^*)(\Gamma) = \nu(\Gamma) = 1.
    \end{equation}
Since $\nu^*$ was arbitrary, $(X^*, \sfd^*, \mm^*)$ is $\delta$-essentially non-branching.
\end{proof}
\section{Compact group actions}\label{sec:compactactions}

Next, we study the $\MCP$ for quotients by isometric group action, in the compact case, see \cref{sec:quotients}. We first need to recall some notation and results from \cite{GKMS-quotients}.

\subsection{Equivariant optimal transport}

We work in the notation of \cref{sec:quotients}. A measure $\mu\in\P(X)$ is \emph{$G$-invariant} if for any Borel set $E\subset X$ it holds
\begin{equation}
  \mu(gE) = \mu(E),\qquad \forall\,g\in G.
\end{equation}
We denote by $\P^G(X)\subset\P(X)$ the subset of $G$-invariant measures. Similarly, we  use the notations
\begin{equation}\label{eq:Ginv}
  \Cpl(\mu_0,\mu_1)^G,\qquad \Opt(\mu_0,\mu_1)^G,\qquad \OptGeo(\mu_0,\mu_1)^G, \qquad \P(\Geo(X))^G,
\end{equation}
for $\mu_0,\mu_1\in\P(X)^G$, where $G$-invariance is understood for the appropriate action of $G$.


Let $\p :X\to X^*$ be the projection to the metric quotient. Any measure on $X^*$ can be lifted to a $G$-invariant one via the map
\begin{equation}\label{eq:liftofmeasures}
  \Lambda:\P(X^*) \to \P^G(X), \qquad \Lambda(\mu^*)(E):= \int_X \mm_G\Big(\{g\in G\mid g \bar{x}(x^*)\cap E \neq \emptyset\}\Big)\mu^*(\di x^*),
\end{equation}
for any  Borel set $E\subset X$, where $\mm_G$ is the (bi-invariant) Haar probability measure on $G$, and where $\bar{x}:X^*\to X$ is a Borel inverse to the projection $\p :X\to X^*$ (which exists by standard measurable selection results, see e.g. \cite[Sec.\ 6.9]{Bogachev-measure}). The choice of $\bar{x}$ does not affect the definition of $\Lambda$, since $\mm_G$ is $G$-invariant.

The following result describes the relation between optimal transport on $(X,\sfd)$ and its quotient $(X^*,\sfd^*)$. It is an extension of \cite[Thm.\ 3.2, Cor.\ 3.4]{GKMS-quotients} that follows from their construction, with more details concerning (dynamical) optimal plans that we need, in particular \cref{i:equivOT3,i:equivOT4,i:equivOT5,i:equivOT6}. We only need the case $p=2$, but we provide a general statement.

\begin{lemma}[Equivariant optimal transport]\label{lem:equivOT}
  Let $\Lambda: \P(X^*) \to \P^G(X)$ be the lift of measures defined in \eqref{eq:liftofmeasures}, let $p \in [1,\infty)$, and let $\p : X \to X^*$ be the quotient map.
  Then:
  \begin{enumerate}[(i)]
    \item\label{i:equivOT1} $\Lambda: \P(X^*)\to\P^G(X)$ is an isomorphism, with inverse $\p_\sharp$, preserving the subsets of absolute continuous measures, the ones with compact supports, and those with finite $p$-momentum;
    \item\label{i:equivOT2} $\Lambda : \P_p(X^*)\to \P_p(X)\cap \P^G(X)$ is an isometric embedding with respect to the $W_p$-distance.
  \end{enumerate}
  Define the sets:
  \begin{align}
    \mathcal{OD}        & :=\{(x,y)\in X\times X \mid \sfd(x,y) = \sfd^*(\p (x),\p (y))\} \subset X\times X, \\
    \Geo_{\mathcal{OD}} & :=\{\gamma \in \Geo(X) \mid \p (\gamma)\in \Geo(X^*)\}\subset\Geo(X).
  \end{align}
  Then, for any $\mu_0,\mu_1\in\P_p(X)\cap \P^G(X)$, it holds that
  \begin{enumerate}[(i),resume]
    \item\label{i:equivOT3} any $\pi\in\Opt_p(\mu_0,\mu_1)^G$ is concentrated on $\mathcal{OD}$;
    \item\label{i:equivOT4} $\p_\sharp :\Opt_p(\mu_0,\mu_1)^G \to \Opt_p(\p_\sharp\mu_0,\p_\sharp\mu_1)$ and is surjective;
    \item\label{i:equivOT5} any $\nu\in\OptGeo_p(\mu_0,\mu_1)^G$ is concentrated on $\Geo_{\mathcal{OD}}$;
    \item\label{i:equivOT6} $\p_\sharp :\OptGeo_p(\mu_0,\mu_1)^G\to \OptGeo_p(\p_\sharp\mu_0,\p_\sharp\mu_1)$ and is surjective.
  \end{enumerate}
\end{lemma}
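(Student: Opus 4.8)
\emph{Overall approach.} The six items are designed to feed each other: \ref{i:equivOT1}--\ref{i:equivOT2} identify $\P^G(X)$ with $\P(X^*)$ isometrically, \ref{i:equivOT3}--\ref{i:equivOT4} transfer this to optimal \emph{plans}, and \ref{i:equivOT5}--\ref{i:equivOT6} to optimal \emph{dynamical} plans. Only the very last surjectivity statement requires input genuinely beyond \cite{GKMS-quotients}, namely a geodesic lifting together with a measurable selection. Items \ref{i:equivOT1}--\ref{i:equivOT2} are \cite[Thm.\ 3.2, Cor.\ 3.4]{GKMS-quotients}: $\p_\sharp\circ\Lambda=\mathrm{Id}_{\P(X^*)}$ is immediate from \eqref{eq:liftofmeasures} and the section property of $\bar x$, while $\Lambda\circ\p_\sharp=\mathrm{Id}_{\P^G(X)}$ follows by disintegrating a $G$-invariant measure along $\p$ and using that the only $G$-invariant probability measure on an orbit is the push-forward of the Haar measure $\mm_G$; the same disintegration gives $\Lambda(f\,\mm^*)=(f\circ\p)\,\mm$ (since $\mm$ is $G$-invariant), hence preservation of absolute continuity, and preservation of compact support uses properness of $\p$. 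For \ref{i:equivOT2}, the inequality $W_p(\mu_0^*,\mu_1^*)\le W_p(\Lambda\mu_0^*,\Lambda\mu_1^*)$ comes from symmetrising an optimal plan between the lifts ($\pi\mapsto\int_G(g,g)_\sharp\pi\,\mm_G(\di g)$, same cost, $G$-invariant) and projecting by $\p$; the reverse inequality from choosing, by a measurable selection \cite{Bogachev-measure}, a map $g(x^*,y^*)$ realising $\sfd^*(x^*,y^*)=\sfd(\bar x(x^*),g(x^*,y^*)\bar x(y^*))$ and lifting any $\pi^*\in\Cpl(\mu_0^*,\mu_1^*)$ to $\Phi_\sharp(\pi^*\otimes\mm_G)\in\Cpl(\Lambda\mu_0^*,\Lambda\mu_1^*)^G$ with $\Phi(x^*,y^*,h)=(h\,\bar x(x^*),\,h\,g(x^*,y^*)\,\bar x(y^*))$, which has the same cost (bi-invariance of $\mm_G$ identifies the marginals). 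Finite $p$-momentum is then preserved because $\Lambda(\delta_{x^*})$ is compactly supported and $\Lambda$ is a $W_p$-isometric embedding.

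\emph{Items \ref{i:equivOT3}--\ref{i:equivOT4}.} Fix $\mu_0,\mu_1\in\P_p(X)\cap\P^G(X)$, so $\mu_i=\Lambda(\p_\sharp\mu_i)$ and $W_p(\mu_0,\mu_1)=W_p(\p_\sharp\mu_0,\p_\sharp\mu_1)$ by \ref{i:equivOT1}--\ref{i:equivOT2}. If $\pi\in\Opt_p(\mu_0,\mu_1)^G$, then $(\p\times\p)_\sharp\pi\in\Cpl(\p_\sharp\mu_0,\p_\sharp\mu_1)$, so $\int\sfd(x,y)^p\,\di\pi\ge\int\sfd^*(\p x,\p y)^p\,\di\pi\ge W_p(\p_\sharp\mu_0,\p_\sharp\mu_1)^p=W_p(\mu_0,\mu_1)^p=\int\sfd(x,y)^p\,\di\pi$; all inequalities are equalities, forcing $\sfd(x,y)=\sfd^*(\p x,\p y)$ for $\pi$-a.e.\ $(x,y)$, i.e.\ $\pi(\mathcal{OD})=1$, which is \ref{i:equivOT3}. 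Consequently $(\p\times\p)_\sharp\pi$ has cost $\int\sfd^*(\p x,\p y)^p\,\di\pi=\int\sfd(x,y)^p\,\di\pi=W_p(\p_\sharp\mu_0,\p_\sharp\mu_1)^p$, hence is optimal; and surjectivity is the construction $\pi^*\mapsto\Phi_\sharp(\pi^*\otimes\mm_G)$ from \ref{i:equivOT2} applied to an optimal $\pi^*$, which is optimal (its cost equals that of $\pi^*$) and satisfies $(\p\times\p)_\sharp\Phi_\sharp(\pi^*\otimes\mm_G)=\pi^*$.

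\emph{Items \ref{i:equivOT5}--\ref{i:equivOT6}.} I first record the elementary fact that $(\gamma_0,\gamma_1)\in\mathcal{OD}$ forces $\p\circ\gamma\in\Geo(X^*)$: this is trivial if $\gamma$ is constant, and otherwise, with $L=\sfd(\gamma_0,\gamma_1)=\sfd^*(\p\gamma_0,\p\gamma_1)$, the chain $L\le\sfd^*(\p\gamma_0,\p\gamma_s)+\sfd^*(\p\gamma_s,\p\gamma_t)+\sfd^*(\p\gamma_t,\p\gamma_1)\le sL+(t-s)L+(1-t)L=L$ (using $\sfd^*(\p\gamma_a,\p\gamma_b)\le\sfd(\gamma_a,\gamma_b)=|a-b|L$) is an equality, giving $\sfd^*(\p\gamma_s,\p\gamma_t)=|s-t|L$. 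Hence, for $\nu\in\OptGeo_p(\mu_0,\mu_1)^G$, $G$-equivariance of $(e_0,e_1)$ gives $(e_0,e_1)_\sharp\nu\in\Opt_p(\mu_0,\mu_1)^G$, which is concentrated on $\mathcal{OD}$ by \ref{i:equivOT3}, so $\nu$ is concentrated on $(e_0,e_1)^{-1}(\mathcal{OD})\subset\Geo_{\mathcal{OD}}$; this is \ref{i:equivOT5}. For \ref{i:equivOT6}, by \ref{i:equivOT5} the push-forward $\p_\sharp\nu$ by $\gamma\mapsto\p\circ\gamma$ lives in $\P(\Geo(X^*))$, its time-marginals are $\p_\sharp((e_t)_\sharp\nu)$, and $(e_0,e_1)_\sharp(\p_\sharp\nu)=(\p\times\p)_\sharp((e_0,e_1)_\sharp\nu)$ is optimal by \ref{i:equivOT4}, so $\p_\sharp\nu\in\OptGeo_p(\p_\sharp\mu_0,\p_\sharp\mu_1)$. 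For surjectivity, given $\nu^*\in\OptGeo_p(\p_\sharp\mu_0,\p_\sharp\mu_1)$, I would exploit that $\p$ is a submetry (by the definition of $\sfd^*$ and compactness of orbits, $\p(B(x,r))=B(\p x,r)$), which ensures that every $\gamma^*\in\Geo(X^*)$ lifts to some $\gamma\in\Geo(X)$ with $\p\circ\gamma=\gamma^*$ and $\ell(\gamma)=\ell(\gamma^*)$; a measurable selection then produces a (universally) measurable $\sigma:\Geo(X^*)\to\Geo(X)$ with $\p\circ\sigma=\mathrm{Id}$, and $\nu:=\Psi_\sharp(\nu^*\otimes\mm_G)$ with $\Psi(\gamma^*,h)=h\cdot\sigma(\gamma^*)$ is $G$-invariant, has $\p_\sharp\nu=\nu^*$, marginals $(e_i)_\sharp\nu=\Lambda((e_i)_\sharp\nu^*)=\mu_i$ (the identification as in \ref{i:equivOT2}, via bi-invariance of $\mm_G$ and $\mu_i=\Lambda(\p_\sharp\mu_i)$), and cost $\int\sfd(\gamma_0,\gamma_1)^p\,\di\nu=\int\sfd^*(\gamma_0^*,\gamma_1^*)^p\,\di\nu^*=W_p(\p_\sharp\mu_0,\p_\sharp\mu_1)^p=W_p(\mu_0,\mu_1)^p$, hence $\nu\in\OptGeo_p(\mu_0,\mu_1)^G$.

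\emph{Main obstacle.} The delicate step is the surjectivity in \ref{i:equivOT6}: unlike everything else, it truly needs the geodesic-lifting property of the submetry $\p$ — every base geodesic lifts to a total-space geodesic of the same length, which I would prove by an Arzelà--Ascoli argument applied to broken lifts over increasingly fine partitions of $[0,1]$, in the spirit of \cite{GKMS-quotients} — together with a measurable choice of such lifts; for the latter one verifies that $\{\gamma\in\Geo(X):\p\circ\gamma\in\Geo(X^*)\}$ is a standard Borel space on which $\gamma\mapsto\p\circ\gamma$ has compact fibres, and that a universally measurable section suffices since one only integrates against $\nu^*\otimes\mm_G$. Everything else reduces to routine bookkeeping with $\Lambda$ and the $1$-Lipschitz map $\p$, as in \cite{GKMS-quotients,CM-OptMaps}.
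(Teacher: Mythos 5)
Your proposal is correct, and most of it tracks the paper's own route: items (i)--(ii) are quoted from \cite{GKMS-quotients}, (iii)--(iv) come from the equality case of the chain of inequalities relating the cost of a $G$-invariant plan to that of its $\p_\sharp$-projection, and surjectivity in (vi) is obtained by a measurable geodesic lifting followed by averaging against $\mm_G$. The one genuine deviation is your argument for (v), and it is a simplification. The paper proves (v) by pushing $\nu$ forward under $\mathrm{restr}_t^s$ for every rational pair $t<s$, checking that each $(e_t,e_s)_\sharp\nu\in\Opt_p(\mu_t,\mu_s)^G$, applying (iii) to each, taking a countable intersection, and finishing by continuity. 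You instead observe the elementary set inclusion $(e_0,e_1)^{-1}(\mathcal{OD})\cap\Geo(X)\subseteq\Geo_{\mathcal{OD}}$: if $\gamma\in\Geo(X)$ has $(\gamma_0,\gamma_1)\in\mathcal{OD}$ with $L:=\sfd(\gamma_0,\gamma_1)$, then the chain
\begin{equation}
L\le\sfd^*(\p\gamma_0,\p\gamma_s)+\sfd^*(\p\gamma_s,\p\gamma_t)+\sfd^*(\p\gamma_t,\p\gamma_1)\le sL+(t-s)L+(1-t)L=L
\end{equation}
is an equality for every $0\le s\le t\le1$, so $\p\circ\gamma$ is a constant-speed geodesic of length $L$. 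Thus (v) is an immediate corollary of (iii) applied once to $(e_0,e_1)_\sharp\nu$, with no restriction maps, no passage to rational pairs and continuity, and no need to note that the intermediate marginals $\mu_t$ remain in $\P_p(X)\cap\P^G(X)$ (a small step the paper leaves implicit). Your handling of the surjectivity in (vi) is substantively the same as the paper's — lift $\nu^*$ by a Borel section $\sigma$ (called $\theta$ in the paper) and average over $\mm_G$, checking marginals via $\Lambda\circ\p_\sharp=\mathrm{Id}$ and the cost via concentration on $\Geo_{\mathcal{OD}}$ — but you articulate more explicitly why the section exists (geodesic-lifting for the submetry $\p$ together with a compactness/Arzel\`a--Ascoli argument on the fibres), a point the paper disposes of by citing a selection theorem without remarking that nonemptiness of fibres also needs justification.
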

\begin{proof}
  \cref{i:equivOT1,,i:equivOT2} correspond to \cite[Thm.\ 3.2]{GKMS-quotients}, and we do not repeat the proof here. We only recall that key to the proof is a (non-canonical) lifting of optimal plans, namely, for any $\mu_0^*,\mu_1^*\in\P_p(X^*)$ and optimal plan $\pi^*\in\Opt_p(\mu_0^*,\mu_1^*)$, one can build a $G$-invariant plan $\pi\in\Opt_p(\mu_0,\mu_1)^G$ between the lifts $\mu_i=\Lambda(\mu_i^*)\in \P_p(X)\cap\P^G(X)$, concentrated on $\mathcal{OD}$, such that $\p_\sharp \pi = \pi^*$, and thus it holds
  \begin{equation}
    W_p(\mu_0,\mu_1)^p = \int_{X\times X}\sfd(x,y)^p\pi(\di x \di y) = \int_{X^*\times X^*} \sfd^*(x^*,y^*)\pi^*(\di x^*\di y^*) = W_p(\mu_0^*,\mu_1^*)^p.
  \end{equation}

  Starting from these facts, we prove the additional items. Let $\mu_0,\mu_1\in\P_p(X)\cap\P^G(X)$, and let $\pi\in\Opt_p(\mu_0,\mu_1)^G$ (not necessarily the one given by the above procedure). Note that $\p_\sharp \pi \in \Cpl(\p_\sharp \mu_0,\p_\sharp\mu_1)$. Then it holds
  \begin{align}
    W_p(\mu_0,\mu_1)^p
    & = \int_{X\times X} \sfd(x,y)^p\pi(\di x\di y) \\
    & = \int_{\mathcal{OD}} \sfd(x,y)^p\pi(\di x\di y) + \int_{X\times X\setminus \mathcal{OD}} \sfd(x,y)^p\pi(\di x\di y)                    \\
    & \geq \int_{\mathcal{OD}} \sfd^*(\p (x),\p (y))^p\pi(\di x\di y) + \int_{X\times X\setminus \mathcal{OD}} \sfd^*(\p (x),\p (y))^p\pi(\di x\di y) \\
    & =\int_{X^*\times X^*}\sfd^*(x^*,y^*)^p (\p_\sharp \pi)(\di x^*\di y^*)                                                                    \geq W_p(\p_\sharp\mu_0,\p_\sharp \mu_1)^p = W_p(\mu_0,\mu_1)^p,
  \end{align}
  where the first inequality is strict unless $\pi$ is concentrated on $\mathcal{OD}$, and where the second inequality is strict unless $\p_\sharp \pi\in \Opt_p(\p_\sharp\mu_0,\p_\sharp \mu_1)$. The last equality is a consequence of \cref{i:equivOT2} and the fact that, since $\mu_i$ are $G$-invariant, it holds $\mu_i=\Lambda(\p_\sharp \mu_i)$. Therefore, all inequalities must be equalities, thus proving \cref{i:equivOT3,i:equivOT4}. Note that the surjectivity part of \cref{i:equivOT4} follows from the existence of at least one $G$-invariant lift of a given optimal plan, as described earlier.

  We prove \cref{i:equivOT5,i:equivOT6}. Let $\nu\in\OptGeo_p(\mu_0,\mu_1)^G$. Then, letting $\mu_t=(e_t)_\sharp\nu$, we have that $\mu_t \in\P(X)^G$. Furthermore, letting $\mathrm{restr}_t^s:\Geo(X)\to\Geo(X)$ be the restriction map for $0\leq t<s\leq 1$, it holds
  \begin{equation}
    \nu_t^s:=(\mathrm{restr}_t^s)_{\sharp}\nu\in\OptGeo_p(\mu_t,\mu_s)^G.
  \end{equation}
  The fact that the restriction of optimal dynamical plans is still an optimal dynamical plan simply follows from the triangle inequality of the Wasserstein distances. Since $(e_0,e_1)\circ\mathrm{restr}_t^s = (e_t,e_s)$, we have $(e_t,e_s)_\sharp\nu\in \Opt_p(\mu_t,\mu_s)^G$. By \cref{i:equivOT3}, $(e_t,e_s)_\sharp\nu$ is concentrated on $\mathcal{OD}$, and thus $\nu$ is concentrated on the Borel set $(e_t,e_s)^{-1}(\mathcal{OD})$.
  It follows that $\nu$ is concentrated on the set of geodesics $\gamma\in\Geo(X)$ such that
  \begin{equation}
    \sfd(\gamma_t,\gamma_s)=\sfd^*(\p (\gamma)_t,\p (\gamma)_s),\qquad \forall \, t,s\in [0,1]\cap \mathbb{Q},
  \end{equation}
  but by continuity this is equivalent to $\p (\gamma)\in\Geo(X^*)$, thus proving \cref{i:equivOT5}.

  Finally, note that $(e_0,e_1)_\sharp \circ \p_\sharp \nu = \p_\sharp \circ (e_0,e_1)_\sharp\nu$ (with some abuse of notation, the first $\p$ is the one acting on $\Geo(X)$, while the second $\p$ is the one acting on $X\times X$). Thus, by \cref{i:equivOT4}, $\p_\sharp$ maps $\OptGeo_p(\mu_0,\mu_1)^G$ to $\OptGeo_p(\p_\sharp\mu_0,\p_\sharp\mu_1)$, which proves the first part of \cref{i:equivOT6}.

  To prove the surjectivity part of \cref{i:equivOT6}, fix a Borel geodesic lifting map $\theta:\Geo(X^*)\to\Geo(X)$ such that $\p\circ \theta = Id$ (this can be done via a standard measurable selection argument, e.g.\ \cite[Thm.\ 6.9.6]{Bogachev-measure}).
  With this choice, for any $\nu^*\in\OptGeo_p(\p_\sharp\mu_0,\p_\sharp\mu_1)$, define
  \begin{equation}
    \nu\in\P(\Geo(X))^G,\qquad  \nu(E):= \int_G (\theta_\sharp\nu^*)(g E)\,\mm_{G}(\di g),
  \end{equation}
  for any Borel set $E\subset\Geo(X)$, and where $\mm_{G}$ is the Haar probability measure of $G$. By construction, $\nu$ is $G$-invariant and $\p_\sharp\nu = \nu^*$ (since $\p\circ \theta = Id$). Furthermore, $\nu$ is concentrated on $\Geo_{\mathcal{OD}}$, and thus $\sfd(\gamma_0,\gamma_1) = \sfd^*(\p (\gamma)_0, \p (\gamma)_1)$ for $\nu$-a.e.\ $\gamma$. Thus, letting $\pi = (e_0,e_1)_\sharp\nu$, we have that $(P_1)_{\sharp}\pi = \mu_0$ and $(P_2)_{\sharp}\pi = \mu_1$, that $\p_\sharp\pi = (e_0,e_1)_\sharp \nu^* \in \Opt_p(\p_\sharp\mu_0, \p_\sharp\mu_1)$, and that $\pi$ is concentrated on $\mathcal{OD}$.
  Thus, we have that
  \begin{align}
    \int_{X\times X} \sfd(x,y)^p \pi(\di x\di y)
    & = \int_{X\times X} \sfd^*(\p (x),\p (y))^p \pi(\di x\di y)                  \\
    & = \int_{X^*\times X^*} \sfd^*(x^*,y^*)^p \p_\sharp\pi(\di x^*\di y^*) \\
    & = W_p(\pi_\sharp\mu_0, \pi_\sharp\mu_1)^{p} = W_p(\mu_0, \mu_1)^{p},
  \end{align}
  where in the last equality we used \cref{i:equivOT2}. It follows that $\pi$ is optimal, and thus we have found $\nu\in\OptGeo_p(\mu_0,\mu_1)^G$ such that $\p_\sharp\nu = \nu^*$.
\end{proof}

\subsection{Uniform approximations of invariant measures}

We intend to apply \cref{thm:MCPineqdeltaess} to $G$-invariant lifts $\mu_0,\mu_1 \in \P_c(X)$ of measures $\mu_0^*,\mu_1^* \in \P_c(X^*)$ for the projection $\p :X\to X^*$. Even if $\mu_0^*,\mu_1^*$ are supported within an arbitrarily small ball in $X^*$, approximations of $\mu_1$ with finite sums of Dirac masses (which can always be found) will not generally satisfy assumption \eqref{eq:essup} of \cref{thm:MCPineqdeltaess}. The following lemma shows that, under suitable assumptions, we can select special approximations that satisfy this property. We do not know whether the proof of \cref{lem:goodapprox} can be adapted in the case of more general compact groups and choices of $\mu_1$, but there are obstructions to our construction when $G$ in non-abelian, see the footnote in the proof.

\begin{lemma}[Uniform approximations of $G$-invariant measures by Dirac masses]\label{lem:goodapprox}
  Let $(X,\sfd)$ be a metric space, and let $G$ be a connected, compact, abelian Lie group acting freely on $X$ by isometries.  Let $\p :(X,\sfd)\to (X^*,\sfd^*)$ be the projection to the metric quotient. Let $\mu_0^*\in \P_c(X^*)$ and $\mu_1^*=\delta_{o^*}\in\P_c(X^*)$, for some $o^*\in X^*$. Consider the unique $\mu_0,\mu_1\in \P_c^G(X)$ such that
  \[
    \p_\sharp \mu_i = \mu_i^*,\qquad i=0,1.
  \]
  Then, there exists a sequence of finite sums of Dirac masses $\mu_1^{m}\in\P_c(X)$, and optimal plans $\pi^{m}\in \Opt(\mu_0,\mu_1^{m})$, for $m\in \N$, such that
  \begin{enumerate}[(a)]
    \item \label{i:a} $\mu_1^{m} \rightharpoonup \mu_1$;
    \item \label{i:b} $\supp(\mu_1^{m})\subseteq \supp(\mu_1) = \p^{-1}(o^*)$, and each $\mu_1^{m}$ is $G_m$-invariant;
    \item \label{i:c} $
      \displaystyle \limsup_{m\to\infty}\left(\pi^{m}-\essup\sfd\right)\leq   \diam\left(\supp(\mu_0^*) \cup \supp(\mu_1^*)\right).
      $
  \end{enumerate}
\end{lemma}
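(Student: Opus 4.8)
\emph{Proof proposal.} The plan is to exploit that a connected compact abelian Lie group is a torus, hence contains finite subgroups of arbitrarily large order that equidistribute towards its Haar measure; the Dirac approximations $\mu_1^m$ will be the uniform measures on the corresponding finite orbits, and the optimal plans will be obtained by transporting $\mu_0$ to a nearest point of such a finite orbit, symmetrised over the subgroup so that the masses come out right.

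\textbf{Step 1: the measures $\mu_1^m$.} Fix an isomorphism $G\simeq(\R/\Z)^d$ and endow $G$ with the induced flat (bi-invariant) metric $\rho$. Fix any $o\in\p^{-1}(o^*)$. Since $G$ is compact and acts freely, the orbit map $\iota\colon G\to X$, $\iota(g):=g\cdot o$, is a continuous bijection from the compact space $G$ onto the orbit $\p^{-1}(o^*)$ (injective by freeness), hence a homeomorphism onto its image. By left-invariance of $\mm_G$, the measure $\iota_\sharp\mm_G$ is $G$-invariant, it projects under $\p$ to $\delta_{o^*}$, and it is supported on $\p^{-1}(o^*)$; therefore $\mu_1=\iota_\sharp\mm_G$ and $\supp\mu_1=\p^{-1}(o^*)$. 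For $m\in\N$ let $G_m:=\tfrac1m\Z^d/\Z^d<G$, a finite subgroup with $|G_m|=m^d$, and set
\[
  \mu_1^m:=\iota_\sharp\!\Big(\frac{1}{|G_m|}\sum_{h\in G_m}\delta_h\Big)=\frac{1}{|G_m|}\sum_{h\in G_m}\delta_{h\cdot o}.
\]
By freeness the points $h\cdot o$ ($h\in G_m$) are distinct, so $\mu_1^m$ is a finite sum of Dirac masses supported on the $G_m$-orbit $G_m\cdot o\subseteq\p^{-1}(o^*)=\supp\mu_1$, and it is $G_m$-invariant. As the normalised counting measures of the grids $G_m$ converge narrowly in $\P(G)$ to $\mm_G$ (convergence of Riemann sums of continuous functions on the torus) and $\iota$ is continuous, $\mu_1^m\rightharpoonup\iota_\sharp\mm_G=\mu_1$. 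This establishes \ref{i:a} and \ref{i:b}.

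\textbf{Step 2: the plans $\pi^m$.} Fix an enumeration $G_m=\{h_1,\dots,h_{m^d}\}$ and let $\beta\colon\supp\mu_0\to G_m$ be the Borel map $\beta(x):=h_{j(x)}$, where $j(x)$ is the least index $j$ with $\sfd(x,h_j\cdot o)=\min_i\sfd(x,h_i\cdot o)$ (this is Borel, being defined by equalities and inequalities of finitely many continuous functions). Put $T(x):=\beta(x)\cdot o$, $\sigma^m:=(\mathrm{Id}\times T)_\sharp\mu_0$, and
\[
  \pi^m:=\frac{1}{|G_m|}\sum_{k\in G_m}(k\times k)_\sharp\sigma^m,
\]
where $k\times k$ denotes the map $(x,y)\mapsto(k x,k y)$. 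Each $(k\times k)_\sharp\sigma^m$ has first marginal $k_\sharp\mu_0=\mu_0$ because $\mu_0$ is $G$-invariant, so $\pi^m$ has first marginal $\mu_0$; its second marginal is $\sum_{h\in G_m}\mu_0(\beta^{-1}(h))\,\delta_{(kh)\cdot o}$, and averaging over $k$, relabelling $k\mapsto k^{-1}h'$, and using $\sum_h\mu_0(\beta^{-1}(h))=1$ shows that $\pi^m$ has second marginal $\tfrac1{|G_m|}\sum_{h'\in G_m}\delta_{h'\cdot o}=\mu_1^m$. Thus $\pi^m\in\Cpl(\mu_0,\mu_1^m)$. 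Moreover $\graf(T)$ is contained in the closed set $\mathcal N:=\{(x,y)\mid\sfd(x,y)=\min_{h\in G_m}\sfd(x,h\cdot o)\}$, which is invariant under the diagonal action of $G_m$ (as $G_m$ acts by isometries and permutes $G_m\cdot o$); hence $\pi^m$ is concentrated on $\mathcal N$.

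\textbf{Step 3: optimality and the bound \ref{i:c}.} For any $\sigma\in\Cpl(\mu_0,\mu_1^m)$, disintegrating $\sigma$ along its first marginal $\mu_0$ and using that $\mu_1^m$ charges only the finite set $G_m\cdot o$ gives $\int\sfd^2\,\di\sigma\ge\int_X\min_{h\in G_m}\sfd(x,h\cdot o)^2\,\mu_0(\di x)$; since $\pi^m$ is concentrated on $\mathcal N$, its cost equals exactly this integral, so $\pi^m\in\Opt(\mu_0,\mu_1^m)$. Set $\delta_m:=\sup_{g\in G}\min_{h\in G_m}\sfd(\iota(g),\iota(h))$; as $G_m$ is $\tfrac{\sqrt d}{m}$-dense in $(G,\rho)$ and $\iota$ is uniformly continuous on the compact group $G$, we have $\delta_m\to0$. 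For $\pi^m$-a.e.\ $(x,y)$ one has $x\in\supp\mu_0$, hence $\p(x)\in\supp\mu_0^*$; choosing $g_x\in G$ with $\sfd(x,g_x\cdot o)=\inf_{g\in G}\sfd(x,g\cdot o)=\sfd^*(\p(x),o^*)$ (the infimum is attained since $\p^{-1}(o^*)$ is compact) and $h\in G_m$ with $\sfd(g_x\cdot o,h\cdot o)\le\delta_m$, we obtain
\begin{align*}
  \sfd(x,y)=\min_{h'\in G_m}\sfd(x,h'\cdot o)
  &\le\sfd(x,g_x\cdot o)+\sfd(g_x\cdot o,h\cdot o)\\
  &\le\sfd^*(\p(x),o^*)+\delta_m
  \le\diam\bigl(\supp\mu_0^*\cup\supp\mu_1^*\bigr)+\delta_m,
\end{align*}
because $\supp\mu_1^*=\{o^*\}$. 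Hence $\pi^{m}-\essup\sfd\le\diam(\supp\mu_0^*\cup\supp\mu_1^*)+\delta_m$, and letting $m\to\infty$ yields \ref{i:c}.

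\textbf{Main obstacle.} The delicate point is the optimality in Step 3: a naive small-displacement coupling need not be $W_2$-optimal, and the optimal plan between $\mu_0$ and $\mu_1^m$ could a priori have large essential displacement. This is resolved by the observation that transporting to a nearest point of the \emph{finite} set $G_m\cdot o$ is automatically optimal, while the $G$-invariance of $\mu_0$ forces the masses $\mu_0(\beta^{-1}(h))$, $h\in G_m$, to balance after symmetrisation. It is also here that abelianness is essential: for a non-abelian connected compact Lie group (already $SU(2)$) the finite subgroups have bounded order, so no sequence of finite subgroups $G_m<G$ equidistributes towards $\mm_G$, and the construction of $G_m$-invariant Dirac approximations with $\mu_1^m\rightharpoonup\mu_1$ breaks down.
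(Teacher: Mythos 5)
Your proof is correct, and it differs genuinely from the paper's in how part \ref{i:c} is established. For parts \ref{i:a} and \ref{i:b}, both proofs exploit that $G$ is a standard torus and identify $\p^{-1}(o^*)$ with $(G,\sfd_G,\mm_G)$ via the orbit map; the paper builds fundamental domains $A_i^m$ for finite subgroups and invokes an auxiliary approximation lemma (\cref{lem:approxwithdelta}) to get narrow convergence, whereas you use the grid subgroups $\tfrac1m\Z^d/\Z^d$ and Weyl equidistribution directly --- both are sound, and yours is marginally more self-contained. The real divergence is in part \ref{i:c}. The paper does \emph{not} construct the optimal plan: it observes that averaging a coupling over $G_m$ preserves the $W_2$-cost, so there exists a $G_m$-invariant $\pi^m\in\Opt(\mu_0,\mu_1^m)$, and then bounds the essential displacement a posteriori by a cyclical-monotonicity test on the $g$-orbit of a generic pair $(x,y)\in\supp\pi^m$: one picks $g\in G_m$ with $\sfd(x,gy)\leq D+2/m$, compares the cycle $\{(g^jx,g^jy)\}$ with the shift $\{(g^jx,g^{j+1}y)\}$, and telescopes to get $\sfd(x,y)\leq\sfd(x,gy)$. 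You instead build the plan explicitly: transport $\mu_0$ to nearest points of the finite orbit $G_m\cdot o$, symmetrize over $G_m$ to fix the second marginal, observe that the result is concentrated on the nearest-point set $\mathcal N$ (which is diagonally $G_m$-invariant), and prove optimality by the elementary lower bound $\int\sfd^2\,\di\sigma\geq\int\min_h\sfd(x,h\cdot o)^2\,\di\mu_0$. The displacement bound is then immediate from the construction, with no need for cyclical monotonicity. Both arguments hinge on the same two ingredients --- density of finite subgroups in the torus and $G_m$-invariance of $\mu_0$ --- and both correctly identify abelianness as the structural obstruction. Your approach buys an explicit optimal plan and avoids the cyclical-monotonicity step; the paper's buys brevity by delegating optimality to the averaging trick and deriving the displacement bound from monotonicity of \emph{any} invariant optimal plan. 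One small remark: your construction also yields that $\pi^m$ is $G_m$-invariant (as $(l\times l)_\sharp\pi^m=\pi^m$ for $l\in G_m$), so it subsumes the invariant optimal plan the paper postulates, making the two proofs reconcilable.
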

\begin{proof}
  Since the action of $G$ is free and transitive on the fibers, we can identify $\p^{-1}(o^*)=\supp(\mu_1)$ with $G$ via the homeomorphism $g \mapsto go$ for a fixed $o \in \supp(\mu_1)$. In this way, $\sfd|_{\supp(\mu_1)}$ is identified with a bi-invariant metric $\sfd_G$ on $G$, and $\mu_1$ is identified with the Haar probability measure $\mm_G$ on $G$, so that $(\supp(\mu_1), \sfd|_{\supp(\mu_1)}, \mu_1)$ and $(G, \sfd_G, \mm_G)$ are identified as metric measure spaces. We remark that under our assumptions, $G$ is the standard torus $\S^1 \times \dots \times \S^1$. Thus, with an elementary construction\footnote{For any $\varepsilon>0$ we can find a finite subgroup $G_m$ of the torus $G$ that is an $\varepsilon$-net for $(G,\sfd_G)$, and a fundamental domain $D_m\subset G$ for the action of $G_m$ on $G$ such that $G = \cup_{g\in G_m} g D_m$, and $\mm_G(\partial D_m)=0$. We note that such an approximation cannot exist, even in a weaker sense, if $G$ is not abelian, see \cite{Turing-approximations}.}, we can find for all $m \in \N$ finite subgroups $G_m < G$ with ${\# G_m}  \nearrow \infty$, open sets $A_i^m \subseteq \supp(\mu_1)$, and points $y_i^m \in A_i^m$ for $i = 1, \dots, {\# G_m}$ such that
  \begin{enumerate}[(1)]
    \item\label{i:A1} $A_i^m\cap A_j^m = \emptyset$ if $i\neq j$;
    \item\label{i:A2} $\mu_1\left(\cup_{i=1}^{{\# G_m}}A_i^m\right) = 1$;
    \item\label{i:A3} $\diam(A_i^m)\leq 1/m$ for all $i=1,\dots,m$;
    \item\label{i:A4} $G_m$ acts transitively and freely on $\{y_1^m,\dots,y_{{\# G_m}}^m\}$ and on $\{A_1^m,\dots,A_{{\# G_m}}^m\}$, so that  in particular $\mu_1(A_i^m) = 1/{\# G_m}$ for all $i=1,\dots,{\# G_m}$.
  \end{enumerate}
  For all $m\in \N$, define $\mu_1^m\in\P_c(X)$ by
  \begin{equation}
    \mu_1^m = \sum_{i=1}^{{\# G_m}} \mu_1(A_i^m) \delta_{y_i^m} = \frac{1}{{\# G_m}} \sum_{i=1}^{{\# G_m}} \delta_{y_i^m},
  \end{equation}
  By \labelcref{i:A1,i:A2,i:A3} we can use \cref{lem:approxwithdelta}, yielding that $\mu_1^m \rightharpoonup \mu_1$, which is \cref{i:a}. By construction, $\supp(\mu_1^m) \subseteq \supp(\mu_1)$, and by \labelcref{i:A4}, the measure $\mu_1^m$ is $G_m$-invariant, i.e.\ \cref{i:b}.

  To prove \cref{i:c}, note that both $\mu_0, \mu_1^m$ are $G_m$-invariant. It is easy to prove that the averaging of a coupling with respect to any subgroup of $G$ (in this case, w.r.t.\ $G_m$) does not change its $W_2$-cost. It follows that there exists a $G_m$-invariant optimal plan $\pi^m \in \Opt(\mu_0, \mu_1^{m})$. Let $(x,y) \in \supp( \pi^m) \subseteq \supp (\mu_0) \times \supp (\mu_1^m)$. There is $\tilde{y}\in\supp(\mu_1) = \p^{-1}(o^*)$ such that
  \begin{equation}
    \sfd(x,\tilde{y}) = \sfd^*(\p (x),o^*) \leq  \diam(\supp(\mu_0^*)\cup\supp(\mu_1^*))=:D.
  \end{equation}
  By \labelcref{i:A2}, any point in $\supp( \mu_1)$ must be in the closure of some $A_i^m$, but by \labelcref{i:A3} these sets have diameter $\leq 1/m$, so that $\{y_1^m,\dots,y_{{\# G_m}}^m\}$ is a $2/m$-net in $\supp(\mu_1)$. It follows that there is $g \in G_m$ (indeed $g y \in \supp( \mu_1^m)$) such that
  \begin{equation}
    \sfd(x,g y)\leq \sfd(x,\tilde{y}) + \sfd(\tilde{y},gy) \leq D+\frac{ 2}{m}.
  \end{equation}
  Let $k$ be the order of $g \in G_m$ (the least integer such that $g^k=e$). Since $\pi^m$ is $G_m$-invariant
  \begin{equation}
    \left(g^{j} x, g^{j} y\right) \in \supp(\pi^m),\qquad\forall\, j =1,\dots,k.
  \end{equation}
  We test $\sfd^2$-cyclical monotonicity of $\supp(\pi^m)$ for the set $\{(x,y),(gx,gy),\dots,(g^{k-1}x,g^{k-1}y)\}$ and the shift $\{(x,gy),\dots,(g^{k-1}x,y)\}$. It holds
  \begin{equation}
    \sfd(x,y)^2 =\frac{1}{k} \sum_{j=1}^{k} \sfd\left(g^{j-1} x,g^{j-1}y\right)^2 \leq \frac{1}{k}\sum_{j=1}^k \sfd\left(g^{j-1} x,g^j y\right)^2 =\sfd(x,gy)^2 \leq \left(D +\frac{2}{m}\right)^2.
  \end{equation}
  This yields, since $(x,y)\in\supp(\pi^m)$ is arbitrary, that
  \begin{equation}
    \pi^m-\essup \sfd = \sup\Big\{\sfd(x,y) \,\Big|\, (x,y)\in\supp(\pi^m)\Big\} \leq D + \frac{2}{m},
  \end{equation}
  concluding the proof of \cref{i:c}.
\end{proof}

\subsection{Local measure contraction properties on the quotient}

We are now ready to prove the main result of this section, showing that, under the $\delta$-essentially non-branching assumption, the \emph{local} $\MCP$ is preserved under quotients by compact groups of isometries.

\begin{theorem}\label{thm:quotient2}
  Let $(X,\sfd,\mm)$ be a $\delta$-essentially non-branching metric measure space satisfying the $\MCP_{\loc}(K,N)$ for some $K\in \R$ and $N \in [1,\infty)$. Let $G$ be a connected, compact, abelian Lie group of metric measure isometries, acting freely. Then, the metric measure quotient $(X^*,\sfd^*,\mm^*)$ satisfies the $\MCP_{\loc}(K,N)$.
\end{theorem}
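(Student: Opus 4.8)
The plan is to verify $\MCP_{\loc}(K,N)$ on $X^*$ directly at every point, by lifting the relevant measures to $G$-invariant ones on $X$, invoking \cref{thm:MCPineqdeltaess} upstairs, and pushing the resulting inequality back down through the equivariant optimal transport of \cref{lem:equivOT}. First I would fix $p^*\in X^*$ and a bounded open ball $V^*\ni p^*$. Since $\p:X\to X^*$ is proper, $U:=\p^{-1}(\overline{V^*})$ is compact, hence bounded, so \cref{thm:MCPineqdeltaess} supplies a constant $\varepsilon=\varepsilon(U)>0$. I then shrink $V^*$ to an open neighbourhood $\mathcal O^*\ni p^*$ with $\mathcal O^*\subset V^*$ and $\diam(\mathcal O^*)<\varepsilon$; as $p^*$ varies over $X^*$ these $\mathcal O^*$ form an open cover, and it is along this cover that I check the local $\MCP$. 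So take $o^*\in\mathcal O^*$ and $\mu_0^*=\tfrac1{\mm^*(A^*)}\mm^*|_{A^*}$ for Borel $A^*\subset\mathcal O^*$ with $0<\mm^*(A^*)<\infty$, and let $\mu_0:=\Lambda(\mu_0^*)$, $\mu_1:=\Lambda(\delta_{o^*})$ be the $G$-invariant lifts of \eqref{eq:liftofmeasures}. Writing $A:=\p^{-1}(A^*)$, which is $G$-saturated, one checks that $\p_\sharp(\mm|_A)=\mm^*|_{A^*}$ and $\mm(A)=\mm^*(A^*)$, so by uniqueness of the lift (\cref{lem:equivOT}) $\mu_0=\tfrac1{\mm(A)}\mm|_A$, while $\supp\mu_1=\p^{-1}(o^*)$ is compact; moreover $A\subset\p^{-1}(\mathcal O^*)\subset U$ and $\supp\mu_0\cup\supp\mu_1\subset\p^{-1}(\overline{\mathcal O^*})\subset U$.

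Next I would apply \cref{lem:goodapprox} to $\mu_0^*$ and $\mu_1^*=\delta_{o^*}$ — this is the one place where the hypothesis that $G$ is a connected, compact, abelian Lie group acting freely (a torus action) is used decisively. It produces finite sums of Dirac masses $\mu_1^{m}\in\P_c(X)$ supported in $\p^{-1}(o^*)$, and optimal plans $\pi^{m}\in\Opt(\mu_0,\mu_1^{m})$, with $\mu_1^{m}\rightharpoonup\mu_1$ and
\[
  \limsup_{m\to\infty}\bigl(\pi^{m}-\essup\sfd\bigr)\le\diam\bigl(\supp\mu_0^*\cup\{o^*\}\bigr)\le\diam(\mathcal O^*)<\varepsilon .
\]
Discarding finitely many indices, I may assume $\pi^{m}-\essup\sfd<\varepsilon$ for all $m$ (the $\mu_1^m$ are only $G_m$-invariant, which is irrelevant for \cref{thm:MCPineqdeltaess}). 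All hypotheses of \cref{thm:MCPineqdeltaess} for the pair $(\mu_0,\mu_1)$ and the bounded set $U$ now hold, yielding $\nu\in\OptGeo(\mu_0,\mu_1)$ with $\tfrac1{\mm(A)}\mm\ge(e_t)_\sharp(\tau_{K,N}^{(1-t)}(\sfd(\gamma_0,\gamma_1))^N\nu)$ for all $t$. Since $\nu$ need not be $G$-invariant, I would replace it by its average $\nu^{G}:=\int_G g_\sharp\nu\,\mm_G(\di g)$: as $\mu_0,\mu_1\in\P^G(X)$ and $G$ acts by isometries, $\nu^G\in\OptGeo(\mu_0,\mu_1)^G$, and because each $g$ is a measure-preserving isometry (so $\gamma\mapsto\sfd(\gamma_0,\gamma_1)$ is $G$-invariant on $\Geo(X)$ and $e_t\circ g=g\circ e_t$),
\[
  (e_t)_\sharp\!\left(\tau_{K,N}^{(1-t)}(\sfd(\gamma_0,\gamma_1))^N\nu^G\right)=\int_G g_\sharp\,(e_t)_\sharp\!\left(\tau_{K,N}^{(1-t)}(\sfd(\gamma_0,\gamma_1))^N\nu\right)\mm_G(\di g)\le\int_G g_\sharp\!\left(\tfrac1{\mm(A)}\mm\right)\mm_G(\di g)=\tfrac1{\mm(A)}\mm .
\]
Thus $\nu^G$ is a $G$-invariant optimal dynamical plan satisfying the same $\MCP$ inequality.

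Finally I would descend to $X^*$. By \cref{lem:equivOT}\ref{i:equivOT6}, $\nu^*:=\p_\sharp\nu^G\in\OptGeo(\p_\sharp\mu_0,\p_\sharp\mu_1)=\OptGeo(\mu_0^*,\delta_{o^*})$, and by \cref{lem:equivOT}\ref{i:equivOT5}, $\nu^G$ is concentrated on $\Geo_{\mathcal{OD}}$, so $\sfd(\gamma_0,\gamma_1)=\sfd^*((\p\gamma)_0,(\p\gamma)_1)$ for $\nu^G$-a.e.\ $\gamma$. Combining this with $\p\circ e_t=e_t\circ\p$ and $\p_\sharp\mm=\mm^*$,
\[
  (e_t)_\sharp\!\left(\tau_{K,N}^{(1-t)}(\sfd^*(\gamma^*_0,\gamma^*_1))^N\nu^*\right)=\p_\sharp(e_t)_\sharp\!\left(\tau_{K,N}^{(1-t)}(\sfd(\gamma_0,\gamma_1))^N\nu^G\right)\le\p_\sharp\!\left(\tfrac1{\mm(A)}\mm\right)=\tfrac1{\mm^*(A^*)}\mm^*
\]
for all $t\in[0,1]$, which is precisely the $\MCP$ inequality on $X^*$ for $o^*$ and $\mu_0^*=\tfrac1{\mm^*(A^*)}\mm^*|_{A^*}$. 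As $o^*\in\mathcal O^*$ and $A^*\subset\mathcal O^*$ were arbitrary, $(X^*,\sfd^*,\mm^*)$ satisfies $\MCP_{\loc}(K,N)$.

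I expect the main obstacle to be the interplay between the \emph{local} $\MCP$ and the quotient, concentrated in the application of \cref{thm:MCPineqdeltaess}: one cannot feed it arbitrary Dirac approximations of the invariant lift $\mu_1=\Lambda(\delta_{o^*})$, but must produce approximations whose transport cost to $\mu_0$ stays uniformly below the threshold $\varepsilon$ — exactly what \cref{lem:goodapprox} delivers, and the reason the acting group must be an abelian torus. The remaining delicacy is the order of the choices (fix $V^*$, extract $\varepsilon=\varepsilon(U)$, then shrink to $\mathcal O^*$ with $\diam(\mathcal O^*)<\varepsilon$, and only afterwards discard the finitely many $m$ with $\pi^m-\essup\sfd\ge\varepsilon$) so as to avoid circularity; once this is arranged, the lifting, the $G$-averaging, and the descent are routine given \cref{lem:equivOT}.
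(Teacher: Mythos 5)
Your proposal is correct and follows essentially the same route as the paper: lift $\mu_0^*,\delta_{o^*}$ to $G$-invariant measures, invoke \cref{lem:goodapprox} to produce Dirac approximations with uniformly small transport cost, apply \cref{thm:MCPineqdeltaess} upstairs, $G$-average the resulting dynamical plan, and push it down via \cref{lem:equivOT}\ref{i:equivOT5}--\ref{i:equivOT6}. Your bookkeeping is in fact slightly more careful than the paper's — you make explicit that \cref{lem:goodapprox}\ref{i:c} only yields a $\limsup$ bound, so finitely many indices must be discarded before \eqref{eq:essup} holds for all $m$ — but the argument is the same one.
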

\begin{remark}[The case of global $\MCP$]\label{rmk:globalMCP}
  The additional assumptions on $G$ are necessary to verify \eqref{eq:essup} of \cref{thm:MCPineqdeltaess} by means of \cref{lem:goodapprox}. If one assumes $\MCP(K,N)$ instead of $\MCP_{\loc}(K,N)$, then \cref{thm:MCPineqdeltaess} holds in greater generality (see \cref{rmk:MCPglob}), and we do not need \cref{lem:goodapprox}. In this case, \cref{thm:quotient2} is valid for a general compact topological group $G$ acting (possibly not freely) on $X$ by metric measure isometries.
\end{remark}
\begin{remark}[Non-branching properties of the quotient]\label{rmk:nonbranchingquot}
  In \cite[Cor.\ 3.5]{GKMS-quotients}, it is proved that the essentially non-branching property descends to quotients by compact group actions. We do not know whether this holds for the $\delta$-essentially non-branching property (this is true in the discrete case, see \cref{thm:quotient1}).
\end{remark}
\begin{proof}
  Let $U^*\subseteq X^*$ be an open, bounded set, and let $U:=\p^{-1}(U^*)\subseteq X$. Let $\varepsilon > 0$ be the constant from \cref{thm:MCPineqdeltaess}. Let $\mathcal{O}^*$ be any open set contained in $\p (U)$ such that $\diam(\mathcal{O}^*) < \varepsilon$.
  Pick $\mu_0^*\in \P_c(X^*)$ of the form $\mu_0^*=\tfrac{1}{\mm^*(A^*)}\mm^*|_{A^*}$ for some Borel set $A^*\subset \mathcal{O}^*$ with $0<\mm(A^*)<\infty$. Let also $o^*\in \mathcal{O}^*$ and $\mu_1^*:=\delta_{o^*}$. In particular, it holds that
  \begin{equation}\label{eq:boundondiameter}
    \diam\left(\supp(\mu_0^*)\cup \supp(\mu_1^*)\right) < \varepsilon.
  \end{equation}
  Let $\mu_0, \mu_1$ be the unique $G$-invariant lifts of $\mu_0^*$ and $\mu_1^*$; see \cref{lem:equivOT}. Letting $A := \p^{-1}(A^*)$, we have $A \subset U$, $\mm(A) = \mm^*(A^*)$, and thus $\mu_0 = \tfrac{1}{\mm(A)}\mm|_{A}$.
  %
  %
  Furthermore, by \cref{lem:goodapprox}, we find a sequence of finite sums of Dirac masses $\mu_1^m \in \P_c(X)$ such that $\mu_1^m \rightharpoonup \mu_1$, and optimal plans $\pi^m \in \Opt(\mu_0, \mu_1)$ satisfying \eqref{eq:essup} (thanks to \eqref{eq:boundondiameter} paired with \cref{i:c} of \cref{lem:goodapprox}). Thus, we can apply \cref{thm:MCPineqdeltaess}, and we obtain $\nu \in \OptGeo(\mu_0, \mu_1)$ such that
  \begin{equation}\label{eq:MCPineqdeltaess-application-Ohta}
    \frac{1}{\mm(A)}\mm \geq (e_t)_{\sharp}\left(\tau_{K,N}^{(1-t)}(\sfd(\gamma_0,\gamma_1))^N\nu\right),\qquad \forall\, t\in [0,1].
  \end{equation}

  Since averaging preserves the transport cost of plans with $G$-invariant marginals, letting $\mm_G$ be the Haar probability measure on $G$, we have that $\overline{\nu} \in \OptGeo(\mu_0, \mu_1)^G$, where
  \begin{equation}
    \overline{\nu} := \int_G \left(g_\sharp \nu\right) \mm_{G}(\di g).
  \end{equation}
  We can then take the average of \eqref{eq:MCPineqdeltaess-application-Ohta}, and using the fact that both $\mm$ and $\sfd$ are $G$-invariant, we obtain the validity of \eqref{eq:MCPineqdeltaess-application-Ohta} for $\overline{\nu}$ in place of $\nu$, namely
  \begin{equation}\label{eq:MCPineqdeltaess-application2-Ohta}
    \frac{1}{\mm(A)}\mm \geq (e_t)_{\sharp}\left(\tau_{K,N}^{(1-t)}(\sfd(\gamma_0,\gamma_1))^N\overline{\nu}\right),\qquad \forall\, t\in [0,1].
  \end{equation}

  To conclude, observe that by \cref{i:equivOT5,,i:equivOT6} of \cref{lem:equivOT}, $\overline{\nu}$ is concentrated on the set $\Geo_{\mathcal{OD}}$ of geodesics that project to geodesics on the quotient, and the projection $\nu^* := \p_\sharp \overline{\nu}$ is an optimal dynamical plan between its marginals, namely $\nu^* \in \OptGeo(\mu_0^*, \delta_{o^*})$. In other words, for $\overline{\nu}$-a.e.\ $\gamma \in \Geo(X)$, we have that $\gamma^* := \p (\gamma) \in \Geo(X^*)$ and $\sfd(\gamma_0, \gamma_1) = \sfd^*(\gamma_0^*, \gamma_1^*)$. Moreover, $\p \circ e_t = e_t \circ \p$. Thus, applying $\p_{\sharp}$ to both sides of \eqref{eq:MCPineqdeltaess-application2-Ohta}, we obtain
  \begin{equation}\label{eq:MCPineqdeltaess-application3-Ohta}
    \frac{1}{\mm^*(A^*)}\mm^* \geq (e_t)_{\sharp}\left(\tau_{K,N}^{(1-t)}(\sfd^*(\gamma^{*}_0,\gamma^{*}_1))^N\nu^*\right),\qquad \forall\, t\in [0,1].
  \end{equation}
  In summary, we can cover $X^*$ with neighborhoods $\mathcal{O}^*$ such that, for any $\mu_0^* \in \P_c(X^*)$ of the form $\mu_0^* = \tfrac{1}{\mm^*(A^*)} \mm^*|_{A^*}$, where $A^* \subset \mathcal{O}^*$ is a Borel set with $0 < \mm^*(A^*) < \infty$, and for any $o^* \in \mathcal{O}^*$, there exists $\nu^* \in \OptGeo(\mu_0^*, \delta_{o^*})$ for which \eqref{eq:MCPineqdeltaess-application3-Ohta} holds.
\end{proof}   	
\section{Carnot homogeneous spaces}\label{sec:Carnothomogeneousspaces}

The following discussion presents the construction of Carnot homogeneous spaces, which appear as tangent cones of general sub-Riemannian manifolds, as explained in \cite{Bellaiche}. At a first read, it could be helpful to consider the case of Carnot groups, which corresponds to choosing the trivial subgroup $\H={e}$ in the notation used below.

\begin{definition}[Stratified groups]
  A Lie algebra $\g$ is \emph{stratified} if it admits a decomposition
  \begin{equation}
    \g = \g_1\oplus \dots\oplus \g_s,
  \end{equation}
  where, setting $\g_i =0$ for all $i\geq s+1$, it holds
  \begin{equation}
    [\g_1,\g_i] = \g_{i+1}, \qquad \forall\, i=1,\dots,s.
  \end{equation}
  The subspaces $\g_i$ are called \emph{strata}, and $s$ is the \emph{step} of the stratification. A \emph{stratified group} is a simply connected Lie group $\G$ whose Lie algebra is stratified. A \emph{Carnot group} is a stratified Lie group $\G$ together with a choice of scalar product on $\g_1$. We identify the Lie algebra $\g$ with the subspace of left-invariant vector fields, so that we usually call the scalar product of a Carnot group a \emph{left-invariant} scalar product on $\g_1$.
\end{definition}

Any stratified group/algebra is nilpotent (of step $s$), and as a consequence of this and simple connectedness, the group exponential map $\exp_{\G} : \g \to \G$ is be a smooth diffeomorphism.

\begin{definition}[Dilations]\label{def:dilations}
  Let $\G$ be a stratified group with stratified Lie algebra $\g = \g_1\oplus \dots\oplus \g_s$. The \emph{dilation} of factor $\lambda \in \R$ is the Lie group endomorphism $\delta_\lambda : \G\to \G$  such that
  \begin{equation}\label{eq:dilation}
    (\delta_\lambda)_* V = \lambda^i V,\qquad \forall\, V \in \g_i,\quad \forall\, i=1,\dots,s,
  \end{equation}
  where the star denotes the push-forward. Note that if $\lambda \neq 0$ then $\delta_\lambda$ is invertible with inverse $\delta_{1/\lambda}$. With a slight abuse of notation, we will also use the term ``dilation'' to refer to the Lie algebra endomorphism $(\delta_\lambda)_* : \g \to \g$.
\end{definition}

%

Let $\G$ be a stratified group, and fix a closed, dilation-invariant subgroup $\H<\G$. This is equivalent to the Lie subalgebra $\h$ of $\H$ being dilation-invariant, or equivalently, to $\h$ having a stratification compatible with that of $\g$, namely
\begin{equation}\label{eq:dil-inv-sub}
  \h = \h_1\oplus \dots\oplus \h_s, \qquad \text{with} \qquad \h_i\subseteq \g_i, \quad \forall\,i=1,\dots,s.
\end{equation}
Note that $\h$ need not be generated by $\h_1$, meaning that $\H$ itself need not be a stratified group.

Let $\M:= \G \slash \H $ be the quotient of $\G$ by the action of $\H$ on it by left multiplication (i.e.\ the set of right cosets). The smooth manifold $\M$ has dimension $\dim\G-\dim\H$ and inherits several natural structures, as outlined below in \cref{sec:homogstructure,sec:dilstructure,sec:distinguishedLie,sec:metstructure,sec:invmeas}.

\subsection{Homogeneous structure}\label{sec:homogstructure}

The left multiplication by elements of $\G$ does not, in general, induce a left action of $\G$ on $\M$ (unless $\H$ is normal). However, the \emph{right} multiplication of $\G$ always induces a \emph{right} action $\beta:\G \times \M \to \M$ defined by
\begin{equation}
  \beta(g',\H g) := \H(gg'), \qquad \forall\, g,g'\in \G.
\end{equation}
Since it is a right action, we use the appropriate shorthand $\H   gg'=\beta(g',\H g)$.  Of course, if $\H=\{e\}$ so that $\M=\G$, then $\beta$ coincides with the right multiplication on $\G$.

\subsection{Dilations}\label{sec:dilstructure}
Since $\H$ is dilation-invariant, the dilations of $\G$ induce dilations on $\M$ by setting
\begin{equation}\label{eq:dilationsonM}
  \delta_\lambda(\H g) := \H \delta_\lambda(g),\qquad \forall\, g\in \G,\,\lambda\in \R.
\end{equation}
The fixed point of dilations of $\M$ is the coset of the identity, namely $\H e$, that we call the \emph{origin}. Of course, if $\M=\G$, these dilations coincide with the ones given by \cref{def:dilations}.

\subsection{Distinguished algebra of vector fields}\label{sec:distinguishedLie}
Let $\pi :\G\to \M$ be the quotient map. Its push-forward induces a Lie algebra homomorphism $\pi_* : \g \to \mathrm{Vec}(\M)$. Thus, although $\M$ is not generally a group, it still carries a distinguished stratified Lie algebra of vector fields given by $\pi_*\g\subset \mathrm{Vec}(\M)$. For $\pi_*$ to be well-defined, it is crucial that $\M$ is defined as the set of right cosets, and not the left ones.

The dilation-invariant subalgebra $\pi_*\h \subset \pi_*\g$  consists of the vector fields in $\pi_*\g$ that vanish at the origin $\H e$. The distinguished subalgebras of vector fields $\pi_*\g$ and $\pi_*\h$ are both dilation-invariant, and $\pi_*\g$ is stratified with strata $\pi_*\g_i$, where $(\delta_\lambda)_*$ acts by multiplication by $\lambda^i$.

Of course, if $\M=\G$, the distinguished algebra of vector fields is just the Lie algebra of left-invariant vector fields on $\G$.

\subsection{Sub-Riemannian metric structure}\label{sec:metstructure}

Assume that $\G$ is a Carnot group. Any orthonormal basis $\{X_1,\dots,X_k\}\subset \g_1\subset\mathrm{Vec}(\G)$ induces smooth vector fields $\{\pi_*X_1,\dots,\pi_*X_k\} \subset \pi_*\g_1\subset\mathrm{Vec}(\M)$, which in turn define a sub-Riemannian structure on $\M=\G \slash \H$, in the sense of \cref{a:SR}. We denote by $\sfd_{\M}$ the corresponding sub-Riemannian metric. One can easily verify that $\sfd_{\M}$ does not depend on the choice of the orthonormal basis. Furthermore, the dilations introduced in \cref{sec:dilstructure} satisfy
\begin{equation}
  \sfd_{\M}\left(\delta_{\lambda}(\H g),\delta_{\lambda}(\H g')\right) =\lambda \sfd_{\M}\left(\H g,\H g'\right), \qquad \forall \, g,g'\in \G,\,\lambda \in \R.
\end{equation}
\begin{remark}[Carnot-Carathéodory metrics]
  When $\M=\G$, the above construction yields the standard Carnot-Carathéodory metric on the Carnot group. It is left-invariant, in the sense that
  \begin{equation}
    \sfd_{\G}(g g',g g'') =\sfd_{\G}(g',g''),\qquad \forall\, g,g',g''\in\G.
  \end{equation}
  We emphasize that $\sfd_{\G}$ is also not right-invariant unless $\G$ is abelian.
\end{remark}
\begin{remark}[Equivalence with the quotient metric]\label{rmk:equiv_with_quotient_metric}
  If $\G$ is a Carnot group with Carnot–Ca\-ra\-théo\-do\-ry metric $\sfd_{\G}$, and $\H<\G$ is a closed, dilation-invariant subgroup, then setting $\M = \G \slash \H$ and defining the metric $\sfd_{\M}$ as described in \cref{sec:metstructure}, we have
  \begin{equation}\label{eq:dmetric}
    \sfd_{\M}(\H g,\H g') = \inf_{h,h'\in \H} \sfd_{\G}(hg,h'g') = \inf_{h\in \H} \sfd_{\G}(g,hg'),
  \end{equation}
  where the second equality follows from the left-invariance of $\sfd_{\G}$. In other words, the sub-Riemannian metric   coincides with the quotient metric described in \cref{sec:quotients} (note that the action of $\H$ on $\G$ is proper).  We may freely use either formulation depending on context.
\end{remark}

\subsection{Invariant measure}\label{sec:invmeas}

For a reference to what follows, see \cite[Sec.\ 2.7]{Federer} or \cite[Sec.\ 15]{HR-harmonic}. Since both $\H$ and $\G$ are unimodular, $\M$ has a unique (up to rescaling) right-invariant Haar measure, namely a Borel measure invariant under the right action of $\G$ on $\M$ (see \cref{sec:homogstructure}). We denote a choice of such a measure by $\mm_{\M}$. This measure is then characterized by the property
\begin{equation}
  \mm_{\M}(E g) = \mm_{\M}(E),
\end{equation}
for all Borel set $E\subset \M$, where $E g= \beta(g,E)$. We call $\mm_{\M}$ an \emph{invariant measure} on $\M$, or also a \emph{right-invariant measure} if we want to emphasize the right action.

When $\M=\G$, the measure $\mm_{\G}$ is (a choice of) the classical right-invariant Haar measure. Furthermore, since $\G$ is nilpotent and therefore unimodular, $\mm_{\G}$ is also left-invariant, and thus bi-invariant. In other words, for any Borel set $E\subset \G$ and all $g\in\G$, it holds
\begin{equation}
  \mm_{\G}(g E) = \mm_{\G}(E g) = \mm_{\G}(E).
\end{equation}

\subsection{Carnot homogeneous spaces} We are now ready to give a general definition putting together all the structures defined in \cref{sec:homogstructure,sec:dilstructure,sec:distinguishedLie,sec:metstructure,sec:invmeas}.

\begin{definition}[Carnot homogeneous space]\label{def:Carnothomo}
  A \emph{Carnot homogeneous space} is the quotient $\M=\G \slash \H$ of a Carnot group $\G$ by a dilation-invariant closed subgroup $\H<\G$. Let $\sfd_{\M}$ be the corresponding quotient metric, and $\mm_{\M}$ a choice of invariant measure. We denote by $(\M,\sfd_{\M})$ and $(\M,\sfd_{\M},\mm_{\M})$ the resulting metric space and metric measure space, respectively.
\end{definition}

The terminology ``Carnot homogeneous space'' is used in \cite[Sec.\ 5.6]{Bellaiche}, while some authors prefer the terminology ``self-similar space''.

As a metric space, any Carnot homogeneous space $(\M,\sfd_{\M})$ is complete, separable, geodesic, and proper. We stress that $\sfd_{\M}$ is not invariant under the right action of $\G$ on $\M$ described in \cref{sec:homogstructure}, unless $\H$ is normal and $\M = \G \slash \H$ is abelian. This is a consequence of the fact that the Carnot-Carathéodory metric on $\G$ is left-invariant, but not right-invariant in general. Thus, Carnot homogeneous spaces are not homogeneous in the sense of metric geometry.

Any Carnot group equipped with its left-invariant Carnot-Carathéodory metric and a bi-invariant Haar measure is a Carnot homogeneous space, corresponding to the choice $\H = \{e\}$ in \cref{def:Carnothomo}.  More generally, we have the following.

\begin{remark}[Consistency with Carnot groups]\label{rmk:consistency}
  If $\H\triangleleft \G$ is normal, then $\M = \G\slash\H$ is a stratified group, with group multiplication given by $(\H g)(\H g'):=\H gg'$, and stratified Lie algebra isomorphic to $\g_1\slash\h_1\oplus\dots\oplus \g_s\slash\h_s$. By construction, the dilations $\delta_{\lambda}:\M\to\M$ of \cref{sec:dilstructure} coincide with the ones of \cref{def:dilations}, seeing $\M$ as a stratified group. If $\G$ is a Carnot group, then $\M$ is also a Carnot group (with scalar product on $\g_1\slash\h_1$ induced by the one on $\g_1$ via the surjective map $\pi_*:\g_1\to\g_1\slash\h_1$), and $\sfd_{\M}$ is the corresponding Carnot-Carathéodory metric,  left-invariant by the left multiplication of $\M$. Finally, by construction, the measure $\mm_{\M}$ is invariant under the right action $\beta:\G\times \M\to\M$ defined in \cref{sec:homogstructure}. In this case, we note that this right action corresponds to the right multiplication in $\M$, in the sense that
  \begin{equation}
    (\H g)(\H g') = \beta(g',\H g), \qquad \forall g,g'\in\G.
  \end{equation}
  It follows that $\mm_{\M}$ is a classical right-invariant Haar measure on the Lie group $\M$, and since the latter is unimodular, $\mm_{\M}$ is also invariant under the left multiplication of $\M$. In other words, the measure $\mm_{\M}$ is in this case a Haar bi-invariant measure on the group $\M$. Therefore, if $\H$ is a normal subgroup, all the structures defined in the previous sections for the Carnot homogeneous space $\M=\G \slash \H$ coincide with the ones obtained by regarding $\M$ directly as a Carnot group.
\end{remark}

\begin{remark}[Minimal group and irreducible presentation]\label{rmk:minimalquotient}
  Different choices of $\G$ and $\H$ may yield the same Carnot homogeneous space $\M$ (i.e.\ the same smooth manifold, the same stratified Lie algebra of vector fields $\pi_*\g$, and the same subalgebra $\pi_*\h$ of vector fields vanishing at the origin, up to isomorphisms). However, there is a ``smallest'' possible choice. Letting $\k:=\ker\pi_*|_{\g}$, it is easy to see that $\k \subset\g$ is a dilation-invariant ideal, and it is clear that $\k\subseteq\h$. In particular, $\K=\exp_{\G}(\k)$ is a closed, stratified normal subgroup of both $\H$ and $\G$. Therefore, it holds:
  \begin{equation}
    \M = \G \slash \H =  (\G\slash\K) \slash (\H\slash\K).
  \end{equation}
  A given Carnot homogeneous space $\M= \G \slash \H$ can thus be realized equivalently as the quotient $\M=\tilde{\G} \slash \tilde{\H}$, where $\tilde{\G}:=\G\slash\K$ and $\tilde{\H}:=\H\slash\K$ (which are stratified groups with Lie algebras denoted by $\tilde{\g}$ and $\tilde{\h}$, respectively). In this way, the corresponding Lie algebra homomorphism $\tilde{\pi}_*:\tilde{\g}\to\mathrm{Vec}(\M)$ is injective, and the original stratified Lie algebras $\pi_*\h\subset\pi_*\g\subset\mathrm{Vec}(\M)$ are \emph{isomorphic} (as stratified Lie algebras) to the Lie algebras of $\tilde{\H}$ and $\tilde{\G}$. Furthermore, if $\G$ is a Carnot group, then $\tilde{\G}$ is also a Carnot group and its sub-Riemannian distance coincides with the quotient distance. Then, for all $g,g'\in \G$, it holds:
  \begin{align}
    \sfd_{\tilde{\G} \slash \tilde{\H}}\big(\tilde{\H} \K g,\tilde{\H} \K g'\big) & = \inf_{\K h,\K h' \in \tilde{\H}} \sfd_{\tilde{\G}}\big((\K h)(\K g),(\K h')(\K g')\big) \\
    & = \inf_{\K h,\K h' \in \tilde{\H}}\sfd_{\G \slash \K}(\K hg,\K h'g') \\
    & = \inf_{h,h'\in \H} \inf_{k,k'\in \K} \sfd_\G(khg,k'h'g') \\
    & = \inf_{h,h'\in \H} \sfd_{\G}(hg,h'g') = \sfd_{\G \slash \H}(g,g'),
  \end{align}
  where we used that $\K$ is a normal subgroup of $\H$ and $\G$. Therefore, the metric structure induced on $\M$ by realizing $\M$ as $\tilde{\G} \slash \tilde{\H}$ is the same as one obtained by realizing $\M$ as $\G \slash \H$.

  We call $\tilde{\G}$ the \emph{minimal group} of the Carnot homogeneous space, and $\tilde{\H}$ the \emph{minimal subgroup}. The minimal group and subgroup of a Carnot homogeneous space are unique up to isomorphisms of stratified groups, since they are the ones associated with the Lie algebras $\pi_*\h\subset\pi_*\g\subset\mathrm{Vec}(\M)$. We say that the presentation of the Carnot homogeneous space $\M = \G \slash \H$ is \emph{irreducible}.
\end{remark}

\subsection{Isometries}\label{sec:isometries}

If $(X,\sfd,\mm)$ and $(\tilde{X},\tilde{\sfd},\tilde{\mm})$ are metric measure spaces, a \emph{metric isometry} is a distance-preserving invertible map from $X$ to $\tilde{X}$, and a \emph{metric measure isometry} a metric isometry that also pushes forward $\mm$ to $\tilde{\mm}$. It is known that any metric isometry between Carnot groups is smooth, see \cite{Ham-isometries,LDO-Isometries}. Furthermore, up to a constant scaling of the measure, any metric isometry is also a metric measure isometry, see \cite{nostropopp}. We do not know whether these results hold for Carnot homogeneous spaces. For this reason, we directly define isometries of Carnot homogeneous spaces as smooth maps that lift to isometries of Carnot groups.

\begin{definition}[Smooth isometries of Carnot homogeneous spaces]\label{def:iso}
  Let $\M$ and $\tilde{\M}$ be Carnot homogeneous spaces with irreducible presentations $\M = \G \slash \H$ and $\tilde{\M} = \tilde{\G} \slash \tilde{\H}$. A \emph{smooth isometry of Carnot homogeneous spaces} is a diffeomorphism $\phi:\M\to \tilde{\M}$ for which there exists a metric isometry $\Phi:\G\to\tilde{\G}$ such that the following diagram commute:
  \begin{equation}\label{eq:cdisometry}
    \begin{tikzcd}
      \G \arrow[r,"\Phi"] \arrow[d,"\pi"] & \tilde{\G} \arrow[d,"\tilde{\pi}"] \\
      \M= \G \slash \H \arrow[r,"\phi"] & \tilde{\G} \slash \tilde{\H}=\tilde{\M}
    \end{tikzcd}
  \end{equation}
  We will say in this case that $\M$ and $\tilde{\M}$ are smoothly isometric.
\end{definition}

\begin{proposition}[Smooth isometries are metric measure isometries]\label{prop:iso} Let $(\M,\sfd_{\M},\mm_{\M})$ and $(\tilde{\M},\sfd_{\tilde{\M}},\mm_{\tilde{\M}})$ be Carnot homogeneous spaces. Then there exists a constant $c>0$ (depending only on the choices of invariant measures) such that any smooth isometry $\phi:\M\to\tilde{\M}$ is a metric measure isometry between $(\M,\sfd_{\M},\mm_{\M})$ and $(\tilde{\M},\sfd_{\tilde{\M}}, c \mm_{\tilde{\M}})$.
\end{proposition}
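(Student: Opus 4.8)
The plan is to check the two defining properties of a metric measure isometry separately: that $\phi$ preserves distances, and that $\phi_\sharp\mm_\M=c\,\mm_{\tilde{\M}}$ for a constant $c$ independent of $\phi$. Fix the irreducible presentations $\M=\G\slash\H$, $\tilde{\M}=\tilde{\G}\slash\Htilde$, with projections $\pi,\tilde{\pi}$, and a metric isometry $\Phi:\G\to\tilde{\G}$ realizing $\phi$ as in \eqref{eq:cdisometry}. For the distances, I would first observe that, since $\phi$ is bijective and $\tilde{\pi}\circ\Phi=\phi\circ\pi$, the map $\Phi$ sends each fibre $\pi^{-1}(\pi(g'))=\H g'$ \emph{onto} the fibre $\tilde{\pi}^{-1}(\tilde{\pi}(\Phi(g')))=\Htilde\,\Phi(g')$ (both inclusions being immediate from surjectivity of $\Phi$, injectivity of $\phi$, and commutativity of the square). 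Feeding this into the quotient-metric formula of \cref{rmk:equiv_with_quotient_metric} applied in the group case, and using that $\Phi$ is a metric isometry, one gets for all $g,g'\in\G$
\begin{align}
\sfd_{\tilde{\M}}\big(\phi(\pi(g)),\phi(\pi(g'))\big)
&=\inf_{\tilde{h}\in\Htilde}\sfd_{\tilde{\G}}\big(\Phi(g),\tilde{h}\,\Phi(g')\big)
=\inf_{h\in\H}\sfd_{\tilde{\G}}\big(\Phi(g),\Phi(hg')\big)\\
&=\inf_{h\in\H}\sfd_{\G}(g,hg')=\sfd_{\M}(\pi(g),\pi(g')),
\end{align}
and since $\pi$ is surjective, $\phi$ is a distance-preserving bijection.

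For the measure, I would use the structure of isometries of Carnot groups \cite{Ham-isometries,LDO-Isometries} to write $\Phi=L_a\circ\Theta$, where $a=\Phi(e)$ and $\Theta:\G\to\tilde{\G}$ is a Lie group isomorphism. A direct computation then shows that $\phi$ is equivariant for the right actions of \cref{sec:homogstructure} through $\Theta$, namely $\phi\circ\beta(g',\cdot)=\tilde{\beta}(\Theta(g'),\cdot)\circ\phi$ for all $g'\in\G$. Since $\mm_\M$ is right-$\G$-invariant and $\Theta$ is onto, $\phi_\sharp\mm_\M$ is then a nonzero, $\sigma$-finite, right-$\tilde{\G}$-invariant Borel measure on $\tilde{\M}$, so by the uniqueness up to scaling of the invariant measure recalled in \cref{sec:invmeas} one obtains $\phi_\sharp\mm_\M=c_\phi\,\mm_{\tilde{\M}}$ for some $c_\phi>0$. (Alternatively one could push $\mm_\G$ through $\Phi$ using \cite{nostropopp} and descend via Weil's integration formula, but the above route avoids that.)

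It then remains to verify that $c_\phi$ does not depend on $\phi$. The idea is to reduce to self-isometries and exploit dilations. If no smooth isometry $\M\to\tilde{\M}$ exists the statement is vacuous; otherwise, fixing one such $\phi_0$, any other smooth isometry $\phi$ produces a smooth self-isometry $\psi:=\phi_0^{-1}\circ\phi$ of $\M$, for which the previous step gives $\psi_\sharp\mm_\M=c_\psi\,\mm_\M$, and it suffices to prove $c_\psi=1$ (as then $\phi_\sharp\mm_\M=\phi_{0\sharp}(\psi_\sharp\mm_\M)=c_\psi c_{\phi_0}\,\mm_{\tilde{\M}}$ forces $c_\phi=c_{\phi_0}$). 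To this end I would note that $(\delta_\rho)_\sharp\mm_\M$ is again right-$\G$-invariant, hence a positive multiple of $\mm_\M$, so there is $Q>0$ with $\mm_\M(B(o,\rho))=\rho^{Q}v$, where $o$ is the origin and $v:=\mm_\M(B(o,1))\in(0,\infty)$; the sandwich $B(o,r-\sfd_\M(o,p))\subset B(p,r)\subset B(o,r+\sfd_\M(o,p))$ then yields $\lim_{r\to\infty}\mm_\M(B(p,r))/r^{Q}=v$ for \emph{every} $p\in\M$. Since $\psi$ is a distance-preserving bijection with $\psi_\sharp\mm_\M=c_\psi\mm_\M$, we have $\mm_\M(B(\psi(p),r))=c_\psi^{-1}\mm_\M(B(p,r))$; dividing by $r^{Q}$ and letting $r\to\infty$ gives $v=c_\psi^{-1}v$, hence $c_\psi=1$.

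The step I expect to be the main obstacle is this last one: Carnot homogeneous spaces are \emph{not} metrically homogeneous, so the volumes of balls of a fixed radius centred at different points genuinely differ and cannot be matched up by an isometry argument; the resolution is to pass to the large-scale limit, where the exact self-similarity of $\mm_\M$ under dilations produces a point-independent volume density $v$ that any isometry must preserve. Finally, rescaling $\mm_\M$ (resp. $\mm_{\tilde{\M}}$) by a positive factor multiplies $c:=c_{\phi_0}$ by that factor (resp. by its inverse), so $c$ depends only on the chosen invariant measures, as claimed.
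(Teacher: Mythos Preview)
Your argument is correct. The metric-isometry part is essentially the same as the paper's. For the measure part, however, you take a genuinely different route. The paper works through the Popp measure and Weil's integration formula: it shows $\Phi$ preserves the Popp measures on $\G$ and $\tilde{\G}$ (citing \cite{nostropopp}), and then uses the explicit formula for the invariant measure on a quotient in terms of a cut-off function $\beta$ to descend this invariance to $\phi_\sharp\mm_\M$; for the independence of $c$, it again invokes that the self-isometry $\bar\Phi=\Phi_1\circ\Phi_2^{-1}$ of $\tilde{\G}$ preserves the Popp measure. You instead use the structure theorem $\Phi=L_a\circ\Theta$ to establish equivariance of $\phi$ with respect to the right $\G$- and $\tilde{\G}$-actions directly, and then appeal to uniqueness of invariant measures; for the independence of $c$, you give a self-contained volume-growth argument exploiting dilations and the point-independence of the large-scale density $v$. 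Your approach is arguably more elementary (no Weil-type formula, no Popp measure for the independence step), and it nicely isolates the role of the dilation structure in circumventing the lack of metric homogeneity. The paper's route, on the other hand, stays closer to standard Lie-theoretic machinery and avoids any asymptotic limit.
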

\begin{proof}
  Observe that from \eqref{eq:cdisometry}, for any $g\in\G$, it must hold
  \begin{equation}\label{eq:intertwining}
    \Phi(\H g) = \tilde{\H}\Phi(g).
  \end{equation}
  It follows that for all $g,g'\in \G$ we have
  \begin{align}
    \sfd_{\tilde{\M}}(\phi(\H g),\phi(\H g')) & = \sfd_{\tilde{\M}}(\tilde{\H}\Phi(g),\tilde{\H}\Phi(g')) & \text{by \eqref{eq:cdisometry}}\\
    & = \inf_{\tilde{h}\in\tilde{\H}}\sfd_{\tilde{\G}}(\Phi(g),\tilde{h}\Phi(g')) \\
    & = \inf_{h\in\H}\sfd_{\tilde{\G}}(\Phi(g),\Phi(h g')) & \text{by \eqref{eq:intertwining}}\\
    & = \inf_{h\in\H}\sfd_{\G}(g,h g') & \text{$\Phi$ is a metric isometry}\\
    & = \sfd_{\M}(\H g,\H g'),
  \end{align}
  proving that $\phi$ is a metric isometry.

  The proof of the fact that $\tilde{\phi}$ is a metric \emph{measure} isometry is ultimately a consequence of the fact that $\G$ is unimodular. A short proof goes as follows.
  One special choice of invariant metric on a Carnot group is given by the Popp measure, see \cite{montgomerybook}. As proven in \cite{nostropopp}, it is preserved by any metric isometry.\footnote{This is proved in \cite{nostropopp} for the case of smooth \emph{self}-isometries of equiregular sub-Riemannian manifolds. Isometries between (a priori different) Carnot groups  are diffeomorphisms by \cite{Ham-isometries}, and therefore must send a global horizontal frame to a corresponding one. Thus, one may apply the same argument as in \cite[Prop.\ 5]{nostropopp}.} In other words, letting $\mm_{\G}$ and $\mm_{\tilde{\G}}$ be the Popp measure on $\G$ and $\tilde{\G}$, respectively, it holds
  \begin{equation}\label{eq:preservemG}
    \Phi_\sharp \mm_{\G} = \mm_{\tilde{\G}}.
  \end{equation}

  Define the Haar measures $\mm_{\H} :=\mm_{\G}|_{\H}$ and $\mm_{\tilde{\H}}=\mm_{\tilde{\G}}|_{\tilde{\H}}$ on $\H$ and $\tilde{\H}$, respectively. As a consequence of \eqref{eq:intertwining}, \eqref{eq:preservemG} and the right-invariance of $\mm_{\G}$, $\mm_{\tilde{\G}}$, we obtain
  \begin{equation}\label{eq:preservemH}
    (\Phi \circ R_{g})_{\sharp} \mm_{\H} = (R_{\Phi(g)})_{\sharp} \mm_{\tilde{\H}},\qquad \forall\,g\in \G.
  \end{equation}
  where $R$ denote the right multiplication.

  By \cite[Thm.\ 1.2.13]{CG-NilpotentGroupsBook}, a right-invariant measure on $\M = \G\slash\H$, which we may assume is precisely $\mm_{\M}$, can be computed in terms of $\mm_{\G}$ and $\mm_{\H}$ as follows. Let $\beta \in C(\G)$ be such that (i) $\beta\geq 0$; (ii) for any compact $K \subset \M$, $\supp(\beta)\cap\pi^{-1}(K)$ is compact, and (iii) $\int_{\H} \beta(hg)\mm_{\H}(\di h)=1$ for all $g\in \G$. Then, we have
  \begin{equation}
    \int_{\M} u(m)\mm_{\M}(\di m) = \int_{\G}\beta(g)( u\circ\pi)(g)\mm_{\G}(\di g),\qquad \forall\, u \in C_c(\M).
  \end{equation}
  The function $\tilde{\beta}_{\Phi} := \beta\circ\Phi^{-1}\in C(\tilde{\G})$ satisfies conditions (i), (ii), and, due to \eqref{eq:preservemH}, also (iii). Then, a change of variable argument shows that
  \begin{equation}\label{eq:moltetilde}
    \int_{\tilde{\M}} \tilde{u}(\tilde{m})(\phi_{\sharp}\mm_{\M})(\di \tilde{m}) = \int_{\tilde{\G}}\tilde{\beta}_{\Phi}(\tilde{g})( \tilde{u}\circ\tilde{\pi})(\tilde{g})\mm_{\tilde{\G}}(\di \tilde{g}),\qquad \forall\, \tilde{u} \in C_c(\tilde{\M}).
  \end{equation}
  It follows that $\phi_{\sharp}\mm_{\M}$ is an invariant measure on $\tilde{\M}$. A priori, it depends on $\phi$ through $\tilde{\beta}_{\Phi}$.

  It remains to show that different choices of smooth isometries yield the same invariant measure on $\tilde{\M}$. Let $\phi_1,\phi_2:\M\to\tilde{\M}$ be two smooth isometries, corresponding to metric isometries $\Phi_1,\Phi_2:\G\to\tilde{\G}$ satisfying \eqref{eq:cdisometry}. In the notation of \eqref{eq:moltetilde}, $\tilde{\beta}_{\Phi_2} = \tilde{\beta}_{\Phi_1} \circ \bar{\Phi}$, where $\bar{\Phi} = \Phi_1\circ\Phi_2^{-1}:\tilde{\G}\to \tilde{\G}$ is a metric self-isometry of $\tilde{\G}$. We again use the fact that metric isometries of Carnot groups preserve the Popp measure, and since $\bar{\Phi}$ is a \emph{self}-isometry, it must hold that $\bar{\Phi}_{\sharp} \mm_{\tilde{\G}} = \mm_{\tilde{\G}}$. It follows by a change-of-variable argument in \eqref{eq:moltetilde} that $\phi_{1\sharp}\mm_{\M} = \phi_{2\sharp}\mm_{\M} = c \mm_{\M'}$.
\end{proof}

\begin{definition}[Engel and Martinet]\label{def:EngMar}
  The \emph{Engel group} $\Eng$ is the stratified group with stratified algebra $\e = \e_1 \oplus \e_2 \oplus \e_3$, for which there exists a basis $X_1,X_2,Y,Z$ satisfying
  \begin{equation}
    \e_1 = \mathrm{span}\{X_1,X_2\},\qquad \e_2 = \mathrm{span}\{Y\}, \qquad \e_3 = \mathrm{span}\{Z\},
  \end{equation}
  and with the following non-zero bracket relations:
  \begin{equation}
    [X_1,X_2] = Y, \qquad [X_1,Y] = Z.
  \end{equation}
  It is a Carnot group when equipped with the scalar product making $X_1,X_2$ orthonormal.

  The \emph{Martinet structure} $\Mar$ is the Carnot homogeneous space realized as the quotient $\Mar=\Eng \slash \H$ of the Engel group by the closed dilation-invariant subgroup $\H$ whose Lie algebra is $\h = \e_2$. The corresponding sub-Riemannian structure, which has constant rank, is generated by the family of vector fields $\pi_*X_1$, $\pi_*X_2$.
\end{definition}

\begin{lemma}[Uniqueness of $\Eng$ and $\Mar$]\label{lem:uniquenessEngMar}
  Let $\M$ be a Carnot homogeneous space whose minimal group is isomorphic (as a stratified group) to the Engel group, and whose sub-Riemannian distribution has constant rank. Then precisely one of the following two cases holds:
  \begin{itemize}
    \item either $\M$ is a Carnot group, in which case $\M$ is isometric to $\Eng$;
    \item or $\M$ is  not a Carnot group, in which case $\M$ is isometric to $\Mar$.
  \end{itemize}
\end{lemma}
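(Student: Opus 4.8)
The plan is to exploit the classification of dilation-invariant subgroups of the Engel group $\Eng$ together with the irreducibility of the presentation (see \cref{rmk:minimalquotient}). Let $\M = \G\slash\H$ be a Carnot homogeneous space whose minimal group $\tilde\G$ is isomorphic as a stratified group to $\Eng$. By passing to the irreducible presentation, we may and do assume $\G = \Eng$, with Lie algebra $\e = \e_1\oplus\e_2\oplus\e_3$ and basis $X_1,X_2,Y,Z$ as in \cref{def:EngMar}, and that the Lie algebra homomorphism $\pi_*:\e\to\mathrm{Vec}(\M)$ is injective; in particular $\k := \ker\pi_*|_\e = 0$. Since $\H$ is closed and dilation-invariant, its Lie algebra $\h$ is a dilation-invariant subalgebra of $\e$, so $\h = \h_1\oplus\h_2\oplus\h_3$ with $\h_i\subset\e_i$.

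First I would determine the possible $\h$. The minimality condition $\k = 0$ forces $\h$ to contain no nonzero dilation-invariant ideal of $\e$; since $\e_3 = \mathrm{span}\{Z\}$ is a one-dimensional (hence dilation-invariant) ideal, we must have $\h_3 = 0$. Next, the constant-rank hypothesis on the sub-Riemannian distribution of $\M$ means that the distribution $\pi_*\e_1$ has constant rank on $\M$; because $\dim\e_1 = 2$, this rank is either $2$ or $1$ (it cannot drop to $0$ as $\pi_*$ is injective). Using the formula for the tangent space to the fiber — the vertical directions at the origin are exactly $\pi_*\h$, and at a general point $\H g$ they are $\pi_*(\mathrm{Ad}_{g}\h)$ — one checks that $\pi_*\e_1$ has constant rank iff $\h_1 = 0$ (if $\h_1\neq 0$, say $\h_1 = \mathrm{span}\{aX_1 + bX_2\}$, the horizontal space would be one-dimensional at the origin but, moving along the $Y$ direction, $\mathrm{Ad}$ rotates this line out of $\e_1$, producing a rank jump — contradiction). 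Hence $\h_1 = 0$ and $\h_3 = 0$, so $\h = \h_2\subset\e_2 = \mathrm{span}\{Y\}$, leaving exactly two cases: $\h = 0$ or $\h = \e_2$.

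If $\h = 0$ then $\H = \{e\}$ and $\M = \Eng$ is a Carnot group, equipped by \cref{sec:metstructure} with the sub-Riemannian metric induced by the (unique up to scaling; here normalized so $X_1,X_2$ are orthonormal by \cref{sec:Carnothomogeneousspaces}) scalar product on $\e_1$, so $\M$ is smoothly isometric to $\Eng$ in the sense of \cref{def:iso}. If $\h = \e_2$ then $\M = \Eng\slash\H$ with $\h = \e_2$, which is by definition $\Mar$; here $\H$ is not normal in $\Eng$ (since $[\e_1,\e_2]=\e_3\not\subset\e_2$), so $\M$ is not a Carnot group by \cref{rmk:consistency}. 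The two cases are mutually exclusive because $\Eng$ has topological dimension $4$ while $\Mar$ has dimension $3$, so no diffeomorphism, let alone isometry, can relate them. I would also note that once $\h$ is fixed, the metric on $\M$ is uniquely determined (independent of the orthonormal basis chosen, as remarked in \cref{sec:metstructure}), so there is genuinely a single isometry class in each case.

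The main obstacle I anticipate is making the constant-rank argument fully rigorous — i.e.\ showing cleanly that $\h_1\neq 0$ forces the distribution $\pi_*\e_1$ to have non-constant rank. This requires a careful description of how the fiber directions $\pi_*(\mathrm{Ad}_g\h)$ vary with the base point $\H g$ and intersect $\pi_*\e_1$; the computation is elementary in the Engel group but must be done explicitly, tracking $\mathrm{Ad}$ on the $4$-dimensional algebra. Everything else (the reduction to the irreducible presentation, the elimination of $\h_3$, the identification of the two remaining cases with $\Eng$ and $\Mar$, and the dimension count ruling out coincidence) is straightforward bookkeeping using the structures set up in \cref{sec:Carnothomogeneousspaces}.
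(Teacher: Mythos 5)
Your overall strategy (reduce to the irreducible presentation, eliminate $\h_3$ using that $\e_3$ is an ideal, eliminate $\h_1$ via constant rank, conclude $\h\subseteq\e_2$ so there are exactly two cases) matches the paper's. The $\h_3$ and $\h_1$ arguments are in the same spirit as the paper's, and both are similarly terse on the constant-rank step, which you at least flag.

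However, there is a genuine gap in the concluding step. You assert that the scalar product on $\e_1$ is ``unique up to scaling'' and then infer that $\M$ is isometric to the standard $\Eng$ (or $\Mar$). That uniqueness claim is false: positive-definite inner products on a two-dimensional space form a three-parameter family, two parameters worth even after scaling. The hypothesis is only that the minimal group is isomorphic to $\Eng$ \emph{as a stratified group}; it may carry an arbitrary left-invariant scalar product on the first stratum, so you cannot ``assume $\G=\Eng$'' as a Carnot group. What your proof is missing is precisely the result that \emph{any} Carnot group whose underlying stratified group is $\Eng$ is isometric to the standard one. The paper proves this by building an adapted orthonormal basis: pick $B\neq 0$ in $\g_2$, split $\g_1=(\ker\ad_B)^\perp\oplus\ker\ad_B$, choose an orthonormal basis $\{A_1,A_2\}$ of $\g_1$ with $A_2\in\ker\ad_B$, and verify that the map sending $A_1,A_2,[A_1,A_2],[A_1,[A_1,A_2]]$ to $X_1,X_2,Y,Z$ is a stratified isomorphism preserving the scalar product on the first layer, hence a Carnot group isometry; for the $\h=\g_2$ case, this isomorphism takes $\g_2$ to $\e_2$ and so descends to an isometry of $\G\slash\exp(\g_2)$ onto $\Mar$. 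Without this step you have only established that $\M$ is isomorphic (not isometric) to $\Eng$ or $\Mar$, which is strictly weaker than the statement.
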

\begin{proof}
  By \cref{rmk:minimalquotient}, we can assume that $\M=\G \slash \H$ where $\G$ is isomorphic as a stratified Lie group to the Engel group, and $\H$ is a closed, dilation-invariant subgroup. The subgroup $\H$ is determined by a stratified subalgebra $\h\subset \g$, say
  \begin{equation}
    \h = \h_1\oplus\h_2\oplus\h_3,\qquad \text{with}\qquad \h_i\subseteq \g_i,\quad i=1,2,3.
  \end{equation}
  If $\pi_*:\g \to \mathrm{Vec}(\M)$ has non-trivial kernel, the distinguished Lie algebra $\pi_*\g$ is not isomorphic to the Engel algebra. Thus, we must have $\ker\pi_*=\{0\}$.

  It is well-known that, letting $\exp_{\G}:\g\to\G$ be the group exponential map, it holds
  \begin{equation}\label{eq:adexp}
    \Ad_{\exp_{\G}(X)}(W) = W + \sum_{i=1}^{s-1} \frac{1}{i!} \ad_X^i(W),\qquad \forall \, X,W\in \g,
  \end{equation}
  see e.g.\ \cite[p.\ 12]{CG-NilpotentGroupsBook}, where $\Ad: \G\to \mathrm{GL}(\g)$ and $\ad:\g \to \mathfrak{gl}(\g)$ are defined respectively by
  \begin{equation}
    \Ad_g(W) :=\left.\frac{d}{dt}\right|_{t=0}  g \exp_{\G}(t W)g^{-1}, \qquad \ad_X(Y):=[X,Y],\qquad \forall\,g\in \G,\, X,Y,W\in \g.
  \end{equation}
  Using \eqref{eq:adexp} it follows that, for $g \in \G$ and $V\in \g$, it holds
  \begin{equation}\label{eq:characterizationofzeroes}
    (\pi_* V)|_{\H g} =0,\qquad \Longleftrightarrow \qquad \Ad_g(V) \in \h.
  \end{equation}
  Taking $V\in\h_3$ in \eqref{eq:characterizationofzeroes} we see that $\pi_*V$ is the zero vector field, but this contradicts the fact that $\ker\pi_*=\{0\}$ so that $\h_3 =\{0\}$. Similarly, taking $V\in \h_1$, in \eqref{eq:characterizationofzeroes} we see that $\pi_*V|_{\H e}=0$, but this contradicts the constant rank assumption unless $\h_1=\{0\}$.

  The only possibility left is that $\h \subseteq \g_2$. There are two cases: $\h = \{0\}$ or $\h=\g_2$. The corresponding structures correspond to the Engel group and the Martinet structure, respectively.

  We prove now that, in both cases, the corresponding structures are isometric (as Carnot homogeneous spaces) to $\Eng$ or $\Mar$, respectively. Consider the following fact. Any Carnot group $\G$ with a stratified Lie algebra $\g = \g_1 \oplus \g_2 \oplus \g_3$, where $\dim \g_1 = 2$ and $\dim \g_2 = \dim \g_3 = 1$, is isometric to $\Eng$. Indeed, for any $B\in \g_2$, $B\neq 0$, we can write
  \begin{equation}
    \g_1 = \left(\ker\ad_{B}\right)^\perp \oplus \ker \ad_{B}.
  \end{equation}
  Choose an orthonormal basis $\{A_1,A_2\}$ of $\g_1$ adapted to the above splitting, with $A_2 \in \ker\ad_B$, and we can assume that $B = [A_1,A_2] \in \g_2$. By construction, $C := [A_1,B] \in \g_3$ is non-zero. By construction, there is a unique stratified Lie algebra isomorphism $\Phi_*:\g \to \e$ such that
  \begin{equation}
    \Phi_*: A_1\mapsto X_1,\quad A_2\mapsto X_2,\quad B\mapsto Y,\quad C\mapsto Z,
  \end{equation}
  which lifts to a stratified Lie group isomorphism $\Phi : \G\to \Eng$. By construction, $\Phi_*$ preserves the scalar product between the first strata, and $\Phi$ is an isometry between Carnot groups.

  Thus, for the choice $\h = \{0\}$, there exists an isometry $\Phi: \G \to \Eng$ (the one just constructed). For the choice $\h=\g_2$, letting $\Phi$ be the aforementioned isometry, note that $\Phi_*(\g_2)=\e_2$, so that $\Phi(\exp(\g_2)) = \exp(\e_2)$. Therefore, the map $\Phi$ descends to a well-defined diffeomorphism on the quotients, $\phi: \G \slash \exp(\g_2) \to \Eng\slash \exp(\e_2) = \Mar$, which, by construction, is an isometry of Carnot homogeneous spaces (see \cref{prop:iso}).
\end{proof}
\begin{remark}[Carnot homogeneous spaces from Engel] Besides $\Eng$ and $\Mar$, other Carnot homogeneous spaces have Engel as their minimal group. They all take the form $\M=\Eng\slash\H$, with $\H<\Eng$ being a closed, dilation-invariant subgroup with Lie algebra $\h\subset\e$. Classifying them is equivalent to finding all dilation-invariant subalgebras $\h\subset \e$ such that $\pi_*:\e \to\mathrm{Vec}(\M)$ has trivial kernel. As discussed in the proof above, we must have $\h_3=\{0\}$. A case-by-case analysis reveals that we may choose:
  \begin{enumerate}[label=(\roman*)]
    \item \label{case-Engel} $\h=\{0\}$, in which case $\M=\Eng$;
    \item \label{case-Martinet} $\h = \e_2$, in which case $\M=\Mar$;
    \item \label{case-2Grushin} $\h = \mathrm{span} \{X_2, Y\}$, in which case we obtain a rank-varying sub-Riemannian structure of dimension $2$ with an Engel-type growth vector, known as the $2$-Grushin plane \cite{B-alphagrushin};
    \item \label{case-Grushin-Engel} $\h =\mathrm{span}\{a X_1 + bX_2\}$ with $(a,b)\neq (0,0)$, in which case we obtain a rank-varying sub-Riemannian structure of dimension $3$ with an Engel-type growth vector.
  \end{enumerate}
  In this work, we consider only $\Eng$ and $\Mar$ in the list above, and we will need to exploit the explicit knowledge of the geodesic structures of $\Mar$, see \cref{sec:EngMart}.
\end{remark}

\subsection{Quotients to Engel or Martinet}

We study Carnot groups admitting the Martinet structure as a quotient. In doing so, we prove a fact of independent interest but key to our work: admitting a quotient to $\Eng$ is equivalent to admitting one to $\Mar$.

\begin{theorem}[Stratified groups admitting Martinet or Engel quotients]\label{thm:quotienttoMartinet}
  Let $\G$ be a Carnot group with Lie algebra $\g$ and step $s \geq 3$. The following conditions are equivalent:
  \begin{enumerate}[label=(\roman*)]
    \item \label{item:quot-martinet} there exists a closed, dilation-invariant subgroup $\K<\G$ such that $ \G \slash \K$ is a Carnot homogeneous space smoothly isometric to $\Mar$;
    \item \label{item:quot-engel} there exists a closed, and dilation-invariant normal subgroup $\H \triangleleft \G$ such that $\G \slash \H$ is a Carnot group smoothly isometric to $\Eng$;
    \item \label{item:quot-algebraicondition} there exists $\h_3\subset \g_3$ with codimension $1$ such that the subspace
      \begin{equation}\label{eq:subspace}
        \h_2:=\{Y \in \g_2 \mid [\g_1,Y] \in \h_3\}\subset \g_2
      \end{equation}
      has codimension $1$.
  \end{enumerate}
\end{theorem}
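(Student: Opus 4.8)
The plan is to prove the cycle $\ref{item:quot-martinet}\Rightarrow\ref{item:quot-algebraicondition}\Rightarrow\ref{item:quot-engel}\Rightarrow\ref{item:quot-martinet}$, relying on two facts from the preceding material: (1) by \cref{def:iso}, a Carnot homogeneous space smoothly isometric to $\Mar$ has minimal group metrically isometric to the minimal group of $\Mar$, which is $\Eng$ (the presentation $\Mar=\Eng/\exp(\e_2)$ being irreducible, since $\e_2$ contains no nonzero ideal of $\e$), so that, by the rigidity of isometries of Carnot groups recalled in \cref{sec:isometries}, its Lie algebra is isomorphic to the Engel algebra $\e$ as a stratified Lie algebra; and (2) the fact, isolated inside the proof of \cref{lem:uniquenessEngMar}, that any stratified Lie algebra with strata of dimensions $(2,1,1)$ is isomorphic to $\e$, and any Carnot group over it is smoothly isometric to $\Eng$. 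For $\ref{item:quot-martinet}\Rightarrow\ref{item:quot-algebraicondition}$, I would let $\k_0:=\ker(\pi_*|_{\g})$ (with $\pi:\G\to\G/\K$), the largest ideal of $\g$ contained in the Lie algebra $\k$ of $\K$ (cf.\ \cref{rmk:minimalquotient}); it is dilation-invariant because each $\delta_\lambda$ is a Lie automorphism, hence graded. By \cref{rmk:minimalquotient} the minimal group of $\G/\K$ has Lie algebra $\g/\k_0$, which by (1) is $\cong\e$, so its strata $\g_i/(\k_0)_i$ have dimensions $(2,1,1)$. Taking $\h_3:=(\k_0)_3$, of codimension $1$ in $\g_3$, it remains to check that the space $\h_2$ of \eqref{eq:subspace} equals $(\k_0)_2$, which has codimension $1$: the inclusion $(\k_0)_2\subset\h_2$ is immediate from $[\g_1,(\k_0)_2]\subset\k_0\cap\g_3=\h_3$, and conversely if $[\g_1,Y]\subset\h_3$ then the class $\bar Y$ of $Y$ in $(\g/\k_0)_2$ satisfies $[(\g/\k_0)_1,\bar Y]=0$, forcing $\bar Y=0$ since $(\g/\k_0)_2$ is a line and $\ad$ is nonzero on $\e_2$ (e.g.\ $[X_1,Y]=Z\neq0$), so $Y\in(\k_0)_2$.

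The crux is $\ref{item:quot-algebraicondition}\Rightarrow\ref{item:quot-engel}$. Fix $\xi\in\g_2^*$ and $\zeta\in\g_3^*$ with $\ker\xi=\h_2$ and $\ker\zeta=\h_3$, and set $\omega:=\xi([\,\cdot\,,\,\cdot\,])\in\Lambda^2\g_1^*$. By the definition of $\h_2$ in \eqref{eq:subspace}, the linear map $\g_2\to\g_1^*$, $Y\mapsto\zeta([\,\cdot\,,Y])$, has kernel $\h_2$; since $\h_2$ has codimension $1$, its image is a line $\R\ell$ with $\ell\neq0$, and after rescaling $\xi$ we may arrange $\zeta([X,Y])=\xi(Y)\,\ell(X)$ for all $X\in\g_1$, $Y\in\g_2$. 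Applying $\zeta$ to the Jacobi identity for three elements of $\g_1$ and substituting this relation in each of the three resulting terms gives $\omega(X,Y)\ell(Z)+\omega(Y,Z)\ell(X)+\omega(Z,X)\ell(Y)=0$, i.e.\ $\omega\wedge\ell=0$ in $\Lambda^3\g_1^*$. As $\ell\neq0$, a standard exterior-algebra fact (complete $\ell$ to a basis of $\g_1^*$) yields $\omega=\ell\wedge\mu$ for some $\mu\in\g_1^*$; and $\omega\neq0$, because $\h_2\subsetneq\g_2=[\g_1,\g_1]$, so $\ell$ and $\mu$ are independent and $\h_1:=\ker\ell\cap\ker\mu=\mathrm{rad}(\omega)\cap\ker\ell$ has codimension \emph{exactly} $2$. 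Reading the relation once more shows $\h_1=\{X\in\g_1\mid [X,\g_1]\subset\h_2,\ [X,\g_2]\subset\h_3\}$, so $\h:=\h_1\oplus\h_2\oplus\h_3\oplus\g_4\oplus\dots\oplus\g_s$ is a graded (hence dilation-invariant) ideal, the only brackets not landing trivially in $\g_{\geq4}\subset\h$ being $[\g_1,\h_1]\subset\h_2$, $[\g_2,\h_1]\subset\h_3$, $[\g_1,\h_2]\subset\h_3$; and $\g/\h$ is stratified with strata dimensions $(2,1,1)$. By (2), $\H:=\exp_{\G}(\h)$ is a closed, dilation-invariant, normal subgroup with $\G/\H$ a Carnot group smoothly isometric to $\Eng$. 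This is the delicate point: one must force $\h_1$ to have codimension exactly two, and the Jacobi identity together with the exterior-algebra fact is precisely what accomplishes this.

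For $\ref{item:quot-engel}\Rightarrow\ref{item:quot-martinet}$, writing $\g/\h\cong\e$ I have $\dim(\g_i/\h_i)=(2,1,1)$, $\g_{\geq4}\subset\h$, and $\h$ an ideal, so in particular $[\g_2,\h_1]\subset\h_3$; then $\k:=\h_1\oplus\g_2\oplus\h_3\oplus\g_4\oplus\dots\oplus\g_s$ is a dilation-invariant subalgebra (the brackets to check being $[\h_1,\h_1]\subset\g_2$ and $[\g_2,\h_1]\subset\h_3$), so $\K:=\exp_{\G}(\k)$ is closed and dilation-invariant, with $\H\triangleleft\K$ and $\k/\h=(\g/\h)_2$. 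A stratified isometry $\G/\H\to\Eng$ sends $\K/\H=\exp((\g/\h)_2)$ to $\exp(\e_2)$ and descends to a smooth isometry of Carnot homogeneous spaces $(\G/\H)/(\K/\H)\to\Eng/\exp(\e_2)=\Mar$; by the compatibility of iterated and direct quotient metrics (\cref{rmk:minimalquotient}) the source of this map is $\G/\K$ with its quotient metric, so \ref{item:quot-martinet} follows. The remaining two implications are thus routine bookkeeping with graded subalgebras and the two background facts, the real content lying in the Jacobi/exterior-algebra computation of the middle step.
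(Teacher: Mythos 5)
Your proof is correct, and the chain $\ref{item:quot-martinet}\Rightarrow\ref{item:quot-algebraicondition}\Rightarrow\ref{item:quot-engel}\Rightarrow\ref{item:quot-martinet}$ suffices. The first and last implications track what the paper does (passing to the largest ideal $\k_0 = \ker\pi_*|_\g$ contained in $\k$, respectively taking $\k$ to be the full second layer glued with the $\h_i$ and descending the isometry through the iterated quotient); your identification $\k = \h_1 \oplus \g_2 \oplus \h_3 \oplus \g_4 \oplus \dots$ is exactly the paper's $\k = \mathrm{span}\{w \mid w + \h \in \Psi_*\e_2\}$ written coordinate-wise. One small citation point: the identity $(\G/\H)/(\K/\H) = \G/\K$ as metric spaces, for $\H\triangleleft\K<\G$, is \cref{prop:quotientofCarnothomogeneous} rather than \cref{rmk:minimalquotient} (the remark reduces a presentation $\G/\H$ to $(\G/\K)/(\H/\K)$ with $\K$ normal in \emph{both} $\H$ and $\G$, which is the reverse containment); the computation is entirely analogous and your use is sound, but the reference should be swapped.

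Where you genuinely diverge from the paper is $\ref{item:quot-algebraicondition}\Rightarrow\ref{item:quot-engel}$, which both treatments recognize as the crux. The paper builds an explicit adapted basis: it picks $Z\in\g_3\setminus\h_3$, then $X_1,Y$ with $[X_1,Y]=Z$, forms the rank-one map $P=\mathrm{proj}_Z\circ\ad_Y$, invokes the Jacobi identity to show $[\ker P,\ker P]\subset\h_2$, chooses $X_2\in\ker P$ and $\h_1:=\ker(\mathrm{proj}_Y\circ\ad_{X_1})\cap\ker P$, and then verifies the ideal property bracket by bracket. You instead dualize: from $\xi,\zeta$ cutting out $\h_2,\h_3$ you produce $\ell\in\g_1^*$ with $\zeta([X,Y])=\xi(Y)\ell(X)$, pass the Jacobi identity through $\zeta$ to obtain $\omega\wedge\ell=0$ for $\omega=\xi\circ[\cdot,\cdot]$, factor $\omega=\ell\wedge\mu$, and read off $\h_1=\ker\ell\cap\ker\mu$. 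The two constructions produce the \emph{same} ideal $\h$ (your intrinsic characterization $\h_1=\{X\mid[X,\g_1]\subset\h_2,\,[X,\g_2]\subset\h_3\}$ coincides with the paper's $\ker P\cap\ker Q$), but your route isolates the role of the Jacobi identity in a single exterior-algebra identity ($\omega\wedge\ell=0$), which is what forces $\h_1$ to have codimension at most $2$, while $\omega\neq0$ gives codimension at least $2$. This is cleaner and more invariant than the paper's basis-chasing, at the modest cost of invoking Cartan's lemma ($\omega\wedge\ell=0\Rightarrow\omega=\ell\wedge\mu$). Both are fine; yours makes the mechanism more transparent.
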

\begin{remark}[A sufficient condition]\label{rmk:almostfree}
   Letting $k:=\dim\g_1$, the equivalent characterizations of \cref{item:quot-martinet-intro,,item:quot-engel-intro,,item:quot-algebraicondition-intro} are verified provided that $k=2$, or $k>2$ and
  \begin{equation}\label{eq:almostfree}
    \dim\G \geq (k-1)\left(\frac{k^2}{3}+\frac{5k}{6}+1\right).
  \end{equation}
  This condition appeared in \cite[Lemma 4]{AG-subanal}. Note that the right hand side of \eqref{eq:almostfree} is one less than the dimension of the free stratified group with $k$ generator and step $3$.
\end{remark}

\begin{proof}
  $\labelcref{item:quot-engel} \Rightarrow \labelcref{item:quot-algebraicondition}$. To avoid confusion with the statement, we momentarily denote by $\bar{\h}$ the Lie algebra of $\H$. Observe that $\H \triangleleft \G$ if and only if its Lie algebra $\bar{\h} \subseteq\g$ is an ideal. Since $\H$ is dilation-invariant, $\bar{\h}$ admits a compatible stratification as in \eqref{eq:dil-inv-sub}. Thus, the quotient group $\G \slash \H$ has its stratified Lie algebra isomorphic to $\g \slash \bar{\h}$, with stratification
  \begin{equation}
    \g\slash\bar{\h} = \g_1\slash\bar{\h}_1 \oplus \g_2\slash\bar{\h}_2 \oplus \dots \oplus \g_s\slash \bar{\h}_s.
  \end{equation}
  By assumption, $\g/\bar{\h}$ is isomorphic to the Engel algebra. In particular, each stratum $\g_j/\bar{\h}_j$ is isomorphic to the corresponding stratum of the Engel algebra. Therefore, we must have
  \begin{equation}
    \bar{\h}_i = \g_i,\qquad \text{for } i=4,\dots,s,
  \end{equation}
  and, moreover,
  \begin{align}
    \dim \g_3 & = \dim \bar{\h}_3 +1, \\
    \dim \g_2 & = \dim \bar{\h}_2 +1, \\
    \dim \g_1 & = \dim \bar{\h}_1 +2.
  \end{align}
  Set then $\h_3 := \bar{\h}_3$, and recall the definition of $\h_2$ from \eqref{eq:subspace}. Since $\bar{\h}$ is an ideal, we have $\bar{\h}_2 \subseteq \h_2$, so the codimension of $\h_2$ is at most $1$. It cannot be zero; otherwise, we would have $\h_2 = \g_2$, and thus, by construction, $[\g_2,\g_1] = [\h_2,\g_1] \subseteq \h_3 \subsetneq \g_3$, contradicting the stratification assumption. Hence, we must have $\bar{\h}_2 = \h_2$.

  $\labelcref{item:quot-algebraicondition} \Rightarrow \labelcref{item:quot-engel}$. Let $\h_3 \subseteq \g_3$ and $\h_2\subseteq \g_2$ be as in the assumption.
  Since $\h_3$ has has codimension one, we may choose $Z \in \g_3$ such that
  \begin{equation}
    \g_3 = \R Z \oplus \h_3.
  \end{equation}
  There also exist $Y \in \g_2$ and $X_1 \in \g_1$ such that $[X_1,Y]=Z$. In particular, we have that
  \begin{equation}
    \g_2 = \R Y \oplus \h_2.
  \end{equation}
  Consider now the map
  \begin{equation}
    P:=\mathrm{proj}_{Z}\circ \ad_{Y}:\g_1 \to \g_3.
  \end{equation}
  It is has rank $1$ since $P(X_1) = -Z$, so that
  \begin{equation}
    \g_1 = \R X_1 \oplus \ker P.
  \end{equation}
  We claim that
  \begin{equation}\label{eq:claimkerP}
    [\ker P,\ker P] \subseteq \h_2.
  \end{equation}
  Indeed, taking $X,X'\in \ker P$, we have that $[X,X']=\alpha Y +\h_2$ for some $\alpha \in \R$. By the Jacobi equation, we obtain
  \begin{align}
    0 & = [X_1,[X,X']]+[X',[X_1,X]]+ [X,[X',X_1]] \\
    & = [X_1,\alpha Y + \h_2] + [X',\beta Y + \h_2] + [X, \gamma Y+\h_2]\\
    & = \alpha Z +\h_3,
  \end{align}
  for some $\beta,\gamma \in \R$, where in the last equality we used the fact that $X,X'\in \ker P$ and $[\g_1,\h_2]\subseteq \h_3$. It follows that $\alpha =0$, proving the claim.

  As a consequence of \eqref{eq:claimkerP} and the stratification assumption, there is $X_2 \in \ker P$ such that $[X_1,X_2]\notin \h_2$. In particular, we can choose $X_2 \in \ker P$ such that
  \begin{equation}
    Y':=[X_1,X_2] = Y + \h_2.
  \end{equation}
  Using the definition of $\h_2$ and the fact that $[X_1,Y]=Z$, it holds
  \begin{equation}
    Z':=[X_1,Y'] = [X_1,Y+\h_2] = Z + \h_3.
  \end{equation}
  Consider now the linear map
  \begin{equation}
    Q:=\mathrm{proj}_{Y}\circ \ad_{X_1} : \ker P \to \R Y,
  \end{equation}
  where the projection is taken according to the splitting $\g_2 = \R Y \oplus \h_2$. The map $Q$ is surjective because $Q(X_2) = Y$. Thus, denoting by $\h_1 := \ker Q$, we have
  \begin{equation}
    \ker P = \R X_2 \oplus \h_1.
  \end{equation}
  In summary, the splittings we have obtained so far are
  \begin{align}
    \g_1 & = \R X_1 \oplus \R X_2 \oplus \h_1, \\
    \g_2 & = \R Y' \oplus \h_2, \\
    \g_3 & = \R Z' \oplus \h_3.
  \end{align}
  Finally, set $\h_j := \g_j$ for all $j=4,\dots,s$.

  We claim that $\h:=\h_1 \oplus \dots \oplus \h_s$ is an ideal. In fact, $\h_1 \subseteq \ker P$ and using \eqref{eq:claimkerP}, we have
  \begin{equation}
    [\h_1,\h_1] \subseteq [\ker P,\ker P] \subseteq \h_2.
  \end{equation}
  Since also $X_2 \in \ker P$, it follows that
  \begin{equation}
    [X_2,\h_1] \subseteq [\ker P,\ker P]\subseteq \h_2.
  \end{equation}
  Furthermore, since $\h_1 = \ker Q$, and using the definition of $Q$, we have
  \begin{equation}
    [X_1,\h_1] \subseteq \h_2,
  \end{equation}
  proving that $[\g_1,\h_1]\subseteq \h$. We also have
  \begin{equation}
    [\g_2,\h_1] = [\R Y + \h_2,\h_1] \subseteq [Y,\h_1] + [\h_2,\h_1] \subseteq \h_3,
  \end{equation}
  where we have used that $\h_1 \subseteq \ker P$ implies $[Y,\h_1]\subseteq \h_3$, together with the definition of $\h_2$. Finally, by construction of $\h_2$, it holds $[\g_1,\h_2]\subseteq \h_3$.

  We have thus proved that
  \begin{equation}
    [\g_1,\h_1]\subseteq \h, \qquad [\g_2,\h_1]\subseteq \h,\qquad [\g_1,\h_2]\subseteq \h.
  \end{equation}
  Any other $[\g_j,\h_k]$ for $j+k\geq 4$ not listed there belongs to $\g_j = \h_j$ for some $j\geq 4$ by stratification, and this proves that $[\g,\h]\subseteq \h$. Thus, $\H:=\exp_{\G}(\h)$ is a dilation-invariant normal subgroup and $\G \slash \H$ is a Carnot group isomorphic (as a stratified group) to $\Eng$. The Carnot homogeneous space $\G\slash\H$ is isometric to $\Eng$ by \cref{lem:uniquenessEngMar}.

  $\labelcref{item:quot-martinet} \Rightarrow \labelcref{item:quot-algebraicondition}$. By \cref{def:iso} of a smooth isometry between Carnot homogeneous spaces, the projection map $\pi: \G \to \G/\K = \M$ induces a stratified Lie algebra homomorphism $\pi_*:\g \to \mathrm{Vec}(\M)$ whose image is isomorphic to the Engel algebra. Setting $\h := \ker \pi_*$, we deduce that $\g/\h$ is isomorphic to the Engel algebra. We then repeat the steps from $\labelcref{item:quot-engel} \Rightarrow \labelcref{item:quot-algebraicondition}$.

  $\labelcref{item:quot-engel} \Rightarrow \labelcref{item:quot-martinet}$. Remember that isometric Carnot groups must be isomorphic, see \cite{LDO-Isometries}. Thus, by assumption, there exists a stratified Lie group isomorphism $\Psi:\Eng \to \G/\H$. Consider the ideal $\h \subset \g$, and define
  \begin{equation}
    \k:=\mathrm{span}\{w \in \g \mid w + \h \in \Psi_*\e_2\}.
  \end{equation}
  Note that $\k$ is a closed, dilation-invariant subalgebra of $\g$ with $\dim\k = \dim\h + 1$. Furthermore, observe that $\k$ is not an ideal (otherwise, $\e_2$ would be an ideal in the Engel algebra $\e$). Let $\K = \exp_{\G}(\k) < \G$ denote the corresponding closed, dilation-invariant subgroup. By definition, $\M := \G/\K$ is a Carnot homogeneous space. Consider the quotient map $\pi:\G\to \M$ and its push-forward $\pi_*:\g\to\mathrm{Vec}(\M)$. By construction, the minimal group of $\M$ is isomorphic to $\g\slash\ker\pi_*$. Indeed, $\ker\pi_*$ is an ideal, and by construction, $\ker\pi_* \subseteq \k$. Thus, by the previous observation, we must have $\ker\pi_*\subsetneq\k$. Furthermore, it is easy to show that $\h\subseteq\ker\pi_*$ (since $\h$ is an ideal). Therefore, we must have $\ker\pi_*=\h$, so the minimal group of $\M$ is $\G \slash\H$, and the minimal subgroup is $\K\slash \H =\Psi(\exp(\e_2))$, see \cref{rmk:minimalquotient}. It follows that $\M$ is a Carnot homogeneous space (that is not a Carnot group) whose distribution has constant rank, and its minimal group is isomorphic to the Engel group. By \cref{lem:uniquenessEngMar}, $\M$ is isometric to $\Mar$.
\end{proof}

\subsection{The factorization argument}\label{sec:tower}

To motivate the next construction, suppose that $\G$ is a Carnot group admitting a closed, dilation-invariant subgroup $\K< \G$ so that $\M=\G\slash\K$ is a Carnot homogeneous space. Our aim is to deduce validity (resp.\ failure) of the $\MCP$ for $\M$ (resp.\ $\G$) from the validity of the $\MCP$ for $\G$ (resp.\ $\M$). The whole \cref{sec:quotients} was developed  for this purpose. Unfortunately, except in the trivial case,$\K$ is neither compact nor discrete. Hence we cannot apply neither \cref{thm:quotient1} (for discrete group actions) nor \cref{thm:quotient2} (for compact group actions). Suppose, however, that $\K$ possesses a co-compact lattice, namely a discrete, normal subgroup $\Gamma\triangleleft \K$ such that $\K\slash\Gamma$ is a compact Lie group (we will see that it acts on $\G\slash\Gamma$ by metric measure isometries). In this case, we can factor the quotient map $\pi :\G\to \M$ as
\begin{equation}\label{eq:quotientnaive}
  \begin{tikzcd}
    \pi: \G \arrow[r,"\q"] & \G\slash\Gamma \arrow[r,"\p"] & (\G\slash\Gamma)\slash(\K\slash\Gamma) =\M,
  \end{tikzcd}
\end{equation}
where $\q$ is a quotient by a discrete group action and $\p$ is a quotient by a compact group of isometries, to which one could apply \cref{thm:quotient1,thm:quotient2}. However, it is well-known that a simply connected nilpotent Lie group admits a co-compact lattice if and only if it is rational, i.e.\ its Lie algebra admits a basis for which all structure constants are rational numbers \cite[Sec.\ 5]{CG-NilpotentGroupsBook}. This condition does not always hold. Furthermore, even in that case, the quotient $\K\slash\Gamma$ might not be abelian, which is a key assumption required in the proof of \cref{thm:quotient2}. To overcome these obstacles, we replace the naive decomposition \eqref{eq:quotientnaive} with a more elaborate construction, consisting of a ``tower'' of simpler quotients such that, at each step, both \cref{thm:quotient1,,thm:quotient2} can be applied. This construction is the content of \cref{thm:tower} below.

To state it, we need a version of \eqref{eq:quotientnaive} where both sides are Carnot homogeneous spaces. Thus, we first prove the following (mostly tautological) result, which generalizes the fact that, if $\H \triangleleft \G$ is a normal, closed, dilation-invariant subgroup of a Carnot group, then the quotient $\G\slash\H$ is itself a Carnot group (see \cref{rmk:consistency}, corresponding to setting $\H = \{e\}$ in the next statement).

\begin{proposition}[Isometric actions on Carnot homogeneous spaces]\label{prop:quotientofCarnothomogeneous}
  Let $\M=\G\slash\H$ be a Carnot homogeneous space, and let $\Htilde<\G$ be a closed, dilation-invariant subgroup with $\H\triangleleft\Htilde$. Then the left multiplication of $\Htilde$ on $\G$ induces a proper and free action of $R:=\Htilde\slash\H$ on $\M$ by metric measure isometries. The quotient of $\M$ by this action, equipped with the quotient metric, is smoothly isometric to the Carnot homogeneous space $\tilde{\M}=\G\slash\Htilde$, in other words
  \begin{equation}
    \M\slash R = \left(\G\slash\H\right) \slash\left( \Htilde\slash\H\right) = \G\slash\Htilde = \tilde{\M}
  \end{equation}
  as metric spaces. We denote by $\pi:\M\to\tilde{\M}$ the corresponding quotient map.
\end{proposition}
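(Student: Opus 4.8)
The plan is to read the proposition as a metric--measure refinement of the third isomorphism theorem $(\G\slash\H)\slash(\Htilde\slash\H)=\G\slash\Htilde$, and to verify it in four short stages: construct the $R$-action, check it is free, proper and by isometries, identify the orbit space metrically with $\tilde{\M}$, and finally check invariance of $\mm_{\M}$. Everything except the last stage is definition-chasing; the measure is where I expect the only real work.

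First I would build the action. Since $\H\triangleleft\Htilde$, conjugation by any $\tilde h\in\Htilde$ preserves $\H$, so left translation $L_{\tilde h}\colon\G\to\G$, $g\mapsto\tilde hg$, descends through $\pi\colon\G\to\M=\G\slash\H$ to the map $\bar L_{\tilde h}\colon\H g\mapsto\H\tilde hg$; since $\H$ itself acts trivially ($\H h g=\H g$ for $h\in\H$), these assemble into a continuous action $\alpha\colon R\times\M\to\M$ of $R=\Htilde\slash\H$, induced by the left multiplication of $\Htilde$ on $\G$. Using the quotient-metric formula $\sfd_{\M}(\H a,\H b)=\inf_{h\in\H}\sfd_{\G}(a,hb)$ of \cref{rmk:equiv_with_quotient_metric}, left-invariance of $\sfd_{\G}$, and that $h\mapsto\tilde h^{-1}h\tilde h$ permutes $\H$, one checks directly that each $\bar L_{\tilde h}$ is an isometry of $(\M,\sfd_{\M})$.

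Next I would record freeness and properness. The stabilizer of $\H g$ in $\Htilde$ is $\{\tilde h\in\Htilde\mid\tilde hg\in\H g\}=\H$, so $R$ acts freely. For properness I would invoke that the left action of the closed subgroup $\Htilde$ on $\G$ is proper (and free), and that such an action descends to a proper action of $\Htilde\slash\H$ on $\G\slash\H$ (a standard fact; alternatively, lift compact sets through the proper open map $\pi$ and use properness of the $\Htilde$-action on $\G$). This makes the quotient metric space $(\M\slash R,\sfd^{*})$ of \cref{sec:quotients} well defined. Then I would identify the orbit space: the $R$-orbit of $\H g$ equals $\pi(\Htilde g)$, and since $\H\subset\Htilde$ distinct right $\Htilde$-cosets give distinct orbits, so $\iota(R\cdot\H g):=\Htilde g$ is a bijection $\M\slash R\to\G\slash\Htilde=\tilde{\M}$ which intertwines the projections $\G\to\M\to\M\slash R$ with $\tilde\pi\colon\G\to\tilde{\M}$ --- in other words $\iota$ is "induced by $\mathrm{id}_{\G}$". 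Unwinding $\sfd^{*}$ via \eqref{eq:pseudod} and \cref{rmk:equiv_with_quotient_metric}, for all $g,g'\in\G$,
\[
\sfd^{*}(R\cdot\H g,\,R\cdot\H g')=\inf_{\tilde h\in\Htilde,\ h\in\H}\sfd_{\G}(\tilde hg,hg')=\inf_{k\in\Htilde}\sfd_{\G}(g,kg')=\sfd_{\tilde{\M}}(\tilde\pi g,\tilde\pi g'),
\]
where the middle equality uses left-invariance of $\sfd_{\G}$ and that $\{\tilde h^{-1}h\mid\tilde h\in\Htilde,\ h\in\H\}=\Htilde$. Hence $\iota$ is a surjective metric isometry; it is a diffeomorphism (the free proper action endows $\M\slash R$ with the smooth quotient structure of $\G\slash\Htilde$, $\pi$ being a submersion), and, being induced by $\mathrm{id}_{\G}$, it is --- after passing to irreducible presentations as in \cref{rmk:minimalquotient} --- a smooth isometry of Carnot homogeneous spaces in the sense of \cref{def:iso}.

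The main obstacle is the invariance of $\mm_{\M}$ under $\alpha$, which is not formal because $R$ is in general neither compact nor discrete. Here I would argue exactly as in the proof of \cref{prop:iso}: starting from the integration formula $\int_{\M}u\,\mm_{\M}(\di m)=\int_{\G}\beta(g)\,(u\circ\pi)(g)\,\mm_{\G}(\di g)$ recalled there, valid for any admissible section $\beta$ with $\int_{\H}\beta(hg)\,\mm_{\H}(\di h)=1$, and using $\bar L_{\tilde h}\circ\pi=\pi\circ L_{\tilde h}$ together with the bi-invariance of $\mm_{\G}$, the change of variable $g\mapsto\tilde h^{-1}g$ gives $\int_{\M}(u\circ\bar L_{\tilde h})\,\mm_{\M}(\di m)=\int_{\G}\beta(\tilde h^{-1}g)\,(u\circ\pi)(g)\,\mm_{\G}(\di g)$. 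The shifted section $g\mapsto\beta(\tilde h^{-1}g)$ is again admissible, because conjugation by $\tilde h\in\Htilde$ preserves $\mm_{\H}$: indeed $\Ad_{\tilde h}$ restricted to $\h$ is unipotent (as $\Htilde$ is nilpotent; cf.\ \eqref{eq:adexp}), hence of unit determinant. By uniqueness of the invariant measure, $(\bar L_{\tilde h})_{\sharp}\mm_{\M}=\mm_{\M}$, so $\alpha$ acts by metric measure isometries. An equivalent route for this step: $\alpha$ commutes with the right $\G$-action $\beta$ of \cref{sec:homogstructure}, so $(\bar L_{\tilde h})_{\sharp}\mm_{\M}$ is again right-invariant, hence equals $c(\tilde h)\mm_{\M}$ for a continuous character $c\colon\Htilde\to\R$ trivial on $\H$, and $c\equiv1$ again follows from the unipotency of $\Ad$. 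Either way, combining this with the previous stages completes the proof.
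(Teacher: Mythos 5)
Your argument is correct, and it reaches the same conclusions as the paper, but it is organized around direct computations where the paper instead leverages already-built infrastructure. For the key claim that each $\bar L_{\tilde h}$ is a metric \emph{measure} isometry, the paper's proof simply observes that the diagram \eqref{eq:cdisometry} commutes with $\Phi = L_{\tilde h}\colon\G\to\G$, so $\bar L_{\tilde h}$ is a smooth isometry in the sense of \cref{def:iso}, and then \cref{prop:iso} (with $c=1$, since the identity is also a self-isometry and $c$ depends only on the choice of measures) yields measure-preservation for free. You instead verify the metric isometry by unwinding the quotient-distance formula and conjugating $\H$ by $\tilde h$, and then re-derive the measure invariance from the Weil integration formula and the unipotency of $\Ad_{\tilde h}|_{\h}$ --- which is, in effect, re-proving the relevant piece of \cref{prop:iso} by hand. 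Both routes are sound; the paper's is shorter because it factors the measure-theoretic content through \cref{prop:iso}, while yours is more self-contained and makes the mechanism (bi-invariance of $\mm_{\G}$, admissibility of the shifted section, unipotent adjoint) visible at the point of use. Your treatment of properness (descent of the proper $\Htilde$-action on $\G$ to $\Htilde/\H$ on $\G/\H$) and the explicit computation identifying $(\M/R,\sfd^{*})$ with $(\tilde\M,\sfd_{\tilde\M})$ fill in details the paper declares "immediate" and "elementary", and they are correct. Your "alternative route" for the measure (a positive character $c$ of $\Htilde$, trivial on $\H$, which one then argues is trivial) is also fine but not actually more elementary, since showing $c\equiv1$ again reduces to the same unipotency input.
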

\begin{proof}
  The action of the group $R=\Htilde\slash\H$, denoted by $\alpha:R\times \M\to\M$, is defined by
  \begin{equation}
    \alpha(\H \tilde{h},\H g) :=\H \tilde{h} g.
  \end{equation}
  This action is well-defined because $\H\triangleleft\Htilde$. It is immediate to verify that this action is left, smooth, proper, and free. For $\tilde{h}\in \Htilde$, denote by $\phi = \alpha(\H \tilde{h},\cdot):\M\to\M$. Letting $\Phi = L_{\tilde{h}}:\G\to \G$ be the left multiplication by $\tilde{h}$ (which is an isometry of $\G$), we see that the diagram \eqref{eq:cdisometry} commutes, so that $\phi$ is an isometry by \cref{def:iso}.

  Denote by $(\M^*,\sfd_{\M^*})$ the metric quotient of $(\M,\sfd_{\M})$ by the isometric action of $R$ (in the sense of \cref{sec:quotients}). It is elementary to verify that the map sending the $R-$orbit of $\H g\in \M=\G\slash\H$ to $\Htilde g\in \tilde{\M}= \G\slash\Htilde$ is a (smooth) metric isometry between $(\M^*,\sfd_{\M^*})$ and $(\tilde{\M},\sfd_{\tilde{\M}})$.
\end{proof}

\begin{theorem}[Factorization of quotients between Carnot homogeneous spaces]\label{thm:tower}
  Let $\G$ be a Carnot group of step $s$.  Let $\M=\G\slash \H$ be a Carnot homogeneous space admitting a quotient to $\tilde{\M}=\G\slash \Htilde$ in the sense of \cref{prop:quotientofCarnothomogeneous}. Then, there exist
  \begin{enumerate}[(a)]
    \item\label{i:towera} closed, dilation-invariant subgroups $\H_i<\G$ for $i=1,\dots,s$, with $\H_0=\H$ and $\H_s = \Htilde$, satisfying
      \begin{equation}
        \H_0\triangleleft\H_1\triangleleft\dots\triangleleft\H_{s-1}\triangleleft \H_{s}   < \G;
      \end{equation}
    \item\label{i:towerb} Carnot homogeneous spaces $\M_i = \G\slash \H_i$ for $i=0,\dots,s$, with $\M_0=\M$ and $\M_s=\tilde{\M}$,  each equipped with their (left-invariant) Carnot-Carathéodory metrics and a choice of right-invariant measure;
    \item\label{i:towerc}  for $i=0,\dots,s-1$, each $\M_{i}$ admits a quotient $\pi_{i+1}:\M_{i}\to\M_{i+1}$ in the sense of \cref{prop:quotientofCarnothomogeneous}. More precisely, the quotient group $R_{i}:=\H_{i+1}\slash \H_{i}$ is an abelian group acting freely and properly on $\M_i$ by metric isometries such that
      \begin{equation}
        \M_{i+1} = \M_{i}\slash  R_{i},
      \end{equation}
      as metric spaces;
    \item\label{i:towerd} discrete normal subgroups $\Gamma_i \triangleleft R_i$, for $i=0,\dots,s-1$, such that
      \begin{itemize}
        \item[--] $\Gamma_i$ acts freely and properly on $\M_i$ by metric measure isometries. We denote by $\M_i\slash\Gamma_i$ the corresponding quotient metric measure space and $\q_i:\M_i\to \M_i\slash\Gamma_i$ the projection map (see \cref{sec:quotients});
        \item[--] $R_i\slash\Gamma_i$ is a compact abelian Lie group, acting freely by metric measure isometries on $\M_i\slash\Gamma_i$. We denote by $(\M_i\slash\Gamma_i)\slash(R_i\slash\Gamma_i)$ the corresponding quotient metric measure space and $\p_i : \M_i\slash\Gamma_i \to (\M_i\slash\Gamma_i)\slash(R_i\slash\Gamma_i) $ the projection map  (see \cref{sec:quotients});
      \end{itemize}
      such that, for all $i=0,\dots,s-1$, the quotient $\pi_{i+1}: \M_{i} \to \M_{i+1}$ can be factorized as
      \begin{equation}\label{eq:factorization}
        \begin{tikzcd}
          \pi_{i+1}: \M_i \arrow[r,"\q_{i +1}"] & \M_i\slash\Gamma_i \arrow[r,"\p_{i +1}"] & (\M_i\slash\Gamma_i)\slash(R_i\slash\Gamma_i) =\M_{i+1}.
        \end{tikzcd}
      \end{equation}
  \end{enumerate}
  In particular, the quotient $\pi:\M\to\tilde{\M}$ can be factorized as a finite number of compositions of local metric measure isometries (the $\q_i$'s) and quotients by compact groups of metric measure isometries (the $\p_i$'s), as described by the following diagram:
  \begin{equation}
    \begin{tikzcd}[sep=scriptsize]
      \M =\G\slash\H \arrow[rrrrrr,"\pi"] 
      & & &  & & & \tilde{\M} =\G\slash\tilde{\H} \\
      \M_0   \arrow[equal,u] \arrow[r,"\q_1"] & \M_0\slash\Gamma_0 \arrow[r,"\p_1"] & \M_{1} \arrow[r] & \dots  \arrow[r]  & \M_{s-1} \arrow[r,"\q_{s}"] &  \M_{s-1}\slash\Gamma_{s-1} \arrow[r,"\p_{s}"] & \M_{s} \arrow[equal,u]
    \end{tikzcd}
  \end{equation}
\end{theorem}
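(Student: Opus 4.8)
The plan is to reduce everything to two ingredients already available: \cref{prop:quotientofCarnothomogeneous}, which turns an inclusion of dilation-invariant subgroups into a metric-measure quotient of Carnot homogeneous spaces, and the factorization idea behind the naive diagram \eqref{eq:quotientnaive} — replacing the single bad quotient by a tower of quotients whose acting groups are either discrete or \emph{compact abelian}, so that \cref{thm:quotient1,thm:quotient2} apply layer by layer. The only thing missing is an explicit tower of subgroups with these properties, together with the lattices interpolating between vector groups and tori.

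\emph{The tower of subgroups.} I would write $\H,\Htilde$ compatibly with the stratification, $\h=\h_1\oplus\dots\oplus\h_s$ and $\tilde{\h}=\tilde{\h}_1\oplus\dots\oplus\tilde{\h}_s$ with $\h_j\subseteq\tilde{\h}_j\subseteq\g_j$, and interpolate by swapping strata \emph{from the top down}: for $i=0,\dots,s$ set
\begin{equation}
\h^{(i)} := \h_1\oplus\dots\oplus\h_{s-i}\oplus\tilde{\h}_{s-i+1}\oplus\dots\oplus\tilde{\h}_{s},
\end{equation}
so that $\h^{(0)}=\h$ and $\h^{(s)}=\tilde{\h}$. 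I would then check, stratum by stratum (using only that $\h,\tilde{\h}$ are graded subalgebras, that $\g$ has step $s$, and that $\h_j\subseteq\tilde{\h}_j$), that each $\h^{(i)}$ is a dilation-invariant subalgebra, that $\h^{(i)}\triangleleft\h^{(i+1)}$, and that $\h^{(i+1)}\slash\h^{(i)}$ is abelian — the last point because $\h^{(i+1)}=\h^{(i)}+\tilde{\h}_{s-i}$ while $[\tilde{\h}_{s-i},\tilde{\h}_{s-i}]\subseteq\g_{2(s-i)}$ lands in strata deeper than $s-i$, which already lie in $\h^{(i)}$. Exponentiating, $\H_i:=\exp_{\G}(\h^{(i)})$ are closed, dilation-invariant subgroups with $\H_0=\H$, $\H_s=\Htilde$, $\H_i\triangleleft\H_{i+1}$; setting $\M_i:=\G\slash\H_i$ gives Carnot homogeneous spaces with $\M_0=\M$ and $\M_s=\tilde{\M}$. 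By \cref{prop:quotientofCarnothomogeneous} (applied to $\M_i=\G\slash\H_i$ with $\Htilde$ replaced by $\H_{i+1}$), the group $R_i:=\H_{i+1}\slash\H_i$ acts freely and properly on $\M_i$ by metric measure isometries with $\M_i\slash R_i=\M_{i+1}$; and since $R_i$ is a connected, simply connected nilpotent Lie group whose Lie algebra $\h^{(i+1)}\slash\h^{(i)}$ is abelian, it is a vector group $\R^{d_i}$. This settles \labelcref{i:towera,i:towerb,i:towerc}.

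\emph{Factorizing each layer.} Fix $i$. Because $R_i\cong\R^{d_i}$, I would pick any co-compact lattice $\Gamma_i\triangleleft R_i$, so $R_i\slash\Gamma_i$ is a compact, connected, abelian Lie group (a torus); being a discrete subgroup of $R_i$, the group $\Gamma_i$ still acts freely and properly on $\M_i$ by metric measure isometries, hence $\q_{i+1}:\M_i\to\M_i\slash\Gamma_i$ is a covering map and a local metric measure isometry (see \cref{sec:quotients}). To descend the residual action, I would invoke \cref{lem:technical_invariance} with $X=\M_i$, $G=\Gamma_i$ (the action $\alpha$), and $K=R_i$ (the action $\beta$, by isometries): hypothesis \labelcref{i:technical_invariance1} holds since $\Gamma_i\subseteq R_i$ is abelian, and \labelcref{i:technical_invariance2} since $\mm_{\M_i}$ is $R_i$-invariant; the conclusion is that $\mm_{\M_i\slash\Gamma_i}$ is invariant under the induced action of $R_i$ on $\M_i\slash\Gamma_i$. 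Since $\Gamma_i$ acts trivially on $\M_i\slash\Gamma_i$, this factors through a \emph{free} action of the compact abelian group $R_i\slash\Gamma_i$ by metric measure isometries, with projection $\p_{i+1}$. The identities $\p_{i+1}\circ\q_{i+1}=\pi_{i+1}$ and $(\M_i\slash\Gamma_i)\slash(R_i\slash\Gamma_i)=\M_i\slash R_i=\M_{i+1}$ are the standard description of an iterated quotient by $\Gamma_i\triangleleft R_i$, giving the factorization \eqref{eq:factorization} and item \labelcref{i:towerd}. Finally, since the right $\G$-action on $\M_i$ commutes with left translations it descends along $\q_{i+1}$ and $\p_{i+1}$, and a second application of \cref{lem:technical_invariance} (now with $K=\G$) shows that the measure induced on $\M_{i+1}$ through the factorization is right $\G$-invariant; I would take $\mm_{\M_{i+1}}$ to be that measure, so the construction is internally consistent. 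Composing the $\q_i$'s and $\p_i$'s over $i=0,\dots,s-1$ yields the displayed diagram, since $\pi=\pi_s\circ\dots\circ\pi_1$.

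\emph{Main obstacle.} The delicate point is the algebraic step: one needs a chain in which each successive quotient is \emph{simultaneously} normal and abelian, which is exactly what is required to apply \cref{thm:quotient1,thm:quotient2} at every layer. The top-down stratum-swapping is what makes both hold at once — deep strata are central, so removing them first preserves normality, and brackets of a single stratum land in strictly deeper strata, forcing the quotient to be abelian — but the verification, although elementary, must be carried out carefully stratum by stratum (with the convention that layers where $R_i$ is trivial contribute identity maps). Everything else — existence of the $\Gamma_i$ once $R_i$ is a vector group, and the invariance bookkeeping of the measures via \cref{lem:technical_invariance} — is routine.
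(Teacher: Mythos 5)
Your proposal is correct and follows essentially the same route as the paper: the same top-down stratum-swapping tower $\h^{(i)} = \h_1\oplus\dots\oplus\h_{s-i}\oplus\tilde{\h}_{s-i+1}\oplus\dots\oplus\tilde{\h}_s$, the same use of \cref{prop:quotientofCarnothomogeneous} layer by layer, the same choice of a co-compact lattice in the vector group $R_i$, and the same appeal to \cref{lem:technical_invariance} to propagate the right $\G$-invariance of the measure. One small point in your favour: you explicitly verify that $R_i$ is abelian (because $[\tilde{\h}_{s-i},\tilde{\h}_{s-i}]\subseteq\tilde{\h}_{2(s-i)}\subseteq\h^{(i)}$), whereas the paper's Step~2 leaves this implicit even though Step~1 requires it; conversely, the paper checks directly that the induced $R_i/\Gamma_i$-action on $\M_i/\Gamma_i$ is by metric isometries, a point you gloss over (note that \cref{lem:technical_invariance} only controls the measure, not the distance), so a short direct computation as in the paper's Step~1 is still needed there.
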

\begin{remark}[The case of Carnot groups]\label{rmk:tower_Carnot_group}
  If $\M = \G\slash\H$ and $\tilde{\M} = \G\slash\Htilde$ are Carnot groups, then each subgroup $\H_i$ is normal in $\G$, so that every $\M_i = \G\slash\H_i$ is a Carnot group.
\end{remark}
\begin{remark}[Repetitions of factors]\label{rmk:repetitions}
  Note that the list of factors $\M_i$ in \cref{thm:tower} may include repetitions, resulting in some of the maps $\pi_i$ being trivial. For example, consider the case $\M = \Eng$ and $\tilde{\M} = \Mar$. Using the notation from \cref{def:EngMar}, we have here $\H = \{e\}$ and $\tilde{\H} = \exp(\e_2)$, so that $\H_1 = \{e\}$ and $\H_2 = \H_3 = \exp(\e_2)$. It follows that the corresponding factorization from \cref{thm:tower} is given by
  \begin{equation}
    \begin{tikzcd}[sep=scriptsize]
      \M_0\arrow[equal,d] \arrow[r,"\pi_1"] &
      \M_1\arrow[equal,d] \arrow[r,"\pi_2"] &
      \M_2\arrow[equal,d] \arrow[r,"\pi_3"] &
      \M_3\arrow[equal,d] \\
      \Eng &
      \Eng &
      \Mar &
      \Mar \\
    \end{tikzcd}
  \end{equation}
\end{remark}
\begin{remark}
  The invariant measure on $\M_{i+1}$ is \emph{not} induced by the quotient of \cref{i:towerc}, since in \cref{sec:quotients} we only prescribed how to do so for compact or discrete group actions. Even if, for the specific action of $R_i$, it is is possible to induce directly an invariant measure on the quotient $\M_{i+1}=\M_{i}\slash R_i$ starting from one on $\M_i$, we chose not do so for clarity. Instead, the measure on $\M_{i+1}$ is given by the construction in \cref{sec:quotients} applied to the actions of \cref{i:towerd}.
\end{remark}
\begin{proof}
  \textbf{Step 1.} We first explain the general building block of the construction that will be repeated a finite number of times to obtain the final result. This step, in particular, describes the factorization \eqref{eq:factorization} for each fixed $i$. Let $\M=\G\slash\H$ be a Carnot homogeneous space, and let $\Htilde<\G$ be a closed, dilation-invariant subgroup with $\H\triangleleft\Htilde$. Define
  \begin{equation}
    \tilde{\M}:=\G\slash\Htilde,
  \end{equation}
  equipped with its metric structure $\sfd_{\tilde{\M}}$ (see \cref{sec:metstructure}). We have not yet chosen an invariant measure on $\tilde{\M}$. By \cref{prop:quotientofCarnothomogeneous}, the left multiplication of $\Htilde$ on $\G$ induces a proper and free action of the group $R:=\Htilde\slash\H$ on $\M$, and its quotient is identified as a metric space with $\tilde{\M}$. Denote the corresponding projection by $\pi:\M\to\tilde{\M}$.

  If $R$ is abelian, it admits a co-compact lattice $\Gamma$, i.e\ a discrete subgroup such that $R\slash\Gamma$ is compact. Fix such a lattice, and note that since $R$ is abelian, the subgroup $\Gamma \triangleleft R$ is normal, ensuring that $R\slash\Gamma$ is a connected, compact, abelian Lie group (a standard torus).

  The action of $R$ on $\M$ induces a proper and free action of the discrete subgroup $\Gamma$ on $(\M,\sfd_{\M},\mm_{\M})$ by metric measure isometries. Consider the quotient metric measure space $(\M\slash\Gamma,\sfd_{\M\slash\Gamma},\mm_{\M\slash\Gamma})$ as described in \cref{sec:quotients}, and let $\q:\M \to \M\slash\Gamma$ be the corresponding quotient map. We emphasize that this quotient is not a Carnot homogeneous space.

  Considering the compact abelian Lie group $R\slash\Gamma$, we observe that the action of $R$ on $\M$ by left multiplication induces an action of $R\slash\Gamma$ on $\M\slash\Gamma$, which we denote for clarity as follows:
  \begin{equation}\label{eq:groupactionalpha}
    \rho:R\slash\Gamma\times \M\slash\Gamma \to \M\slash\Gamma,\qquad \rho(\Gamma r, \Gamma m) := \Gamma r m.
  \end{equation}
  Note that \eqref{eq:groupactionalpha} is well-defined since $R$ is abelian. Since the action of $R$ on $\M$ is free, the induced action $\rho$ is also free (and, indeed, proper, as $R\slash\Gamma$ is compact).

  We claim that $\rho$ is an action by metric measure isometries on $(\M\slash\Gamma,\sfd_{\M\slash\Gamma},\mm_{\M\slash\Gamma})$. To verify the metric part, note that for all $r\in R$ and $m,m'\in \M$, we have
  \begin{align}
    \sfd_{\M\slash\Gamma}\Big(\rho(\Gamma r,\Gamma m),\rho(\Gamma r,\Gamma m')\Big) & = \sfd_{\M\slash\Gamma}\left(\Gamma rm,\Gamma rm'\right) & \text{by def.\ of group action \eqref{eq:groupactionalpha}} \\
    & = \inf_{\gamma\in\Gamma}\sfd_{\M}\left(rm,\gamma r m'\right) & \text{by def.\ of $\sfd_{\M\slash\Gamma}$}\\
    & = \inf_{\gamma\in\Gamma}\sfd_{\M}\left(rm,r \gamma m'\right) & \text{$R$ is abelian}\\
    & = \inf_{\gamma\in\Gamma}\sfd_{\M}\left(m, \gamma m'\right) & \text{$R$ acts by isometries on $\M$} \\
    & = \sfd_{\M\slash\Gamma}\left(\Gamma m, \Gamma m'\right).
  \end{align}
  For the measure part, recall from \cref{sec:quotients} that $\mm_{\M\slash\Gamma}$ is defined as the unique Borel measure on $\M\slash\Gamma$ such that, for any open set $U\subset \M$ on which the restriction $\q:U\to \q(U)$ is a metric isometry, the following holds
  \begin{equation}\label{eq:defofmmGgamma}
    \mm_{\M\slash\Gamma}|_{\q(U)} = \q_{\sharp}\left(\mm_{\M}|_U\right).
  \end{equation}
  Let $E \subset \M\slash\Gamma$ be a Borel set, which can always be expressed in the form $E = \q(F)$ for some Borel set $F \subset \M$. Assume, without loss of generality, that $E \subset \q(U)$ and $F \subset U$. From the above computation, if $\q: U \to \q(U)$ is a metric isometry, then for every $r \in R$, $\q: r U \to \q(r U)$ is also a metric isometry. It follows that, for any $r \in R$, we have
  \begin{align}
    \mm_{\M\slash\Gamma}\Big(\rho(\Gamma r,E)\Big) & = \mm_{\M\slash\Gamma}(\q(r F))\\
    & = \mm_{\M}\left(\q^{-1}(\q(rF))\cap rU \right)\\
    & = \mm_{\M}(r F\cap r U)\\
    & = \mm_{\M}(F\cap U)  & \text{the action of $R$ preserves $\mm_{\M}$}\\
    & =\mm_{\M \slash \Gamma}(E).
  \end{align}
  This concludes the proof of the claim: the compact abelian group $R\slash\Gamma$ acts freely and properly by metric measure isometries on the metric measure space $(\M\slash\Gamma, \sfd_{\M\slash\Gamma}, \mm_{\M\slash\Gamma})$. According to the construction in \cref{sec:quotients}, we denote the quotient metric measure space by
  \begin{equation}
    \Big((\M\slash\Gamma)\slash(R\slash\Gamma),\sfd_{(\M\slash\Gamma)\slash(R\slash\Gamma)},\mm_{(\M\slash\Gamma)\slash(R\slash\Gamma)}\Big),
  \end{equation}
  and by $\p:\M\slash\Gamma \to (\M\slash\Gamma)\slash(R\slash\Gamma)$ the corresponding quotient map.

  We need to verify that the result of this construction is (isometric to) the Carnot homogeneous space $\tilde{\M} = \G \slash \Htilde$ equipped with a choice of invariant measure. To do this, we first define a metric isometry $\psi$ such that the following diagram commutes
  \begin{equation}\label{eq:diagram}
    \begin{tikzcd}
      \M =\G\slash\H\arrow[rd, "\q"] \arrow[rr, "\pi"] & & \tilde{\M}=\G \slash \Htilde \arrow[d, dashed, "\psi"] \\
      & \M\slash\Gamma \arrow[r, "\p"] &  (\M\slash\Gamma)\slash(R\slash\Gamma)
    \end{tikzcd}
  \end{equation}
  Recall that $\pi(\H g) = \Htilde g$ (see \cref{prop:quotientofCarnothomogeneous}). The map $\psi$ is defined as follows:
  \begin{equation}
    \psi: \G \slash \Htilde \to (\M\slash\Gamma)\slash(R\slash\Gamma), \qquad \psi(\Htilde g):=  (R\slash\Gamma) (\Gamma (\H g)),\qquad \forall\, g\in \G.
  \end{equation}
  It is straightforward to check that $\psi$ is well-defined, is a smooth diffeomorphism, is surjective, and that the diagram \eqref{eq:diagram} commutes. We now show that it is a metric isometry. Indeed, for all $g, g' \in \G$, letting $m = \H g$ and $m' = \H g' \in \M$, we have
  \begin{align}
    \sfd_{(\M\slash\Gamma)\slash(R\slash\Gamma)}\Big( \psi(m),\psi(m')\Big)
    & =  \sfd_{(\M\slash\Gamma)\slash(R\slash\Gamma)}\Big( (R\slash\Gamma) \Gamma m,(R\slash\Gamma) \Gamma m' \Big) & \text{by def.\ of $\psi$}\\
    & = \inf_{ r \in R} \sfd_{\M\slash\Gamma}\Big(\rho(\Gamma r,\Gamma m),\Gamma m'\Big) & \text{by def.\ of $\sfd_{(\M\slash\Gamma)\slash(R\slash\Gamma)}$}\\
    & = \inf_{ r \in R} \sfd_{\M\slash\Gamma}\left(\Gamma r m,\Gamma m'\right) & \text{by \eqref{eq:groupactionalpha}}\\
    & = \inf_{ r \in R} \inf_{\gamma \in \Gamma} \sfd_{\M}(\gamma r m, m') & \text{by def.\ of $\sfd_{\G\slash\Gamma}$}\\
    & =\inf_{r\in R} \sfd_{\M}(r m,m') & \text{since $\Gamma < R$} \\
    & =\inf_{\tilde{h}\in \Htilde} \sfd_{\M}(\H \tilde{h} g,\H g') & \text{since $R=\Htilde\slash\H$} \\
    & =\inf_{\tilde{h}\in \Htilde} \inf_{h \in\H} \sfd_{\G}(h \tilde{h} g, g') & \text{by def.\ of $\sfd_{\M}$} \\
    & =\inf_{\tilde{h}\in \Htilde} \sfd_{\G}(\tilde{h} g, g') & \text{since $\H <\Htilde$} \\
    & = \sfd_{\M}(\Htilde g, \Htilde g'). & \text{by def.\ of $\sfd_{\M}$}
  \end{align}
  We claim that the Borel measure on $\tilde{\M}=\G \slash \Htilde$ defined by
  \begin{equation}\label{eq:mmGprime}
    \mm_{\tilde{\M}}:=\left(\psi^{-1}\right)_{\sharp} \mm_{(\M\slash\Gamma)\slash(R\slash\Gamma)}
  \end{equation}
  is invariant by the right action of $\G$ on $\tilde{\M}=\G \slash \Htilde$. Indeed, consider a Borel set $E\subset \G \slash \Htilde$ and $g\in\G$. Writing $E=\pi(F)$ for $F:=\pi^{-1}(E)$ and using \eqref{eq:diagram}, we obtain
  \begin{align}
    \mm_{\tilde{\M}}(E g) & = \mm_{(\M\slash\Gamma)\slash(R\slash\Gamma)}(\psi(E g)) \\
    & = \mm_{\M\slash\Gamma}(\p^{-1}(\psi (E g))) \\
    & = \mm_{\M\slash\Gamma}(\q(\pi^{-1}(E g))) \\
    & = \mm_{\M\slash\Gamma}(\q(F g)) = \mm_{\M\slash\Gamma}(\q(F) g),\label{eq:last_measure}
  \end{align}
  where, in the last equality, the multiplication on the right by $g \in \G$ denotes the well-defined right action of $\G$ on $\M \slash \Gamma$, induced by the action on $\M$ (see \cref{sec:homogstructure}), formally given by
  \begin{equation}\label{eq:thetaaction}
    \G\times \M\slash\Gamma \to \M\slash\Gamma,\qquad (g,\Gamma m)\mapsto \Gamma m g.
  \end{equation}
  An application of \cref{lem:technical_invariance} yields that\footnote{We apply \cref{lem:technical_invariance} to the case $(X,\sfd,\mm) = (\M,\sfd_{\M},\mm_{\M})$, with $G=\Gamma$ and its action of $\alpha$ given by the restriction to $\Gamma<R$ of the action \cref{prop:quotientofCarnothomogeneous}, and $K=\G$ acting on $\M$ from the right, with the action $\beta$ being the one from \cref{sec:homogstructure}.} $\mm_{\M\slash\Gamma}$ is invariant by the action \eqref{eq:thetaaction}. It follows from \eqref{eq:last_measure} that $\mm_{\tilde{\M}}(Eg) = \mm_{\tilde{\M}}(E)$, concluding the proof of the claim.

  Thus, $\mm_{\tilde{\M}}$ is an invariant measure on the Carnot homogeneous space $\tilde{\M}=\G \slash \Htilde$ such that
  \begin{equation}
    \psi:\Big((\M\slash\Gamma)\slash(R\slash\Gamma),\sfd_{(\M\slash\Gamma)\slash(R\slash\Gamma)},\mm_{(\M\slash\Gamma)\slash(R\slash\Gamma)}\Big) \to \Big(\tilde{\M},\sfd_{\tilde{\M}},\mm_{\tilde{\M}}\Big)
  \end{equation}
  is a metric measure isometry. With this choice of measure on $\tilde{\M}$, we have shown that the quotient map $\pi : \M \to \tilde{\M}$ can be factorized (up to the metric measure isometry $\psi$) via the composition of $\p$ and $\q$, as shown in the commutative diagram \eqref{eq:diagram}.

  \textbf{Step 2.} With the building block from the previous step at our disposal, we show how to use it to prove the result. In other words, we show how to choose the groups $\H_{i}$, for $i=0,\dots,s$, such that the previous step corresponds to the factorization \eqref{eq:factorization} for each fixed $i$.

  In the notation of the statement of the theorem, let $\h\subseteq\tilde{\h}\subseteq\g$ be the Lie algebras of $\H$ and $\Htilde$, respectively. They are both dilation-invariant so that they are stratified with
  \begin{align}
    \h = \h_{1}\oplus\dots\oplus\h_{s},\qquad \tilde{\h} = \tilde{\h}_1\oplus\dots\oplus\tilde{\h}_s,
  \end{align}
  where $\h_{j}\subseteq\tilde{\h}_j\subseteq \g_j$ for all $j=1,\dots,s$ and where some of the $\h_{j},\tilde{\h}_j$ can be trivial. Define
  \begin{equation}
    \H_i := \exp_{\G}\left(\h_{1}\oplus\dots\oplus \h_{s-i}\oplus \tilde{\h}_{s-i+1} \oplus\dots\oplus \tilde{\h}_s\right),\qquad \forall\, i=0,\dots,s.
  \end{equation}
  In other word we are enlarging, one stratum at the time, the Lie algebra of $\H=\H_0$ to the corresponding one of $\Htilde = \H_s$.

  To prove \cref{i:towera} we claim that the Lie algebra of $\H_i$ is an ideal in the one of $\H_{i+1}$ so that $\H_i\triangleleft \H_{i+1}$ for all $i=0,\dots,s-1$. To prove it, fix $i=0,\dots,s-1$, let $X \in \mathrm{Lie}(\H_{i+1})$ and decompose it as $X=A+B$ where
  \begin{equation}
    A \in \h_1\oplus\dots\oplus \h_{s-i-1},\qquad B \in \tilde{\h}_{s-i}\oplus\dots\oplus \tilde{\h}_s.
  \end{equation}
  Note that $A\in\mathrm{Lie}(\H_i)$, so that $[A,\mathrm{Lie}(\H_{i})]\subset \mathrm{Lie}(\H_i)$. On the other hand, by construction of $B$, the fact that $\mathrm{Lie}(\H_i)\subseteq \tilde{\h}$, and the stratification property it holds
  \begin{equation}
    [B,\mathrm{Lie}(\H_i)]\subseteq \left[\bigoplus_{j=0}^{i} \tilde{\h}_{s-i+j},\tilde{\h}\right] \subseteq \bigoplus_{j=1}^{i} \tilde{\h}_{s-i+j} \subseteq \mathrm{Lie}(\H_{i}).
  \end{equation}
  It follows that $[X,\mathrm{Lie}(\H_i)]  = [A+B,\mathrm{Lie}(\H_i)] \subset \mathrm{Lie}(\H_i)$, concluding the proof of the claim.

  Note that, if $\M,\tilde{\M}$ are Carnot groups, then $\H$ and $\tilde{\H}$ are normal in $\G$. It is easy to see that each $\H_i$ is normal in $\G$, so that each $\M_i$ is a Carnot group, thus explaining \cref{rmk:tower_Carnot_group}.

  There is nothing to prove for \cref{i:towerb}, as it simply provides the definition of the spaces $\M_i=\G\slash\H_i$. The choice of invariant measures is as given in \cref{i:towerd}.

  \Cref{i:towerc} then follows immediately from the definition of the spaces $\M_i$ and \cref{prop:quotientofCarnothomogeneous}.

  \Cref{i:towerd} is the result of the construction explained in \textbf{Step 1} of the proof. In particular, for each $\M_{i+1}$, we choose the invariant measure resulting from the factorization \eqref{eq:factorization}.
\end{proof}          	
\section{\texorpdfstring{$\MCP$}{MCP} on Carnot homogeneous spaces}\label{sec:MCPforQuotientsofCarnotHomo}

We now state our main theorem, proving that the $\MCP$ descends to quotients of Carnot homogeneous spaces in the sense of \cref{prop:quotientofCarnothomogeneous}. 

\begin{theorem}[MCP for quotients of Carnot homogeneous spaces]\label{thm:MCPforQuotientsofCarnotHomo}
Suppose that a Carnot homogeneous space $\M=\G\slash\H$ admits a quotient to a Carnot homogeneous space $\tilde{\M}=\G\slash\tilde{\H}$. Assume that the minimizing Sard property holds for all factors $\M,\M_1,\dots,\M_{s-1}$ of the map $\pi:\M\to\tilde{\M}$ in \cref{thm:tower} (this is the case, for example, if the minimizing Sard property holds for all Carnot homogeneous spaces of step $\leq s$, where $s$ is the step of $\M$). Then, if $\M$ satisfies the $\MCP(K,N)$ for some $K \in \R$ and $N \in [1,\infty)$, the space $\tilde{\M}$ also satisfies the $\MCP(K,N)$.
\end{theorem}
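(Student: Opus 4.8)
The plan is to feed $\pi:\M\to\tilde\M$ into the tower of factorizations of \cref{thm:tower} and to transport the \emph{local} $\MCP$ all the way along it, upgrading to the global $\MCP$ only at the last step. Concretely, since $\M$ is a Carnot homogeneous space satisfying $\MCP(K,N)$, it satisfies $\MCP_{\loc}(K,N)$. Apply \cref{thm:tower} to obtain the Carnot homogeneous spaces $\M_0=\M,\M_1,\dots,\M_s=\tilde\M$, the discrete normal subgroups $\Gamma_i\triangleleft R_i$, and the maps $\q_{i+1}:\M_i\to\M_i\slash\Gamma_i$, $\p_{i+1}:\M_i\slash\Gamma_i\to\M_{i+1}$ factoring $\pi_{i+1}$ as in \eqref{eq:factorization}, with the invariant measures chosen as in \cref{i:towerd}. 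I will then prove, by induction on $i=0,\dots,s$, that $\M_i$ satisfies $\MCP_{\loc}(K,N)$; the base case $i=0$ is what we have just recorded, and the case $i=s$ is almost the conclusion.

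For the inductive step, suppose $\M_i$ satisfies $\MCP_{\loc}(K,N)$ with $i\le s-1$. As a Carnot homogeneous space, $\M_i$ is a real-analytic sub-Riemannian metric measure space (with smooth right-invariant measure), and by hypothesis it satisfies the minimizing Sard property, which in the real-analytic setting is equivalent to the $*$-minimizing Sard property; hence by \cref{thm:Sardimpliesdeltaessnb} the space $\M_i$ is $\delta$-essentially non-branching. Now \cref{thm:quotient1}, applied to the proper and free action of the discrete group $\Gamma_i$ by metric measure isometries on $\M_i$ (\cref{i:towerd}), shows that $\M_i\slash\Gamma_i$ satisfies $\MCP_{\loc}(K,N)$ \emph{and} is again $\delta$-essentially non-branching. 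Finally \cref{thm:quotient2}, applied to the free action of the connected compact abelian Lie group $R_i\slash\Gamma_i$ by metric measure isometries on $\M_i\slash\Gamma_i$ (again \cref{i:towerd}, using that $R_i$ is abelian so that $R_i\slash\Gamma_i$ is a torus), shows that $\M_{i+1}=(\M_i\slash\Gamma_i)\slash(R_i\slash\Gamma_i)$ satisfies $\MCP_{\loc}(K,N)$. This closes the induction.

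In particular $\tilde\M=\M_s$ satisfies $\MCP_{\loc}(K,N)$, and since it is a Carnot homogeneous space it carries the metric dilations $\delta_\lambda$, which scale distances linearly and rescale the invariant measure by a power of $\lambda$; for such spaces $\MCP_{\loc}(K,N)$ and $\MCP(K,N)$ coincide (cf.\ \cref{i:challenge2}), so $\tilde\M$ satisfies $\MCP(K,N)$, as claimed. I would also note explicitly that the minimizing Sard property is used only for $\M_0,\dots,\M_{s-1}$ and never for $\tilde\M=\M_s$: this is because $\delta$-essential non-branching is re-derived at each intermediate level rather than propagated — \cref{thm:quotient1} does carry it through the discrete quotients $\q_i$, but whether it survives the compact quotients $\p_i$ is not known (\cref{rmk:nonbranchingquot}), and the last place it is needed, in \cref{thm:quotient2} for $\p_s$, is at $\M_{s-1}$.

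The main obstacle is exactly the reason the earlier sections are needed: the intermediate spaces $\M_i\slash\Gamma_i$ are not Carnot homogeneous spaces — they are not self-similar — so for them the local and global $\MCP$ genuinely differ, and one must push only $\MCP_{\loc}$ through the whole tower. Making $\MCP_{\loc}$ survive \emph{both} kinds of quotient is the content of \cref{thm:quotient1,thm:quotient2}, and the second of these ultimately relies on the improved measure contraction \cref{thm:MCPineqdeltaess} together with the uniform Dirac approximations of \cref{lem:goodapprox}; this is where the restriction to compact \emph{abelian} acting groups — and hence the ``one stratum at a time'' design of the tower in \cref{thm:tower} — is forced. The second delicate point, already anticipated in \cref{i:challenge1}, is having a non-branching property available at every stage; replacing the classical essentially non-branching condition by the weaker $\delta$-essentially non-branching one of \cref{def:delta-enb}, which \cref{thm:Sardimpliesdeltaessnb} deduces from the minimizing Sard property, is precisely what makes the whole scheme applicable to Carnot homogeneous spaces, where the classical condition is not known to hold.
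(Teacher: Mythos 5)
Your proof is correct and follows essentially the same route as the paper: run the tower of \cref{thm:tower}, at each level get $\delta$-essential non-branching from the minimizing Sard property via \cref{thm:Sardimpliesdeltaessnb}, push $\MCP_{\loc}$ through $\q_{i+1}$ by \cref{thm:quotient1} and through $\p_{i+1}$ by \cref{thm:quotient2}, and upgrade to global $\MCP$ by dilations. The only cosmetic difference is that you phrase the induction purely in terms of $\MCP_{\loc}$ and perform the dilation upgrade once at the end, whereas the paper upgrades to $\MCP(K,N)$ on each intermediate $\M_{i+1}$ before re-entering the loop; since both \cref{thm:quotient1,thm:quotient2} need only $\MCP_{\loc}$ as input, this is equivalent, and your remark that the Sard hypothesis is never invoked on $\M_s$ is a correct reading of why the statement asks it only for $\M_0,\dots,\M_{s-1}$.
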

\begin{proof}
    By \cref{thm:tower}, such a quotient $\pi:\M\to\tilde{\M}$ can be factorized into a finite number of compositions of local metric measure isometries (the $\q_i$'s) and quotients by compact groups of metric measure isometries (the $\p_i$'s), as illustrated by the diagram:
    \begin{equation}
        \begin{tikzcd}[sep=scriptsize]
            \M =\G\slash\H \arrow[rrrrrr,"\pi"] 
            & & &  & & & \tilde{\M} =\G\slash\tilde{\H} \\
            \M_0   \arrow[equal,u] \arrow[r,"\q_1"] & \M_0\slash\Gamma_0 \arrow[r,"\p_1"] & \M_{1} \arrow[r] & \dots  \arrow[r]  & \M_{s-1} \arrow[r,"\q_{s}"] &  \M_{s-1}\slash\Gamma_{s-1} \arrow[r,"\p_{s}"] & \M_{s} \arrow[equal,u]
        \end{tikzcd}
    \end{equation}
    %
    where each space appearing in the sequence is equipped with a suitable metric and measure making it a metric measure space, as specified in \cref{thm:tower}.

    For each $i = 0,\dots,s-1$, the space $\M_i$ is a Carnot homogeneous space of step $\leq s$ and, by assumption, satisfies the minimizing Sard property. Thus, by \cref{thm:Sardimpliesdeltaessnb}, $\M_i$ is $\delta$-essentially non-branching. Recall from \cref{thm:tower} that each map $\q_{i +1}:\M_i \to \M_i/\Gamma_i$ is the quotient by a discrete group $\Gamma_i$ of metric measure isometries of $\M_i$ acting properly and freely.  It follows from \cref{thm:quotient1} that if $\M_i$ satisfies the $\MCP(K,N)$ for some $K \in \R$ and $N \in [1,\infty)$, then the quotient space $\M_i/\Gamma_i$ satisfies the $\MCP_{\loc}(K,N)$ and is also $\delta$-essentially non-branching. Furthermore, by \cref{thm:tower} again, the map $\p_{i +1}:\M_i/\Gamma_i \to \M_{i+1}$ is the quotient of the metric measure space $\M_i/\Gamma_i$ by a connected, abelian, compact Lie group acting freely by metric measure isometries on $\M_i/\Gamma_i$. It follows from \cref{thm:quotient2} that $\M_{i+1}$ satisfies the $\MCP_{\loc}(K,N)$. By dilations, the $\MCP_{\loc}(K,N)$ for $\M_{i+1}$ is equivalent to $\MCP(K,N)$. Iterating this argument $s$ times, we reach the conclusion.
\end{proof}

\subsection{Failure of the measure contraction property}\label{sec:failure}

We postpone to \cref{sec:EngMart} the proof that the Martinet structure does not satisfy the $\MCP(K,N)$ for any $K\in\R$ and $N \in [1,\infty)$ (see \cref{thm:Martinet-noMCP}). Taking this result for granted, we apply the machinery developed so far to prove the failure of the $\MCP$ for Carnot groups admitting a quotient to the Martinet structure.

\begin{theorem}[No $\MCP$ with Martinet quotients]\label{thm:noMCP}
    Let $\G$ be a Carnot group of step $s\geq 3$ that admits a quotient to the Martinet structure (i.e., such that it satisfies one of the equivalent conditions of \cref{thm:quotienttoMartinet}). Assume that the minimizing Sard property holds for all Carnot groups of step $\leq s$. Then $\G$ does not satisfy the $\MCP(K,N)$ for any $K\in\R$ and $N \in [1,\infty)$.
\end{theorem}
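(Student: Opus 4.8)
The plan is a contrapositive argument assembled from \cref{thm:tower,thm:MCPforQuotientsofCarnotHomo} and the (separately proved) failure of the $\MCP$ for the Martinet structure. First I would unwind the hypothesis: by \cref{thm:quotienttoMartinet} (condition \labelcref{item:quot-martinet}), the assumption that $\G$ admits a quotient to $\Mar$ means that there is a closed, dilation-invariant subgroup $\K<\G$ such that $\G\slash\K$ is smoothly isometric to $\Mar$. By \cref{prop:iso}, a smooth isometry of Carnot homogeneous spaces is a metric measure isometry up to a constant rescaling of the measure, and the $\MCP(K,N)$ is invariant both under metric measure isometries and under constant rescalings of $\mm$; hence it suffices to show that \emph{if $\G$ satisfies $\MCP(K,N)$ then $\G\slash\K$ does too}, since this would give $\MCP(K,N)$ for $\Mar$, contradicting \cref{thm:Martinet-noMCP}. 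Finally, by \cref{rmk:equiv_with_quotient_metric} the sub-Riemannian metric on $\G\slash\K$ coincides with the metric quotient, so $\M:=\G=\G\slash\{e\}$ and $\tilde{\M}:=\G\slash\K$ form a quotient pair in the sense of \cref{prop:quotientofCarnothomogeneous}, via $\{e\}\triangleleft\K<\G$.

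Next I would apply \cref{thm:tower} to this pair, producing the factorization
\[
  \G=\M_0\to\M_1\to\dots\to\M_s=\G\slash\K,
\]
where each $\M_i=\G\slash\H_i$ is a Carnot homogeneous space of step $\leq s$. To invoke \cref{thm:MCPforQuotientsofCarnotHomo} I must check the minimizing Sard property for the factors $\M_0,\dots,\M_{s-1}$. For $\M_0=\G$, a Carnot group of step $s$, this is exactly the hypothesis. For $i=1,\dots,s-1$ the space $\M_i$ is in general \emph{not} a Carnot group (e.g. $\M_{s-1}$ is a ``Martinet-type'' quotient; in the elementary case $\G=\Eng$ one has $\M_{s-1}=\Mar$ itself), but its minimal group $\tilde{\G}_i$ (see \cref{rmk:minimalquotient}) is a Carnot group of step $\leq s$, hence satisfies the minimizing Sard property by assumption. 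I would then transfer the minimizing Sard property from $\tilde{\G}_i$ down to $\M_i$: writing $\pi\colon\tilde{\G}_i\to\M_i$ for the canonical submersion, an abnormal length-minimizer on $\M_i$ issuing from $\pi(\tilde{x})$ lifts, via its horizontal lift based at $\tilde{x}$, to an abnormal length-minimizer on $\tilde{\G}_i$ issuing from $\tilde{x}$ --- lengths are preserved so minimality passes, and abnormality passes since the endpoint maps intertwine, $\pi\circ\mathrm{End}^{\tilde{\G}_i}_{\tilde{x}}=\mathrm{End}^{\M_i}_{\pi(\tilde{x})}$, with $\di\pi$ surjective. Consequently the set of abnormal endpoints on $\M_i$ from any point is contained in the $\pi$-image of the (negligible) corresponding set on $\tilde{\G}_i$; one must then verify that this image is $\mm_{\M_i}$-negligible --- its $\pi$-preimage is the orbit of a negligible set under the fiber subgroup of $\tilde{\G}_i$, which has to be controlled (Haar disintegration of invariant measures, as in \cref{prop:iso}, is the natural tool).

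With the Sard hypotheses in place, \cref{thm:MCPforQuotientsofCarnotHomo} applies to the factorization above: if $\M_0=\G$ satisfies $\MCP(K,N)$ for some $K\in\R$ and $N\in[1,\infty)$, then $\M_s=\G\slash\K$ satisfies $\MCP(K,N)$, hence so does $\Mar$ by the first paragraph, contradicting \cref{thm:Martinet-noMCP} (proved in \cref{sec:EngMart}). Therefore $\G$ does not satisfy $\MCP(K,N)$, and since $K$ and $N$ were arbitrary, $\G$ satisfies no measure contraction property at all.

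As for where the difficulty lies: essentially all of it is upstream of the present statement --- the factorization tower \cref{thm:tower}, the descent of the (local) $\MCP$ along discrete and compact quotients (\cref{thm:quotient1,thm:quotient2}), and above all the failure of the $\MCP$ for $\Mar$ (\cref{thm:Martinet-noMCP}, which rests on the explicit optimal synthesis of the Martinet structure). Given these, \cref{thm:noMCP} is an assembly, and the only step that is not purely formal is the verification of the minimizing Sard property for the intermediate, a priori non-group, Carnot homogeneous factors $\M_1,\dots,\M_{s-1}$ --- that is, the reduction from ``Sard for Carnot groups of step $\leq s$'' (the hypothesis) to ``Sard for the factors $\M_i$'' (what \cref{thm:MCPforQuotientsofCarnotHomo} requires). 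I expect the genuinely delicate point there to be the measure-negligibility transfer along the submersions $\pi$, i.e.\ showing that the $\tilde{\H}_i$-orbit of the abnormal-endpoint set upstairs remains negligible.
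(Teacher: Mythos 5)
Your plan is structurally correct up to the crucial point you yourself flag, and that point is a genuine gap: the step of transferring the minimizing Sard property from the minimal group $\tilde\G_i$ down to the (non-group) Carnot homogeneous factor $\M_i$ does not go through, and the paper in fact avoids it entirely.

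Concretely, your containment $\mathrm{Abn}_{\M_i}(\pi(\tilde x))\subseteq \pi\bigl(\mathrm{Abn}_{\tilde\G_i}(\tilde x)\bigr)$ is correct (horizontal lifts of abnormal minimizers are abnormal minimizers, since $D_u\End^{\M_i}_{\pi(\tilde x)} = \di\pi\circ D_u\End^{\tilde\G_i}_{\tilde x}$ and $\di\pi$ is onto). But the image of a null set under $\pi$ is not null in general, and neither is the $\tilde\H_i$-orbit of a null set when $\tilde\H_i$ has positive dimension: for $\G=\R^2$, $\H=\R\times\{0\}$, the null set $A=\{0\}\times\R$ has $\H\cdot A=\R^2$. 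Haar disintegration goes the other way --- it lets you pass from negligibility of $\pi^{-1}(E)$ upstairs to negligibility of $E$ downstairs --- but $\pi^{-1}\bigl(\pi(A)\bigr)\supsetneq A$, so nothing is gained. Your ``to be controlled'' is thus the whole missing argument, and it cannot be supplied along these lines.

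The paper's proof is designed precisely to bypass this. Instead of quotienting $\G\to\Mar$ directly, it first invokes \cref{thm:quotienttoMartinet} (the content you largely skip over) to replace the Martinet quotient with an \emph{Engel} quotient: since $\Mar$-quotients and $\Eng$-quotients are equivalent, it suffices to run \cref{thm:MCPforQuotientsofCarnotHomo} on $\pi:\G\to\Eng$. Because both endpoints are Carnot groups, \cref{rmk:tower_Carnot_group} guarantees that \emph{all} intermediate factors $\M_i=\G\slash\H_i$ in the tower of \cref{thm:tower} are Carnot groups of step $\leq s$, so the hypothesis ``Sard for all Carnot groups of step $\leq s$'' applies directly, with no transfer of Sard to non-group spaces ever needed. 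It then applies \cref{thm:MCPforQuotientsofCarnotHomo} once more to $\Eng\to\Mar$; here \cref{rmk:repetitions} shows the tower degenerates to $\Eng,\Eng,\Mar,\Mar$, whose Sard properties are known (step~$3$ Carnot groups by \cite{LDMOPV-Sard}, and $\Mar$ by its explicit synthesis). The contradiction with \cref{thm:Martinet-noMCP} then follows as you say. So \cref{thm:quotienttoMartinet} is not a cosmetic reformulation --- it is exactly what removes the Sard-transfer obstruction that blocks your direct $\G\to\Mar$ route.
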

\begin{proof}
By \cref{thm:quotienttoMartinet}, $\G$ admits a quotient to $\Eng$. By \cref{rmk:tower_Carnot_group}, the validity of the minimizing Sard property for Carnot groups of step $\leq s$ allows to apply \cref{thm:MCPforQuotientsofCarnotHomo}. We deduce that $\Eng$ satisfies the $\MCP(K,N)$. Since $\Eng$ admits a quotient onto $\Mar$, another application of \cref{thm:MCPforQuotientsofCarnotHomo} implies that $\Mar$ also satisfies the $\MCP(K,N)$. Note that for the quotient $\Eng\to\Mar$, as explained in \cref{rmk:repetitions}, the list of factors in \cref{thm:tower} is $\M_0=\M_1=\Eng$, $\M_2=\M_3=\Mar$, and all of them satisfy the minimizing Sard property, which again allows to apply \cref{thm:MCPforQuotientsofCarnotHomo}. This contradicts the fact that the Martinet structure does not satisfy any $\MCP(K,N)$, which will be	 proven in \cref{thm:Martinet-noMCP}.
\end{proof}

The minimizing Sard property stipulates that for any initial point $x$ of a sub-Riemannian manifold, the set of final points of abnormal geodesics starting from $x$ has zero measure. It is one of the main open problems in sub-Riemannian geometry, see \cite[Prob.\ 3]{A-openproblems}, \cite[Conj.\ 1]{RT-MorseSard}. However, it holds in several cases, which we outline now. Firstly, it is known to be true for all Carnot groups of step $\leq 3$. This is well-known for step $\leq 2$, while the step $3$ case is shown in \cite{LDMOPV-Sard}. Secondly, recall that \emph{filiform} Carnot groups are those whose Lie algebra satisfies
\begin{equation}
    \dim\g_1=2,\qquad \dim\g_2=\dots=\dim\g_s=1.
\end{equation}
In \cite{BNV-Sardfiliform}, it is proven that filiform Carnot groups satisfy the minimizing Sard property. Finally, in \cite{BV-Dynamical}, the minimizing Sard property is proven for Carnot groups of rank 2 and step 4. It is easy to check that these structures always admit a quotient to the Martinet structure. Thus, we obtain the following unconditional results.

\begin{corollary}[Examples of Carnot groups failing the $\MCP$]\label{cor:noMCPSardok}
    The following Carnot groups do not satisfy the $\MCP(K,N)$ for all $K\in\R$ and $N \in [1,\infty)$:
    \begin{itemize}
        \item Carnot groups of step $3$ that admit a quotient to the Martinet structure (in particular, the Engel group and all the free Carnot groups of step $3$);
        \item Filiform Carnot groups of step $s\geq 3$;
        \item Carnot groups of step $4$ and rank $2$.
    \end{itemize}
\end{corollary}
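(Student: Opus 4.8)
The plan is to deduce the statement from the failure of the $\MCP$ for the Martinet structure, \cref{thm:Martinet-noMCP}, together with \cref{thm:noMCP} in the step-$3$ case (where its hypothesis holds verbatim) and with \cref{thm:MCPforQuotientsofCarnotHomo} in the other two cases (where one only needs the minimizing Sard property on the specific factors $\M_0=\G,\M_1,\dots,\M_{s-1}$ that arise from the tower of \cref{thm:tower}). For each of the three families I would check two things: (a) the group admits a quotient to $\Mar$, equivalently (by \cref{thm:quotienttoMartinet}) to $\Eng$; and (b) the minimizing Sard property holds for the factors of the towers of \cref{thm:tower} associated to the chain $\G\to\Eng\to\Mar$. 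Granted (a) and (b), applying \cref{thm:MCPforQuotientsofCarnotHomo} first to $\G\to\Eng$ and then to $\Eng\to\Mar$ (whose list of factors is $\Eng,\Eng,\Mar,\Mar$ as in \cref{rmk:repetitions}) shows that $\MCP(K,N)$ for $\G$ would force $\MCP(K,N)$ for $\Mar$, contradicting \cref{thm:Martinet-noMCP}.

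For point (a): the Engel group has $\Mar$ as a quotient by \cref{def:EngMar}. Any rank-$2$ Carnot group of step $\geq 3$, and the free Carnot group of step $3$ on $k$ generators, satisfy the sufficient condition of \cref{rmk:almostfree}: it holds automatically when $k=2$ (which covers all rank-$2$ groups), and for the free step-$3$ group with $k>2$ generators the dimension equals one more than the right-hand side of \eqref{eq:almostfree}, so \eqref{eq:almostfree} holds; hence \cref{thm:quotienttoMartinet} applies in all these cases. For a filiform Carnot group, with $\dim\g_1=2$ and $\dim\g_j=1$ for $2\leq j\leq s$, I would instead verify condition \ref{item:quot-algebraicondition} of \cref{thm:quotienttoMartinet} directly: take $\h_3=\{0\}\subset\g_3$, which has codimension $1$; then $\h_2=\{Y\in\g_2\mid [\g_1,Y]=0\}$ is trivial, because $[\g_1,\g_2]=\g_3\neq\{0\}$ while $\dim\g_2=1$, so $\h_2$ also has codimension $1$.

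For point (b): unwinding the construction in the proof of \cref{thm:tower}, each $\H_i$ is obtained by successively replacing the top strata of $\mathrm{Lie}(\H)$ by those of $\mathrm{Lie}(\Htilde)$, so every factor $\M_i=\G/\H_i$ is the quotient of $\G$ by the exponential of a dilation-invariant ideal concentrated in the highest strata. Such a quotient preserves being filiform (while lowering the step) and preserves being rank $2$ of step $\leq 4$. Consequently, for $\G$ filiform every $\M_i$ is filiform, hence satisfies the minimizing Sard property by \cite{BNV-Sardfiliform}; for $\G$ of rank $2$ and step $4$ every $\M_i$ is rank $2$ of step $\leq 4$, covered by \cite{LDMOPV-Sard} (step $\leq 3$) and \cite{BV-Dynamical} (step $4$, rank $2$); and for $\G$ of step $3$ every $\M_i$ has step $\leq 3$, so the minimizing Sard property is known for all of them and \cref{thm:noMCP} applies as stated. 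Finally, the tower of $\Eng\to\Mar$ consists of copies of $\Eng$ and $\Mar$, of step $\leq 3$ and known to satisfy the minimizing Sard property (for $\Mar$, from its explicit geodesic synthesis, see \cref{sec:EngMart} and \cite{martinetagrachev}).

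The main obstacle is point (b): while the step-$3$ case is immediate, for the filiform and rank-$2$-step-$4$ families one genuinely needs the explicit description of the subgroups $\H_i$ from \cref{thm:tower} in order to see that the entire tower stays inside a class for which the minimizing Sard property has been established; once that stability is observed, the remaining verifications are routine.
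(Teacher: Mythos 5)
Your proof is correct and follows the same overall strategy as the paper: reduce to \cref{thm:Martinet-noMCP} via \cref{thm:MCPforQuotientsofCarnotHomo} applied to the chain $\G\to\Eng\to\Mar$, after verifying (a) the existence of a Martinet quotient and (b) the minimizing Sard property for the tower factors. The paper itself only sketches the corollary in the paragraph preceding it, listing the classes of Carnot groups for which Sard is known and asserting that they all admit a Martinet quotient; it leaves implicit the observation, which you make explicit, that \cref{thm:noMCP} as stated (requiring Sard for \emph{all} Carnot groups of step $\le s$) does not apply verbatim to filiform groups of step $\ge 5$ or to rank-$2$ step-$4$ groups, and that one must instead track the factors $\M_i$ of \cref{thm:tower} and check that they stay within a class where Sard has been proved. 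Your analysis of the $\H_i$ (successive replacement of top strata) confirming that the tower stays filiform, respectively rank-$2$ of step $\le 4$, is the key supporting computation that the paper elides. Your verification of (a) for each family is also correct: $k=2$ triggers \cref{rmk:almostfree} automatically, the free step-$3$ group on $k>2$ generators has dimension exactly one more than the right-hand side of \eqref{eq:almostfree}, and your direct check of \cref{item:quot-algebraicondition} for filiform groups ($\h_3=0$, hence $\h_2=0$) is fine. This is a somewhat more careful write-up of the same argument the authors had in mind.
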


\subsection{Validity of the measure contraction property}\label{sec:validity}

Yet, many Carnot groups of step $\geq 3$ do satisfy the $\MCP$. In fact, we recall the following result, which is the state of the art regarding the validity of the $\MCP$ on Carnot groups.

\begin{theorem}[Badreddine, Rifford  \cite{BR-MCP}]\label{thm:BadRif}
    Any Carnot group whose distance function $\sfd_{\G}:\G\times \G\to \R$ is locally Lipschitz in charts outside of the diagonal satisfies the $\MCP(K,N)$ for all $K\leq 0$ and some $N\in [1,\infty)$.
\end{theorem}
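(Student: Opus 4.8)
The statement to prove is Theorem~\ref{thm:BadRif} (Badreddine--Rifford): any Carnot group whose distance function $\sfd_{\G}$ is locally Lipschitz in charts outside the diagonal satisfies $\MCP(K,N)$ for all $K\le 0$ and some $N\in[1,\infty)$. Since this is quoted from \cite{BR-MCP}, a full proof is out of scope; but here is the approach I would take to reconstruct it.

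\textbf{Approach.} The plan is to follow the classical strategy for verifying the $\MCP$ on a sub-Riemannian manifold: fix a base point $o\in\G$ and, for a measure $\mu_0=\tfrac{1}{\mm(A)}\mm|_A$, transport it to $\delta_o$ along the (essentially unique) minimizing geodesics emanating from $o$, then control the Jacobian of the contraction map $x\mapsto \gamma_t^{x}$ where $\gamma^x$ is the geodesic from $x$ to $o$. First I would invoke the structure of the exponential map: away from the cut locus $\Cut(o)$, which has measure zero, every point $x$ is joined to $o$ by a unique minimizing geodesic, which moreover is normal (here one uses that the set of endpoints of abnormal minimizers is negligible --- a fact that in the Lipschitz-in-charts setting follows from Rifford--Trélat-type arguments, or is built into the hypothesis). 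Then the contraction map $\Phi_t(x):=\gamma^x_t$ is defined $\mm$-a.e., and the $\MCP$ inequality \eqref{eq:O-MCP} reduces to the pointwise Jacobian estimate
\begin{equation}
  \det D\Phi_t(x) \le \frac{1}{\tau_{K,N}^{(1-t)}(\sfd(x,o))^N},
\end{equation}
after identifying $(e_t)_\sharp$ of the relevant dynamical plan with $(\Phi_t)_\sharp\mu_0$.

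\textbf{Key steps in order.} (1) Reduce to $K=0$: by the existence of metric dilations $\delta_\lambda$ on $\G$, which scale distances by $\lambda$ and the measure by the homogeneous dimension, one shows $\MCP(0,N)$ for a Carnot group automatically upgrades/degrades consistently, and that $K<0$ follows from $K=0$ by monotonicity of $\tau_{K,N}$ in $K$ (indeed $\tau_{K,N}^t(\theta)\ge \tau_{0,N}^t(\theta)=t$ for $K\le 0$), so it suffices to prove $\mm(Z_t(o,A))\ge t^N\mm(A)$, i.e.\ \eqref{eq:MCPtovolumeineq}, for some $N$. (2) Use the local Lipschitz regularity of $\sfd_\G$ in charts to deduce, via Pansu--Rademacher / the a.e.\ differentiability of Lipschitz functions, that $x\mapsto \sfd_\G(x,o)^2$ is differentiable a.e.; at such points the minimizing geodesic from $x$ to $o$ is unique and normal, and the geodesic flow is well-defined and measurable. (3) Establish that $\Phi_t$ is a.e.\ approximately differentiable with locally bounded Jacobian --- this is where Lipschitzianity is essential, since it gives the needed regularity of the exponential map's inverse --- and write down the area formula $\mm(\Phi_t(A))=\int_A |\det D\Phi_t|\,d\mm$. (4) Bound $\det D\Phi_t(x)$ from above by $t^{\mathcal{Q}}$ up to controlled factors, where $\mathcal{Q}$ is the homogeneous dimension, using the homogeneity of the geodesic flow under $\delta_\lambda$ and a comparison of the contracted Jacobian with the dilation Jacobian; this yields $N=\mathcal{Q}$ (or a larger admissible $N$). (5) Conclude by integrating.

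\textbf{Main obstacle.} The hard part is step (3)--(4): controlling the Jacobian of the contraction map without the exponential map being a diffeomorphism and without a priori smoothness of $\sfd^2$. In \cite{BR-MCP} this is handled by combining the local Lipschitz regularity of $\sfd_\G$ (which is precisely the standing hypothesis) with the real-analytic structure to get that $\sfd^2$ is locally semiconcave off the diagonal and cut locus, hence twice differentiable a.e., so that $D\Phi_t$ exists a.e.\ with the correct scaling; the Jacobian bound then comes from a Riccati/matrix comparison along normal extremals combined with the homogeneity of the Carnot structure. Since the statement is quoted verbatim from \cite{BR-MCP}, for the purposes of this paper one simply cites that reference; I would not reprove it here, but would flag that the non-explicit nature of $N$ is inherent to their method (as remarked in the introduction), in contrast with the sharp values obtained case-by-case in \cite{R-Ricci,R-MCPCarnot,J09}.
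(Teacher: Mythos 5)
The paper does not prove this theorem; it is cited verbatim from \cite{BR-MCP}, so there is no in-paper argument for me to compare against. You correctly recognize this and give a plausible sketch of the Badreddine--Rifford strategy: reduce to $K=0$ by dilations, exploit the local Lipschitz regularity of $\sfd_{\G}^2$ (together with real-analyticity) to obtain local semiconcavity off the diagonal, a.e.\ uniqueness and normality of minimizers, and a.e.\ differentiability of the contraction map, then bound the Jacobian by a compactness argument whose non-explicit constants explain why $N$ is not made explicit — all of which matches the spirit of \cite{BR-MCP} and the paper's own remark in the introduction.

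One small correction to your reduction step: you write that $\tau_{K,N}^t(\theta)\ge \tau_{0,N}^t(\theta)=t$ for $K\le 0$, but the inequality goes the other way. For $K<0$ the coefficient involves $\sinh(t\alpha)/\sinh(\alpha)$, and convexity of $\sinh$ with $\sinh(0)=0$ gives $\sinh(t\alpha)\le t\sinh(\alpha)$, hence $\tau_{K,N}^t(\theta)\le t^{1/N}\cdot t^{1-1/N}=t=\tau_{0,N}^t(\theta)$. Because the distortion coefficient appears on the right-hand side of \eqref{eq:O-MCP}, $\tau_{K,N}\le\tau_{0,N}$ is precisely what makes $K=0$ the strongest (hardest) case, from which $\MCP(K,N)$ for all $K<0$ follows for free. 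Your stated conclusion (``it suffices to prove $\MCP(0,N)$'') is therefore right; only the parenthetical inequality has the sign flipped.
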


\emph{Goh-Legendre geodesics} are a class of abnormal geodesics satisfying second-order necessary conditions for minimality, see \cref{a:Goh-Legendre}. If a sub-Riemannian structure admits no non-trivial Goh-Legendre abnormal geodesics, then all abnormal geodesics have infinite index \cite[Thm.\ 12.12]{nostrolibro}, and this implies that the corresponding distance function is locally Lipschitz in charts outside of the diagonal, see \cite[Thm.\ 12.11]{nostrolibro} or \cite[Thm.\ 5.7]{AAPL-OT}. We have the following.

\begin{corollary}\label{cor:goh-ideal}
    Any Carnot group with no non-trivial Goh-Legendre geodesics satisfies the $\MCP(K,N)$ for all $K\leq 0$ and some $N \in [1,\infty)$.
\end{corollary}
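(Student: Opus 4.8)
The plan is to deduce the statement directly from \cref{thm:BadRif}: the only thing to verify is that a Carnot group $\G$ with no non-trivial Goh--Legendre geodesics has Carnot--Carathéodory distance $\sfd_{\G}$ which is locally Lipschitz in charts outside of the diagonal, for then \cref{thm:BadRif} applies verbatim (a Carnot group being real-analytic).

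The regularity is established in two imported steps. First, I would recall \cite[Thm.\ 12.12]{nostrolibro}: a non-trivial abnormal geodesic that fails to satisfy both the Goh and the generalized Legendre conditions has infinite Morse index --- equivalently, any abnormal geodesic of finite index is Goh--Legendre. Under the hypothesis of the corollary there are no non-trivial Goh--Legendre geodesics, so every non-trivial abnormal geodesic of $\G$ has infinite index. Second, I would invoke \cite[Thm.\ 12.11]{nostrolibro} (see also \cite[Thm.\ 5.7]{AAPL-OT}): fixing distinct points $x,y\in\G$ and a length-minimizer $\gamma$ from $x$ to $y$, the curve $\gamma$ is non-trivial, and either it is purely normal or all of its abnormal lifts have infinite index; in both situations the cited results give that $\sfd_{\G}^2$, and hence $\sfd_{\G}$, is locally Lipschitz in smooth charts near $(x,y)$. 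Ranging over all off-diagonal pairs $(x,y)$ yields the desired regularity of $\sfd_{\G}$, and \cref{thm:BadRif} then produces $\MCP(K,N)$ for all $K\leq 0$ and some $N\in[1,\infty)$.

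There is no genuine obstacle, the content of the two steps being off-the-shelf from \cite{nostrolibro,AAPL-OT}; the single point worth flagging is that when $\G$ has step $\geq 3$ the constant geodesic at any point is Goh--Legendre and $\sfd_{\G}^2$ is not Lipschitz in charts along the diagonal --- this is harmless, since \cref{thm:BadRif} only demands Lipschitz regularity away from the diagonal, where no trivial geodesic occurs.
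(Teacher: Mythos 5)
Your proposal is essentially identical to the paper's argument: infer infinite index for all non-trivial abnormal geodesics from \cite[Thm.\ 12.12]{nostrolibro}, deduce local Lipschitz regularity of $\sfd_{\G}$ in charts off the diagonal via \cite[Thm.\ 12.11]{nostrolibro} (or \cite[Thm.\ 5.7]{AAPL-OT}), then apply \cref{thm:BadRif}. The side remark about the trivial Goh--Legendre geodesic in step $\geq 3$ being harmless is correct and a useful observation, though the paper does not spell it out.
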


\subsection{Carnot groups in low dimensions}\label{sec:cornucopia}

Building on the previous results, we initiate the classification of Carnot groups that satisfy the $\MCP$. A stratified group is \emph{indecomposable} if its Lie algebra is not the direct sum of two non-trivial Lie algebras. A classification of indecomposable  stratified groups of dimension $\leq 7$ is provided in \cite{LDT-cornucopia,Gong}, to which we refer for details.

\begin{theorem}[A cornucopia of Carnot groups failing the $\MCP$]\label{thm:cornucopia}
  Among the indecomposable  stratified groups of dimension $\leq 7$, listed in \cref{tab:cornucopia}, equipped with any choice of Carnot-Carathéodory metric and invariant measure turning it into a Carnot group:
  \begin{enumerate}[(i)]
    \item\label{i:cornucopiared} the ones marked in red do not satisfy the $\MCP(K,N)$ for any $K\in \R$, $N\in [1,\infty)$ (an asterisk indicates this is conditional on the validity of the Sard property for that group);
    \item\label{i:cornucopiagreen} the ones marked in green do satisfy the $\MCP(K,N)$ for all $K\leq 0$ and some $N\in [1,\infty)$.
  \end{enumerate}
\end{theorem}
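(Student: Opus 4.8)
The plan is to reduce the theorem to two results already at our disposal: \cref{cor:goh-ideal} handles every group marked in green, and \cref{thm:noMCP} — fed through the purely algebraic criterion of \cref{thm:quotienttoMartinet}\ref{item:quot-algebraicondition} — handles every group marked in red. So the proof is a finite, entry-by-entry inspection of the classification of indecomposable stratified groups of dimension $\leq 7$ from \cite{LDT-cornucopia,Gong}: for each such $\G$ we fix the bracket relations of its Lie algebra $\g=\g_1\oplus\dots\oplus\g_s$ in the basis provided there, and we run one of the two verifications below.

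\textbf{Red entries.} For each red $\G$ we must exhibit a codimension-one subspace $\h_3\subset\g_3$ for which the induced subspace $\h_2=\{Y\in\g_2\mid [\g_1,Y]\subset\h_3\}$ also has codimension one; by \cref{thm:quotienttoMartinet} this is equivalent to $\G$ admitting a quotient to $\Mar$. When $\dim\g_1=2$ — which covers the filiform groups, the rank-$2$ step-$4$ and step-$5$ groups, and in particular $\Eng=N_{4,2}$ — this holds automatically by \cref{rmk:almostfree}, so it suffices to pick any $Z\in\g_3\setminus\{0\}$ for the complement and read off $X_1\in\g_1$, $Y\in\g_2$ with $[X_1,Y]=Z$ from the structure constants. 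For the rank-$3$ step-$3$ red groups one instead examines the bilinear map $\ad:\g_1\times\g_2\to\g_3$ in the given basis and checks directly that killing one linear functional on $\g_3$ forces exactly one linear functional on $\g_2$ to vanish on $\{Y:[\g_1,Y]\subset\h_3\}$; this is a short linear-algebra computation per group, and can be streamlined using \cref{rmk:almostfree} whenever the dimension bound \eqref{eq:almostfree} applies (e.g. for the free step-$3$ group on $3$ generators and its quotients). To pass from "admits a Martinet quotient'' to "fails every $\MCP(K,N)$'' we invoke \cref{thm:noMCP}, which needs the minimizing Sard property for all Carnot groups of step $\leq s$: this is unconditional for step $\leq 3$ by \cite{LDMOPV-Sard}, for filiform groups of any step by \cite{BNV-Sardfiliform}, and for rank-$2$ step-$4$ groups by \cite{BV-Dynamical}. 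The step-$4$ and step-$5$ red groups not covered by these results carry an asterisk, and for them the failure of the $\MCP$ is stated conditionally, consistently with \cref{rmk:nalon}.

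\textbf{Green entries.} For each green $\G$ we must show there are no non-trivial Goh--Legendre geodesics, after which \cref{cor:goh-ideal} yields $\MCP(K,N)$ for all $K\leq 0$ and some $N\in[1,\infty)$. For the step-$2$ green groups this is immediate because the listed distributions are medium-fat in the sense of \cite{AS-minvssubanal}, hence admit no non-trivial Goh curve at all, let alone a Goh--Legendre one. For the step-$3$ and step-$4$ green groups one writes the abnormal extremals in the given basis, imposes the Goh condition and then the generalized Legendre condition (see \cref{a:Goh-Legendre}), and checks that the system forces the curve to be trivial. The dagger entries are the subtle sub-case: there the Goh condition alone admits non-trivial solutions, but adjoining the Legendre condition eliminates them, so \cref{cor:goh-ideal} still applies — this is exactly the phenomenon highlighted by the dagger footnote in \cref{tab:cornucopia}.

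\textbf{Where the work is.} There is no single hard conceptual step; the difficulty is the exhaustive, case-by-case bookkeeping. Between the two verifications, the Goh--Legendre analysis for the green (especially dagger) groups is the more delicate one, since it must be carried past the Goh condition to the generalized Legendre condition; and one sees the limit of the method in that a group such as $N_{6,2,6}$, where only non-strong Goh--Legendre curves survive, is left black rather than green because a higher-order analysis would be required. The black entries of \cref{tab:cornucopia} are precisely those to which neither \cref{thm:noMCP} nor \cref{cor:goh-ideal} applies, so no claim is made about them here.
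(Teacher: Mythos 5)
Your proof follows exactly the same strategy as the paper's: for the red entries, verify the purely algebraic criterion of \cref{thm:quotienttoMartinet}\ref{item:quot-algebraicondition} group by group and then invoke \cref{thm:noMCP}, tracking which Sard results (\cite{LDMOPV-Sard}, \cite{BNV-Sardfiliform}, \cite{BV-Dynamical}) make the conclusion unconditional; for the green entries, verify the absence of non-trivial Goh--Legendre geodesics (medium-fat for most, direct computation for the dagger entries) and invoke \cref{cor:goh-ideal}. The only slips are cosmetic: you write "step-$3$ and step-$4$ green groups," but every green entry in \cref{tab:cornucopia} has step $\leq 3$; and for step-$2$ green groups the paper's stated reason is simply that all geodesics are normal in step $\leq 2$, which is the more elementary fact (though your medium-fat observation is also correct since step-$2$ distributions are automatically medium-fat). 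Otherwise this matches the paper's proof, which likewise defers the case-by-case linear-algebra and Goh--Legendre computations to a table rather than spelling them out.
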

\begin{proof}
    \textbf{Proof of \cref{i:cornucopiared}.}
The criterion for admitting a Martinet quotient of \cref{thm:quotienttoMartinet} is checked using the Lie algebra relations listed in \cite{LDT-cornucopia} in terms of a specific basis. The groups marked in red in \cref{tab:cornucopia} are precisely all those that satisfy the criterion. A choice of subspaces $\h_3,\h_2$ that verify the criterion is given in \cref{tab:h2h3}, and the computations are omitted.
\begin{table}[ht]
\begin{tabular}[t]{|c|c|c|} \hline
    \multicolumn{3}{|c|}{Step 3}            \\ \hline \hline
    & $\h_2$     & $\h_3$    \\ \hline
    $N_{4,2}$ & $0$        & $0$       \\ \hline
    $N_{5,2,3}$    & $0$        & $X_5$     \\ \hline
    $N_{6,3,3}$    & $X_6$        & $0$     \\ \hline
    $N_{6,3,4}$    & $X_6$      & $0$       \\ \hline
    $357A$         & $X_6, X_7$ & $0$       \\ \hline
    $357B$         & $X_7$      & $X_6$     \\ \hline
    $257B$         & $X_7$      & $0$       \\ \hline
    $247A$         & $X_5$      & $X_7$     \\ \hline
    $247B$         & $X_5$      & $X_7$     \\ \hline
    $247C$         & $X_4$      & $X_6$     \\ \hline
    $247D$         & $X_5$      & $X_7$     \\ \hline
    $247E$         & $X_4-X_5$  & $X_7$     \\ \hline
    $247E_1$       & $X_5$      & $X_7$     \\ \hline
    $247F$         & $X_4-X_5$  & $X_6-X_7$ \\ \hline
    $247G$         & $X_4-X_5$  & $X_6-X_7$ \\ \hline
    $247I$         & $X_4$      & $X_6$     \\ \hline
    $247J$         & $X_4$      & $X_7$     \\ \hline
\end{tabular}
\begin{tabular}[t]{|c|c|c|} \hline
    \multicolumn{3}{|c|}{Step 4}   \\ \hline \hline
    & $\h_2$ & $\h_3$ \\ \hline
    $N_{5,2,1}$  & $0$    & 0      \\ \hline
    $N_{6,2,5}$  & $0$    & $X_5$  \\ \hline
    $N_{6,2,5a}$ & $0$    & $X_5$  \\ \hline
    $N_{6,2,7}$  & $0$    & $X_6$  \\ \hline
    $2457A$      & $X_7$  & $0$    \\ \hline
    $2457B$      & $X_6$  & $0$    \\ \hline
    $2457L$      & $0$    & $X_5$  \\ \hline
    $2457L_1$    & $0$    & $X_5$  \\ \hline
    $2457M$      & $0$    & $X_5$  \\ \hline
\end{tabular}
\begin{tabular}[t]{|c|c|c|} \hline
    \multicolumn{3}{|c|}{Step 5}  \\ \hline \hline
    & $\h_2$ & $\h_3$ \\ \hline
    $N_{6,2,1}$ & 0      & $0$    \\ \hline
    $N_{6,2,2}$ & 0      & $0$    \\ \hline
    $23457A$    & $0$    & $X_7$  \\ \hline
    $23457B$    & $0$    & $X_7$  \\ \hline
    $23457C$    & $0$    & $0$    \\ \hline
    $12457H$    & $0$    & $X_5$  \\ \hline
    $12457L$    & $0$    & $X_5$  \\ \hline
    $12457L_1$  & $0$    & $X_5$  \\ \hline
    \multicolumn{3}{c}{}          \\
    \multicolumn{3}{c}{}          \\
    \multicolumn{3}{c}{}          \\
    \multicolumn{3}{c}{}          \\
    \multicolumn{3}{c}{}          \\
    \multicolumn{3}{c}{}          \\ \hline
    \multicolumn{3}{|c|}{Step 6}  \\ \hline \hline
    & $\h_2$ & $\h_3$ \\ \hline
    $123457A$   & 0      & 0      \\ \hline
\end{tabular}
\vspace{1em}
\caption{Indecomposable  stratified groups of dimension $\leq 7$ admitting a quotient to Martinet, marked in red in \cref{tab:cornucopia}. The generators for a choice of the subspaces $\h_2,\h_3$ verifying \cref{thm:quotienttoMartinet} are given in terms of the bases of \cite{LDT-cornucopia}.}\label{tab:h2h3}
\end{table}

Choose a scalar product on the first layer giving these stratified groups the structure of a Carnot group. By \cref{thm:noMCP}, these Carnot groups -- provided that the minimizing Sard property holds -- do not satisfy any $\MCP(K,N)$. According to the discussion preceding \cref{cor:noMCPSardok}, the Carnot groups for which the minimizing Sard property is known to be true are those of step $3$, all filiform ones (which, in \cref{tab:cornucopia}, for step $\geq 4$, are $N_{5,2,1}$, $N_{6,2,1}$, and $123457A$), and Carnot groups of step $4$ and rank $2$ (which, in \cref{tab:cornucopia}, are $N_{6,2,5}$, $N_{6,2,5a}$, $N_{6,2,7}$, and $2457L$).

\textbf{Proof of \cref{i:cornucopiagreen}.} In Carnot groups of step $1$ and $2$, it is well-known that all geodesics are normal, so the minimizing Sard property holds and there are no non-trivial Goh-Legendre geodesics. We then apply \cref{cor:goh-ideal}. All other groups marked in green in \cref{tab:cornucopia} are of step $3$. One can directly verify that all of them have a medium-fat distribution, and thus they do not have non-trivial Goh(-Legendre) abnormal   horizontal curves (see \cite[Ex.\ 2.1]{riffordbook}). Therefore, we can also apply \cref{cor:goh-ideal} to them. The only exceptions are $N_{6,3,1}$, $247F_{1}$, $247P$ and $137A_1$, marked with a dagger in \cref{tab:cornucopia}: their distribution is not medium-fat, and they have Goh geodesics, see \cref{rmk:Legendre}. Nevertheless, we can verify that they do not have non-trivial Goh–\emph{Legendre} geodesics. For  stratified groups, the Goh–Legendre condition can be written in terms of the adjoint representation, and in step $3$, it admits the following simplified formulation (see \cref{a:Goh-Legendre}): there exists $\lambda_0 \in \g^*\setminus\{0\}$ such that
\begin{align}
\langle \lambda_0, X\rangle  & =0, \qquad \forall\, X\in\g_1, \tag{abnormal condition} \\
\langle\lambda_0,Y\rangle =\langle \lambda_0, [X_{u(t)},Y]\rangle & =0,\qquad \forall\, Y\in\g_2,\, a.e.\ t\in[0,1],  \tag{Goh condition}  \\
\langle \lambda_0, [[X_{u(t)},X],X]\rangle & \geq 0,\qquad \forall\, X\in\g_1,\, a.e.\ t\in [0,1],  \tag{generalized Legendre condition}
\end{align}
where $\langle \cdot,\cdot\rangle$ denotes the duality pairing, $X_{u(t)} = \sum_{i=1}^k u_i(t)X_i$, and $u(t)$ is the control of the geodesic. Using the explicit Lie algebra structure from \cite{LDT-cornucopia} we check that the Goh-Legendre condition is false for an arbitrary non-trivial horizontal curve. We omit the computations.
\end{proof}

\begin{remark}[Goh vs Legendre]\label{rmk:Legendre}
The Goh condition alone is not sufficient in the proof. In fact, all groups marked with a dagger in \cref{tab:cornucopia} do have a non-trivial Goh horizontal curves. Such  curves, however, do not satisfy the generalized Legendre condition. For $N_{6,3,1}$, $247F_{1}$, and $247P$, such curves have control $u = (u_1,0,0)$. For $137A_{1}$, any horizontal curve with control $u = (0,0,u_3,u_4)$ satisfies the Goh condition. Taking these controls to be constant corresponds to segments in the first stratum, that are geodesics for any choice of sub-Riemannian metric.
\end{remark}

We conclude with a straightforward result concerning the decomposable case.

\begin{corollary}[MCP for decomposable Carnot groups]\label{thm:decomposable}
Let $\G$ be a Carnot group that admits a decomposition as a direct product of Carnot groups
\begin{equation}
\G = \G_1\times\dots\times\G_n,
\end{equation}
with product metric $\sfd_{\G}^2 = \sum_{j=1}^n \sfd_{\G_j}^2$, and product measure $\mm_{\G} = \otimes_{i=j}^n \mm_{\G_j}$. Then
\begin{enumerate}[(i)]
\item \label{i:yes} if each factor $\G_i$ satisfies the $\MCP(K_i,N_i)$ for some $K_i\in \R$ and $N_i\in [1,\infty)$, then $\G$ satisfies the $\MCP(K,N)$ with $K=\min_i K_i$ and $N=\sum_i N_i$;
\item \label{i:no} if one of the factors $\G_i$ does not satisfy the $\MCP(K,N)$ for some $K\in \R$ and $N \in [1,\infty)$, then $\G$ does not satisfy the $\MCP(K,N)$ (assuming that the minimizing Sard property holds for all Carnot groups of step smaller or equal than the step of $\G$).
\end{enumerate}
\end{corollary}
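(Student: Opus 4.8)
The plan is to treat the two parts by rather different mechanisms. For part \labelcref{i:yes}, the statement is exactly the tensorization property of the $\MCP$: if $(X_j,\sfd_j,\mm_j)$ satisfies $\MCP(K_j,N_j)$ then the product metric measure space $(\prod_j X_j, \sqrt{\sum_j \sfd_j^2}, \otimes_j \mm_j)$ satisfies $\MCP(\min_j K_j, \sum_j N_j)$. This is a known result; for essentially non-branching spaces it follows from the tensorization of $\CD$ combined with the fact that, under essential non-branching, $\MCP(K,N)$ is equivalent to $\CD^1_{\loc}$-type conditions, but the cleanest route here is to cite the direct tensorization of the $\MCP$ (e.g. the argument works by lifting, for each factor, the geodesic plan contracting $\mu_0^{(j)}$ to $\delta_{o_j}$, taking products of the dynamical plans, and multiplying the distortion-coefficient inequalities, using the elementary inequality $\tau_{\min_j K_j, \sum_j N_j}^{(1-t)}(\sfd) \ge \prod_j \tau_{K_j,N_j}^{(1-t)}(\sfd_j)$ when $\sfd^2 = \sum_j \sfd_j^2$). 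So for \labelcref{i:yes} I would simply invoke this tensorization statement, noting that Carnot groups are geodesic, proper, etc., so the hypotheses of the tensorization theorem are met.

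For part \labelcref{i:no}, the strategy is to realize $\G_i$ as a quotient of $\G$ and apply \cref{thm:MCPforQuotientsofCarnotHomo} contrapositively. Concretely, write $\G = \G_1\times\dots\times\G_n$ and let $\H_i \triangleleft \G$ be the (closed, dilation-invariant, normal) subgroup $\G_1\times\dots\times\widehat{\G_i}\times\dots\times\G_n$ (all factors except the $i$-th). Then $\G\slash\H_i \cong \G_i$ as Carnot groups, with the correct Carnot–Carathéodory metric and invariant measure: indeed the product metric $\sfd_\G^2 = \sum_j \sfd_{\G_j}^2$ has the property that the quotient metric on $\G\slash\H_i$ is exactly $\sfd_{\G_i}$ (the infimum over $\H_i$-translates reduces to the $\G_i$-component), and the product measure projects to $\mm_{\G_i}$. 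Thus $\G_i$ is a quotient of $\G$ in the sense of \cref{prop:quotientofCarnothomogeneous}. Now I would argue: if $\G$ satisfied $\MCP(K,N)$, then — since by hypothesis the minimizing Sard property holds for all Carnot groups of step $\le \mathrm{step}(\G)$, and in particular for all the intermediate factors $\M_1,\dots,\M_{s-1}$ appearing in the factorization of \cref{thm:tower} for the quotient $\G \to \G_i$ (these are Carnot groups by \cref{rmk:tower_Carnot_group}, of step $\le \mathrm{step}(\G)$) — \cref{thm:MCPforQuotientsofCarnotHomo} gives that $\G_i = \G\slash\H_i$ satisfies $\MCP(K,N)$, contradicting the hypothesis on $\G_i$. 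Hence $\G$ does not satisfy $\MCP(K,N)$.

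The main obstacle, and the one step that requires care rather than being purely formal, is verifying that the quotient $\G\slash\H_i$, with the metric and measure constructions of \cref{sec:Carnothomogeneousspaces}, genuinely reproduces $(\G_i, \sfd_{\G_i}, \mm_{\G_i})$: one must check (a) that $\H_i$ is dilation-invariant and normal — immediate since it is a product of full strata-compatible factors; (b) that the quotient Carnot–Carathéodory metric equals $\sfd_{\G_i}$, which follows because horizontal curves in $\G$ project to horizontal curves in $\G_i$ with length equal to the $\G_i$-component of the length and the transverse directions can be contracted along $\H_i$ at zero cost (this uses $\sfd_\G^2 = \sum_j \sfd_{\G_j}^2$ in an essential way); and (c) that the right-invariant Haar measure on $\G\slash\H_i$ can be taken to be $\mm_{\G_i}$ up to the harmless scaling constant of \cref{prop:iso}. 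Once this identification is in place, part \labelcref{i:no} is just an application of the already-proven \cref{thm:MCPforQuotientsofCarnotHomo}. I would also remark that in \labelcref{i:no} it suffices, by \cref{thm:cornucopia-intro} and related results, that the Sard hypothesis be invoked only for the relevant factors, but stating it for all Carnot groups of step $\le \mathrm{step}(\G)$ keeps the statement clean.
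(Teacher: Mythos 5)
Your proposal is correct and follows essentially the same route as the paper: part \labelcref{i:yes} is the standard tensorization of the $\MCP$ (cited in the paper to Ohta's product result), and part \labelcref{i:no} realizes each factor $\G_i$ as the quotient $\G\slash\H_i$ with $\H_i$ the product of the remaining factors, then applies \cref{thm:MCPforQuotientsofCarnotHomo} contrapositively. The extra verifications you spell out in \labelcref{i:no} — dilation-invariance and normality of $\H_i$, that the quotient metric is $\sfd_{\G_i}$, and that the projected Haar measure is $\mm_{\G_i}$ — are all correct and implicit in the paper's terser argument.
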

\begin{proof}
\cref{i:yes} follows from the well-known tensorization property of the $\MCP$, see \cite{O-products}. To prove \cref{i:no}, note that $\G$ admits a quotient to $\G_i$, and one can use \cref{thm:MCPforQuotientsofCarnotHomo}.
\end{proof}       	
\section{Failure of the \texorpdfstring{$\MCP$}{MCP} for generic structures}\label{sec:genericity}

Purpose of this section is to prove \cref{thm:generic-no-MCP-intro}, which we recall here.

\begin{theorem}[Generic failure of the $\MCP$ for rank $3$ and high dimension]\label{thm:generic-no-MCP}
 Assume that the minimizing Sard property holds for  Carnot groups. In the same setting of \cref{{gen:genideal}}, if
  \begin{equation}\label{eq:lowerbound}
    \dim M \geq (k-1)\left(\frac{k^2}{3}+\frac{5k}{6}+1\right),
  \end{equation}
  then there exists an open dense subset $W_k'\subset W_k$ of $\mathcal{G}_k$ such that for every element of $W_k'$, the corresponding sub-Riemannian structure does not admit non-trivial abnormal geodesics and does not satisfy the $\MCP(K,N)$ for any $K\in \R$, $N\in [1,\infty)$ and any choice of smooth measure.
\end{theorem}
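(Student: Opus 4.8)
The plan is to deduce the failure of the $\MCP$ for the structure itself from its failure on the metric measure tangent cone at a well-chosen point, thereby reducing everything to \cref{thm:noMCP}. I would start from the open dense set $W_k\subset\mathcal{G}_k$ provided by \cref{gen:genideal} and let $W_k'\subset W_k$ consist of those $(\distr,g)$ that are, in addition, equiregular with the generic (maximal) growth vector on a dense open subset of $M$. Since equiregularity alone is an open dense condition for each fixed structure, and the generic growth vector is attained on a dense open set of structures by the transversality arguments of \cite{CJT-generic}, the set $W_k'$ is open and dense in $\mathcal{G}_k$. Now fix $(\distr,g)\in W_k'$, an arbitrary smooth measure $\mm$ on $M$, and a point $x\in M$ lying in the dense open set where the growth vector is maximal.

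The metric measure tangent cone of $(M,\sfd,\mm)$ at $x$ is the nilpotentization $\widehat{\G}_x$, a Carnot group of rank $k$ and dimension $\dim M$, equipped with its Carnot--Carathéodory metric and a Haar measure; this tangent cone is independent of the choice of $\mm$, because the tangent measure of any smooth measure at an equiregular point is a Haar measure up to a positive constant, and the $\MCP$ is insensitive to a global rescaling of the measure. Since $k\geq 3$ and, by \eqref{eq:lowerbound}, $\dim M$ exceeds $k+\binom{k}{2}$, the Carnot group $\widehat{\G}_x$ cannot have step $\leq 2$, so its step $s$ is at least $3$; moreover $\dim\widehat{\G}_x=\dim M$ satisfies \eqref{eq:almostfree}, whence \cref{rmk:almostfree} (that is, \cite[Lemma 4]{AG-subanal}) guarantees that $\widehat{\G}_x$ admits a quotient to the Martinet structure, i.e.\ satisfies the equivalent conditions of \cref{thm:quotienttoMartinet}. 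Because we assume the minimizing Sard property for \emph{all} Carnot groups --- in particular for those of step $\leq s$ --- \cref{thm:noMCP} applies and yields that $\widehat{\G}_x$ satisfies no $\MCP(K,N)$; a fortiori it fails $\MCP(0,N)$ for every $N$. On the other hand, if $(M,\sfd,\mm)$ satisfied $\MCP(K,N)$ for some $K\in\R$ and $N\in[1,\infty)$, then, by the stability of the $\MCP$ under pointed measured Gromov--Hausdorff convergence together with the scaling properties of the distortion coefficients and the presence of metric dilations on $\widehat{\G}_x$, the tangent cone $\widehat{\G}_x$ would satisfy $\MCP(0,N)$. This contradiction shows that $(M,\sfd,\mm)$ does not satisfy $\MCP(K,N)$ for any $K\in\R$ and $N\in[1,\infty)$; since $\mm$ and $(\distr,g)\in W_k'$ were arbitrary, the theorem follows.

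The crux of the argument is the algebraic input \cref{rmk:almostfree}: one must check that, for the generic rank-$k$ nilpotentization of dimension $\dim M$ satisfying \eqref{eq:lowerbound}, the Lie-algebraic conditions of \cref{thm:quotienttoMartinet} hold --- this is precisely \cite[Lemma 4]{AG-subanal}, and it is the same threshold responsible for the loss of subanalyticity of the squared distance. The remaining ingredients --- openness and density of $W_k'$, the identification of the tangent cone with $\widehat{\G}_x$ and its independence of the smooth measure at equiregular points, and the stability of the $\MCP$ under tangent cones --- are standard and cause no difficulty. (If, in \cref{rmk:almostfree}, the Martinet-quotient property is only available for the \emph{generic} nilpotentization rather than for every Carnot group of that dimension, this is exactly why one passes from $W_k$ to the smaller, still open and dense, set $W_k'$; otherwise one may simply take $W_k'=W_k$.)
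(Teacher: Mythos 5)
Your proposal is correct and follows essentially the same route as the paper's proof: use the genericity of the maximal growth vector (the paper cites \cite[Lemma 4]{AG-subanal} for both this and the Martinet-quotient property) to obtain the smaller open dense set $W_k'$, apply \cref{thm:noMCP} to the tangent Carnot group, and conclude via the stability of the $\MCP$ under blow-up. You make explicit two steps the paper leaves implicit --- the $\MCP$ descending to metric measure tangent cones (together with the dilation argument reducing to $K=0$) and the independence of the tangent cone from the choice of smooth measure --- which is a useful unpacking but not a different argument.
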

\begin{proof}
Remember the set $W_k$ from \cref{gen:genideal}. By \cite[Lemma 4]{AG-subanal}, if \eqref{eq:lowerbound} holds, then the growth vector at $x$, i.e. $(\dim\distr_x^1,\dim\distr_x^2,\dots,\dim\distr_x^{s(x)})$, is generically maximal, and the tangent cone of $(M,\sfd)$ at $x$ admits a quotient to the Martinet structure. This means that, up to taking a smaller open dense set $W_k'\subset W_k$, for any pair $(\distr,g) \in W_k'$ we can apply \cref{thm:noMCP}.
\end{proof}

We also prove \cref{thm:no-MCP-rank2-intro}, whose statement we recall here.

\begin{theorem}[Failure of the $\MCP$ for rank $2$]\label{thm:no-MCP-rank2}
Let $(M,\sfd,\mm)$ be a sub-Riemannian metric measure space with constant rank $k=2$. Assume that there exists $x\in M$ such that 
  \begin{equation}
  \dim \distr^3_x - \dim \distr^2_x \geq 1.
  \end{equation}
  Assume that the minimizing Sard property holds for all Carnot groups of step $\leq s(x)$. Then $(M,\sfd,\mm)$ does not satisfy the $\MCP(K,N)$ for any $K\in \R$ and $N\in [1,\infty)$.
  \end{theorem}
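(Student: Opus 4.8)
The plan is to argue by contradiction, using that the $\MCP$ is inherited by metric measure tangent cones and that tangent cones of sub-Riemannian metric measure spaces are Carnot homogeneous spaces. Concretely, I would show that under the hypotheses the tangent cone $\M_x$ of $M$ at $x$ reaches the Martinet structure by finitely many quotients in the sense of \cref{prop:quotientofCarnothomogeneous}, so that if $M$ satisfied some $\MCP(K,N)$, then \cref{thm:MCPforQuotientsofCarnotHomo} would force $\Mar$ to satisfy it as well, contradicting \cref{thm:Martinet-noMCP}.

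So assume $(M,\sfd,\mm)$ satisfies $\MCP(K,N)$ for some $K\in\R$, $N\in[1,\infty)$, and let $\M_x$ be its metric measure tangent cone at $x$, written in irreducible form as $\M_x=\G\slash\H$ with a choice of invariant measure; then $\M_x$ satisfies $\MCP(K,N)$. Let $\g=\g_1\oplus\dots\oplus\g_s$ be the stratified Lie algebra of $\G$ and $\h=\h_1\oplus\dots\oplus\h_s$ the dilation-compatible stratification of the Lie algebra of $\H$. Since the tangent cone has, at its origin, the same flag as $M$ at $x$, the assumption $\dim\distr^3_x-\dim\distr^2_x\geq 1$ translates into $\h_3\subsetneq\g_3$ (in particular $\g_3\neq 0$ and the step is $s=s(x)\geq 3$), and constant rank $2$ of $\distr$ forces the tangent cone's distribution to have constant rank $2$, which in the irreducible presentation means $\dim\g_1=2$ and $\h_1=0$; hence also $\dim\g_2=1$, so $\h_2\in\{0,\g_2\}$.

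Next, pick any codimension-$1$ subspace $\h_3'\subseteq\g_3$ with $\h_3\subseteq\h_3'$ (possible since $\h_3\subsetneq\g_3$), set $\h':=\h_2\oplus\h_3'\oplus\g_4\oplus\dots\oplus\g_s$, a dilation-invariant subalgebra, and $\H':=\exp_{\G}(\h')$. One checks directly that $\H\subseteq\H'$ (equality in degree $2$, the inclusion $\h_3\subseteq\h_3'$ in degree $3$, automatic in degrees $\geq 4$), that the distribution of $\G\slash\H'$ has constant rank $2$ because $\h'$ has no degree-$1$ part, and that the minimal group of $\G\slash\H'$ has dimension profile $(2,1,1)$, hence is the Engel group (the fact recalled in the proof of \cref{lem:uniquenessEngMar}); this is just the rank-$2$ instance of \cref{thm:quotienttoMartinet}, where condition \labelcref{item:quot-algebraicondition} imposes nothing, cf.\ \cref{rmk:almostfree}. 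By \cref{lem:uniquenessEngMar}, $\G\slash\H'$ is smoothly isometric to $\Eng$ when $\h_2=0$ (then $\h'$ is an ideal, so $\G\slash\H'$ is a Carnot group) and to $\Mar$ when $\h_2=\g_2$. Since $\Eng$ itself quotients onto $\Mar$, in both cases $\M_x$ reaches $\Mar$ by finitely many quotients in the sense of \cref{prop:quotientofCarnothomogeneous}, after — if needed — interpolating the inclusion $\H\subseteq\H'$ by a chain of dilation-invariant subgroups each normal in the next (normalizers of proper subalgebras grow in nilpotent Lie algebras). Every space along the chain is a Carnot homogeneous space of step $\leq s$ whose minimal group is a Carnot group of step $\leq s(x)$, so the minimizing Sard hypothesis for Carnot groups of step $\leq s(x)$ supplies the hypotheses of \cref{thm:MCPforQuotientsofCarnotHomo} at every stage (including for the intermediate factors produced by \cref{thm:tower}). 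Iterating \cref{thm:MCPforQuotientsofCarnotHomo} propagates $\MCP(K,N)$ from $\M_x$ down to $\Mar$, contradicting \cref{thm:Martinet-noMCP}; hence $M$ satisfies no $\MCP(K,N)$.

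I expect the main obstacle to be the structural input on the tangent cone at the (possibly non-equiregular) point $x$: one must justify carefully that constant rank $2$ of $\distr$ forces $\dim\g_1=2$ and $\h_1=0$ for the cone, and that the flag inequality at $x$ yields $\h_3\subsetneq\g_3$. The remaining delicate point is the bookkeeping showing that the minimizing Sard hypothesis, stated for Carnot groups, is enough for the Carnot homogeneous spaces occurring as factors (via descent of the Sard property from the minimal group); once these structural facts are in place, the algebraic construction of the Martinet quotient above is routine given \cref{thm:quotienttoMartinet} and \cref{lem:uniquenessEngMar}.
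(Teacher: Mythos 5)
Your proof is correct and takes essentially the same route as the paper: identify the tangent cone at $x$ as a Carnot homogeneous space admitting a quotient to the Martinet structure and propagate the failure of the $\MCP$ via \cref{thm:MCPforQuotientsofCarnotHomo} and \cref{thm:Martinet-noMCP}. The paper's proof is terser, simply listing the four possible growth vectors $(2,2,3,\dots)$, $(2,2,4,\dots)$, $(2,3,4,\dots)$, $(2,3,5,\dots)$ and citing \cite[Prop.\ 7]{AG-subanal} for the existence of the Martinet quotient before invoking \cref{thm:noMCP}, whereas you construct the quotient subalgebra $\h'$ directly from the rank-$2$ constraint and flag condition, which is an equivalent, more self-contained realization of the same step.
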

\begin{proof}
Under these assumptions, the growth vector at $x$ is either $(2,2,3,\dots)$, $(2,2,4,\dots)$, $(2,3,4,\dots)$, or $(2,3,5,\dots)$. In each case the tangent cone of $(M,\sfd)$ at $x$ admits a quotient to the Martinet structure (see also the proof of \cite[Prop.\ 7]{AG-subanal}). We conclude using \cref{thm:noMCP}.
\end{proof}				
\section{Failure of the \texorpdfstring{$\MCP$}{MCP} for the Martinet structure}\label{sec:EngMart}

The purpose of this section is to disprove the $\MCP$ in the Martinet structure. We refer to \cref{a:SR} for basic notions in sub-Riemannian geometry and in particular their geodesics.

The Martinet structure was defined as a Carnot homogeneous space in \cref{def:EngMar} as the quotient $\pi:\Eng\to \Mar = \Eng\slash\H$ of the Engel group by the subgroup $\H =\exp(\e_2)$. Let $X_1,X_2,Y,Z$ be the basis given in \cref{def:EngMar}, and consider the map $\phi:\R^3\to \Mar$
\begin{equation}
  \phi(x_1,x_2,z):=\H \exp(z Z)\exp(x_2 X_2)\exp(x_1 X_1),
\end{equation}
where $\exp:\e\to \Eng$ is the group exponential map. By \cite[Thm.\ 1.2.12]{CG-NilpotentGroupsBook} the map $\phi$ is an analytic diffeomorphism giving global coordinates on $\Mar$ and such that, in these coordinates, the Lebesgue measure is right-invariant by the action of $\Eng$ on $\Mar$. Via routine computations, in these coordinates, the sub-Riemannian structure on $\Mar$ is generated by the vector fields
\begin{equation}
  \pi_* X_1 = \partial_{x_1},\qquad \pi_*X_2 := \partial_{x_2} + \frac{x_1^2}{2} \partial_z,
\end{equation}
In order to stay consistent with \cite{martinetagrachev}, which we will use in the following, we relabel $x_1 = y$ and $x_2 = x$. With this relabeling the Hamiltonian of this sub-Riemannian structure on $\R^3$ is
\begin{equation}
  H(\lambda) = \frac{1}{2} \left[ \left( u + \frac{y^2}{2} w \right)^2 + v^2 \right], \quad \text{where} \quad \lambda = \left(x, y, z, u \di x + v \di y + w \di z\right) \in T^*\R^3.
\end{equation}
Solutions $\lambda : [0,1]\to T^*\R^3$  to the Hamilton's equation are called extremal and (normal) geodesics are projections $\pi:T^*\R^3\to \R^3$ of extremals. The corresponding Hamilton's equations are
\begin{align}
  \dot{x} & = u + \frac{y^2}{2} w,                              & \dot{u} & = 0,                                        \\
  \dot{y} & = v,                                                & \dot{v} & = - \left( u + \frac{y^2}{2} w \right) y w, \\
  \dot{z} & =  \left( u + \frac{y^2}{2} w \right)\frac{y^2}{2}, & \dot{w} & = 0.
\end{align}
We will only need to consider geodesics starting from the origin and thus always set the initial conditions $x_0 = y_0 = z_0 = 0$. As usual, we must have $(u_0, v_0) \neq 0$ in order to have a non-trivial curve and we can introduce cylindrical coordinates on each fibre of the cotangent bundle by setting $r > 0$, and $\theta \in \mathbb{S}^1$ the unique parameters such that
\begin{equation}
  u_0 = r \cos(\theta),\qquad  v_0 = r \sin(\theta).
\end{equation}
We observe that $u(t) = u_0$ and $w(t) = w_0$ are constant functions and that $y(t)$ satisfies the second-order differential equation
\begin{equation}
  \label{duffing}
  \ddot{y} + u_0 w_0 y + \frac{w_0^2}{2} y^3 = 0, \qquad y(0) = 0, \qquad \dot{y}(0) = v_0.
\end{equation}
Equation \eqref{duffing} can be solved by means of Jacobi elliptic functions. We refer to \cite[Ch.\ 16 and 17]{AS-Handbook} for notation and conventions we use throughout. Set $y(t) = A \JacobiCN{\omega t + \phi}{m}$. We explain how the parameters $A, \omega, \phi$ and $m$ are found. Putting the ansatz into \eqref{duffing} gives
\begin{equation}
  (1 - 2m) \omega^2 = u_0 w_0, \qquad A^2 w_0^2 = 4 m \omega^2.
\end{equation}

From the initial conditions $A \JacobiCN(\phi \mid m) = 0$ and $-A \omega \JacobiSN(\phi \mid m) \JacobiDN(\phi \mid m) = v_0$, we deduce that $m \in [0, 1)$, $\phi = \JacobiK(m)$ and $-A \omega \sqrt{1 - m} = v_0$, or $\phi = 3 \JacobiK(m)$ and $A \omega \sqrt{1 - m} = v_0$. With a simple algebraic simplification, we obtain
\begin{equation}\label{eq:correspondence}
  m = \frac{r - \mathrm{sgn}(w_0) u_0}{2 r} = \frac{1 - \mathrm{sgn}(w_0) \cos(\theta)}{2}, \quad \omega^2 = r |w_0|, \quad A^2 \omega^2 = 4 m r^2.
\end{equation}
To make the correspondence between the variables $(r, \theta, w_0) \mapsto (A, \omega, m)$ given by \eqref{eq:correspondence} one-to-one and smooth, we restrict it to the range:
\begin{equation}
  (0, + \infty) \times (\mathbb{S}^1 \setminus \{(\pm 1, 0)\}) \times (\mathbb{R} \setminus \{0\}) \to (\mathbb{R} \setminus \{0\})^2 \times (0, 1)
\end{equation}
and choose $\phi = \JacobiK(m)$, $\mathrm{sgn}(\omega) = \mathrm{sgn}(w_0)$ and $\mathrm{sgn}(A \omega) = - \mathrm{sgn}(v_0)$.

Once we have found an equation for $y(t)$, all the other unknown functions can be derived by integration. Indeed, it is shown in \cite[Section 4.5]{martinetagrachev} that
\begin{align}
  v(t) & = - A \omega \JacobiSN{\omega t + \phi}{m}\JacobiDN{\omega t + \phi}{m},                                                       \\
  x(t) & = \mathrm{sgn}(w_0)\left(- r t + \frac{2 r}{\omega} \Big(\JacobiE(\omega t + \phi \mid m) - \JacobiE(\phi\mid m)\Big) \right), \\
  z(t) & = \frac{1}{3 w_0} \left( r^2 t - u_0 x(t) - y(t) v(t) \right).
\end{align}
The geodesic flow is an analytic function of the initial data, and so it is enough to take the limit $w_0 \to 0$ (resp. $v_0 \to 0$) in the above equations to obtain the cases $w_0 = 0$ (resp. $v_0 = 0$).

\begin{remark}
  How the extremal $\lambda(t)$ behave at the limit $v_0 \to 0$ can be different depending on the sign of $u_0$ and $w_0$. Indeed, if the signs of $u_0$ and $w_0$ are such that $v_0 \to 0$ corresponds to taking $m \to 0$, then the elliptic functions reduce to standard trigonometric functions. If, however, they are such that $v_0 \to 0$ implies $m \to 1$, hyperbolic functions appear instead. This is the origin of the asymmetric cusp in Martinet spheres, see \cite[Fig.\ 13.18, 13.19]{nostrolibro}.
\end{remark}

\begin{definition}
  For $t \in [0,1]$, let $\exp_0^t:=\exp_0(t\cdot):T_0^*\R^3\to\R^3$ be the sub-Riemannian exponential map at time $t$. The Jacobian $\mathcal{J}^t$ is the is $3$-form on $T^*_0 \R^3$ obtained by pulling back the Lebesgue volume through $\exp_0^t$, i.e.
  \begin{equation}
    \mathcal{J}^{t}(\lambda_0) := \left.\left(\left(\exp_0^t\right)^*\di x\wedge\di y\wedge \di z\right)\right|_{\lambda_0}, \qquad \forall\, \lambda_0 \in T^*_0\R^3.
  \end{equation}
\end{definition}

The expression for $\J^t$ was obtained in \cite{martinetagrachev}, see especially \cite[Sec.\ 3.1]{martinetagrachev} and the computations in \cite[Sec.\ 4.6]{martinetagrachev} leading to the equations (4.19)--(4.22) there.

\begin{proposition}
  \label{JacobianDetMartinet}
  Let $\lambda_0 \in T^*_0\R^3$ with $m\in (0,1)$ and $\omega \neq 0$ (corresponding to $v_0 \neq 0$ and $w_0 \neq 0$). Then for all $t\in [0,1]$ we have
  \begin{equation}
    \label{detMartinet2}
    \J^t(\lambda_0) = -\frac{r^4 t}{\omega^2} \mathrm{sgn}(v_0) \J_R\left(|\omega| t, m\right) r \di r \wedge  \di \theta \wedge \di w|_{\lambda_0},
  \end{equation}
  where $\J_R$ is the \emph{reduced Jacobian}, defined by
  \begin{equation}
    \label{eq:JR}
    \J_R(\xi, m):=  \frac{\xi^2 c_1(\xi, m) + \xi c_2(\xi, m) + c_3(\xi, m)}{\JacobiDN(\xi \mid m)}
  \end{equation}
  where we set
  \begin{align}
    c_1(\xi) & := (1-m)\frac{\JacobiCN(\xi \mid m)}{\JacobiDN(\xi \mid m)},                                                                \\
    c_2(\xi) & := (1 - m)\JacobiSN(\xi \mid m) - 2 (1 - m) \JacobiE(\xi\mid m) \frac{\JacobiCN(\xi \mid m)}{\JacobiDN(\xi \mid m)},        \\
    c_3(\xi) & := \JacobiE(\xi \mid m)^2 \frac{\JacobiCN(\xi \mid m)}{\JacobiDN(\xi \mid m)} - \JacobiE(\xi \mid m) \JacobiSN(\xi \mid m).
  \end{align}
\end{proposition}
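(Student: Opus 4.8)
The plan is to prove this by direct differentiation of the explicit geodesic equations recalled above, rewriting the outcome in the stated ``reduced'' form; it is essentially a repackaging of the computation in \cite[Sec.\ 4.6]{martinetagrachev} leading to their equations (4.19)--(4.22). By definition $\mathcal{J}^t(\lambda_0)$ is the pullback of $\di x\wedge\di y\wedge\di z$ along $\lambda_0\mapsto(x(t),y(t),z(t))$, so in the fibre coordinates $(u_0,v_0,w_0)$ on $T^*_0\R^3$ one has $\mathcal{J}^t(\lambda_0)=\det\big(\partial(x(t),y(t),z(t))/\partial(u_0,v_0,w_0)\big)\,\di u_0\wedge\di v_0\wedge\di w_0$, and the change to cylindrical coordinates $(r,\theta,w_0)$ produces $\di u_0\wedge\di v_0=r\,\di r\wedge\di\theta$, which is the source of the trailing $r\,\di r\wedge\di\theta\wedge\di w$ in \eqref{detMartinet2}. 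Since $z(t)=\tfrac{1}{3w_0}\big(r^2t-u_0x(t)-y(t)v(t)\big)$ is an explicit function of $x(t),y(t),v(t)=\dot y(t)$ and of $u_0,w_0,r$, row operations on the $3\times 3$ Jacobian using the first two rows reduce the computation to an expression involving only $x(t),y(t),v(t)$ and elementary terms, carrying along an overall factor $1/w_0$.

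First I would record that, with the branch choice $\phi=\JacobiK(m)$, one has $\JacobiSN(\phi\mid m)=1$, $\JacobiCN(\phi\mid m)=0$, $\JacobiDN(\phi\mid m)=\sqrt{1-m}$, and use the quarter-period addition formulas (e.g.\ \cite[Ch.\ 16]{AS-Handbook}) to rewrite $\JacobiCN(\omega t+\phi\mid m)$, $\JacobiSN(\omega t+\phi\mid m)\JacobiDN(\omega t+\phi\mid m)$ and $\JacobiE(\omega t+\phi\mid m)-\JacobiE(\phi\mid m)$ purely in terms of Jacobi functions of the argument $\xi:=|\omega|t$; the parities of $\JacobiSN,\JacobiCN,\JacobiDN,\JacobiE$ then pin down the dependence on $\mathrm{sgn}(\omega)=\mathrm{sgn}(w_0)$. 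Next I would apply the chain rule through the diffeomorphism $(r,\theta,w_0)\mapsto(A,\omega,m)$ of \eqref{eq:correspondence}, whose Jacobian is elementary from $\omega^2=r|w_0|$, $m=\tfrac12(1-\mathrm{sgn}(w_0)\cos\theta)$ and $A^2\omega^2=4mr^2$, and differentiate $y(t)=A\JacobiCN(\omega t+\phi\mid m)$, $v(t)=-A\omega\JacobiSN(\omega t+\phi\mid m)\JacobiDN(\omega t+\phi\mid m)$ and $x(t)$ with respect to $A,\omega,m$, using $\partial_\xi\JacobiSN=\JacobiCN\JacobiDN$, $\partial_\xi\JacobiCN=-\JacobiSN\JacobiDN$, $\partial_\xi\JacobiDN=-m\JacobiSN\JacobiCN$, $\partial_\xi\JacobiE(\xi\mid m)=\JacobiDN^2(\xi\mid m)$, the identities $\JacobiSN^2+\JacobiCN^2=1$ and $\JacobiDN^2+m\JacobiSN^2=1$, and the standard expressions for the $m$-derivatives of $\JacobiSN,\JacobiCN,\JacobiDN,\JacobiE,\JacobiK$. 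The key structural point is that $\omega$ enters both as a prefactor and inside the argument $\omega t+\phi$; differentiating the latter yields factors $t\,(\cdot)'$, and this is exactly what produces the polynomial $\xi^2c_1(\xi)+\xi c_2(\xi)+c_3(\xi)$ in $\xi=|\omega|t$. Collecting powers of $r,\omega,A$, eliminating $A$ via $A^2\omega^2=4mr^2$, and tracking all signs down to the single factor $-\mathrm{sgn}(v_0)$ (recall $\mathrm{sgn}(A\omega)=-\mathrm{sgn}(v_0)$), the remaining quotient should reduce exactly to $\mathcal{J}_R(|\omega|t,m)=\big(\xi^2c_1+\xi c_2+c_3\big)/\JacobiDN(\xi\mid m)$ with $c_1,c_2,c_3$ as stated.

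The hard part is purely computational: the determinant expansion yields a long sum of products of Jacobi functions, their $\xi$- and $m$-derivatives, and the Jacobi epsilon function, and the claimed identity only emerges after repeated use of the elliptic identities above and very careful bookkeeping of the branch-dependent signs (from $\mathrm{sgn}(\omega)=\mathrm{sgn}(w_0)$, $\mathrm{sgn}(A\omega)=-\mathrm{sgn}(v_0)$ and the quarter-period shifts) together with the cancellation of the apparent $1/w_0$ and $1/\JacobiDN$ singularities. As a sanity check I would specialize to the limiting regimes $w_0\to0$ and $v_0\to0$ (i.e.\ $m\to0$ or $m\to1$), where the elliptic functions degenerate to trigonometric, resp.\ hyperbolic, ones, and verify that \eqref{detMartinet2} reproduces the corresponding Jacobians, consistent with the asymmetric cusp of the Martinet spheres mentioned in the preceding remark. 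Alternatively, one may simply cite equations (4.19)--(4.22) of \cite{martinetagrachev} and carry out the elementary algebra that rewrites them in the compact form \eqref{detMartinet2}--\eqref{eq:JR}.
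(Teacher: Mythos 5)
Your proposal is correct and matches the paper's approach: the paper gives no explicit proof of this proposition, instead citing \cite{martinetagrachev} (Sec.\ 3.1, Sec.\ 4.6, eqs.\ (4.19)--(4.22)), which is exactly the ``alternatively, one may simply cite\dots'' route you offer at the end. The detailed determinant computation you sketch --- cylindrical coordinates producing $r\,\di r\wedge\di\theta$, the chain rule through $(r,\theta,w_0)\mapsto(A,\omega,m)$, the quarter-period shifts and the elliptic-function identities --- is precisely the content outsourced to the reference, and your outline is a reasonable blueprint for a self-contained verification, though the paper itself records none of it.
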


The next statement combines the results from \cite[Thms.\ 4.7, 4.10]{martinetagrachev}.

\begin{theorem}
  \label{theorem:cutlocusMartinet}
  Let $\lambda_0 \in T^*_0\R^3$ with $m\in (0,1)$ and $\omega \neq 0$ (corresponding to $v_0 \neq 0$ and $w_0 \neq 0$). The first conjugate time along the curve $\gamma(t)=\exp_0(t\lambda_0)$, $t\in\R$, denoted by $t_{\mathrm{conj}}(\lambda_0)$, satisfies
  \begin{equation}
    2 \EllipticK(m) < |\omega| t_{\mathrm{conj}}(\lambda_0) < 3 \EllipticK(m),
  \end{equation}
  and $\mathcal{J}_R(|\omega| t, m) < 0$ for all $t \in (0, t_{\mathrm{conj}}(\lambda_0))$. Furthermore, the cut time is given by
  \begin{equation}
    t_{\mathrm{cut}}(\lambda_0) = \frac{2 \EllipticK(m)}{|\omega|},
  \end{equation}
  corresponding to the first time $\gamma$ intersects the Martinet surface $\{y = 0\}$.
\end{theorem}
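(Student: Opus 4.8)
The plan is to read off $t_{\mathrm{conj}}(\lambda_0)$ and $t_{\mathrm{cut}}(\lambda_0)$ directly from the explicit geodesic flow, using \cref{JacobianDetMartinet} for the conjugate time and the reflection symmetry of $\Mar$ for the cut time. Throughout, $\lambda_0$ is as in the statement, so $m\in(0,1)$, $\omega\neq 0$, and I write $\bar t:=2\EllipticK(m)/|\omega|$. For the conjugate time, note that $t_{\mathrm{conj}}(\lambda_0)$ is the least $t>0$ at which $d(\exp_0^t)|_{\lambda_0}$ degenerates, equivalently (both spaces being $3$-dimensional) the least $t>0$ with $\mathcal{J}^t(\lambda_0)=0$. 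Since $r>0$, $t>0$ and $\mathrm{sgn}(v_0)\neq 0$, \cref{JacobianDetMartinet} gives $\mathcal{J}^t(\lambda_0)=0\iff \mathcal{J}_R(|\omega|t,m)=0$, so $|\omega|\,t_{\mathrm{conj}}(\lambda_0)=\xi_*(m)$, where $\xi_*(m)>0$ is the first positive zero of $\xi\mapsto\mathcal{J}_R(\xi,m)$. To locate $\xi_*(m)$ I would evaluate $\mathcal{J}_R$ at $\xi=2\EllipticK(m)$ and $\xi=3\EllipticK(m)$; as $\JacobiDN(\xi\mid m)>0$, the sign of $\mathcal{J}_R$ is that of the numerator $P(\xi,m):=\xi^2 c_1(\xi)+\xi c_2(\xi)+c_3(\xi)$ in \eqref{eq:JR}. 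Using $\JacobiSN(2\EllipticK\mid m)=0$, $\JacobiCN(2\EllipticK\mid m)=-1$, $\JacobiDN(2\EllipticK\mid m)=1$, $\JacobiE(2\EllipticK\mid m)=2\EllipticE(m)$ one finds
\begin{equation}
  P(2\EllipticK(m),m)=-4\big[(1-m)\EllipticK(m)^2-2(1-m)\EllipticK(m)\EllipticE(m)+\EllipticE(m)^2\big],
\end{equation}
whose bracket is a quadratic in $\EllipticK(m)$ with positive leading coefficient $1-m$ and discriminant $-4m(1-m)\EllipticE(m)^2<0$, hence $P(2\EllipticK(m),m)<0$; similarly, from $\JacobiSN(3\EllipticK\mid m)=-1$, $\JacobiCN(3\EllipticK\mid m)=0$, $\JacobiDN(3\EllipticK\mid m)=\sqrt{1-m}$, $\JacobiE(3\EllipticK\mid m)=3\EllipticE(m)$,
\begin{equation}
  P(3\EllipticK(m),m)=3\big[\EllipticE(m)-(1-m)\EllipticK(m)\big]>0,
\end{equation}
the positivity because $h(m):=\EllipticE(m)-(1-m)\EllipticK(m)$ vanishes at $m=0$ and satisfies $h'(m)=\tfrac12\EllipticK(m)>0$ by the standard derivative formulas for the complete elliptic integrals. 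Thus $\mathcal{J}_R(\cdot,m)$ changes sign in $(2\EllipticK(m),3\EllipticK(m))$, giving $\xi_*(m)<3\EllipticK(m)$. For the lower bound $\xi_*(m)>2\EllipticK(m)$ one must show $\mathcal{J}_R(\cdot,m)<0$ on all of $(0,2\EllipticK(m)]$; this is the analytic core of \cite[Thm.\ 4.7]{martinetagrachev}, a monotonicity and sign study of $P(\xi,m)$ in the Jacobi functions, which I would import. Combining, $|\omega|\,t_{\mathrm{conj}}(\lambda_0)=\xi_*(m)\in(2\EllipticK(m),3\EllipticK(m))$ and $\mathcal{J}_R(|\omega|t,m)<0$ for $t\in(0,t_{\mathrm{conj}}(\lambda_0))$.

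For the cut time, the key observation is that $\sigma:\R^3\to\R^3$, $\sigma(x,y,z):=(x,-y,z)$, sends the orthonormal frame $\{\pi_*X_1,\pi_*X_2\}=\{\partial_y,\partial_x+\tfrac{y^2}{2}\partial_z\}$ to $\{-\partial_y,\partial_x+\tfrac{y^2}{2}\partial_z\}$, hence is a metric measure isometry of $\Mar$ fixing the origin and fixing the Martinet surface $\{y=0\}$ pointwise. From the explicit solution $y(t)=A\,\JacobiCN(\omega t+\EllipticK(m)\mid m)$ with $A\neq 0$ (as $v_0\neq 0$ forces $A^2\omega^2=4mr^2\neq 0$ by \eqref{eq:correspondence}), and since the zeros of $\JacobiCN(\cdot\mid m)$ are the odd multiples of $\EllipticK(m)$, the first positive time with $y(t)=0$ is exactly $\bar t=2\EllipticK(m)/|\omega|$, where moreover $\dot y(\bar t)\neq 0$. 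Consequently $\gamma|_{[0,\bar t]}$ and $\sigma\circ\gamma|_{[0,\bar t]}$ are two distinct geodesics from the origin to $\gamma(\bar t)\in\{y=0\}$ of equal length, so by the standard fact that the existence of two distinct minimizers between $\gamma(0)$ and $\gamma(\bar t)$ prevents $\gamma$ from being minimizing past $\bar t$, we obtain $t_{\mathrm{cut}}(\lambda_0)\le\bar t$. For the reverse inequality one shows $\gamma|_{[0,t]}$ is minimizing for $t\le\bar t$: since $\bar t<t_{\mathrm{conj}}(\lambda_0)$ by the previous paragraph, $\gamma|_{[0,\bar t]}$ has no conjugate point and is a strict weak local minimizer, and global minimality up to $\bar t$ is established in \cite[Thm.\ 4.10]{martinetagrachev} through a covering/injectivity analysis of the time-$t$ exponential map on $\{|\omega|t<2\EllipticK(m)\}$ together with the existence of minimizers. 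Hence $t_{\mathrm{cut}}(\lambda_0)=\bar t=2\EllipticK(m)/|\omega|$, attained precisely at the first intersection of $\gamma$ with $\{y=0\}$.

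The two genuinely nontrivial inputs are the sign analysis of $\mathcal{J}_R(\cdot,m)$ on $(0,2\EllipticK(m)]$ — which simultaneously yields $|\omega|t_{\mathrm{conj}}>2\EllipticK(m)$ and the negativity of $\mathcal{J}_R$ below the conjugate time — and the proof that $\gamma|_{[0,\bar t]}$ is globally minimizing; both are carried out in \cite[Thms.\ 4.7, 4.10]{martinetagrachev}, and the role of the present statement is to repackage them in the coordinates and parametrization fixed by \eqref{eq:correspondence}. Everything else — the reduction of $t_{\mathrm{conj}}$ to zeros of $\mathcal{J}_R$, the two endpoint evaluations above, and the reflection argument bounding $t_{\mathrm{cut}}$ from above — is elementary, so the main obstacle is simply to transcribe faithfully the delicate elliptic-function estimates of \cite{martinetagrachev}, checking that the sign conventions for $\omega$, $A$ and the branch $\phi=\EllipticK(m)$ are consistent with \eqref{eq:correspondence}.
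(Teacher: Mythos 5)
Your proposal is correct and takes the same route as the paper, which states this theorem purely as a combination of \cite[Thms.\ 4.7, 4.10]{martinetagrachev} without providing an independent proof. Your explicit endpoint evaluations of $\mathcal{J}_R$ at $\xi=2\EllipticK(m)$ and $\xi=3\EllipticK(m)$ are a useful consistency check of the sign conventions fixed by \eqref{eq:correspondence}, but, as you correctly identify, the two analytic cores — the negativity of $\mathcal{J}_R(\cdot,m)$ on $(0,2\EllipticK(m)]$ and the global minimality of $\gamma|_{[0,\bar t]}$ — are imported from the cited reference, just as in the paper.
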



\begin{proposition}
  \label{diffcharMCP}
  Should the Martinet structure satisfies the $\MCP(0, N)$ condition for some $N \in [1,\infty)$, the inequality
    \begin{equation}
        \label{MCPJacobian}
        |\J_R(\omega t, m)| \geq t^{N - 1} |\J_R(\omega, m)|
      \end{equation}
  will hold for all $t \in [0,1]$, all $m \in (0, 1)$ and all $\omega \in (0, 2 \EllipticK(m))$.
\end{proposition}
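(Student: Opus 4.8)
The plan is to extract the inequality \eqref{MCPJacobian} directly from the $\MCP(0,N)$ inequality \eqref{eq:O-MCP} (with $K=0$, so that $\tau_{0,N}^{1-t}(\theta)=(1-t)$) by testing it against carefully chosen small sets $A$ shrinking toward a fixed point along a geodesic, and reading off the resulting volume-contraction estimate via the Jacobian formula of \cref{JacobianDetMartinet}. Concretely, fix $m\in(0,1)$ and $\omega\in(0,2\EllipticK(m))$. By \cref{theorem:cutlocusMartinet}, the corresponding covector $\lambda_0\in T_0^*\R^3$ has cut time $t_{\mathrm{cut}}(\lambda_0)=2\EllipticK(m)/|\omega|>1$, so the geodesic $\gamma(t)=\exp_0(t\lambda_0)$ is the \emph{unique} minimizer from $0$ to $\gamma(1)$ and remains minimizing (and conjugate-point-free, again by \cref{theorem:cutlocusMartinet}) on $[0,1]$. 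Set $o:=\gamma(1)$. Because the exponential map is a local diffeomorphism near $\lambda_0$ on the no-conjugate-point region, there is a neighbourhood $V\subset T_0^*\R^3$ of $\lambda_0$ such that $\exp_0^1|_V$ is a diffeomorphism onto a neighbourhood $A$ of $o$; the dilation structure then lets us reparametrise so that the unique geodesic from a point $\gamma_{\lambda}(1)\in A$ back to... — more carefully, one should run the argument backward from $o$, as is standard for $\MCP$: consider the measures $\mu_0=\tfrac1{\mm(A)}\mm|_A$ for $A$ a small neighbourhood of $o$ and transport to $\delta_{x}$ where $x=0$ is reached along these geodesics, so that $Z_t(x,A)$-type intermediate sets are parametrised by $\exp_0^{t}$ of a piece of phase space.

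The key computational step is then: the $\MCP(0,N)$ condition, rewritten via \eqref{eq:MCPtovolumeineq}-type reasoning on the image of $\exp_0$, forces
\begin{equation*}
  \mm\big(\exp_0^t(W)\big)\ \geq\ t^{N}\,\mm\big(\exp_0^1(W)\big)
\end{equation*}
for $W$ ranging over small neighbourhoods of $\lambda_0$ in the conjugate-point-free region (using uniqueness of minimizers to identify the optimal dynamical plan), and by the change of variables of \cref{JacobianDetMartinet},
\begin{equation*}
  \mm\big(\exp_0^t(W)\big)=\int_W |\J^t(\lambda_0')|\,d\lambda_0'.
\end{equation*}
Shrinking $W$ to $\lambda_0$ and using continuity of $\J^t$ (and $\J^1$) gives the pointwise bound $|\J^t(\lambda_0)|\geq t^N|\J^1(\lambda_0)|$. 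Finally, plug in the explicit formula \eqref{detMartinet2}: $\J^t(\lambda_0)=-\tfrac{r^4 t}{\omega^2}\mathrm{sgn}(v_0)\,\J_R(|\omega|t,m)\,r\,dr\wedge d\theta\wedge dw$. Taking absolute values, the prefactors $r^4/\omega^2$, $r$, $\mathrm{sgn}(v_0)$ cancel between numerator and denominator, and one linear factor of $t$ survives on the left, yielding $t\,|\J_R(|\omega|t,m)|\geq t^N\cdot 1\cdot|\J_R(|\omega|,m)|$, i.e. exactly \eqref{MCPJacobian} after dividing by $t$ (for $t>0$; the case $t=0$ is trivial). Since $\omega>0$ here, $|\omega|=\omega$.

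The main obstacle I anticipate is the bookkeeping in Step 2: one must justify rigorously that the optimal dynamical plan realising the $\MCP$ inequality for these particular $\mu_0,\delta_x$ is the one supported on the family of geodesics $\{\gamma_{\lambda_0'}\}$, and that the intermediate-time push-forward $(e_t)_\sharp\nu$ is, up to the known Jacobian, the pushforward of $\mu_0$ by the map $\exp_0^{1}(\lambda_0')\mapsto\exp_0^{t}(\lambda_0')$. This requires: (i) uniqueness of minimizers on $[0,1]$, which is guaranteed since $1<t_{\mathrm{cut}}(\lambda_0)$ and, by openness of this condition, for all $\lambda_0'$ near $\lambda_0$; (ii) that these geodesics have no conjugate points on $[0,1]$, so $\exp_0^t$ is a local diffeo and the Jacobian does not vanish — exactly the content $\J_R(|\omega|t,m)<0$ on $(0,t_{\mathrm{conj}})$ from \cref{theorem:cutlocusMartinet}; and (iii) absolute continuity of the intermediate measures, which the $\MCP$ inequality itself provides. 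Once these are in place the rest is the elementary cancellation above. A secondary (purely technical) point is handling the measure: the Lebesgue measure in the $\phi$-coordinates is the chosen right-invariant measure on $\Mar$, and the $\MCP$ is invariant under the metric-measure structure, so we may compute everything in $\R^3$ with Lebesgue measure and the stated $\J^t$.
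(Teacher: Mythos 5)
Your proposal is correct and follows essentially the same route as the paper's proof: both test the $\MCP(0,N)$ inequality in its volume-contraction form \eqref{eq:MCPtovolumeineq} against shrinking neighbourhoods $B_\varepsilon$ of $\exp_0(\lambda_0)$, use the cut-time and conjugate-time bounds from \cref{theorem:cutlocusMartinet} to identify $Z_t(0,B_\varepsilon)$ with $\exp_0^t$ of the corresponding set in $T_0^*\R^3$, pass to the limit $\varepsilon\to 0$ via continuity of the Jacobian, and cancel the $-\tfrac{r^5 t}{\omega^2}\mathrm{sgn}(v_0)$ prefactor in \eqref{detMartinet2} to extract the reduced inequality $t\,|\J_R(\omega t,m)|\geq t^N|\J_R(\omega,m)|$. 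The brief tangent at the start of your second paragraph (about dilations and running the argument ``backward from $o$'') is a red herring which you correctly abandon; everything after that matches the paper's argument.
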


\begin{proof}
  By \cref{theorem:cutlocusMartinet} if $\lambda_0\in T^*_0\R^3$ is such that $m\in (0,1)$ and $\omega \in (0, 2 \EllipticK(m))$, then $\gamma:[0,1]\to \R^3$ defined by $\exp_0(t\lambda_0)$ is a geodesic, $\lambda_0$ is not a critical point of $\exp_0$, and $\exp_0(\lambda_0)\notin \Cut(0)$. Then, there is a neighborhood $\mathcal{U}$ of $\lambda_0$ such that $\exp_0 : \mathcal{U} \to \exp_0(\mathcal{U})$ is a diffeomorphism, and such that for all $\lambda_0' \in \mathcal{U}$, the geodesic $\gamma(t) := \exp_0(t \lambda_0')$ is the unique geodesic joining $0$ with $\exp_0(\lambda_0')$ and $\exp_0(\lambda_0')$ is not conjugate to $0$ along $\gamma$. Assume that $\varepsilon > 0$ is sufficiently small such that $B_\varepsilon(\exp_0(\lambda_0)) \subseteq \exp_0(\mathcal{U})$ and let $A_{\varepsilon}$ be such that $\exp_0(A_\varepsilon) = B_\varepsilon(\exp_0(\lambda_0))$. In particular by the $\MCP(0, N)$ and \cref{eq:MCPtovolumeineq} in particular, we have that for all $t\in  [0,1]$
  \begin{equation}
    \label{eq:MCPtoJac}
    \left\lvert\frac{\mathcal{J}^t(\lambda_0)}{\mathcal{J}^1(\lambda_0)}\right\rvert = \lim_{\varepsilon\to 0} \frac{\int_{A_{\varepsilon}} \left.\left(\left(\exp_0^t\right)^*\di x \wedge\di y\wedge \di z\right)\right|_{\lambda_0}}{\int_{A_{\varepsilon}} \left.\left(\left(\exp_0^1\right)^*\di x \wedge\di y\wedge \di z\right)\right|_{\lambda_0}}  = \lim_{\varepsilon\to 0}\frac{\mathscr{L}(Z_t(0, B_{\varepsilon}(\exp_0(\lambda_0))))}{\mathscr{L}(B_{\varepsilon}(\exp_0(\lambda_0)))} \geq t^N,
  \end{equation}
  where $\mathscr{L}$ is the Lebesgue measure on $T_0^*\R^3\simeq \R^3$. We deduce \eqref{MCPJacobian} with the expression \eqref{detMartinet2}.
\end{proof}

The failure of the measure contraction property can finally be obtained by showing that \eqref{MCPJacobian}
does not hold in the Martinet structure.

\begin{theorem}[Failure of the $\MCP$ for the Martinet structure]\label{thm:Martinet-noMCP}
  The Martinet structure does not satisfy the $\MCP(K, N)$ for any $K \in \mathbb{R}$ and any $N \in [1,\infty)$.
\end{theorem}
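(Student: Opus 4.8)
The plan is to combine the self‑similar (dilation) structure of $\Mar$ as a Carnot homogeneous space with the differential obstruction of \cref{diffcharMCP}, and then to violate the reduced‑Jacobian inequality \eqref{MCPJacobian} by a limiting analysis in the abnormal regime $m\to1^-$.

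\emph{Reducing to $\MCP(0,N)$ and to the reduced Jacobian.} The case $K>0$ is immediate: $\MCP(K,N)$ with $K>0$ would force $\Mar$ to be compact, whereas $\Mar$ is unbounded, carrying nontrivial dilations (note moreover that $N$ must dominate the Hausdorff dimension of $\Mar$, which is $5$, so $N>1$). Suppose then $\MCP(K,N)$ holds with $K<0$. Since $\Mar$ is a Carnot homogeneous space, its dilations $\delta_\lambda$ ($\lambda>0$) scale $\sfd_{\Mar}$ by $\lambda$ and $\mm_{\Mar}$ by a fixed power of $\lambda$, the latter cancelling in the scale‑invariant $\MCP$ inequality; pushing that inequality forward through $\delta_\lambda$ turns the parameter $K$ into $\lambda^{-2}K$, so that $\MCP(K',N)$ holds for every $K'<0$. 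As $\tau^{t}_{K',N}(\theta)\to\tau^{t}_{0,N}(\theta)=t$ when $K'\to0^-$ and the relevant families of optimal geodesic plans are narrowly precompact, a routine stability argument upgrades this to $\MCP(0,N)$. Hence it suffices to prove that $\Mar$ satisfies no $\MCP(0,N)$, and by \cref{diffcharMCP} it is enough to show that no exponent $N\in[1,\infty)$ can make the inequality $|\J_R(\omega t,m)|\ge t^{N-1}|\J_R(\omega,m)|$ of \eqref{MCPJacobian} hold for all $t\in[0,1]$, $m\in(0,1)$ and $\omega\in(0,2\EllipticK(m))$ (by \cref{theorem:cutlocusMartinet}, $\J_R(\cdot,m)<0$ on this whole interval, so only magnitudes matter).

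\emph{The abnormal limit.} I would examine $\J_R(\xi,m)$ as $m\to1^-$, that is, along covectors tending to the abnormal direction of $\Mar$ — precisely the regime that produces the asymmetric cusp of the Martinet sphere and the failure of Lipschitz regularity of $\sfd_{\Mar}$. Since $\EllipticK(m)\to+\infty$, the admissible range of $\xi=\omega t$ opens up to all of $(0,\infty)$. Substituting the classical expansions of $\JacobiSN,\JacobiCN,\JacobiDN,\JacobiE$ near $m=1$ (they tend, uniformly on compacta, to $\tanh,\sech,\sech,\tanh$, with corrections in powers of $1-m$, see \cite{AS-Handbook}) into the explicit formula \eqref{eq:JR}, one extracts an asymptotics
\[
\J_R(\xi,m)=(1-m)^{k}\,G(\xi)+o\bigl((1-m)^{k}\bigr)\qquad(m\to1^-),
\]
uniform for $\xi$ in compact sets, for some integer $k\ge1$ and some analytic $G<0$. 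The decisive feature is that $G$ inherits the factor $1/\JacobiDN(\xi\mid1)=\cosh\xi$ from the denominator of \eqref{eq:JR}, so $|G(\xi)|$ grows \emph{exponentially}, of order $\xi^{a}e^{b\xi}$ with $b>0$, as $\xi\to\infty$. Hence, for any fixed $t\in(0,1)$,
\[
\lim_{m\to1^-}\frac{|\J_R(\omega t,m)|}{|\J_R(\omega,m)|}=\frac{|G(\omega t)|}{|G(\omega)|}
\]
behaves like $t^{a}e^{-b(1-t)\omega}$ for large $\omega$, and so tends to $0$ as $\omega\to\infty$. Given $N$, fix $t=\tfrac12$, pick $\omega$ so large that $|G(\omega t)|/|G(\omega)|<t^{N-1}$ (with $G(\omega)\neq0$, possible since $G$ has isolated zeros), and then $m$ close enough to $1$ that $2\EllipticK(m)>\omega$ and $|\J_R(\omega t,m)|/|\J_R(\omega,m)|<t^{N-1}$; this contradicts \eqref{MCPJacobian}. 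As $N$ was arbitrary, $\Mar$ fails $\MCP(0,N)$ for every $N$, hence fails $\MCP(K,N)$ for all $K\in\R$ and $N\in[1,\infty)$.

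\emph{Main obstacle.} The crux is the asymptotic analysis of the abnormal limit: pinning down the order $k$ in $1-m$ and confirming the exponential growth of $G$ with the correct sign. Low‑order contributions largely cancel — already the numerator of \eqref{eq:JR} vanishes to high order in $\xi$ at $\xi=0$ — so the computation must be carried out carefully, tracking in particular the $m$‑expansion of $\JacobiE$; here the explicit form \eqref{eq:JR} from \cite{martinetagrachev} and precise Jacobi‑function expansions are indispensable. Sign bookkeeping can nonetheless be avoided: it is enough that $|\J_R(\xi,m)|/(1-m)^{k}$ converges as $m\to1^-$ to a nonzero limit growing exponentially in $\xi$, and this is forced by the $\cosh\xi$ factor irrespective of the behaviour of the polynomial part.
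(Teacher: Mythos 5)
Your proposal is correct and follows essentially the same route as the paper: reduce to $\MCP(0,N)$ via dilations, invoke \cref{diffcharMCP}, and disprove \eqref{MCPJacobian} by expanding $\J_R$ near the abnormal limit $m\to 1^-$ and letting $\omega\to\infty$. The paper carries out the expansion explicitly, obtaining $\J_R(\xi,m)=\tfrac12(m-1)\cosh(\xi)\bigl[\sinh^2\xi+\xi(\tanh\xi-2\xi)\bigr]+O\bigl((m-1)^2\bigr)$, which confirms your predicted order $k=1$ and the exponential growth coming from the $\cosh\xi$ factor.
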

\begin{proof}
  Since the Martinet structure admits dilations, it is enough to disprove $\MCP(0, N)$. We show that we get a contradiction in \eqref{MCPJacobian} by writing an expansion of $\J_R$ around $m = 1$ and letting $\omega$ tend to $+\infty$. The following expansion can be found in \cite[16.15.1--3]{AS-Handbook}:
      \begin{align}
        \JacobiSN(\xi \mid m) & = \tanh(\xi) + \frac{1}{4} (m - 1) \left(\sinh(\xi) \cosh(\xi) - \xi\right) \sech(\xi)^2 + O\big((m - 1)^2\big) ;                        \\
        \JacobiCN(\xi \mid m) & = \operatorname{sech}(\xi) - \frac{1}{4} (m - 1) (\sinh(\xi) \cosh(\xi) - \xi) \tanh(\xi) \sech(\xi) + O\big((m - 1)^2\big) ; \\
        \JacobiDN(\xi \mid m) & = \sech(\xi) + \frac{1}{4} (m - 1) (\sinh(\xi) \cosh(\xi) + \xi) \tanh(\xi) \sech(\xi) + O\big((m - 1)^2\big).
      \end{align}
      Recalling from \cite[17.2.10]{AS-Handbook} that $\JacobiE(\xi \mid m) := \int_0^\xi \JacobiDN(u \mid m)^2 \di u$, we also obtain
      \[
        \JacobiE(\xi \mid m) = \tanh(\xi) - \frac{1}{4} (m - 1) \Big[\xi + \tanh(\xi) (\xi \tanh(\xi) - 1)\Big] + O\big((m - 1)^2\big).
      \]
      Plugging these formulas into \eqref{eq:JR} gives
      \[
        \J_R(\xi, m) := \frac{1}{2} (m - 1) \cosh(\xi) \Big[ \sinh(\xi)^2 + \xi \left( \tanh(\xi) - 2 \xi \right) \Big] + O\big((m - 1)^2\big).
      \]
      Therefore, we get, for all $t\in (0, 1)$, that
      \begin{equation}
        \label{eq:ratioJac0}
        \lim_{\omega \to +\infty} \lim_{m \to 1^{-}} \left\lvert\frac{\J_R(\omega t, m)}{\J_R(\omega, m)}\right\rvert = \lim_{\omega \to \infty} \frac{\cosh(\omega t)}{\cosh(\omega)} \frac{\sinh^2(\omega t) + \omega t \left( \tanh(\omega t) - 2 \omega t  \right) }{ \sinh^2(\omega) + \omega \left( \tanh(\omega) - 2 \omega \right)}
        = 0.
      \end{equation}
      This implies by \cref{diffcharMCP} that the Martinet structure does not satisfy the $\MCP(0, N)$.
\end{proof}
\begin{remark}[Violation of weaker conditions]
  \label{remark:noQCDinMar}
  Suppose that there is a function $f : [0, 1] \to [0, +\infty)$ such that for all $q \in \Mar$ and all Borel set $A \subset \Mar$, with $0<\mm(A)<\infty$ it holds
  \begin{equation}
    \label{eq:moregeneralthanMCP}
    \mm(Z_t(q,A)) \geq f(t) \mm(A),\qquad \forall\, t\in [0, 1].
  \end{equation}
The same argument leading to \eqref{eq:MCPtoJac} yields that for all $t\in [0, 1]$, it holds
  \[
    t |\J_R(\omega t, m)| \geq f(t) |\J_R(\omega, m)|,\qquad \forall\, m \in (0, 1)\, , \ \forall\, \omega \in (0, 2 \EllipticK(m)).
  \]
  The computation \eqref{eq:ratioJac0} shows that $f(t)\equiv 0$ for all $t\in [0,1)$. The Martinet structure is therefore \emph{not} qualitatively non-degenerate in the sense of \cite[Assumption 1]{CavallettiHuesmann2015}.
\end{remark}
\section{Variants and related curvature conditions}\label{sec:variants}

In this paper, we work with Ohta's $\MCP$, the weakest among the variants of the measure contraction property. A variant in terms of the Rényi entropy was used by Cavalletti and Mondino in \cite{CM-OptMaps} (see also \cite[Def.\ 6.8]{CM-globalization}, where it is denoted by $\MCP_{\varepsilon}$).
One can deduce that $\MCP_{\varepsilon}$ implies Ohta's $\MCP$, see \cite[Lemma 6.13]{CM-globalization}. Since we do not assume the essentially non-branching condition, the converse implication might not be true.

Furthermore, for $Q \in [1,\infty)$, one can define a relaxed version of the $\MCP$ (resp.\ $\MCP_{\varepsilon}$) by multiplying the inequality \eqref{eq:O-MCP} in \cref{def:MCP} (resp.\ \cite[(6.4) in Def.\ 6.8]{CM-globalization}) by a factor $1/Q$. The case $Q = 1$ corresponds to Ohta's $\MCP$ and the $\MCP_{\varepsilon}$ variant.

With minor modifications in the proofs, all the results concerning stability under quotients that we proved for $\MCP$ also hold for the $Q$-relaxed variants and the corresponding $\MCP_{\varepsilon}$. Arguing without loss of generality in the case $K = 0$, any such variant easily implies that
\begin{equation}\label{eq:false}
    \mm(Z_t(x,A))\geq \frac{t^N}{Q} \mm(A),\qquad\forall \,t\in [0,1],
\end{equation}
for the $t$-intermediate set  between $x\in X$ and a Borel set $A$ with $0<\mm(A)<+\infty$. Condition \eqref{eq:false} cannot be satisfied by the Martinet structure; see \cref{remark:noQCDinMar}.

We conclude that any sub-Riemannian metric measure space which, as a consequence of \cref{thm:noMCP-intro}, does not satisfy the $\MCP$, also fails to satisfy the $\MCP_{\varepsilon}$ and the corresponding $Q$-relaxed variants. This includes, of course, all red Carnot groups appearing in \cref{tab:cornucopia}.

Finally, one can verify with an approximation argument that the \emph{Quasi Curvature-Dimension} condition $\mathrm{QCD}(Q,K,N)$ introduced in \cite{M-QCD}, for $Q \in [1,\infty)$, implies the $Q$-relaxed version of the $\MCP_{\varepsilon}(K,N)$ mentioned above. Thus, again under the conditions of \cref{thm:noMCP-intro}, Milman's $\mathrm{QCD}$ condition cannot be satisfied either.

\appendix
\crefalias{section}{appendix}
\section{Sub-Riemannian geometry}\label{a:SR}

We collect here basic facts and notations in sub-Riemannian geometry used in this paper. For a comprehensive introduction, we refer to \cite{nostrolibro,riffordbook,montgomerybook}.

\begin{itemize}[left=0em]
\item A sub-Rieman\-nian structure on a smooth, connected $n$-dimensional manifold $M$, where $n\geq 2$, is defined by a set of $L\geq 2$ global smooth vector fields $X_{1},\ldots,X_{L} \in \mathrm{Vec}(M)$, called a \emph{generating family}. We assume the \emph{bracket-generating} condition, i.e., the vector fields $X_{1},\ldots,X_{L}$ and their iterated Lie brackets at $x$ generate the tangent space $T_x M$, for all $x\in M$.
\item The (generalized) \emph{distribution} is the disjoint union $\distr = \bigsqcup_{x\in M} \distr_x$, where 
\begin{equation}
\distr_{x}:=\mathrm{span}\{X_{1}(x),\ldots,X_{L}(x)\}\subseteq T_{x}M,\qquad \forall\, x\in M.
\end{equation}
Notice that $\distr$ is a vector bundle if and only if $\dim\distr_x$, called the \emph{rank at $x$}, does not depend on $x$. In this case we say that the sub-Riemannian structure has constant rank.

\item The generating family induces an inner product $g_{x}$ on $\distr_{x}$ given by:
\begin{equation}
g_{x}(v,v):=\inf\left\{\sum_{i=1}^{L}u_{i}^{2}\,\Big|\,  v=\sum_{i=1}^{L}u_{i}X_{i}(x)\right\},\qquad \forall\, v\in \distr_x.
\end{equation}

\item For $i\geq 1$, we define the \emph{iterated distributions} $\distr^i=\bigsqcup_{x\in M}\distr^i_x$, where:
\[
\distr^{i}_{x}:=\mathrm{span}\{[X_{i_{1}},[\ldots,[X_{i_{j-1}},X_{i_{j}}]]\mid i_{k}\in \{1,\ldots,L\},\; j\leq i\}.
\]
The \emph{step} of the distribution at $x$ is the minimal $s=s(x)$ such that $\distr^{s}_{x}=T_{x}M$. The distribution is \emph{equiregular} at $x$ if $\distr^i_x$ is locally constant in a neighborhood of $x$, for all $j\geq 1$. We also say that $x$ is a \emph{regular point}.

\item A \emph{horizontal curve} is an absolutely continuous (in charts) map $\gamma : [0,1] \to M$ such that there exists $u\in L^{2}([0,1],
\R^{L})$, called \emph{control}, satisfying
\begin{equation}\label{eq:admissible}
\dot\gamma_t =  \sum_{i=1}^L u_i(t) X_i(\gamma_t), \qquad \mathrm{a.e. }\; t \in [0,1].
\end{equation}
The class of horizontal curves depends on the family $\mathscr{F}=\{X_1,\dots,X_L\}$ only through the $C^{\infty}(M)$-module of vector fields generated by $\mathscr{F}$.

\item For any control $u\in L^2([0,1],\R^L)$, let $X_{u(t)}:=\sum_{i=1}^L u_i(t)X_i$ be a time-dependent vector field. Denote by $P_{t_0,t_1}^{u}:M\to M$ the flow of $X_u$ with initial datum at time $t_0$ and final time $t_1$, provided that it is well-defined. Note that it holds
\begin{equation}
P_{t_0,t_0}^{u} = \mathrm{id}_M,\qquad P_{t_0,t_1}^{u}= P_{t_1,t_2}^{u} \circ P_{t_0,t_1}^{u}
\end{equation}
as maps on $M$, for all $t_0,t_1,t_2$ such that the flows exist. Note that if $\gamma$ is a horizontal curve with control $u$, and $\gamma_0=x$, then $\gamma_t = P_{0,t}^{u}(x)$.

\item We define the \emph{length} of a horizontal curve $\gamma:[0,1]\to M$ as follows:
\begin{equation}
\ell(\gamma) := \int_0^1 \sqrt{g(\dot\gamma_t,\dot\gamma_t)}\,\di t.
\end{equation}
The length $\ell$ is invariant by suitable reparametrizations. Every horizontal curve is the reparametrization of one with constant speed.
\item The \emph{sub-Rieman\-nian (or Carnot-Carathéodory) distance} is defined by:
\begin{equation}\label{eq:infimo}
\sfd(x,y) = \inf\{\ell(\gamma)\mid \gamma_0 = x,\, \gamma_1 = y,\quad \gamma \text{ is horizontal} \}.
\end{equation}
The bracket-generating condition implies that $\sfd$ is finite and continuous. If $(M,\sfd)$ is complete as metric space, then for any $x,y \in M$ the infimum in \eqref{eq:infimo} is attained.

\item
On the space of horizontal curves with fixed endpoints, and parametrized with constant speed, the minimizers of the length functional coincide with the minimizers of the \emph{energy functional} 
\begin{equation}
J(\gamma) := \frac{1}{2}\int_0^1 g(\dot\gamma(t),\dot\gamma(t))\, \di t.
\end{equation}

\item 
The \emph{end-point map} of the family $\mathscr{F}=\{X_{1},\ldots,X_{L}\}$ and with base point $x$ is the Fréchet-smooth map $\End^{\mathscr{F}}_{x}: \mathcal{U} \to M$, sending $u$ to the point a time $t=1$ of the solution of
\begin{equation}\label{eq:endpoints}
\dot\gamma_t = \sum_{i=1}^L u_i(t) X_i(\gamma_t), \qquad \gamma_0 = x,
\end{equation}
for every $u\in \mathcal{U} \subset L^2([0,1],\R^L)$ for which the solution is defined on $[0,1]$.

\item \emph{Sub-Riemannian geodesics} are horizontal curves associated with \emph{minimizing controls}, namely the ones that solve the constrained minimum problem
\begin{equation} \label{eq:Ju}
\min\{J(u) \mid  u \in \mathcal{U},\quad \End^{\mathscr{F}}_x(u) = y\},\qquad x,y\in M.
\end{equation}
They are the curves $\gamma:[0,1]\to M$ such that $\sfd(\gamma_t,\gamma_s)=|t-s|\sfd(\gamma_0,\gamma_1)$ for all $t,s\in[0,1]$.

\item If $u$ is a minimizing control with $\End^{\mathscr{F}}_x(u)=y$, then there exists a non-trivial pair $(\lambda_1,\nu) \in T_{y}^*M \times \{0,1\}$, called \emph{Lagrange multiplier}, such that
\begin{equation}\label{eq:multipliers}
\lambda_1 \circ D_u \End^{\mathscr{F}}_x(v)  = \nu  (u,v)_{L^2}, \qquad \forall v\in T_u\mathcal{U} \simeq L^2([0,1],\R^L),
\end{equation}
where $\circ$ denotes the composition of linear maps, $D$ the (Fr\'echet) differential. Non-trivial means that $(\lambda_1,\nu)\neq (0,0)$.

\item The multiplier $(\lambda_1,\nu)$ and the associated geodesic $\gamma$ are called \emph{normal} if $\nu = 1$ and \emph{abnormal} if $\nu = 0$ (we use the same terminology for the covector $\lambda_1$). A minimizing control $u$ may admit different multipliers so that $\gamma$ might be both normal \emph{and} abnormal. In particular we observe that $\gamma$ is abnormal if and only if $u$ is a critical point of $\End^{\mathscr{F}}_x$.

\item To any multiplier $(\lambda_1,\nu)$ we associate an \emph{extremal}, namely a curve $\lambda:[0,1]\to T^*M$, given by
\begin{equation}\label{eq:extremallift}
\lambda_t:=(P_{t,1}^{u})^*\lambda_1,\qquad \forall\,t\in [0,1].
\end{equation}
The covector $\lambda_0$ is called the \emph{initial covector} of the extremal.


\item If $\gamma:[0,1]\to M$ is a normal geodesic with minimizing control $u$, and $\lambda_1\in T_{\gamma_1}^*M$ is a normal multiplier, then the corresponding extremal lift $\lambda :[0,1]\to T^*M$ as defined in \eqref{eq:extremallift} satisfies the differential equation
\begin{equation}
\dot{\lambda}_t = \vec{H}(\lambda_t),
\end{equation}
where $H : T^*M \to \R$ is the \emph{sub-Riemannian Hamiltonian}:
\begin{equation}
H(\lambda) := \frac{1}{2}\sum_{i=1}^{L} \langle \lambda,X_i\rangle^{2}, \qquad \forall\,\lambda \in T^*M,
\end{equation}
and $\vec H$ denotes the corresponding Hamiltonian vector field.

\item If $\gamma :[0,1]\to M$ is an abnormal geodesic with minimizing control $u$, and $\lambda_1\in T_{\gamma_1}^*M$ is an abnormal Lagrange multiplier, then the corresponding extremal lift $\lambda :[0,1]\to T^*M$ as defined in \eqref{eq:extremallift} satisfies the \emph{abnormal condition}
\begin{equation}
\langle \lambda(t),X_i \rangle =0,\qquad \forall\, i=1,\dots,L.
\end{equation}

\item The \emph{exponential map} at $x \in M$ is the map $\exp_x : T_x^*M \to M$, which assigns to $\lambda_{0}\in T_x^*M$ the final point $\pi(\lambda_1)$  of the solution of 
\[
\dot{\lambda}_t = \vec{H}(\lambda_t),
\]
starting at $\lambda_0$. The curve $\gamma_t = \pi(\lambda_t)$, $t\in[0,1]$, has control $u_i(t)=\langle \lambda_t,X_i(\gamma_t)\rangle$ for $i=1,\dots,L$, and satisfies the normal Lagrange multiplier rule with multiplier $\lambda_{1}=e^{\vec H}(\lambda_{0})$.

\item Given a normal geodesic $\gamma_t = \exp_x(t\lambda_{0})$ with initial covector $\lambda_0 \in T_x^*M$
we say that $y=\exp_x(\bar{t}\lambda)$ is a \emph{conjugate point} to $x$ along $\gamma$ if $\bar{t}\lambda$ is a critical point for $\exp_x$. We call $\bar{t}$ a conjugate time. Also we say that $\gamma_s$ and $\gamma_t$ are \emph{conjugate} if $\gamma_t$ is conjugate to $\gamma_s$ along $\gamma|_{[s,t]}$.

\item A normal geodesic $\gamma:[0,1]\to M$ \emph{contains no non-trivial abnormal segments} if for every $s,s'\in [0,1]$ with $0<|s-s'|<1$, the restriction $\gamma|_{[s,s']}$ is not abnormal.
\item If a geodesic $\gamma: [0,1] \to M$ contains no non-trivial abnormal segments, then $\gamma_s$ is not conjugate to $\gamma_{s'}$ for every $s,s'\in [0,1]$ with $0<|s-s'|<1$. 
\item We say that $y\in M$ is a \emph{smooth point}, with respect to $x\in M$, if there exists a unique geodesic joining $x$ with $y$, which is not abnormal, and with non-conjugate endpoints.
\item The \emph{cut locus} $\Cut(x)$ is the complement of the set of smooth points with respect to $x$. The \emph{global cut locus} of $M$ is 
\begin{equation}
\Cut(M) := \{(x,y) \in M \times M \mid y \in \Cut(x)\}.
\end{equation}
The set of smooth points is open and dense in $M$, and the squared sub-Riemannian distance function is smooth on $M\times M \setminus \Cut(M)$ \cite{agrasmoothness,RT-MorseSard}.


\item The \emph{abnormal set} $\mathrm{Abn}(x)$ is the set of points $y$ such that there exists an abnormal geodesic joining $x$ and $y$. It holds $\mathrm{Abn}(x)\subseteq \mathrm{Cut}(x)$. 
\end{itemize}

\subsection{The Goh-Legendre condition}\label{a:Goh-Legendre}

Consider a sub-Riemannian structure on a smooth manifold $M$ generated by a family of vector fields $X_1,\dots,X_L\in\mathrm{Vec}(M)$. Abnormal geodesics with finite index must satisfy necessary conditions for length-minimality known as Goh-Legendre conditions, see \cite[Thms.\ 12.12, 3.59]{nostrolibro}, that we recall here. 

\begin{definition}[Goh-Legendre]\label{def:Goh-Legendre}
\emph{Goh-Legendre geodesics} are geodesics $\gamma:[0,1]\to M$, with control $u\in L^2([0,1],\R^L)$, such that there exists a never-vanishing extremal lift $\lambda : [0,1]\to M$ such that 
the following holds for almost every $t\in [0,1]$:
\begin{align}
\langle\lambda_t, X_v\rangle & = 0, \qquad\qquad \forall\, v\in \R^L, \tag{abnormal condition} \\
\langle\lambda_t,[X_v,X_w] \rangle & = 0, \qquad\qquad \forall\, v,w\in\R^L, \tag{Goh condition} \\
\langle \lambda_t,[[X_{u(t)},X_v],X_v]\rangle & \geq 0, \qquad\qquad \forall\,v\in\R^L,   \tag{generalized Legendre condition}
\end{align}
where $\langle\cdot,\cdot\rangle$ denotes the duality pairing, and $X_v := \sum_{i=1}^L v_i X_i$ for any $v\in \R^L$. We call \emph{Goh-Legendre curve} a horizontal curve admitting a lift satisfying the above conditions.
\end{definition}

We refer to \cite[Ch.\ 12]{nostrolibro} for details; we only note here that all abnormal geodesics that are not normal must have finite index so that they must be Goh-Legendre geodesics.

We also formulate a strengthening of the Goh-Legendre condition. It was introduced in \cite[(4.16)]{AS-Morse} in relation with rigidity properties of abnormal geodesics.

\begin{definition}[Strong-Goh-Legendre]\label{def:strong-Goh-Legendre}
A \emph{strong}-Goh-Legendre geodesic is a Goh-Legendre geodesic such that the Legendre condition is verified in a strong sense. Namely, in the notation of \cref{def:Goh-Legendre}, there exists $c>0$ such that for almost every $t\in [0,1]$:
\begin{equation}
\langle \lambda_t,[[X_{u(t)},X_v],X_v]\rangle  \geq c\|v\|^2, \qquad \forall\,v\in u(t)^{\perp},\qquad \text{(strong generalized Legendre condition)}
\end{equation}
where $u(t)^\perp = \{v\in \R^L \mid v\cdot u(t)=0\}$, and $\|\cdot\|$ denotes the Euclidean norm of $\R^L$.
\end{definition}

\subsection{Goh-Legendre conditions on a Carnot group}

We rewrite \cref{def:Goh-Legendre} on a Carnot group $\G$. We use the corresponding notation from \cref{sec:Carnothomogeneousspaces}. Denote by $R_g :\G\to\G$ the right multiplication by $g\in \G$. Then, assuming without loss of generality $\gamma_0=e$, for all $g\in \G$ it holds
\begin{equation}
P_{0,t}^{u}(g) = R_{\gamma_t}(g),\qquad \text{where}\qquad \gamma_t = P_{0,t}^{u}(e),
\end{equation}
where, we recall, $P_{0,t}^{u}$ denotes the flow of the non-autonomous vector field $X_{u(t)}=\sum_{i=1}^L u_i(t)X_i$ (a curve in $\g$) with initial datum at time $0$ and final time at time $t$. It follows that for any extremal lift \eqref{eq:extremallift}, letting $\lambda_0\in T_e^*\G \simeq \g^*$ be the corresponding initial covector, it holds
\begin{equation}
\langle\lambda_t,V\rangle = \langle \lambda_0,\Ad_{\gamma_t}(V)\rangle,\qquad\forall\, V\in\g,
\end{equation}
where for $g\in \G$, $\Ad_g:\g\to\g$ is the adjoint endomorphism. Thus, the conditions in \cref{def:Goh-Legendre} are equivalent to the existence of $\lambda_0 \in \g^*\setminus\{0\}$ such that
\begin{align}
\langle\lambda_0,\Ad_{\gamma_t} (X)\rangle & = 0, \qquad\qquad \forall\, X\in\g_1,  \tag{abnormal condition} \\
\langle\lambda_0,\Ad_{\gamma_t} (Y) \rangle & = 0, \qquad\qquad \forall\, Y\in\g_2,  \tag{Goh condition} \\
\langle \lambda_0,\Ad_{\gamma_t}([[X_{u(t)},X],X])\rangle & \geq 0, \qquad\qquad \forall\, X\in\g_1, & \tag{generalized Legendre condition}
\end{align}
for almost every $t\in [0,1$]. Furthermore, for the flow of a non-autonomous vector field, a Volterra series argument shows that, if $\G$ has  step $s$, it holds:
\begin{equation}\label{eq:chrono?}
\Ad_{P_{0,t}^{u}(e)}(W) = W +\sum_{i=1}^{s-1} \int_0^{t} \dots \int_0^{t_{i-1}} [X_{u(t_{i})},[\dots,[X_{u(t_1)},W]]]\,\di t_1\cdots \di t_i,\qquad\forall \, W\in\g.
\end{equation}
Note that \eqref{eq:chrono?} is the non-autonomous version of \eqref{eq:adexp}.

 Elementary arguments using \eqref{eq:chrono?}, show that in step $3$ the above conditions are equivalent to:
\begin{align}
\langle \lambda_0, X\rangle  & =0, \qquad \forall\, X\in\g_1, \tag{abnormal condition} \\
\langle\lambda_0,Y\rangle =\langle \lambda_0, [X_{u(t)},Y]\rangle & =0,\qquad \forall\, Y\in\g_2,\, a.e.\ t\in[0,1],  \tag{Goh condition}  \\
\langle \lambda_0, [[X_{u(t)},X],X]\rangle & \geq 0,\qquad \forall\, X\in\g_1,\, a.e.\ t\in [0,1].  \tag{generalized Legendre condition}
\end{align}
Note that the Goh condition implies that the bilinear map $\g_1\times\g_1\ni (V,W)\mapsto \langle \lambda_0, [[X_{u(t)},V],W]\rangle$ is a quadratic form for almost every $t\in [0,1]$. 

\section{Approximations of probability measures}

The following is a standard construction used, in particular, to prove that if $(X,\sfd)$ is separable then also $(\mathcal{P}(X),W_p)$ is separable. We give a proof of the precise formulation we need.

\begin{lemma}\label{lem:approxwithdelta}
    Let $(X,\sfd)$ be a metric space. Let $\mu \in \mathcal{P}(X)$. Assume that for all $m\in \N$ there exist Borel sets $A_1^{m},\dots,A_{L(m)}^{m}$, with $A_i^{m}\subset X$ such that as $m\to \infty$ we have $L(m)\nearrow\infty$ and
    \begin{enumerate}
        \item \label{lem:i:A1} $A_i^{m}\cap A_j^{m} = \emptyset$ if $i\neq j$;
        \item \label{lem:i:A2} $ \mu\left( \cup_{i=1}^{L(m)} A_i^{m}\right) \geq 1-\frac{1}{m}$;
        \item \label{lem:i:A3} $ \diam\left(A_i^{m}\right)\leq \frac{1}{m}$ for all $i=1,\dots,L(m)$.
    \end{enumerate}
    Let $y_i^{m}$ be a choice of a point from each $A_i^{m}$. Define the sequence of probability measures
    \begin{equation}\label{eq:defmun}
        \mu^{m}:= \frac{1}{\mu\left(\cup_{i=1}^{L(m)} A_i^{m}\right)}\sum_{i=1}^{L(m)} \mu\left(A_i^{m}\right) \delta_{y_i^{m}},\qquad \forall\, m\in \N.
    \end{equation}
    Then $\mu^{m} \rightharpoonup \mu$ as $m\to \infty$.
\end{lemma}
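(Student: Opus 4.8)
The plan is to verify narrow convergence directly from the definition: I must show that for every bounded continuous $f \in C_b(X)$, one has $\int_X f \, \di\mu^m \to \int_X f \, \di\mu$ as $m \to \infty$. First I would set $B^m := \bigcup_{i=1}^{L(m)} A_i^m$ and write $c_m := \mu(B^m)$, noting that $c_m \to 1$ by assumption \labelcref{lem:i:A2}. Then I would decompose the integral of $f$ against $\mu$ by splitting $X = B^m \sqcup (X \setminus B^m)$, using the disjointness in \labelcref{lem:i:A1}, so that $\int_X f \, \di\mu = \sum_{i=1}^{L(m)} \int_{A_i^m} f \, \di\mu + \int_{X \setminus B^m} f \, \di\mu$. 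The second term is bounded in absolute value by $\|f\|_\infty (1 - c_m) \to 0$.

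The core estimate is to compare $\sum_i \int_{A_i^m} f \, \di\mu$ with $\sum_i \mu(A_i^m) f(y_i^m)$, which is $c_m \int_X f \, \di\mu^m$ by the definition \eqref{eq:defmun}. On each $A_i^m$ we have $\left| \int_{A_i^m} f \, \di\mu - \mu(A_i^m) f(y_i^m) \right| = \left| \int_{A_i^m} \big(f(x) - f(y_i^m)\big) \mu(\di x) \right| \le \mu(A_i^m) \sup_{x \in A_i^m} |f(x) - f(y_i^m)|$. Since $\diam(A_i^m) \le 1/m$ by \labelcref{lem:i:A3} and $y_i^m \in A_i^m$, every $x \in A_i^m$ satisfies $\sfd(x, y_i^m) \le 1/m$. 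Here the one subtlety is that $f$ is merely continuous, not uniformly continuous, so I cannot bound the oscillation by a single modulus of continuity uniformly over all $i$; instead I would restrict attention to a fixed compact set. If $X$ is not compact, I would first use tightness of the single measure $\mu \in \P(X)$ (valid since $X$ is separable and, in our setting, Polish) to choose, given $\varepsilon > 0$, a compact $C \subset X$ with $\mu(X \setminus C) < \varepsilon$; then for $m$ large the sets $A_i^m$ meeting $C$ lie in the compact $1$-neighbourhood of $C$, on which $f$ is uniformly continuous, and the remaining sets contribute at most $\|f\|_\infty(\varepsilon + o(1))$.

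Summing over $i$ and using $\sum_i \mu(A_i^m) = c_m \le 1$, the total discrepancy $\left| \sum_i \int_{A_i^m} f \, \di\mu - c_m \int_X f \, \di\mu^m \right|$ is bounded by $\omega_f(1/m)$ on the compact part plus a term controlled by $\varepsilon$ and $\|f\|_\infty$, where $\omega_f$ is the modulus of continuity of $f$ on the relevant compact set. Combining the three pieces gives
\begin{equation}
    \left| \int_X f \, \di\mu - c_m \int_X f \, \di\mu^m \right| \le \|f\|_\infty(1 - c_m) + \omega_f(1/m) + C\varepsilon,
\end{equation}
and finally, since $c_m \to 1$ and $\int_X f \, \di\mu^m$ is bounded by $\|f\|_\infty$, I can replace $c_m \int_X f \, \di\mu^m$ by $\int_X f \, \di\mu^m$ at the cost of another vanishing term $\|f\|_\infty |1 - c_m|$. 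Letting $m \to \infty$ and then $\varepsilon \to 0$ yields $\int_X f \, \di\mu^m \to \int_X f \, \di\mu$, which is exactly narrow convergence $\mu^m \rightharpoonup \mu$. The main obstacle, as noted, is handling the lack of uniform continuity of $f$ on a general (possibly non-compact) metric space, which is why the tightness reduction to a compact set is the essential ingredient; everything else is bookkeeping with the three error terms.
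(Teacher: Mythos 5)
You correctly identify the main obstacle --- a generic $f\in C_b(X)$ is not uniformly continuous --- but your fix has a real gap: the $1$-neighbourhood of a compact set $C$ is \emph{not} compact in a general metric space (take $X=\ell^2$, $C=\{0\}$: its $1$-neighbourhood is the closed unit ball, which is not compact). This holds only if $X$ is proper, which is true in the paper's applications but not a hypothesis of the lemma. Similarly, your tightness appeal (Ulam's theorem) requires $X$ to be Polish, again stronger than what the lemma assumes. The compactness claim could be repaired by a sequential-compactness argument establishing local uniform continuity of $f$ near a compact set (for each $\varepsilon>0$ there is $\delta>0$ with $|f(x)-f(y)|<\varepsilon$ whenever $\sfd(x,C)<\delta$ and $\sfd(x,y)<\delta$), but you have not supplied this, and even with it you remain restricted to Polish $X$.

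The paper takes a shorter and fully general route. It invokes a Portmanteau-type reduction (\cite[Cor.\ 2.2.6]{Bogachev-Weak}): narrow convergence of probability measures can be tested against bounded $1$-Lipschitz functions alone. For such a $g$, the oscillation on a set of diameter $\leq 1/m$ is $\leq 1/m$ \emph{by definition}, so the key estimate collapses to $\sum_i \sup_{z\in A_i^m}\sfd(y_i^m,z)\,\mu(A_i^m)\leq 1/m$, with no compactness, tightness, or properness needed. This Lipschitz reduction is exactly the standard device for circumventing the non-uniform-continuity issue you spotted; it makes the argument one display shorter, avoids the unproved auxiliary lemma, and works on an arbitrary metric space, matching the lemma's stated generality.
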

\begin{proof}
  Remember that $\mu^{m}\rightharpoonup \mu$ if for any continuous and bounded function $g\in C_b(X)$
    \begin{equation}
        \lim_{m\to\infty}\int_X g(z) \mu^{m}(\di z)= \int_X g(z) \mu(\di z).
    \end{equation}
    Since we are dealing with probability measures, it is sufficient to test the above convergence for $g\in \mathrm{Lip}_1(X,\sfd)\cap C_b(X)$, see \cite[Cor.\ 2.2.6]{Bogachev-Weak}. By our assumptions it holds
    \begin{equation}
        \lim_{m\to \infty} \mu\left(\cup_{i=1}^{L(m)} A_i^{m}\right) =1,
    \end{equation}
    so that we can omit the normalization in \eqref{eq:defmun}. Let $g\in \mathrm{Lip}_1(X,\sfd)$. The we compute
    \begin{align}
        \Big|\int_X g(z)\mu^{m}(\di z) & - \int_X g(z) \mu(\di z)\Big| = \left| \sum_{i=1}^{L(m)} \mu\left(A_i^{m}\right) g(y_i^{m})  - \int_X g(z) \mu(\di z)\right|                                                            \\
                                       & = \left| \int_X \left(\sum_{i=1}^{L(m)}\mathbbm{1}_{A_i^{m}}(z)g(y_i^{m})-g(z) \right) \mu(\di z)\right|                                                                                \\
                                       & = \left| \sum_{i=1}^{L(m)} \int_{A_i^{m}}\left(g(y_i^{m})-g(z) \right)\mu(\di z) - \int_{X\setminus \cup_{i=1}^{L(m)}A_j^{m}} g(z)\mu(\di z)   \right|                                  \\
                                       & \leq \sum_{i=1}^{L(m)} \sup_{z\in A_i^{m}}\sfd(y_i^{m},z)\mu\left(A_i^{m}\right) + \|g\|_{\infty} \mu\left(X\setminus \bigcup_{i=1}^{L(m)} A_i^{m}\right) \\
                                       & \leq  1/m + \|g\|_{\infty}/m,
    \end{align}
    where $\|g\|_{\infty}$ is the supremum of $g$. Letting $m\to\infty$ concludes the proof.
\end{proof}


\bibliographystyle{alphaabbr}

\bibliography{Engel_quotients_biblio}
\vspace{0.5cm}

\end{document}